\providecommand{\tabularnewline}{\\}
\theoremstyle{plain}
\newtheorem{assumption}{\protect\assumptionname}
\theoremstyle{remark}
\newtheorem{rem}{\protect\remarkname}[section]
\theoremstyle{plain}
\newtheorem{thm}{\protect\theoremname}[section]
\theoremstyle{definition}
\newtheorem{defn}{\protect\definitionname}[section]
\theoremstyle{plain}
\newtheorem{lem}{\protect\lemmaname}[section]
\theoremstyle{plain}
\newtheorem{prop}{\protect\propositionname}[section]
\setlist[enumerate,1]{label=\upshape{(\roman*)}, ref=(\roman*)}
\setlist[enumerate,2]{label=\upshape{(\alph*)}, ref=(\alph*)}
\setlist[enumerate,3]{label=\upshape{\roman*.}, ref=\roman*}
\newtheoremstyle{named}{\medskipamount}{\medskipamount}{\itshape}{\parindent}{\scshape}{.}{1em}{#1 \thmnote{#3}}
\theoremstyle{named}
\newtheorem*{namedlemma}{Lemma}
\let\@old@begintheorem=\@begintheorem
\def\@begintheorem#1#2[#3]{%
  \gdef\@thm@name{#3}%
  \@old@begintheorem{#1}{#2}[#3]%
}
\def\namedthmlabel#1{\begingroup
   \edef\@currentlabel{\@thm@name}%
   \label{#1}\endgroup
}
\newcommand \Dotfill {\leavevmode \cleaders \hb@xt@ .88em{\hss .\hss }\hfill \kern \z@}
\providecommand{\assumptionname}{Assumption}
\providecommand{\definitionname}{Definition}
\providecommand{\lemmaname}{Lemma}
\providecommand{\propositionname}{Proposition}
\providecommand{\remarkname}{Remark}
\providecommand{\theoremname}{Theorem}
\begin{document}
\begin{frontmatter}


\title{Estimation and Inference in the Presence of Fractional $\lowercase{d}=1/2$
and Weakly Nonstationary Processes}

\maketitle
\runtitle{Fractional and Weakly Nonstationary Processes}

\author{\fnms{James A.} \snm{Duffy}\corref{}\ead[label=e1]{james.duffy@economics.ox.ac.uk}}

\address{Department of Economics\\
University of Oxford\\
Oxford OX1 3UQ\\
United Kingdom\\  \printead{e1}} \and

\author{\fnms{Ioannis} \snm{Kasparis}\ead[label=e2]{kasparis@ucy.ac.cy}}

\address{Department of Economics\\University of Cyprus\\P.O. Box 20537\\CY-1878,  Nicosia CYPRUS\\ \printead{e2}}

\runauthor{Duffy and Kasparis}
\begin{abstract}
We provide new limit theory for functionals of a general class of
processes lying at the boundary between stationarity and nonstationarity
-- what we term weakly nonstationary processes (WNPs). This includes,
as leading examples, fractional processes with $d=1/2$, and arrays
of autoregressive processes with roots drifting slowly towards unity.
We first apply the theory to study inference in parametric and nonparametric
regression models involving WNPs as covariates. We then use these
results to develop a new specification test for parametric regression
models. By construction, our specification test statistic has a $\chi^{2}$
limiting distribution regardless of the form and extent of persistence
of the regressor, implying that a practitioner can validly perform
the test using a fixed critical value, while remaining agnostic about
the mechanism generating the regressor. Simulation exercises confirm
that the test controls size across a wide range of data generating
processes, and outperforms a comparable test due to Wang and Phillips
(2012, Ann.\ Stat.) against many alternatives.
\end{abstract}
\begin{keyword}[class=MSC] \kwd[Primary ]{} \kwd{62M10}
\kwd{62F12} \kwd{62G08} 
\end{keyword}

\begin{keyword} \kwd{Fractional Process} \kwd{Half Unit Root}
\kwd{Kernel Regression} \kwd{Specification Testing} \kwd{Mildly
Integrated Process} \kwd{Weakly Nonstationary Process} \end{keyword}

\end{frontmatter}


\global\long\def\argmin{\operatorname*{argmin}}%

\section{Introduction}

Inference in regression models when data is temporally dependent is
a challenging problem, which has engendered a voluminous literature.
Previous work has investigated the asymptotics of parametric and nonparametric
regression estimators under a variety of assumptions on the form and
extent of that dependence, including, for example: regressors generated
by autoregressive fractionally integrated moving average (ARFIMA)
models, general linear and nonlinear processes, and by partial sums
and arrays formed from such processes. Both stationary and nonstationary
processes have been considered, and quite distinct arguments -- relying
on stationary laws of large numbers (LLNs) and central limit theorems
(CLTs) for the former, and the weak convergence of stochastic processes
for the latter -- have been utilised to handle these cases.

It might therefore seem as though little work remains to be done on
this problem. However, in its treatment of nonstationary processes,
previous work has typically employed assumptions that prevent these
from being wholly contiguous with their stationary counterparts, leaving
some significant gaps in the domain of the existing theory. Thus,
for example, when regressors are generated by arrays of ARIMA processes,
both the cases of fixed stationary autoregressive roots and of an
autoregressive root drifting towards unity at rate $n^{-1}$ (a `nearly
integrated' process, henceforth `NI'; see Chan and Wei, 1987, 1988;
Phillips, 1987a,b) have been closely studied, but the intermediate
case of roots drifting towards unity at a strictly slower rate than
$n^{-1}$ (a `mildly integrated' process, henceforth `MI'; see
Giraitis and Phillips, 2006; Magdalinos and Phillips, 2007) has received
less attention until very recently. Similarly, the asymptotics of
regression estimators when applied to fractionally integrated regressors
of order $d$ (henceforth, $I(d)$) are well understood both when
$d\in(-1/2,1/2)$ or $d\in(1/2,3/2)$, but not nearly so well when
$d=1/2$. In view of the importance of ARFIMA models to the statistical
modelling of time series, it is striking that the limit theory for
these processes still remains to be fully characterised.\footnote{Although recent work by Shimotsu and Phillips (2005) and Hualde and
Robinson (2011) allow for cases where $d=1/2$, the limit theory developed
by these authors is specific to functionals in the frequency domain
that arise in the context of memory estimation.}

Existing work on the asymptotics of regression estimators may thus
be divided into two literatures: that dealing with stationary processes,
and that with strongly dependent nonstationary processes, with a certain
space left in between them. Though these literatures are too vast
to be cited exhaustively here, some particularly notable contributions
include the following. For stationary long memory processes, the asymptotics
of nonparametric regression estimators were developed by Wu and Mielniczuk
(2002) and Wu, Huang and Huang (2010). For nonstationary processes,
parametric regression estimators have been studied by Chan and Wei
(1987, 1988), Phillips (1987a,b), Phillips (1995), Park and Phillips
(1999, 2001) and Chan and Wang (2015). Robinson and Hualde (2003),
Christensen and Nielsen (2006), Hualde and Robinson (2010) and Johansen
and Nielsen (2012a) consider fractional systems. Several papers consider
the problem of inference in regressions with a NI covariate, including
Mikusheva (2007), Phillips and Magdalinos (2009) and Kostakis, Magdalinos
and Stamatogiannis (2015). Wang and Phillips (2009a,b, 2011, 2012)
consider nonparametric methods for estimation and inference in regressions
with a NI or nonstationary fractional covariate; some closely related
work on nonparametric estimation in the setting of null recurrent
Markov chains is the subject of the papers by Karlsen and Tjøstheim
(2001) and Karlsen, Myklebust and Tjøstheim (2007).

The present work aims to fill the gap between these two literatures,
by developing the asymptotics of regression estimators for a class
of processes intermediate between the stationary and more strongly
dependent nonstationary processes previously considered. We term these
\emph{weakly nonstationary processes} (WNPs), with leading examples
being MI and $I(1/2)$ processes. To appreciate the significance of
our results, consider the regression model
\begin{equation}
y_{t}=m(x_{t-1})+u_{t}\label{eq:npreg}
\end{equation}
where $u_{t}$ is a martingale difference sequence; suppose for concreteness
that $x_{t}$ is $I(d)$ for an unknown $d\in(-1/2,3/2)$. The object
of interest is the regression function $m(\cdot)$; e.g.\ we would
like to test such a null hypotheses as $\mathcal{H}_{0}:m(x)=m_{0}(x)$
for a given $x$ and $m_{0}$. Since $\mathcal{H}_{0}$ places no
restriction on the process followed by $x_{t}$, establishing the
asymptotic validity of any test of $\mathcal{H}_{0}$ requires that
its asymptotics be developed under all possible values of the nuisance
parameter $d$ (and indeed, under appropriate drifting sequences $\{d_{n}\}$:
see e.g.\ Mikusheva, 2007; Andrews, Cheng and Guggenberger, 2020).\footnote{Formally, the asymptotic size of a test $\phi_{n}\in\{0,1\}$ would
be defined as $\limsup_{n\rightarrow\infty}\sup_{d\in(-1/2,3/2),m\in\mathscr{M}}\mathbb{P}_{d,m}\{\phi_{n}=1\}$,
for $\mathscr{M}$ a class of functions respecting $\mathcal{H}_{0}$.} This requires results for the case where $d=1/2$, no less than for
$d\in(-1/2,1/2)$ and $d\in(1/2,3/2)$. Empirically, values of $d$
in the vicinity of $1/2$ have been systematically found to provide
a good description of the dynamics of inflation and realised volatility
series (see e.g.\ Hassler \& Wolter, 1995; Baillie, Chung and Tieslau,
1996; and Andersen, Bollerslev, Diebold and Labys, 2001). Indeed,
recent work by Hassler and Pohle (2019) finds that when forecasting
such series, an ARFIMA($p$,$d$,$0$) model with $d$ fixed at $1/2$
gives superior forecasts to those produced by the same model with
an estimated value of $d$. As such, our results should be particularly
relevant for inference in regressions involving such series as r.h.s.\ variables.

In the context of (\ref{eq:npreg}), we show that nonparametric kernel
estimators of $m$ are asymptotically (mixed) Gaussian when $x_{t}$
is a WNP; consequently, the $t$ statistic for testing $\mathcal{H}_{0}$
is asymptotically standard normal. This accords with previous results
for both stationary and strongly dependent nonstationary processes
(e.g.\ NI processes or $I(d)$ process with $d>1/2$), which establish
the asymptotic normality of the $t$ statistic in these cases\textbf{
}(see Wu and Mielniczuk, 2002; and Wang and Phillips, 2009a,b, 2011).
It follows that conventional tests of $\mathcal{H}_{0}$, involving
the comparison of the $t$ statistic to normal critical values, are
asymptotically valid even when the regressor has an unknown, but possibly
high, degree of persistence. This is of particular importance for
practitioners, since it implies that tests of $\mathcal{H}_{0}$ can
be conducted without having to in any way adjust for the persistence
of $x_{t}$.

We also consider the case where $m$ is parametrised as $m(x)=\mu+\gamma g(x)$
for a known function $g$. When $x_{t}$ is a WNP, least squares estimators
of $(\mu,\gamma)$ are shown to exhibit the elevated rates of convergence
familiar from when regressors are more strongly dependent, but with
limiting distributions that are (mixed) Gaussian, similarly to the
case of stationary regressors. This result is less directly useful
to practitioners, since it breaks down when $x_{t}$ is more strongly
dependent, in which case the limiting distributions of these estimators
are well known to be nonstandard (see e.g.\ Phillips, 1995; Marinucci
and Robinson, 1998; Robinson and Hualde, 2003).

We build on these results to develop a test for parametric specifications
of $m$ the form $m(x)=\mu+\gamma g(x)$. The test is based on a comparison
of the fit provided by parametric and nonparametric estimates of $m$,
and is designed so as to inherit the asymptotic (mixed) normality
of the nonparametric estimator. It therefore has the attractive property
that the limiting distribution of the test statistic is invariant
to the persistence of $x_{t}$, implying that a practitioner can perform
the test using a fixed set of critical values, while remaining agnostic
about the dependence properties of the regressor. Simulations confirm
that the size of the test is successfully controlled in finite samples,
as the process generating the regressor varies between weakly dependent
and stationary, weakly nonstationary, and strongly dependent nonstationary.
Relative to the specification test proposed by Wang and Phillips (2012),
our test appears to have greater power against a broad range of alternatives,
with these power improvements being especially pronounced for integrable
and asymptotically vanishing alternatives.

Underpinning our limit theory for regression estimators and specification
tests are a collection of new technical results concerning the asymptotics
of additive functionals of WNPs, of the form
\begin{equation}
\frac{1}{n}\sum_{t=1}^{n}f(\beta_{n}^{-1}x_{t})\qquad\text{and}\qquad\frac{\beta_{n}}{nh_{n}}\sum_{t=1}^{n}K\left(\frac{x_{t}-x}{h_{n}}\right)\label{functionals}
\end{equation}
where $\beta_{n}^{2}=Var(x_{n})$, $f$ is locally integrable, $K$
is integrable, and $h_{n}$ denotes a bandwidth sequence. These new
results are needed because WNPs are characterised by being: (a) sufficiently
nonstationary to resist the application of existing LLNs; and (b)
so weakly dependent that the finite-dimensional distributions of the
process $r\mapsto\beta_{n}^{-1}x_{\lfloor nr\rfloor}$ converge to
those of a nonseparable Gaussian process.\footnote{By the \emph{nonseparability} of a process $G$, we mean there does
\emph{not} exist a countable $T\subset[0,1]$ such that for every
open interval $I\subset[0,1]$, $\inf_{r\in I\cap T}G(r)=\inf_{r\in I}G(r)$
and $\sup_{r\in I\cap T}G(r)=\sup_{r\in I}G(r)$ (see Loève, 1978,
p.\ 171).} Since this convergence cannot be strengthened to weak convergence
with respect to the uniform or Skorokhod topologies, the asymptotics
of (\ref{functionals}) are not amenable to an application of the
continuous mapping theorem, or even to more general results on the
convergence of integral functionals (see Gikhman and Skorokhod, 1969,
p.\ 485, Thm 1). Despite this, we shall prove that for WNPs
\begin{equation}
\frac{1}{n}\sum_{t=1}^{n}f(\beta_{n}^{-1}x_{t})\overset{d}{\rightarrow}\int_{\mathbb{R}}f(x+X^{-})\varphi_{\sigma_{+}^{2}}(x)dx\label{eq:WNPlim1}
\end{equation}
where $X^{-}\sim N[0,\sigma_{-}^{2}]$ (with possibly $\sigma_{-}^{2}=0$)
and $\varphi_{\sigma_{+}^{2}}$ denotes the $N[0,\sigma_{+}^{2}]$
density, and
\begin{equation}
\frac{\beta_{n}}{nh_{n}}\sum_{t=1}^{n}K\left(\frac{x_{t}-x}{h_{n}}\right)\overset{d}{\rightarrow}\varphi_{\sigma_{+}^{2}}(-X^{-})\int_{\mathbb{R}}K(u)du.\label{eq:WNPlim2}
\end{equation}

The remainder of this paper is organised as follows. To help further
motivate this work, two leading examples of WNPs, $I(1/2)$ and MI
processes, are discussed in detail in Section~\ref{process-intro}.
Our general results on the asymptotics of the functionals in (\ref{functionals})
are presented in Section~\ref{sec:functionals}. These provide the
basis for the asymptotics of both parametric and nonparametric estimators
in regression models involving WNPs -- or non-linear transformations
thereof -- developed in Section~\ref{applications}. These results
are in turn used, in Section~\ref{Test}, to propose a specification
test whose limiting distribution is invariant to the persistence of
the regressor. Its finite-sample performance is evaluated through
the simulation exercises presented in Section~\ref{sec:Simulations}.
Proofs of all results are given in the Appendices (in the Supplementary
Material).

\paragraph{Notation}

$\overset{a.s.}{\rightarrow}$, $\overset{p}{\rightarrow}$ and $\overset{d}{\rightarrow}$
respectively denote convergence almost surely, in probability, and
in distribution. For deterministic sequences $\{a_{n}\}$ and $\{b_{n}\}$,
$a_{n}\sim b_{n}$ denotes $\lim_{n\rightarrow\infty}a_{n}/b_{n}=1$
and $a_{n}\asymp b_{n}$ denotes $\lim_{n\rightarrow\infty}|a_{n}/b_{n}|\in(0,\infty)$.
For a random variable $X$, $X\sim F$ denotes that $X$ has distribution
$F$. For a positive real number $x$, $\lfloor x\rfloor$ denotes
its integer part. $\mathbf{1}\{A\}$ denotes the indicator function
for the set $A$. $\overline{\mathbb{R}}$, $\mathbb{R}_{+}$ and
$\mathbb{R}_{+}^{\ast}$ are the extended, the nonnegative, and (strictly)
positive real numbers respectively. $f^{(j)}(x)$ denotes the $j$th
derivative of the function $f(x)$. All limits are taken as $n\rightarrow\infty$
unless otherwise indicated.

\section{Leading examples of WNPs: $I(1/2)$ and MI processes}

\label{process-intro}

In the next section, we provide general limit theorems for the additive
functionals in (\ref{functionals}) under high-level conditions, which
may be regarded as defining the class of weakly nonstationary processes
(WNPs); these results will then be specialised to $I(1/2)$ and MI
processes. Before doing so, we give a precise definition of these
two processes, which helps to motivate our high-level conditions.
To bring these processes into a common framework, consider a linear
process array of the form 
\begin{equation}
x_{t}(n)=\sum_{j=0}^{t-1}\phi_{j}(n)v_{t-j},\qquad\text{where}\qquad v_{t}=\sum_{i=0}^{\infty}c_{i}\xi_{t-i},\label{model01}
\end{equation}
$t=1,...,n\in\mathbb{N}$, and the coefficients $\phi_{j}(n)$ and
$c_{i}$ will be specified below. (Where there is no possibility of
ambiguity, we shall generally denote $x_{t}(n)$ as simply $x_{t}$,
for ease of notation.) $\{\xi_{t}\}_{t\in\mathbb{Z}}$ satisfies

\setcounter{assumption}{6461}
\begin{assumption}
~
\begin{enumerate}
\item $\xi_{t}$ is i.i.d.\ with $\mathbf{E}\xi_{1}=0$ and $Var(\xi_{1})=\sigma_{\xi}^{2}<\infty$.
\item $\xi_{1}$ has an absolutely continuous distribution, and a characteristic
function $\psi_{\xi}(\lambda)$ that satisfies $\int_{\mathbb{R}}|\psi_{\xi}(\lambda)|^{\theta}d\lambda<\infty$,
for some $\theta\in\mathbb{N}$.
\end{enumerate}
\end{assumption}

\subsection{$I(1/2)$ processes}

The definition of a `fractionally integrated process' used this paper
closely follows that of Marinucci and Robinson (1999). These authors
classify a non-stationary fractional process $x_{t}$ as type I or
type II according to the `type' of the fractional Brownian motion
(fBM) to which the finite dimensional distributions of $\beta_{n}^{-1}x_{\lfloor nr\rfloor}$
converge, where $\beta_{n}^{2}:=Var(x_{n})$. Although these authors
consider processes with a long memory component that is specified
`parametrically' -- via the expansion of an autoregressive lag polynomial
$(1-L)^{d}$ (see also Remark~\ref{rem:FR}(c)\textbf{ }below) --
their classification extends straightforwardly to the case where this
is instead formulated `semi-parametrically', in terms of the decay
rate of the coefficients $\{\phi_{j}\}$ in (\ref{model01}). Thus
we shall say that for $d\in(1/2,1)$, $x_{t}$ is an $I(d)$ process
of
\begin{itemize}
\item type I: if $\phi_{j}=1$, $c_{s}\sim\ell(s)s^{d-2}$, $\sum_{s=0}^{\infty}c_{s}=0$;
and
\item type II: if $\phi_{j}\sim\ell(j)j^{d-1}$, $\sum_{s=0}^{\infty}\lvert c_{s}\rvert<\infty$,
and $\sum_{s=0}^{\infty}c_{s}\neq0$;
\end{itemize}
\noindent where $\ell:\mathbb{R}_{+}\rightarrow\mathbb{R}_{+}$ is
locally integrable on $[1,\infty)$, positive-valued and slowly varying
at infinity (henceforth, `SV') in the sense of Bingham, Goldie and
Teugels (1987, p.\ 6). Fractional processes of this kind have been
widely studied when $d>1/2$: see e.g.\ Taqqu (1975), Kasahara and
Maejima (1988) and Jeganathan (2004, 2008) for the type I case, and
Robinson and Hualde (2003), Phillips and Shimotsu (2004), Shimotsu
and Phillips (2005) and Hualde and Robinson (2011) for the type II
case.

The preceding extend naturally to $d=1/2$, and give the definitions
of $I(1/2)$ processes (of each type) used throughout this paper,
even though $\beta_{n}^{-1}x_{\lfloor nr\rfloor}$ will not converge
weakly to an fBM of either type in the case. We shall accordingly
develop our limit theory for these processes under

\setcounter{assumption}{173}
\begin{assumption}
$x_{t}(n)$ is generated by (\ref{model01}). $\ell$ is SV such that
$L(n):=\int_{1}^{n}[\ell^{2}(x)/x]dx\rightarrow\infty$ as $n\rightarrow\infty$,
and either:
\begin{enumerate}[label=\upshape{\textbf{FR\arabic*}}]
\item $\phi_{j}=1\ \forall j\geq0$, $c_{s}\sim\ell(s)s^{-3/2}$, and $\sum_{s=0}^{\infty}c_{s}=0$;
or
\item $\phi_{j}\sim\ell(j)j^{-1/2}$, $\phi_{0}\neq0$, $\sum_{s=0}^{\infty}|c_{s}|<\infty$,
$\sum_{s=0}^{\infty}c_{s}\neq0$.
\end{enumerate}
\end{assumption}
\begin{rem}
\label{rem:FR}\textbf{(a)} \textbf{FR1} and \textbf{FR2} respectively
imply that $x_{t}(n)$ is an $I(1/2)$ process of types I and II.

\textbf{(b)} $Var(x_{n})\asymp L(n)$ under \textbf{FR}, where $L(n)$
is itself SV. The divergence or convergence of $L(n)$ effectively
demarcates the boundary between WNPs and stationary long memory processes.
Either is possible, depending on $\ell$: e.g.\ $\ell(n)=1$ gives
$L(n)\sim\ln n$ and $\ell(n)=(\ln n)^{-1/2}$ gives $L(n)\asymp\ln\ln n$,
whereas $\ell(n)=(\ln n)^{-1}$ gives a bounded $L(n)$. When $L(n)$
diverges, $L(n)^{-1/2}x_{n}$ will obey a CLT. But when $L(n)$ is
bounded, no CLT applies: and indeed in this case we have under \textbf{FR2}
that $\sum_{j=0}^{\infty}\phi_{j}^{2}<\infty$, so that $x_{t}$ is
stationary. Such processes fall within the purview of existing results,
and so have been excluded by our assumption that $L(n)\rightarrow\infty$.

\textbf{(c)} \textbf{FR }encompasses both types of parametric ARFIMA
models with $d=1/2$. For example, consider the ARFIMA($1/2$) type
II model 
\begin{equation}
(1-L)^{1/2}x_{t}=v_{t}1\left\{ t>0\right\} ,\qquad\text{where }a(L)v_{t}=b(L)\xi_{t},\label{Parametric1}
\end{equation}
where $a$ and $b$ denote finite-order polynomials in the lag operator
$L$. In this case, it is possible to write $x_{t}=\sum_{j=0}^{t-1}\phi_{j}v_{t-j}$,
where $\{\phi_{j}\}_{j\geq0}$ are the coefficients in the power series
expansion of $(1-L)^{-1/2}$; and so $\phi_{0}=1$ and $\phi_{j}\asymp j^{-1/2}$
(see e.g.\ p.\ 673 in Johansen and Nielsen, 2012b). Further, if
all the roots of $a$ lie outside the unit circle, $v_{t}=a(L)^{-1}b(L)\xi_{t}$
is a linear process with geometrically decaying coefficients. Thus
\textbf{FR2} is satisfied. Since $\ell(x)=1$, we have trivially that
$L(n)\rightarrow\infty$. Similar arguments show that \textbf{FR1}
is consistent with an ARFIMA($1/2$) type I model, under which $x_{t}$
is the partial sum of a ARFIMA($-1/2$) type II process.
\end{rem}

\subsection{MI processes}

Mildly integrated (MI) processes are closely related to the nearly
integrated (NI) processes studied by Chan and Wei (1987) and Phillips
(1987), and more recently extended by Buchmann and Chan (2007). Both
MI and NI processes may be defined in terms of an array as 
\begin{equation}
x_{t}(n)=(1-\kappa_{n}^{-1})x_{t-1}(n)+v_{t},\label{MI}
\end{equation}
where $x_{0}(n)=0$ and $v_{t}$ is a stationary process and $\kappa_{n}>1$
with $\kappa_{n}\rightarrow\infty$, so that the autoregressive coefficient
approaches unity as $n$ grows. They can thus be encompassed within
the framework of (\ref{model01}) if we allow $\phi_{j}$ to depend
on $n$ as per 
\begin{equation}
\phi_{j}=\phi_{j}(n)=(1-\kappa_{n}^{-1})^{j}.\label{eq:micoef}
\end{equation}
Both NI and MI processes thus describe highly persistent autoregressive
processes, which have a root in the vicinity of unity. They have accordingly
been used to investigate the behaviour of various inferential procedures
under local departures from unit roots (e.g. Mikusheva, 2007, and
Duffy, 2020), and in the construction of robust inferential procedures
(e.g. Magdalinos and Phillips, 2011; Kostakis, Magdalinos and Stamatogiannis,
2015; Demetrescu et al 2019; Yang, Long, Peng and Cai, 2019). The
crucial difference between NI and MI processes concerns the assumed
growth rate of the sequence $\kappa_{n}$. NI processes are defined
by $\kappa_{n}/n\rightarrow c\neq0$, with the consequence that $n^{-1/2}x_{\lfloor nr\rfloor}$
converges weakly to an Ornstein-Uhlenbeck process. MI processes have
$\kappa_{n}/n\rightarrow0$, which tilts $x_{t}(n)$ closer to stationarity:
and as a consequence, FCLTs cannot be used to derive the asymptotics
of functionals of these processes. Their variance also grows at a
slower rate, since $Var(x_{n}(n))\asymp\kappa_{n}=o(n)$.

Formally, we define an MI process as follows, specifying regularity
conditions that are helpful in unifying notation and simplifying some
derivations.\footnote{Previous work on these processes has assumed that $v_{t}$ is short
memory in the sense that $\sum_{s=0}^{\infty}\lvert c_{s}\rvert<\infty$.
In a previous working paper version of the present work (available
as arXiv:1812.07944v1), we allowed $v_{t}$ to have long memory in
the sense that $c_{s}\sim s^{-m}$ for $m\in(1/2,1)$, thereby extending
this previous work much in the manner of Buchmann and Chan's (2007)
extension of earlier work on NI processes. To keep the length of the
paper manageable, this generalisation is not reported here.}

\setcounter{assumption}{346}
\begin{assumption}
$x_{t}\left(n\right)$ and $\phi_{j}(n)$ are as in (\ref{model01})
and (\ref{eq:micoef}). $\{c_{s}\}_{s\in\mathbb{Z}}$ is such that
$\sum_{s=0}^{\infty}\left\vert c_{s}\right\vert <\infty$ and $\sum_{s=0}^{\infty}c_{s}\neq0$.
$\{\kappa_{n}\}_{n\in\mathbb{N}}$ has $\kappa_{n}>1$, $\kappa_{n}=n^{\alpha_{\kappa}}\ell_{\kappa}(n)$
for $\ell_{\kappa}$ SV and $\alpha_{\kappa}\in[0,1)$, $\kappa_{n}\rightarrow\infty$
and $\sup_{n\geq1}\sup_{1\leq t\leq n}\kappa_{n}^{-1}\kappa_{t}<\infty$.
\end{assumption}

\section{Limit theory for functionals of WNPs}

\label{sec:functionals}

\subsection{Additive functionals of standardised processes}

\label{additive-funcs}

Consider
\begin{equation}
\frac{1}{n}\sum_{t=1}^{n}f(\beta_{n}^{-1}x_{t})\label{target-fnal}
\end{equation}
where $f$ is locally integrable, and $\beta_{n}^{2}:=Var(x_{n}(n))$.
Here we provide high-level conditions (\textbf{Assumption HL}) under
which the asymptotics of (\ref{target-fnal}) may be derived. These
conditions, particularly \textbf{HL0--2} and \textbf{HL4} below,
may be taken as providing an abstract definition of a WNP -- with
\textbf{HL3}, \textbf{HL5} and \textbf{HL6} being merely regularity
conditions that may be dispensed with, if $f$ satisfies certain assumptions.
These high-level conditions are stated in terms of a general random
array denoted $\{X_{t}(n)\}$, to distinguish it from the linear process
array $\{x_{t}(n)\}$ introduced in (\ref{model01}) above.

\setcounter{assumption}{219}
\begin{assumption}[high-level conditions]
~
\begin{enumerate}[label=\upshape{\textbf{HL\arabic*}},start=0]
\item  Let $\left\{ X_{t}\left(n\right)\right\} _{t=1}^{n}$, $n\in\mathbb{N}$\ be
a random array and $\left\{ \mathcal{F}_{t}\right\} _{t=-\infty}^{\infty}$\ a
filtration such that $X_{t}\left(n\right)$\ is $\mathcal{F}_{t}$-measurable
for all $t$\ and $n$. Let $\{\beta_{n}\}$ denote a positive sequence
with $\beta_{n}\rightarrow\infty$.
\item $X_{t}(n)=X_{t}^{+}(n)+X_{t}^{-}(n)+R_{t}(n)$, where $X_{t}(n)^{-}$\ is
$\mathcal{F}_{0}$-measurable, and $\sup_{1\leq t\leq n}\mathbf{P}\{\beta_{n}^{-1}\lvert R_{t}(n)\rvert>\epsilon\}\rightarrow0$\ for
every $\epsilon>0$.
\item There are random variables $X^{+}$ and $X^{-}$, where $X^{+}$ has
bounded continuous density $\Phi_{X^{+}}$\ such that: for every
$\delta\in(0,1)$ and $\left\{ t_{n}\right\} $\ with $\lfloor n\delta\rfloor\leq t_{n}\leq n$
\begin{enumerate}
\item $\beta_{n}^{-1}X_{t_{n}}^{+}(n)\overset{d}{\rightarrow}X^{+}$, conditionally
on $\mathcal{F}_{0}$\ in the sense that for all bounded and continuous
$h:\mathbb{R}\rightarrow\mathbb{R}$
\[
\mathbf{E}\left[h\left(\beta_{n}^{-1}X_{t_{n}}^{+}(n)\right)\mid\mathcal{F}_{0}\right]\overset{p}{\rightarrow}\int_{\mathbb{R}}h(x)\Phi_{X^{+}}(x)dx\text{\textit{;} \textit{and}}
\]
\item $\beta_{n}^{-1}X_{t_{n}}^{-}(n)\overset{d}{\rightarrow}X^{-}$,\ and
$\beta_{n}^{-1}[X_{n}^{-}(n)-X_{t_{n}}^{-}(n)]\overset{p}{\rightarrow}0$.
\end{enumerate}
\item $\beta_{t}^{-1}X_{t}(n)$ has density $\mathcal{D}_{n,t}(x)$ such
that for some $n_{0}\geq t_{0}\geq1$, 
\[
\sup_{n\geq n_{0},t_{0}\leq t\leq n}\sup_{x}\mathcal{D}_{n,t}(x)<\infty
\]
\item For every bounded and Lipschitz continuous $g:\mathbb{R}\rightarrow\mathbb{R}$
\[
\frac{1}{n}\sum_{t=1}^{n}g\left(\beta_{n}^{-1}X_{t}(n)\right)=\frac{1}{n}\sum_{t=1}^{n}\mathbf{E}\left[g\left(\beta_{n}^{-1}X_{t}(n)\right)\mid\mathcal{F}_{0}\right]+o_{p}(1).
\]
\item For some $\lambda\in\left(0,\infty\right)$ and $n_{0}\geq1$
\begin{enumerate}
\item $\sup_{n\geq n_{0},1\leq t\leq n}\mathbf{E}\left\vert \beta_{n}^{-1}X_{t}\left(n\right)\right\vert ^{\lambda}<\infty$;
or
\item $\sup_{n\geq n_{0},1\leq t\leq n}\mathbf{E}\exp\left(\lambda\left\vert \beta_{n}^{-1}X_{t}\left(n\right)\right\vert \right)<\infty$.
\end{enumerate}
\item $\sup_{n\in\mathbb{N}}\frac{\beta_{n}}{n}\sum_{t=t_{0}}^{n}\beta_{t}^{-1}<\infty$,
for $t_{0}$ as in \textbf{HL3}.
\end{enumerate}
\end{assumption}
\begin{rem}
\label{rem:HL}\textbf{(a)} When $X_{t}(n)$ is a linear process array
formed from an underlying i.i.d.\ sequence $\{\xi_{t}\}$ as in (\ref{model01}),
\textbf{HL1} is trivially satisfied by splitting it into terms depending
on $\{\xi_{s}\}_{s\leq0}$ and $\{\xi_{s}\}_{s=1}^{t}$.

\textbf{(b)} \textbf{HL2} expresses one of the key properties of a
WNP: that its finite dimensional distributions should converge (upon
standardisation), albeit not to those of a separable process. Its
requirements may be illustrated by an $I(1/2)$ type I process with
$\ell(x)=1$, denoted $\{x_{t}\}$. Per the previous remark, write
$x_{t}=x_{t}^{+}+x_{t}^{-}$, where $x_{t}^{+}$ and $x_{t}^{-}$
are respectively weighted sums of $\{\xi_{s}\}_{s=1}^{t}$ and $\{\xi_{s}\}_{s\leq0}$.
Then a CLT for weighted sums of linear processes (see Abadir, Distaso,
Giraitis, and Koul, 2014) yields that for every $r,s\in(0,1]$
\begin{equation}
\beta_{n}^{-1}(x_{\lfloor nr\rfloor}^{+},x_{\lfloor ns\rfloor}^{+})\overset{d}{\rightarrow}(\eta_{r},\eta_{s})\label{t1-curr-cvg}
\end{equation}
where $\beta_{n}^{2}\asymp\ln n$, and $\eta_{r}$ and $\eta_{s}$
are independent $N[0,1/2]$ random variables. We thus have the marginal
convergence of each coordinate of $\beta_{n}^{-1}x_{\lfloor nr\rfloor}^{+}$
to identical distributional limits, as per \textbf{HL2(a)} -- and
if $\xi_{t}$ is i.i.d.,\ the required conditional convergence holds
trivially. In that case, (\ref{t1-curr-cvg}) holds jointly with (and
independently of) 
\begin{equation}
\beta_{n}^{-1}(x_{\lfloor nr\rfloor}^{-},x_{\lfloor ns\rfloor}^{-})\overset{d}{\rightarrow}(\eta^{-},\eta^{-})\label{t1-past-cvg}
\end{equation}
where $\eta^{-}\sim N[0,1/2]$. Note the degeneracy in the joint distribution
of the limit in (\ref{t1-past-cvg}), consistent with the second part
of \textbf{HL2(b)}.

\textbf{(c)} \textbf{HL3} is useful for establishing $L_{1}$-approximations
to functionals of WNPs, which permit convergence results proved under
the requirement that $f$ in (\ref{target-fnal}) be bounded and continuous
to be extended to a much broader class of integrable functions. High-level
conditions similar to \textbf{HL3} have been employed for similar
purposes in many previous works, e.g.\ Jeganathan (2004, 2008), Pötscher
(2004), Gao, King, Lu and Tjøstheim (2009), Wang and Phillips (2009a,b;
2012) among others.

\textbf{(d)} \textbf{HL4} expresses the requirement that a WNP should
not be too strongly dependent. It would fail both for $I(d)$ processes
with $d>1/2$, and for NI processes -- and indeed for any process
for which $\beta_{n}^{-1}X_{\lfloor nr\rfloor}(n)$ converges weakly
to a process with continuous sample paths.
\end{rem}
\begin{thm}
\label{MainThm} Suppose $f:\mathbb{R}\rightarrow\mathbb{R}$\ is\ locally
integrable, and \textbf{HL0}--\textbf{4} and \textbf{HL6} hold. Further
suppose $f$ is bounded or that:
\begin{enumerate}
\item There is a $\mathcal{Y\subseteq}\mathbb{R}$ such that $\int_{\mathbb{R}}\left\vert f\left(x+y\right)\right\vert \Phi_{X^{+}}(x)dx<\infty$
for all $y\in\mathcal{Y}$, and $\mathbf{P}\left(X^{-}\in\mathcal{Y}\right)=1$;
\item $n^{-1}\sum_{t=1}^{t_{0}-1}f(\beta_{n}^{-1}X_{t}(n))=o_{p}\left(1\right)$,
for $t_{0}$\ as in \textbf{HL3};
\item For some $\lambda^{\prime}\in\left(0,\lambda\right)$, where $\lambda$
is as in \textbf{HL5}, either:
\begin{enumerate}
\item $\left\vert f\left(x\right)\right\vert =O(\left\vert x\right\vert ^{\lambda^{\prime}})$,
as $\left\vert x\right\vert \rightarrow\infty$\ and \textbf{HL5(a)}\ holds;
or
\item $\left\vert f\left(x\right)\right\vert =O\left(\exp\left(\lambda^{\prime}\left\vert x\right\vert \right)\right)$,
as $\left\vert x\right\vert \rightarrow\infty$ and \textbf{HL5(b)}\ holds. 
\end{enumerate}
\end{enumerate}
\noindent Then as $n\rightarrow\infty$,
\begin{equation}
\frac{1}{n}\sum_{t=1}^{n}f\left(\beta_{n}^{-1}X_{t}\left(n\right)\right)\overset{d}{\rightarrow}\int_{\mathbb{R}}f\left(x+X^{-}\right)\Phi_{X^{+}}(x)dx.\label{Thm2.1}
\end{equation}
\end{thm}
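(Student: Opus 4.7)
The plan is to establish the convergence first for bounded Lipschitz $f$, then progressively extend to the full class permitted in the statement by approximation arguments that exploit the bounded density condition \textbf{HL3}, the moment bounds in \textbf{HL5}, and the tail-control summation in \textbf{HL6}. This matches the design of \textbf{HL0}--\textbf{HL6}, where the first few conditions effectively define a WNP and the remainder serve as regularity.

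For bounded Lipschitz $f$, I would first use \textbf{HL1} together with Lipschitz continuity to replace $X_t(n)$ by $X_t^+(n)+X_t^-(n)$ at a cost of $o_p(1)$. Applying \textbf{HL4} to the bounded Lipschitz map $u\mapsto f(u)$ then gives
\[
\frac{1}{n}\sum_{t=1}^n f\bigl(\beta_n^{-1}(X_t^+(n)+X_t^-(n))\bigr)
= \frac{1}{n}\sum_{t=1}^n G_{n,t}\bigl(\beta_n^{-1}X_t^-(n)\bigr)+o_p(1),
\]
where, using $\mathcal{F}_0$-measurability of $X_t^-(n)$,
$G_{n,t}(y):=\mathbf{E}[f(\beta_n^{-1}X_t^+(n)+y)\mid\mathcal{F}_0]$.

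For any $\delta\in(0,1)$ and any $t\in[\lfloor n\delta\rfloor,n]$, applying \textbf{HL2(a)} to the Lipschitz function $x\mapsto f(x+y)$ yields $G_{n,t}(y)\overset{p}{\rightarrow}\Psi(y):=\int f(x+y)\Phi_{X^+}(x)dx$. A short Lipschitz argument upgrades this to $G_{n,t}(y_n)\overset{p}{\rightarrow}\Psi(y)$ whenever $y_n\overset{p}{\rightarrow}y$, and \textbf{HL2(b)} then shows that $\beta_n^{-1}X_t^-(n)$ agrees asymptotically with $\beta_n^{-1}X_n^-(n)\overset{d}{\rightarrow}X^-$ uniformly over such $t$. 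Since the resulting limit $\Psi(X^-)$ is independent of $t$, an averaging argument---bounding the contribution of $t<\lfloor n\delta\rfloor$ by $\delta\|f\|_\infty$ and sending $\delta\downarrow 0$---delivers (\ref{Thm2.1}) for bounded Lipschitz $f$.

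The extensions then proceed in order. For bounded continuous $f$, approximate uniformly on compacta by bounded Lipschitz functions and dispose of the tails by \textbf{HL5}. For locally integrable $f$ satisfying (i), approximate in $L^1_{\mathrm{loc}}$ by bounded continuous functions; the density bound in \textbf{HL3} delivers an inequality of the form $\mathbf{E}|(f_1-f_2)(\beta_n^{-1}X_t(n))|\lesssim (\beta_n/\beta_t)\|f_1-f_2\|_{L^1}$ on compact sets, and the $t$-sum is controlled by \textbf{HL6}; condition (ii) handles the $t<t_0$ block. For the unbounded case in (iii), truncate $f$ at level $M$ and control the tail uniformly in $n$ via the polynomial or exponential moment bound in \textbf{HL5}, letting $M\to\infty$ after $n$. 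The main obstacle is the conditional-limit step: we need $G_{n,t}$ to converge strongly enough in $y$ that we may substitute the random, $\mathcal{F}_0$-measurable sequence $\beta_n^{-1}X_t^-(n)$, and uniformly enough in $t$ for the $n^{-1}\sum_t$ averaging to produce a single well-defined limit. It is precisely the non-separability of the Gaussian limit noted in the introduction that blocks a direct continuous-mapping approach and forces this detour through \textbf{HL4}.
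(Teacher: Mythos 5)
Your proposal is correct and follows essentially the same route as the paper: Lipschitz/$L^1$ approximation of $f$ controlled by the bounded densities in \textbf{HL3} together with \textbf{HL6}, tail truncation via \textbf{HL5} and condition (iii), reduction to conditional expectations via \textbf{HL4} and \textbf{HL1}, and then \textbf{HL2} for the conditional CLT. The one step you describe loosely --- substituting the $\mathcal{F}_{0}$-measurable argument $\beta_{n}^{-1}X_{n}^{-}(n)$, which converges only in \emph{distribution} to the random variable $X^{-}$, into the conditionally convergent family $G_{n,t}$ --- is exactly what the paper's Lemma~\ref{Kallenberg} formalises via stochastic equicontinuity and uniform convergence on compacta, so your "short Lipschitz argument" for $y_{n}\overset{p}{\rightarrow}y$ needs that upgrade, which you correctly flag as the main obstacle.
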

\begin{rem}
\textbf{(a)} If $f$ is bounded, conditions (i)--(iii) hold trivially,
and \textbf{HL5} is unnecessary for (\ref{Thm2.1}). If $f$ is additionally
Lipschitz, then \textbf{HL3} and \textbf{HL6} may also be dispensed
with.

\textbf{(b)} Condition (ii) of Theorem \ref{MainThm} is a technical
requirement that has also been employed in other studies that develop
limit theory for functionals of nonstationary processes, e.g.\ Jeganathan
(2004) and Pötscher (2004). This condition is redundant if \textbf{HL3}
holds with $t_{0}=1$.

\textbf{(c) }Since $X_{t}(n)$ is continuously distributed under \textbf{Assumption
INN}, (\ref{Thm2.1}) continues to hold if $f$ is modified on a set
of Lebesgue measure zero. Thus e.g.\ if $f$ has an integrable pole
at some $x_{0}\in\mathbb{R}$, and otherwise satisfies the requirements
of Theorem~\ref{MainThm}, then (\ref{Thm2.1}) holds regardless
of how $f$ is defined at $x_{0}$.
\end{rem}

For $I(1/2)$ and MI processes, it may be shown that \textbf{Assumption
INN} and each of \textbf{FR} and \textbf{MI} are sufficient for \textbf{Assumption
HL}. Theorem~\ref{MainThm} therefore specialises as follows. Recall
that $\varphi_{\sigma^{2}}(x)$\ denotes the $N[0,\sigma^{2}]$ density,
and take $\beta_{n}^{2}=Var(x_{n}(n))$. For $X^{-}\sim N[0,1/2]$,
let
\begin{equation}
\varrho(x):=\begin{cases}
\varphi_{1/2}(x-X^{-}) & \text{under \textbf{FR1}}\\
\varphi_{1}(x) & \text{under \textbf{FR2}, \textbf{MI},}
\end{cases}\label{eq:integrals}
\end{equation}
noting that this is a density, albeit a random one under \textbf{FR1}.
\begin{thm}
\label{MainLL} Suppose $f:\mathbb{R}\rightarrow\mathbb{R}$\ is\ locally\ Lebesgue
integrable, and \textbf{Assumption INN} and either \textbf{FR }or
\textbf{MI} hold. Further suppose:
\begin{enumerate}[label={\upshape\arabic*.}]
\item Either: (a) for each $t^{\prime}\in\mathbb{N}\ $ fixed,$\ n^{-1}\sum_{t=1}^{t^{\prime}}f\left(\beta_{n}^{-1}x_{t}\left(n\right)\right)=o_{p}\left(1\right)$;
or (b) \textbf{Assumption INN} holds with $\theta=1$.
\item For some $\lambda^{\prime}\in\left(0,\infty\right)$, as $\left\vert x\right\vert \rightarrow\infty$\ either
\begin{enumerate}
\item $\lvert f(x)\rvert=O(\lvert x\rvert^{\lambda^{\prime}})$, and $\mathbf{E}\left\vert \xi_{1}\right\vert ^{2\vee\lambda}<\infty$
for some $\lambda>\lambda^{\prime}$; or
\item $\lvert f(x)\rvert=O\left(\exp\left(\lambda^{\prime}\left\vert x\right\vert \right)\right)$,
and $\xi_{1}$ has a finite m.g.f.\ in a neighbourhood of zero.
\end{enumerate}
\end{enumerate}
\noindent Then
\begin{equation}
\frac{1}{n}\sum_{t=1}^{n}f(\beta_{n}^{-1}x_{t}(n))\overset{d}{\rightarrow}\int_{\mathbb{R}}f(x)\varrho(x)dx.\label{Corollary2.4}
\end{equation}
\end{thm}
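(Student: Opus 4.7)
The plan is to derive Theorem \ref{MainLL} as a direct specialisation of Theorem \ref{MainThm}: under Assumption \textbf{INN} combined with either \textbf{FR} or \textbf{MI}, I would verify that Assumption \textbf{HL} holds with concrete choices of $\beta_{n}^{2}=Var(x_{n}(n))$, $X^{+}$, and $X^{-}$, and then check that Conditions 1 and 2 of the present theorem deliver the auxiliary requirements (i)--(iii) of Theorem \ref{MainThm}. The identifications to aim for are: under \textbf{FR1}, $X^{+}\sim N[0,1/2]$ and $X^{-}\sim N[0,1/2]$ independent, so a change of variable gives $\int f(x+X^{-})\Phi_{X^{+}}(x)\,dx=\int f(y)\varphi_{1/2}(y-X^{-})\,dy$; under \textbf{FR2} and \textbf{MI}, $X^{-}=0$ a.s.\ and $X^{+}\sim N[0,1]$, yielding $\int f(x)\varphi_{1}(x)\,dx$. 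Both cases match the right-hand side of (\ref{Corollary2.4}) with $\varrho$ as in (\ref{eq:integrals}).

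For the verification of \textbf{HL}, I would decompose $x_{t}(n)=x_{t}^{+}(n)+x_{t}^{-}(n)$ according to whether the underlying innovations have indices in $\{1,\dots,t\}$ or in $(-\infty,0]$, taking $R_{t}\equiv0$ in \textbf{HL1}. The central step is \textbf{HL2}. Under \textbf{FR1}, rewriting $x_{t}$ as a partial sum of $v_{s}$ shows the weight on a pre-sample $\xi_{j}$ (with $j\leq0$) equals $\sum_{s=1}^{t}c_{s-j}$, which, since $\sum_{i\geq0}c_{i}=0$ and $c_{i}\sim\ell(i)i^{-3/2}$, differs by only $o(\beta_{n})$ between $t=t_{n}$ and $t=n$; a CLT for weighted sums of linear processes such as that of Abadir, Distaso, Giraitis and Koul (2014), applied separately to the in-sample and pre-sample pieces, then yields the joint convergence together with the degeneracy $\beta_{n}^{-1}(x_{n}^{-}-x_{t_{n}}^{-})\overset{p}{\to}0$ required by \textbf{HL2(b)}. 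Under \textbf{FR2} the pre-sample tail vanishes because $\phi_{j}\sim\ell(j)j^{-1/2}$ makes $\sum_{j\geq t}\phi_{j}^{2}=o(\beta_{n}^{2})$ as $t\to\infty$; under \textbf{MI} it vanishes exponentially because $(1-\kappa_{n}^{-1})^{t}\to0$ for $t\gg\kappa_{n}$. The conditional form of \textbf{HL2(a)} is immediate because $x_{t}^{+}$ is independent of $\mathcal{F}_{0}$. \textbf{HL3} follows by Fourier inversion: the characteristic function of $x_{t}(n)$ factors as a product of copies of $\psi_{\xi}$, and $\int|\psi_{\xi}|^{\theta}\,d\lambda<\infty$ combined with $\phi_{0}\neq0$ and a uniform lower bound on $Var(x_{t}(n))/\beta_{t}^{2}$ yields a uniform density bound on $\beta_{t}^{-1}x_{t}(n)$ for $t\geq t_{0}:=\theta$. \textbf{HL4} is a Chebyshev bound that uses the Lipschitz property of $g$ and the summability of normalised covariances $Cov(x_{s},x_{t})/\beta_{n}^{2}$ inherent to the linear-process representation. \textbf{HL5} is a standard Rosenthal-type moment bound on weighted sums of i.i.d.\ innovations under the moment or m.g.f.\ hypothesis on $\xi_{1}$. \textbf{HL6} follows from Karamata's theorem: $\beta_{n}^{2}$ is slowly varying under \textbf{FR} and regularly varying with index $\alpha_{\kappa}\in[0,1)$ under \textbf{MI}, so $(\beta_{n}/n)\sum_{t=t_{0}}^{n}\beta_{t}^{-1}=O(1)$.

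For Conditions (i)--(iii) of Theorem \ref{MainThm}: (i) holds with $\mathcal{Y}=\mathbb{R}$ because $\Phi_{X^{+}}$ is Gaussian and $f$ has at most polynomial or exponential growth by Condition 2; (ii) is either Condition 1(a) applied at $t'=t_{0}-1$, or is rendered vacuous by Condition 1(b), since $\theta=1$ in \textbf{INN} gives $t_{0}=1$ in the density bound above; (iii) matches Condition 2 term for term, with the growth exponent $\lambda'$ of $f$ strictly below the moment exponent $\lambda$ in \textbf{HL5} by Condition 2(a)'s requirement $\mathbf{E}|\xi_{1}|^{2\vee\lambda}<\infty$ for some $\lambda>\lambda'$ (and analogously under 2(b), where the m.g.f.\ hypothesis propagates to $\beta_{n}^{-1}x_{t}(n)$ by the uniform bound on $\sum_{j}|\phi_{j}(n)|^{k}/\beta_{n}^{k}$ for small $k$).

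I expect the main obstacle to be the clean verification of \textbf{HL2} under \textbf{FR1}, where both pre- and in-sample contributions are asymptotically non-degenerate: one must simultaneously pin down the $1/2$--$1/2$ variance split between $x_{t}^{+}$ and $x_{t}^{-}$ and show $\beta_{n}^{-1}(x_{n}^{-}-x_{t_{n}}^{-})\overset{p}{\to}0$, even though both $x_{n}^{-}$ and $x_{t_{n}}^{-}$ have non-vanishing Gaussian limits. This requires quantitatively tracking the slowly varying function $L(n)$ and tail sums $\sum_{k=m+1}^{\infty}c_{k}\sim-Cm^{-1/2}\ell(m)$ in a CLT that applies uniformly over $t_{n}\in[\lfloor n\delta\rfloor,n]$. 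A secondary subtlety is the uniform density bound in \textbf{HL3}, which demands a lower bound on $Var(x_{t}(n))/\beta_{t}^{2}$ that is uniform in both $t$ and $n$ for $t\geq t_{0}$ --- straightforward under \textbf{MI} but more delicate for type I fractional processes, where $Var(x_{t}(n))$ itself is only slowly varying in $t$.
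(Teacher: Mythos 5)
Your architecture is essentially the paper's: decompose $x_{t}(n)=x_{t}^{+}(n)+x_{t}^{-}(n)$ by innovation indices with $R_{t}\equiv0$, verify Assumption \textbf{HL} (using the Abadir--Distaso--Giraitis--Koul CLT for \textbf{HL2}, Fourier inversion with $\int\lvert\psi_{\xi}\rvert^{\theta}<\infty$ for \textbf{HL3}, Whittle/sub-Gaussian bounds for \textbf{HL5}, Karamata for \textbf{HL6}), identify $(X^{+},X^{-})$ as $N[0,1/2]\otimes N[0,1/2]$ under \textbf{FR1} and $N[0,1]\otimes\delta_{0}$ under \textbf{FR2}/\textbf{MI}, and then feed conditions 1--2 into requirements (i)--(iii) of Theorem~\ref{MainThm}. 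The paper merely interposes an intermediate layer (Assumption \textbf{LP} and Proposition~\ref{LPtoHL}) as an organisational device; the substance, including your two flagged difficulties (the $1/2$--$1/2$ variance split and degeneracy of $x_{n}^{-}-x_{t_{n}}^{-}$ under \textbf{FR1}, and the uniform-in-$(n,t)$ density bound), is the same.

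The one step where your sketch would not go through as written is \textbf{HL4}. You describe it as ``a Chebyshev bound using the summability of normalised covariances $Cov(x_{s},x_{t})/\beta_{n}^{2}$,'' but for a nonlinear bounded Lipschitz $g$ there is no inequality bounding $Cov(g(x_{s}),g(x_{t}))$ by $Cov(x_{s},x_{t})$, so a second-moment computation on $n^{-1}\sum_{t}[g(\beta_{n}^{-1}x_{t})-\mathbf{E}_{0}g(\beta_{n}^{-1}x_{t})]$ cannot be reduced to the covariance structure of the $x_{t}$ themselves. The paper instead uses a Wu--Mielniczuk-style martingale projection decomposition: writing $g(x_{t})-\mathbf{E}_{0}g(x_{t})=\sum_{s}[\mathbf{E}_{t-s}g(x_{t})-\mathbf{E}_{t-s-1}g(x_{t})]$, bounding each projection difference by $C\lvert a_{s}\rvert$ via the Lipschitz property, and exploiting martingale orthogonality to obtain an $L_{1}$ bound of order $n^{-1/2}\beta_{n}^{-1}\sum_{k=0}^{n-1}\lvert a_{k}(n)\rvert$. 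Showing this tends to zero is itself nontrivial under \textbf{FR}: one needs $\sum_{k}\lvert a_{k}\rvert\leq Cn^{1/2}\ell(n)$ (which under \textbf{FR1} uses $\sum_{s}c_{s}=0$ to convert the partial sums into tail sums) together with the slowly varying cancellation $\ell^{2}(n)/\int_{1}^{n}x^{-1}\ell^{2}(x)dx\rightarrow0$. This is a concrete repair you would need to make; everything else in your plan lines up with the paper's proof.
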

\begin{rem}
\textbf{(a)} Theorem \ref{MainLL} bridges existing asymptotic results
for $I(d)$ processes of order $d\in(-1/2,1/2)$ and $d\in(1/2,3/2)$.
There are similarities and differences between (\ref{Corollary2.4})
and the limit theory that applies in these two cases. Firstly, there
is some analogy with the LLN results that hold when $d\in(-1/2,1/2)$.
As in that case, the limit in (\ref{Corollary2.4}) is determined
by an expectation, but with the difference that the expectation in
(\ref{Corollary2.4}) is with respect to a limiting distribution,
rather than the invariant distribution of a strictly stationary process.
Secondly, the limiting density ($\varphi_{1}$ or $\varphi_{1/2}$)
is obtained by an application of a CLT, and in this respect the limit
theory is analogous to that for $d\in(1/2,3/2)$, which involves FCLTs.
In that case, the weak limits of additive functionals are stochastic,
being a functional of a limiting fBM. This nondegeneracy of the limit
carries over to the $I(1/2)$ type I process (\textbf{FR1}), as evinced
by the dependence of $\varrho$ on $X^{-}\sim N[0,1/2]$ in this case.
(The type II process can be regarded as a truncated type I process,
and the additional variability of the latter appears to make its behaviour
closer to that of an $I(d)$ processes with $d>1/2$).

\textbf{(b) }Theorem\ \ref{MainLL} generalises the limit theory
of Giraitis and Phillips (2006) and Phillips and Magdalinos (2007)
for MI processes to general nonlinear functionals. Those two papers
consider quadratic functions (i.e. $f(x)=x^{2}$) of MI processes
driven by short memory linear processes errors. Using a direct approach
they show that $(n\beta_{n}^{2})^{-1}\sum_{t=1}^{n}x_{t}\left(n\right)^{2}$
$\overset{p}{\rightarrow}1$. This result can be understood as a special
case of Theorem \ref{MainLL}; indeed in this case the r.h.s.\ of
(\ref{Corollary2.4}) is 
\[
\int_{\mathbb{R}}f(x)\varphi_{1}(x)dx=\int_{\mathbb{R}}x^{2}\varphi_{1}(x)dx=1.
\]
Theorem\ \ref{MainLL} also generalises a result due to Tanaka (1999,
p.\ 555), who shows that $\frac{1}{n\ln n}\sum_{t=1}^{n}x_{t}^{2}=O_{p}(1)$
for an ARFIMA($1/2$) processes, in the course of deriving the asymptotics
of the maximum likelihood estimator for $d$ over a parameter space
that includes $d=1/2$.

\textbf{(c)} \textbf{Assumption INN} entails that $\xi_{t}$ has finite
second moment, and so $\beta_{n}^{-1}x_{\lfloor nr\rfloor}(n)$ satisfies
a CLT; the limiting distributions that appear in (\ref{Corollary2.4})
are therefore Gaussian.\footnote{Some preliminary work of the authors' shows that Theorem \ref{MainLL}
can be extended to the case where $\xi_{t}$ is in the domain of attraction
of an $\alpha$-stable law with parameter $\alpha\in(0,2)$, in which
case other stable distributions will appear in the limit. We leave
extensions of this kind for future work.} Note that the process $r\mapsto\beta_{n}^{-1}x_{\lfloor nr\rfloor}(n)$
does not converge weakly (with respect to the uniform or Skorokhod
topologies): as the example of an $I(1/2)$ type I process given in
Remark~\ref{rem:HL}(b) illustrates, this is impossible because the
finite-dimensional distributions of $\beta_{n}^{-1}x_{\lfloor nr\rfloor}(n)$
converge to those of a nonseparable process. (Similar calculations
show that if $x_{t}$ is $I(1/2)$ type II or MI, then the fidis converge
to those of a Gaussian `white noise' process $G$ for which $G(r)\sim N[0,1]$
is independent of $G(s)$ for all $r,s\in[0,1]$.) This accords with
the results of Johansen and Nielsen (2012b), who show that for an
$I(d)$ process with $d\in(1/2,3/2)$ of either type, $\{\xi_{t}\}$
must have moments of order greater than $(d-1/2)^{-1}$ if $\beta_{n}^{-1}x_{\lfloor nr\rfloor}(n)$
is to converge weakly to a fBM, a requirement that becomes progressively
more demanding as $d$ approaches $1/2$.
\end{rem}

\subsection{Kernel functionals}

\label{seckernels}

We next consider kernel functionals of the form 
\begin{equation}
\frac{\beta_{n}}{h_{n}n}\sum_{t=1}^{n}K\left(\frac{x_{t}(n)-x}{h_{n}}\right),\label{eq:kernelfunc}
\end{equation}
where $x\in\mathbb{R}$, $h_{n}$ is a bandwidth sequence, and $K$
is an integrable kernel function satisfying

\setcounter{assumption}{10}
\begin{assumption}[kernel]
 $K:\mathbb{R}\rightarrow\overline{\mathbb{R}}$ is such that $K$
and $K^{2}$ are Lebesgue integrable.
\end{assumption}
Whereas in (\ref{target-fnal}) the nonlinear transformation $f$
is applied to the standardised process $\beta_{n}^{-1}x_{t}(n)$,
in (\ref{eq:kernelfunc}) $K$ is applied to the unstandardised process
$x_{t}(n)$. This leads to a different limit theory, which is partly
reflected in the different normalisations of the sums in (\ref{target-fnal})
and (\ref{eq:kernelfunc}). A notable difference between (\ref{eq:kernelfunc})
and the usual expression for a kernel density estimator is the appearance
of $\beta_{n}$. The more variable that $x_{t}$ is, the less frequent
are its visits to the support of $K[(\cdot-x)/h_{n}]$ -- and since
the variance $\beta_{n}$ of a WNP grows with $n$, these visits accumulate
only at rate $nh_{n}/\beta_{n}$, a fact reflected exactly in the
normalisation in (\ref{eq:kernelfunc}).

Nonetheless, the results of the preceding section turn out to be highly
relevant for the asymptotics of (\ref{eq:kernelfunc}). To explain
why this is the case, we return to the setting of \textbf{Assumption
HL} above, which we now augment by the following additional smoothness
conditions on the density of the increments of $X_{t}(n)$. To state
these, let 
\[
\Omega_{n}\left(\eta\right):=\left\{ \left\{ s,t\right\} \in\mathbb{N}:\lfloor\eta n\rfloor\leq s\leq\lfloor(1-\eta)n\rfloor,\text{ }\lfloor\eta n\rfloor+s\leq t\leq n\right\} ,
\]
for $\eta\in(0,1)$.

\setcounter{assumption}{219}
\begin{assumption}[continued]
~
\begin{enumerate}[label=\upshape{\textbf{HL\arabic*}},start=7]
\item Let $X_{0}(n):=0$ and $t>s\geq0$. Conditionally on $\mathcal{F}_{s}$,
$\beta_{t-s}^{-1}(X_{t}(n)-X_{s}(n))$\ has density $\mathcal{D}$$_{t,s,n}(x)$\ such
that for some $n_{0},t_{0}\geq1$ 
\[
\sup_{n\geq n_{0},0\leq s<t\leq n,t-s\geq t_{0}}\sup_{x}\mathcal{D}_{t,s,n}(x)<\infty.
\]
\item For all $q_{0},q_{1}>0$
\[
\lim_{\eta\downarrow0}\limsup_{n\rightarrow\infty}\sup_{(s,t)\in\Omega_{n}(\eta)}\sup_{\left\vert x\right\vert \leq q_{0}\eta^{q_{1}}}\left\vert \mathcal{D}_{t,s,n}(x)-\mathcal{D}_{t,s,n}(0)\right\vert =0
\]
\item For $t_{0}$\ as in \textbf{HL7}:
\begin{enumerate}
\item $\lim_{\eta\rightarrow0}\lim_{n\rightarrow\infty}\frac{\beta_{n}}{n}\sum_{t=\lfloor(1-\eta)n\rfloor}^{n}\beta_{t}^{-1}=0$;
\item $\lim_{\eta\rightarrow0}\lim_{n\rightarrow\infty}\frac{\beta_{n}}{n}\sum_{t=t_{0}}^{\lfloor\eta n\rfloor}\beta_{t}^{-1}=0$;
\item $\lim\sup_{n\rightarrow\infty}\frac{\beta_{n}}{n}\sum_{t=t_{0}}^{n}\beta_{t}^{-1}<\infty$;
\item there exist $l_{0},l_{1}>0$ such that $\liminf_{n\rightarrow\infty}\beta_{n}^{-1}\inf_{(s,t)\in\Omega_{n}(\eta)}\beta_{t-s}\geq\eta^{l_{1}}/l_{0}$
for all $\eta\in(0,1)$;
\item $\sup_{n\geq1}\sup_{1\leq t\leq n}\beta_{n}^{-1}\beta_{t}<\infty$.
\end{enumerate}
\end{enumerate}
\end{assumption}
\textbf{HL7} and \textbf{HL9} may be regarded as strengthened versions
of \textbf{HL3} and \textbf{HL6}, and are closely related to Assumption~2.3
in Wang and Phillips (2009a). Under these conditions, an $L_{1}$-approximation
argument developed by those authors and Jeganathan (2004) yields that,
for $t_{0}$ as in \textbf{HL7}, 
\[
\frac{\beta_{n}}{h_{n}n}\sum_{t=t_{0}}^{n}K\left(\frac{X_{t}(n)-x}{h_{n}}\right)=\frac{1}{n}\sum_{t=t_{0}}^{n}\varphi_{\varepsilon^{2}}(\beta_{n}^{-1}X_{t}(n))\int_{\mathbb{R}}K(u)du+o_{p}(1),
\]
as $n\rightarrow\infty$ and then $\varepsilon\rightarrow0$. The
leading order term on the r.h.s.\ clearly has the same form as the
l.h.s.\ of (\ref{Thm2.1}) and is thus is amenable to a direct application
of Theorem \ref{MainThm}, which entails
\begin{align}
\frac{1}{n}\sum_{t=1}^{n}\varphi_{\varepsilon^{2}}(\beta_{n}^{-1}X_{t}(n)) & \overset{d}{\rightarrow}\int\varphi_{\varepsilon^{2}}(x+X^{-})\Phi_{X^{+}}(x)dx,\quad\text{as }n\rightarrow\infty\nonumber \\
 & \overset{a.s.}{\rightarrow}\Phi_{X^{+}}(-X^{-}),\quad\text{as }\varepsilon\rightarrow0.\label{kernapprox}
\end{align}
We thus have the following counterpart of Theorem \ref{MainThm} for
kernel functionals.
\begin{thm}
\label{MainKernel} Suppose that, in addition to \textbf{Assumptions
K}, \textbf{HL0--2}, \textbf{HL4} and \textbf{HL6--9}, the following
hold:
\begin{enumerate}
\item $\{h_{n}\}$ is a positive sequence with $\beta_{n}^{-1}h_{n}+\beta_{n}(nh_{n})^{-1}\rightarrow0$;
and
\item for each $x\in\mathbb{R}$ and $t_{0}$ as in \textbf{HL7}, $\frac{\beta_{n}}{nh_{n}}\sum_{t=1}^{t_{0}-1}K\left(\frac{X_{t}(n)-x}{h_{n}}\right)=o_{p}\left(1\right)$.
\end{enumerate}
Then 
\[
\frac{\beta_{n}}{h_{n}n}\sum_{t=1}^{n}K\left(\frac{X_{t}(n)-x}{h_{n}}\right)\overset{d}{\rightarrow}\Phi_{X^{+}}(-X^{-})\int_{\mathbb{R}}K(u)du.
\]
\end{thm}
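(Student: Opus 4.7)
The plan is to follow the three-step outline already flagged in the lead-in to the theorem. Step 1: discard the initial block $\sum_{t=1}^{t_0-1}K((X_t(n)-x)/h_n)$ using hypothesis (ii). Step 2: establish the $L_1$-approximation
\begin{equation*}
\frac{\beta_n}{h_n n}\sum_{t=t_0}^n K\!\left(\frac{X_t(n)-x}{h_n}\right) = \frac{1}{n}\sum_{t=t_0}^n \varphi_{\varepsilon^2}\!\left(\beta_n^{-1}X_t(n)\right)\int_{\mathbb{R}}K(u)\,du + o_p(1),
\end{equation*}
in the iterated sense: as $n\to\infty$ and then $\varepsilon\downarrow 0$. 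Step 3: apply Theorem \ref{MainThm} to the right-hand side and pass $\varepsilon\downarrow 0$ to identify the random limit $\Phi_{X^+}(-X^-)\int K$.

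The main obstacle is Step 2, which adapts the $L_1$-approximation strategy of Jeganathan (2004) and Wang and Phillips (2009a). The core calculation is to control, for $t$ well above a chosen skip $m_n = m_n(\varepsilon)\to\infty$ with $m_n/n\to 0$, the conditional expectation
\begin{equation*}
\mathbb{E}\!\left[\tfrac{\beta_n}{h_n}K\!\left(\tfrac{X_t(n)-x}{h_n}\right)\bigm|\mathcal{F}_{t-m_n}\right] = \tfrac{\beta_n}{\beta_{m_n}}\!\int_{\mathbb{R}}\!K(u)\,\mathcal{D}_{t,t-m_n,n}\!\left(\beta_{m_n}^{-1}(h_n u + x - X_{t-m_n}(n))\right)du,
\end{equation*}
supplied by \textbf{HL7}. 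The bandwidth assumption $h_n/\beta_n\to 0$ and the uniform continuity property \textbf{HL8} allow the argument of $\mathcal{D}_{t,t-m_n,n}$ to be collapsed to $\beta_{m_n}^{-1}(-X_{t-m_n}(n))$ (since $\beta_n^{-1}|x|\to 0$ and $h_n\ll\beta_{m_n}$), and the resulting density value is in turn matched to $\varphi_{\varepsilon^2}(\beta_n^{-1}X_{t-m_n}(n))$ by a local CLT-type estimate in which $\varepsilon^2$ tracks the ratio $m_n/n$ (so that $\beta_n/\beta_{m_n}$ is well-approximated by $1/\varepsilon$). Summation in $t$ then reindexes the conditioning time, and the edge blocks (small $t$, or $t$ close to $n$) are absorbed using \textbf{HL9}(a)--(c). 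Finally, the difference between each summand and its conditional expectation is controlled by a variance bound on block sums, exploiting the uniform density bound in \textbf{HL7} together with \textbf{HL9}(d)--(e).

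With Step 2 in place, apply Theorem \ref{MainThm} to $f = \varphi_{\varepsilon^2}$. Since $\varphi_{\varepsilon^2}$ is bounded and Lipschitz, conditions (i)--(iii) as well as \textbf{HL5} are unnecessary by the first remark following Theorem \ref{MainThm}, and the theorem delivers
\begin{equation*}
\tfrac{1}{n}\sum_{t=1}^n\varphi_{\varepsilon^2}\!\left(\beta_n^{-1}X_t(n)\right)\overset{d}{\rightarrow}\int_{\mathbb{R}}\varphi_{\varepsilon^2}(x+X^-)\Phi_{X^+}(x)\,dx
\end{equation*}
for each fixed $\varepsilon>0$. Since $\Phi_{X^+}$ is bounded and continuous, the standard mollifier identity gives that the right-hand side converges almost surely to $\Phi_{X^+}(-X^-)$ as $\varepsilon\downarrow 0$. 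A diagonal (or asymptotic-equivalence) argument along a sequence $\varepsilon_k\downarrow 0$ then combines this with the $L_1$-approximation of Step 2 to deliver the stated convergence in distribution to $\Phi_{X^+}(-X^-)\int K(u)\,du$.
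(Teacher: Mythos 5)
Your proposal follows essentially the same route as the paper: drop the initial block via condition (ii), reduce the kernel sum to $\frac{1}{n}\sum_t\varphi_{\varepsilon^2}(\beta_n^{-1}X_t(n))\int K$ by an $L_1$-approximation (the paper packages this as Lemma~A.3, proved by citing Lemma~7 of Jeganathan (2004) and Wang and Phillips (2009a) under \textbf{HL7--9}), then apply Theorem~\ref{MainThm} to the bounded Lipschitz function $\varphi_{\varepsilon^2}$ and let $\varepsilon\downarrow 0$ via Billingsley's Theorem~4.2. Your sketch of the approximation step is in the right spirit, though the identification with $\varphi_{\varepsilon^2}(\beta_n^{-1}X_{t-m_n}(n))$ comes not from a local CLT but from showing that both conditional expectations reduce, via \textbf{HL8}, to $(\beta_n/\beta_{m_n})\mathcal{D}_{t,t-m_n,n}$ evaluated at nearby points.
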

For $I(1/2)$ and MI processes, the preceding specialises as follows.
\begin{thm}
\label{KernelLL} Suppose that, in addition to \textbf{Assumption
K}:
\begin{enumerate}
\item $x_{t}(n)$ satisfies \textbf{Assumption INN},\textbf{ }and \textbf{FR}
or \textbf{MI};
\item $\{h_{n}\}$ satisfies condition (i) of Theorem \ref{MainKernel};
and
\item for each $x\in\mathbb{R}$ and $t^{\prime}\in\mathbb{N}$, $\frac{\beta_{n}}{nh_{n}}\sum_{t=1}^{t^{\prime}}K\left(\frac{x_{t}(n)-x}{h_{n}}\right)=o_{p}\left(1\right)$.
\end{enumerate}
\noindent Then 
\[
\frac{\beta_{n}}{h_{n}n}\sum_{t=1}^{n}K\left(\frac{x_{t}(n)-x}{h_{n}}\right)\overset{d}{\rightarrow}\varrho(0)\int_{\mathbb{R}}K(u)du.
\]
\end{thm}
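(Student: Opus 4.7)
The plan is to deduce Theorem~\ref{KernelLL} directly from Theorem~\ref{MainKernel} by taking $X_t(n):=x_t(n)$, $\beta_n^2:=Var(x_n(n))$, and $\mathcal{F}_t:=\sigma(\xi_s:s\leq t)$. This reduces the task to (i) verifying \textbf{HL0--2}, \textbf{HL4} and \textbf{HL6--9} under \textbf{Assumption INN} combined with either \textbf{FR} or \textbf{MI}, and (ii) identifying the random quantity $\Phi_{X^+}(-X^-)$ appearing in Theorem~\ref{MainKernel} with $\varrho(0)$ as defined in (\ref{eq:integrals}); the two conditions on $\{h_n\}$ and on the initial terms are already postulated.

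The main structural step is to choose a workable decomposition of $x_t(n)$. Interchanging summations in (\ref{model01}) rewrites $x_t(n)=\sum_{s\leq t}\psi_{t,s}(n)\xi_s$ with $\psi_{t,s}(n):=\sum_{j=0}^{t-1}\phi_j(n)c_{t-j-s}\mathbf{1}\{t-j-s\geq 0\}$. I would set $x_t^+(n):=\sum_{s=1}^{t}\psi_{t,s}(n)\xi_s$, $x_t^-(n):=\sum_{s\leq 0}\psi_{t,s}(n)\xi_s$, and $R_t(n)\equiv 0$, so that \textbf{HL0--1} are immediate. Since $x_t^+(n)$ is independent of $\mathcal{F}_0$, the conditional CLT in \textbf{HL2(a)} reduces to an unconditional CLT for weighted sums of i.i.d.\ innovations (as in Abadir, Distaso, Giraitis and Koul, 2014); this yields $\beta_n^{-1}x_{t_n}^+(n)\overset{d}{\to}X^+\sim N[0,1/2]$ under \textbf{FR1} (the factor $1/2$ arises because the partial-sum structure with $Var(x_n)\asymp L(n)$ slowly varying forces $Var(x_{t_n}^+)/\beta_n^2\to 1/2$ whenever $t_n/n\to r\in(0,1]$), while variance saturation gives $X^+\sim N[0,1]$ under \textbf{FR2} and \textbf{MI}. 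For \textbf{HL2(b)}, under \textbf{FR1} the same CLT applied to the coefficients $\{\psi_{t,s}(n)\}_{s\leq 0}$ produces $X^-\sim N[0,1/2]$, and the degeneracy $\beta_n^{-1}[x_n^-(n)-x_{t_n}^-(n)]\overset{p}{\to}0$ follows from a variance bound that exploits $c_s\sim\ell(s)s^{-3/2}$ and the slow variation of $L$; under \textbf{FR2} and \textbf{MI}, the square-summability of the weights on $\{\xi_s\}_{s\leq 0}$, combined with $\beta_n\to\infty$, gives $\beta_n^{-1}x_t^-(n)\overset{p}{\to}0$ uniformly in $t\leq n$, so $X^-=0$. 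Substituting into Theorem~\ref{MainKernel} gives $\Phi_{X^+}(-X^-)=\varphi_{1/2}(-X^-)$ under \textbf{FR1} and $\varphi_1(0)$ otherwise, which by (\ref{eq:integrals}) is exactly $\varrho(0)$ in both cases.

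The remaining conditions are essentially regularity. \textbf{HL4} follows from a variance calculation on the demeaned sum using Lipschitz continuity of $g$ and decay of the post-$\mathcal{F}_0$ covariances of $x_t(n)$. Conditions \textbf{HL3} and \textbf{HL7--8} on the density and its modulus of continuity follow from Fourier inversion: the conditional characteristic function of $\beta_{t-s}^{-1}(x_t(n)-x_s(n))$ given $\mathcal{F}_s$ factors over the underlying $\xi_r$, and the hypothesis $\int|\psi_\xi|^\theta<\infty$ in \textbf{INN(ii)}, together with uniform lower bounds on $\beta_{t-s}$ and uniform control of the rescaled coefficients, delivers the required estimates. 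Finally, \textbf{HL6} and \textbf{HL9} reduce to direct computation: under \textbf{FR} $\beta_n^2\asymp L(n)$ is slowly varying, and under \textbf{MI} $\beta_n^2\asymp\kappa_n=n^{\alpha_\kappa}\ell_\kappa(n)$ is regularly varying of index $\alpha_\kappa\in[0,1)$, so standard Karamata estimates give $\frac{\beta_n}{n}\sum_{t=t_0}^{n}\beta_t^{-1}=O(1)$ and $\liminf_n\beta_n^{-1}\inf_{(s,t)\in\Omega_n(\eta)}\beta_{t-s}\geq\eta^{l_1}/l_0$.

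The hardest point will be the \textbf{FR1} case, where the partial-sum structure simultaneously forces a nontrivial Gaussian limit for $\beta_n^{-1}x_t^-(n)$ and the degeneracy of its joint limits at different time scales, and where the increments $x_t(n)-x_s(n)$ inherit long-memory weights that make the uniform density bounds in \textbf{HL7--8} delicate. The variance computation balancing $c_s\sim\ell(s)s^{-3/2}$ against the slowly varying $L(n)\to\infty$ will be the crux; once this is in hand, the other verifications amount to careful but essentially standard calculation, and the conclusion of Theorem~\ref{MainKernel} then applies mechanically.
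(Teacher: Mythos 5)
Your proposal is correct and follows essentially the same route as the paper: the paper's proof also verifies \textbf{HL0--2}, \textbf{HL4} and \textbf{HL6--9} for $X_t(n)=x_t(n)$ with the same $x_t^+/x_t^-$ split, the same CLT for weighted sums (Abadir et al., 2014), the same Fourier-inversion/integrable-characteristic-function argument for the density bounds, and the same Karamata estimates for \textbf{HL6}/\textbf{HL9}. The only organisational difference is that the paper packages the verification of \textbf{HL0--6} through an intermediate set of linear-process conditions (Assumption \textbf{LP} and Proposition~\ref{LPtoHL}) and isolates the \textbf{FR1} variance computations you correctly flag as the crux into Lemma~\ref{LVAR}.
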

\begin{rem}
\noindent \textbf{(a)} Theorem \ref{KernelLL} fills a gap in existing
asymptotic theory for kernel functionals of linear processes. A general
theory for stationary linear processes, including $I(d)$ processes
with $\left|d\right|<1/2$, is given in Wu and Mielniczuk (2002).
Supposing that $\int_{\mathbb{R}}K=1$, under their conditions kernel
functionals converge in probability to the invariant density of the
stationary process. Jeganathan (2004, 2008) provides limit theorems
for kernel functionals of $I(d)$ processes with $1/2<d<3/2$. In
that case, kernel functionals converge to the local time of a fractional
Brownian motion (or fractional stable motion if innovations are in
the domain of attraction of a stable law) -- so their limit is an
occupation density rather than the invariant density of some stationary
process. The limiting behaviour of kernel functionals of $I(1/2)$
processes is intermediate between these two cases. These converge
to the density of a random variable, rather than to an occupation
density, but the density corresponds to a limiting random variate,
rather than the invariant density of a stationary process.

\textbf{(b)} Theorem \ref{KernelLL} nests a similar result provided
by Duffy (2020) for bounded kernel functionals of MI processes, which
unlike \textbf{Assumption K} requires $K$ to be bounded and Lipschitz
continuous.
\end{rem}

\section{Estimation and inference in regressions with WNPs}

\label{applications}

The preceding results are fundamental to the asymptotics of parametric
and nonparametric least squares estimators, in models involving WNPs
as regressors. In this section, we show that these estimators have
either Gaussian or mixed Gaussian limit distributions, and in consequence
their associated $t$ statistics are asymptotically standard Gaussian.
These results are in turn used, in Section~\ref{Test}, to derive
the asymptotic distribution of a proposed regression specification
test statistic, and in particular to show that it is asymptotically
pivotal, being unaffected by the persistence of the regressor process.

\subsection{Parametric regression}

\label{subLS}

Consider the ordinary least squares (OLS) estimator of $(\mu,\gamma)$
in the model
\begin{equation}
y_{t}=\mu+\gamma g(x_{t-1})+u_{t}\label{regres1}
\end{equation}
given by $(\hat{\mu},\hat{\gamma}):=\text{argmin}_{(a,b)}\sum_{t=1}^{n}[y_{t}-a-bg(x_{t-1})]^{2}$,
where $g$ is a known nonlinear transformation. Since the regressor
is predetermined (i.e.\ $\mathcal{F}_{t-1}$-measurable) relative
to the error $u_{t}$, (\ref{regres1}) is an instance of a so-called
`predictive' or `reduced form' regression model. If $x_{t}$ is
stationary, the OLS estimator will be asymptotically normal; whereas
if $x_{t}$ is strongly dependent, the OLS estimator has a non-standard
limiting distribution, unless either $g$ is itself integrable, or
$x_{t}$ and $u_{t}$ satisfy a very restrictive `long-run orthogonality'
condition (see e.g.\ Park and Phillips, 1999, 2001).

When $x_{t}$ is a WNP, the OLS estimator is either asymptotically
normal or mixed normal, depending on the type of process. In either
case, the $t$ statistic is asymptotically $N[0,1]$, due to self-normalisation.
In this respect, the asymptotics are similar to those when $x_{t}$
is stationary; but since the variance of a WNP grows without bound,
the analysis requires arguments more appropriate to nonstationary
processes. In particular, the following property of $g$, first introduced
by Park and Phillips (1999, 2001), plays a key role.

\setcounter{defn}{889}
\begin{defn}[asymptotically homogeneous function]
Let $\{x_{t}(n)\}$ denote a random array and $\beta_{n}^{2}=Var(x_{n}(n))$.
$g:\mathbb{R}\rightarrow\overline{\mathbb{R}}$ is \textit{asymptotically
homogeneous} for $\{x_{t}(n)\}$, if for each $\lambda>0$ it admits
the decomposition 
\[
g(x)=\kappa_{g}(\lambda)H_{g}(x/\lambda)+R_{g}(x,\lambda),
\]
where $\kappa_{g}:\mathbb{R}_{+}^{\ast}\rightarrow\mathbb{R}_{+}^{\ast}$,
$H_{g}:\mathbb{R}\rightarrow\overline{\mathbb{R}}$, $R_{g}:\mathbb{R}\times\mathbb{R}_{+}^{\ast}\rightarrow\overline{\mathbb{R}}$
and for $j=\left\{ 1,2\right\} $, 
\begin{equation}
R_{g,n}^{j}:=\frac{1}{\kappa_{g}^{j}(\beta_{n})n}\sum_{t=1}^{n}\mathbf{E}\left\vert R_{g}(x_{t}(n),\beta_{n})\right\vert ^{j}=o(1).\label{AHF}
\end{equation}
\end{defn}

AHFs encompass a wide range of commonly used regression functions,
such as polynomial functions, cumulative distribution functions (with
$H_{g}(u)=\mathbf{1}\{u>0\}$), and logarithmic functions (with $H_{g}(u)=1$);
see Park and Phillips (1999, 2001) for some further examples. Such
a condition as 
\[
\lim_{\lambda\rightarrow\infty}\kappa_{g}(\lambda)^{-1}\sup_{x}\lvert R_{g}(x,\lambda)\rvert=0
\]
is sufficient, but not necessary, for (\ref{AHF}) to hold. The relevance
of AHFs for the OLS estimator can be seen most easily in the case
where $\mu=0$ is known and imposed, so that the OLS estimator for
$\gamma$ satisfies 
\[
\hat{\gamma}_{n}-\gamma=\frac{\sum_{t=2}^{n}g(x_{t-1})u_{t}}{\sum_{t=2}^{n}g^{2}(x_{t-1})}=(1+o_{p}(1))\frac{\sum_{t=2}^{n}H_{g}(\beta_{n}^{-1}x_{t-1})u_{t}}{\sum_{t=2}^{n}H_{g}^{2}(\beta_{n}^{-1}x_{t-1})}.
\]
Upon standardisation, the denominator on the r.h.s.\ is directly
amenable to an application of Theorem~\ref{MainLL}. Since the numerator
is a sum of martingale differences, it can be handled via an appropriate
martingale CLT (either Hall and Heyde, 1980; or Wang, 2014); here
Theorem~\ref{MainLL} is used to verify the stability condition pertaining
to its conditional variance.

Reasoning along these lines yields our main result on parametric OLS
esitmators. To state it, let $MN[0,\varsigma^{2}]$ denote a mixed
normal distribution with mixing variate $\varsigma^{2}$ (i.e.\ which
has characteristic function $u\mapsto\mathbf{E}e^{-\varsigma^{2}u^{2}/2}$),
and $\sigma(\{\xi_{s},u_{s}\}_{s\leq t})$ denote the $\sigma$-field
generated by $\{\xi_{s},u_{s}\}_{s\leq t}$.
\begin{thm}
\label{LS} Let $\{y_{t}\}_{t=1}^{n}$ be generated by (\ref{regres1}),
$\mathcal{F}_{t}:=\sigma(\{\xi_{s},u_{s}\}_{s\leq t})$, and suppose
that:
\begin{enumerate}
\item $x_{t}(n)$ satisfies (\ref{model01}), \textbf{Assumption INN} and
either \textbf{FR} or \textbf{MI};
\item $\{u_{t},\mathcal{F}_{t}\}_{t\geq1}$\ is a martingale difference
sequence such that $\mathbf{E}[u_{t}^{2}\mid\mathcal{F}_{t-1}]=\sigma_{u}^{2}$
a.s. for some constant $\sigma_{u}^{2}<\infty$;
\item $\sup_{1\leq t\leq n}\mathbf{E}[u_{t}^{2}\mathbf{1}\{\lvert u_{t}\rvert\geq A_{n}\}\mid\mathcal{F}_{t-1}]=o_{p}(1)$
for non-random $A_{n}\rightarrow\infty$;
\item $g(x)$ is AHF for $\{x_{t}(n)\}$, with limit homogeneous component
$H_{g}$ that is not a.e.\ constant, and is such that $H_{g}^{2}$
satisfies the conditions of Theorem~\ref{MainLL}.
\end{enumerate}
\noindent Then
\begin{equation}
n^{1/2}\begin{bmatrix}\hat{\mu}-\mu\\
\kappa_{g}(\beta_{n})(\hat{\gamma}-\gamma)
\end{bmatrix}\overset{d}{\rightarrow}MN\left[0,\ \sigma_{u}^{2}\left(\int\begin{bmatrix}1 & H_{g}\\
H_{g} & H_{g}^{2}
\end{bmatrix}\varrho\right)^{-1}\right].\label{eq:LSlimit}
\end{equation}
\end{thm}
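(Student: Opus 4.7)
I would proceed via the standard OLS sandwich decomposition. With $Z_t := (1, g(x_{t-1}))^\top$ and $D_n := \mathrm{diag}(n^{1/2}, n^{1/2} \kappa_g(\beta_n))$,
\[
D_n(\hat{\theta} - \theta) = \mathcal{A}_n^{-1} \mathcal{B}_n, \quad \mathcal{A}_n := D_n^{-1} \sum_{t=1}^n Z_t Z_t^\top D_n^{-1}, \quad \mathcal{B}_n := D_n^{-1} \sum_{t=1}^n Z_t u_t.
\]
The strategy is to show that $(\mathcal{A}_n, \mathcal{B}_n) \overset{d}{\rightarrow} (\mathcal{A}, \mathcal{B})$ jointly, where $\mathcal{A} := \int \begin{pmatrix} 1 & H_g \\ H_g & H_g^2 \end{pmatrix}\varrho$ and, conditional on $\mathcal{A}$, $\mathcal{B} \sim N[0, \sigma_u^2 \mathcal{A}]$. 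Since $H_g$ is non-constant a.e., $\mathcal{A}$ is a.s.\ positive definite, and the continuous mapping theorem then delivers (\ref{eq:LSlimit}).

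\textbf{Denominator.} Inserting the AHF decomposition $g(x) = \kappa_g(\beta_n) H_g(x/\beta_n) + R_g(x, \beta_n)$ into each entry of $\mathcal{A}_n$ and invoking (\ref{AHF}) shows that $\mathcal{A}_n$ agrees, up to an $o_p(1)$ matrix, with the matrix whose $(i,j)$ entry is $n^{-1}\sum_{t=1}^n H_g^{i+j-2}(\beta_n^{-1} x_{t-1})$. The $(1,2)$ remainder is bounded in $L_1$ by $R_{g,n}^1 = o(1)$; the $(2,2)$ cross term by $(R_{g,n}^1 R_{g,n}^2)^{1/2}$ up to a factor that is $O_p(1)$ via Cauchy--Schwarz combined with Theorem~\ref{MainLL}; and the squared--$R_g$ contribution by $R_{g,n}^2 = o(1)$. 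Theorem~\ref{MainLL} applied to $f \in \{H_g, H_g^2\}$---whose hypotheses follow from (iv) using the elementary bound $|H_g| \le 1 + H_g^2$---then yields the claimed convergence. Joint convergence of the entries is automatic: under \textbf{FR1} all limits share the same initial variate $X^-$, while under \textbf{FR2} and \textbf{MI} they are deterministic.

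\textbf{Numerator.} The same AHF reduction gives $\mathcal{B}_n = \widetilde{\mathcal{B}}_n + o_p(1)$, where
\[
\widetilde{\mathcal{B}}_n := \frac{1}{\sqrt{n}} \sum_{t=1}^n \begin{pmatrix} 1 \\ H_g(\beta_n^{-1} x_{t-1}) \end{pmatrix} u_t,
\]
the error being controlled in $L_2$ by the martingale orthogonality of $\{u_t\}$ across $t$ together with $R_{g,n}^2 = o(1)$. Since $\widetilde{\mathcal{B}}_n$ is a martingale-difference sum adapted to $\{\mathcal{F}_t\}$, I would apply the stable martingale CLT (Hall and Heyde Thm.\ 3.2, or Wang 2014): by assumption (ii) the predictable quadratic variation is $\sigma_u^2 \mathcal{A}_n + o_p(1)$, whose stable convergence to $\sigma_u^2 \mathcal{A}$ is precisely what the denominator analysis established, while the conditional Lindeberg condition follows from (iii) and the $O_p(1)$ bound on $n^{-1}\sum H_g^2(\beta_n^{-1} x_{t-1})$ already obtained.

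\textbf{Main obstacle.} The delicate point is securing the mixed-normal structure in the \textbf{FR1} case: one must verify that the limiting conditional variance is measurable with respect to a $\sigma$-field ``prior'' to the martingale increments, so that the CLT yields $\mathcal{B}\mid\mathcal{A} \sim N[0, \sigma_u^2 \mathcal{A}]$ and not some spurious joint dependence. This is exactly what \textbf{HL1} and \textbf{HL2(b)} buy: the randomness in $\mathcal{A}$ enters solely through $X^- \in \mathcal{F}_0$, while $\{u_t\}_{t \ge 1}$ drives only the strictly post-zero part of the filtration, so the stable convergence is legitimate. Slutsky's lemma and the continuous mapping theorem then complete the argument.
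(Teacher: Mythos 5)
Your proposal is correct and follows essentially the same route as the paper's proof: the same $M_n^{-1}S_n$ decomposition, the same AHF reduction of $g$ to $H_g$ via the remainder bounds and Cauchy--Schwarz, Theorem~\ref{MainLL} for the design matrix, and Wang's (2014) martingale CLT with the observation that the mixing variate is $\mathcal{F}_0$-measurable. The only point you gloss over is the conditional Lindeberg condition, which does not follow directly from the $O_p(1)$ bound on $n^{-1}\sum H_g^2(\beta_n^{-1}x_{t-1})$ but requires a truncation argument (applying Theorem~\ref{MainLL} to $H_g^2\mathbf{1}\{\lvert H_g\rvert>A\}$ and letting $A\rightarrow\infty$ via dominated convergence and the portmanteau theorem), exactly as the paper does.
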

\begin{rem}
\textbf{\label{rem:LS}(a)}\ For $I(1/2)$ type II and \textit{\emph{MI}}
processes $\hat{\gamma}_{n}$ is asymptotically normal; for $I(1/2)$
type I processes it is mixed normal, because then $\varrho(x)=\varphi(x-X^{-})$
is random. In either case, the $t$ statistics for testing hypotheses
about $\mu$ or $\gamma$ will be asymptotically standard normal,
so that inferences may be drawn in the usual manner. This contrasts
with the case where regressors are $I(d)$ for $d>1/2$, e.g.\ see
Phillips (1995), Park and Phillips (1999, 2001), Robinson and Hualde
(2003).

\textbf{(b)} In a linear regression model, i.e.\ $g(x)=x$, we have
$H_{g}(u)=u$ and $\kappa_{g}(\lambda)=\lambda$, and it follows that
the OLS estimator for $\gamma$ has convergence rate $\beta_{n}n^{1/2}$,
which is faster than the $n^{1/2}$-convergence rate that obtains
when the regressor is stationary. For $I(1/2)$ processes the gain
in convergence rate is given by the slowly varying factor $L^{1/2}(n)$
(see Remark \ref{rem:FR}(b) above).

\textbf{(c) }Suppose instead that $y_{t}=\mu+\gamma g(x_{t})+u_{t}$,
so that the regressor is no longer predetermined. In this case, the
asymptotics of the OLS estimator are different from (\ref{eq:LSlimit}):
for example, if it is known that $\mu=0$ and $g(x)=x$, (so that
$\kappa_{g}(\beta_{n})=\beta_{n}$) we have
\[
\beta_{n}^{2}\left(\hat{\gamma}-\gamma\right)\overset{d}{\rightarrow}\left[\int x^{2}\varrho(x)dx\right]^{-1}\lim_{n\rightarrow\infty}\mathbf{E}(x_{t}(n)-x_{t-1}(n))u_{t}.
\]
In this case there is a severe reduction in the convergence rate,
by a factor of $n^{1/2}/\beta_{n}$, due to the endogeneity of $x_{t}$.
This result is comparable to Theorem 5.2 of Marinucci and Robinson
(1998)  which gives the asymptotics of the OLS estimator when $x_{t}$
is $I(d)$ with $d\in(1/2,1)$. We expect that in this setting such
methods as narrowband LS (see e.g. Marinucci and Robinson, 1998; Robinson
and Hualde 2003; Christensen and Nielsen 2006) or those that use lagged
regressors as instruments will be more efficient. We leave the exploration
of alternative estimation procedures for future work.
\end{rem}

\subsection{Nonparametric regression}

\label{subNW}

We next consider the nonparametric estimation of $m$ in the predictive
regression
\begin{equation}
y_{t}=m(x_{t-1})+u_{t}.\label{regres2}
\end{equation}
In particular, we consider the kernel regression (Nadaraya--Watson;
NW) estimator 
\[
\hat{m}(x):=\sum_{t=2}^{n}K_{th}(x)y_{t}/\sum_{t=2}^{n}K_{th}(x),
\]
and the local linear (LL) estimator
\[
\begin{bmatrix}\tilde{m}(x)\\
\tilde{m}^{(1)}(x)
\end{bmatrix}:=\argmin_{(a,b)\in\mathbb{R}^{2}}\sum_{t=2}^{n}\left[y_{t}-a-b(x_{t-1}-x)\right]^{2}K_{th}(x),
\]
where $K_{th}(x):=K[(x_{t-1}-x)/h_{n}]$. The following theorem is
a direct consequence of Theorem~\ref{KernelLL} and certain martingale
central limit theorems, and is complementary to the recent work of
Wang and Phillips (2009a,b; 2012) who develop estimation and testing
procedures in the context of nonparametric regression with NI and
$I(d)$ processes with $d\in(1/2,3/2)$. Let
\[
Q:=\left\{ \int\left[\begin{array}{cc}
1 & x\\
x & x^{2}
\end{array}\right]K\right\} ^{-1}\left\{ \int\left[\begin{array}{cc}
1 & x\\
x & x^{2}
\end{array}\right]K^{2}\right\} \left\{ \int\left[\begin{array}{cc}
1 & x\\
x & x^{2}
\end{array}\right]K\right\} ^{-1},
\]
and $\nu_{K,i}:=\int x^{i}K(x)dx$ for $i\in\mathbb{N}$.
\begin{thm}
\label{thm:npboth}Let $\{y_{t}\}_{t=1}^{n}$ be generated by (\ref{regres2})
and suppose that:
\begin{enumerate}
\item conditions (i)--(iii) of Theorem~\ref{LS} hold, $\nu_{K,0}=1$
and $\nu_{K,2}\neq\nu_{K,1}^{2}$.
\item $x^{j}[K(x)+K^{2}(x)]$ are bounded and integrable for $j\in[0,3]$;
\item $h_{n}+\beta_{n}/nh_{n}\rightarrow0$;
\end{enumerate}
If $m$ has a bounded first derivative and $nh_{n}^{3}/\beta_{n}\rightarrow0$,
then
\begin{equation}
\left(\frac{nh_{n}}{\beta_{n}}\right)^{1/2}\left(\hat{m}(x)-m(x)\right)\overset{d}{\rightarrow}MN\left[0,\sigma_{u}^{2}\varrho(0)^{-1}\int K^{2}\right].\label{eq: NWLimit}
\end{equation}
Alternatively, if $m$ has a bounded second derivative and $nh_{n}^{5}/\beta_{n}\rightarrow0$,
then\textbf{ }
\begin{equation}
\left(\frac{nh_{n}}{\beta_{n}}\right)^{1/2}\left[\begin{array}{c}
\tilde{m}(x)-m(x)\\
h_{n}\left(\tilde{m}^{(1)}(x)-m^{(1)}(x)\right)
\end{array}\right]\stackrel{d}{\rightarrow}MN\left[0,\sigma_{u}^{2}\varrho(0)^{-1}Q\right].\label{eq:loclin}
\end{equation}
\end{thm}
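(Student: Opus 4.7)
The plan is to treat both estimators by decomposing the estimator error into a bias contribution and a martingale noise contribution, and then to reduce each object to a kernel functional handled by Theorem \ref{KernelLL}. For the Nadaraya--Watson estimator, I would write
\[
\sum_{t=2}^{n} K_{th}(x)(\hat m(x)-m(x))=\sum_{t=2}^{n} K_{th}(x)[m(x_{t-1})-m(x)]+\sum_{t=2}^{n} K_{th}(x)u_t=:B_n+U_n.
\]
Theorem \ref{KernelLL} applied to $K$ yields $\tfrac{\beta_n}{nh_n}\sum K_{th}(x)\stackrel{d}{\to}\varrho(0)\int K=\varrho(0)$, handling the denominator. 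The first-order Taylor bound $|m(x_{t-1})-m(x)|\le\|m'\|_\infty|x_{t-1}-x|$, combined with Theorem \ref{KernelLL} applied to $u\mapsto|uK(u)|$ (integrable by hypothesis (ii)), gives $|B_n|=O_p(h_n\cdot nh_n/\beta_n)$, so $(\beta_n/nh_n)^{1/2}B_n=O_p((nh_n^{3}/\beta_n)^{1/2})=o_p(1)$ under the stated bandwidth restriction. The noise term $\{K_{th}(x)u_t,\mathcal F_t\}$ is a martingale difference array with conditional variance $\sigma_u^2\sum K_{th}^2(x)$; Theorem \ref{KernelLL} applied to $K^2$ (integrable by Assumption K) delivers $\tfrac{\beta_n}{nh_n}\sigma_u^2\sum K_{th}^2(x)\stackrel{d}{\to}\sigma_u^2\varrho(0)\int K^2$, and a stable martingale CLT (Hall--Heyde Thm.\ 3.2), whose conditional Lindeberg hypothesis follows from condition (iii) of Theorem \ref{LS} together with the boundedness of $K$ (hypothesis (ii)), then produces $(\beta_n/nh_n)^{1/2}U_n\stackrel{d}{\to}MN[0,\sigma_u^2\varrho(0)\int K^2]$ jointly with the limit of the denominator. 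Dividing yields (\ref{eq: NWLimit}).

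For the local-linear estimator the same strategy applies in matrix form. Writing $z_t:=(x_{t-1}-x)/h_n$ and $D:=\operatorname{diag}(1,h_n)$, the OLS identity yields $D[\tilde m(x)-m(x),\tilde m^{(1)}(x)-m^{(1)}(x)]^\top=\widetilde M_n^{-1}\widetilde V_n$, with
\[
\widetilde M_n=\sum K_{th}(x)\begin{pmatrix}1 & z_t\\ z_t & z_t^2\end{pmatrix},\qquad \widetilde V_n=\sum K_{th}(x)\begin{pmatrix}1\\ z_t\end{pmatrix}(r_t+u_t),
\]
where $r_t:=m(x_{t-1})-m(x)-m^{(1)}(x)(x_{t-1}-x)$. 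Entrywise application of Theorem \ref{KernelLL} to the integrable kernels $x^j K$ for $j\in\{0,1,2\}$ gives $\tfrac{\beta_n}{nh_n}\widetilde M_n\stackrel{d}{\to}\varrho(0)\int K\cdot\bigl(\begin{smallmatrix}1 & x\\ x & x^2\end{smallmatrix}\bigr)$, a matrix invertible by the hypothesis $\nu_{K,2}\neq\nu_{K,1}^2$. The bounded second derivative of $m$ gives $|r_t|=O(h_n^2 z_t^2)$, so Theorem \ref{KernelLL} applied to $x^2 K$ and $x^3 K$ shows the bias piece of $\widetilde V_n$ is $O_p(h_n^2\cdot nh_n/\beta_n)$; premultiplication by $(nh_n/\beta_n)^{1/2}\widetilde M_n^{-1}=O_p((\beta_n/nh_n)^{1/2})$ yields $O_p((nh_n^{5}/\beta_n)^{1/2})=o_p(1)$. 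A bivariate stable martingale CLT, with conditional covariance computed entrywise via Theorem \ref{KernelLL} on $x^j K^2$ ($j\in\{0,1,2\}$), gives $(\beta_n/nh_n)^{1/2}\sum K_{th}(x)(1,z_t)^\top u_t\stackrel{d}{\to}MN[0,\sigma_u^2\varrho(0)\int K^2\cdot\bigl(\begin{smallmatrix}1 & x\\ x & x^2\end{smallmatrix}\bigr)]$; sandwiching by the inverse of the limit of $\tfrac{\beta_n}{nh_n}\widetilde M_n$ produces the variance $\sigma_u^2\varrho(0)^{-1}Q$ of (\ref{eq:loclin}).

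The main technical obstacle is the joint-convergence step under \textbf{FR1}, where $\varrho(0)=\varphi_{1/2}(-X^-)$ is a random, $\mathcal F_0$-measurable quantity that appears both in the limit of the denominator and in the conditional variance of the martingale numerator. To ensure the limiting ratio is genuinely $MN[0,\sigma_u^2\varrho(0)^{-1}\cdot(\cdot)]$, I would invoke a stable (mixing) form of the martingale CLT after conditioning on $\mathcal F_0$: given $\mathcal F_0$, $\varrho(0)$ is deterministic, so the denominator and the conditional variance both converge to deterministic limits proportional to $\varrho(0)$, and the martingale sum converges conditionally to a centred Gaussian with that variance; integrating over $\mathcal F_0$ delivers the claimed mixed-normal limit. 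Under \textbf{FR2} and \textbf{MI}, $\varrho(0)=\varphi_1(0)$ is constant and the joint convergence follows from Slutsky. A secondary point is verifying the conditional Lindeberg condition for the bivariate CLT, which however reduces to the scalar case via the Cram\'er--Wold device and the boundedness of $x^j K(x)$ and $x^j K^2(x)$ on compacta.
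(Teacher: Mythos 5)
Your proposal is correct and takes essentially the same route as the paper's proof: the same bias/martingale-noise decomposition, the same reduction of the denominator, bias bound and conditional variances to entrywise applications of Theorem~\ref{KernelLL}, and a stable martingale CLT (the paper uses Wang, 2014) to obtain the mixed-normal limit jointly with the denominator, with the $\mathcal{F}_{0}$-measurability of the mixing variate handling the \textbf{FR1} case exactly as you describe.
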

\begin{rem}
\label{rem:kernel}\textbf{(a)} Since $\beta_{n}\rightarrow\infty$,
the convergence rate of both $\hat{m}$ and $\tilde{m}$ when $x_{t}$
is a WNP is slower than when $x_{t}$ is stationary. For $I(1/2)$
processes this convergence rate is reduced by the slowly varying factor
$L(n)^{1/2}$.

\textbf{(b)} Let $\tilde{\sigma}_{u}^{2}$ denote a consistent estimator
of $\sigma_{u}^{2}$, and consider the nonparametric $t$ statistic
for the hypothesis $\mathcal{H}_{0}:m(x)=m_{0}(x)$ based on the local
linear estimator, as given by
\[
\tilde{t}(x;m_{0}):=\left(\frac{\sum_{t=2}^{n}K_{th}(x)}{\tilde{\sigma}_{u}^{2}Q_{11}}\right)^{1/2}[\tilde{m}(x)-m_{0}(x)].
\]
It follows directly from Theorem~\ref{thm:npboth} that $\tilde{t}(x;m_{0})\overset{d}{\rightarrow}N[0,1]$,
and similarly when $(\tilde{m},Q_{11})$ is replaced by $(\hat{m},\int K^{2})$.

Thus in conjunction with the existing literature, Theorem~\ref{thm:npboth}
implies that kernel nonparametric $t$ statistics are asymptotically
standard Gaussian across a wide range of regressor processes, including:
stationary fractional (with $-1/2<d<1/2$; Wu and Mielniczuk, 2002),
weakly nonstationary (fractional with $d=1/2$ or mildly integrated),
nonstationary fractional ($1/2<d<3/2$) and (nearly) integrated processes
(Wang and Phillips, 2009a,b, 2011, 2012). This is in marked contrast
to parametric $t$ statistics, which when regressors are nonstationary
have limiting distributions that are typically nonstandard and dependent
on nuisance parameters relating to the persistence of the regressor,
which cannot be consistently estimated -- a fact that greatly complicates
parametric inference in these models (for an overview of this problem
and the relevant literature, see Phillips and Lee, 2013, pp.\ 251--254).

\textbf{(c)} Suppose that $x_{t-1}$ on the r.h.s.\ of (\ref{regres2})
is replaced by $x_{t}$, so that the regressor is no longer predetermined.
If $x_{t}$ is stationary, then the correlation between it and $u_{t}$
prevents $m$ from being consistently estimated. However, for the
case where $x_{t}$ is NI, Wang and Phillips (2009b) show that the
nonparametric regression estimator is consistent for $m$ and asymptotically
mixed Gaussian, even when $x_{t}$ is correlated with $u_{t}$, and
$u_{t}$ is serially dependent. In other words, for NI covariates
the asymptotics of the nonparametric regression estimator are unaffected
by whether $x_{t}$ or $x_{t-1}$ appears in (\ref{regres2}). We
conjecture that a similar results also holds for WNPs, but leave an
examination of this for future work.

\textbf{(d) }Our smoothness assumptions on $m$ could be relaxed along
the lines of Wang and Phillips (2009a,b) and Wang and Phillips (2011),
for the NW and LL estimators respectively; we have refrained from
doing so here to permit Theorem~\ref{thm:npboth} to be more concisely
stated.
\end{rem}

\section{Specification testing when a regressor has an unknown degree of persistence}

\label{Test}

In this section, we exploit the asymptotic normality of the nonparametric
$t$ statistic to develop a specification test statistic for parametric
regression models that has the same asymptotic distribution regardless
of the extent of the persistence of the regressor, and indeed regardless
of whether that persistence is modelled in terms of long memory (i.e.\ as
$I(d)$ for some $d\in(-1/2,3/2)$) or in terms of an autoregressive
root localised to unity (as in an MI or NI process). The proposed
test can thus be validly conducted, in a straightforward manner, without
requiring practitioners either to make an assumption on the persistence
of the regressor, or to somehow estimate this and take account of
it when carrying out the test.

The hypothesis to be tested is that the true regression function $m$
in (\ref{regres2}) belongs to a certain parametric family, as e.g.\ postulated
in (\ref{regres1}). Formally, the null is
\begin{equation}
\mathcal{H}_{0}:m(x)=\mu+\gamma g(x),\textrm{\textrm{for} some }(\mu,\gamma)\mathbb{\in\mathbb{R}}^{2}\text{ and all }x\in\mathbb{R};\label{eq:nullhyp}
\end{equation}
where $g$ is a known function;\textbf{ }the alternative is that no
such $\mu$ and $\gamma$ exist. Tests of $\mathcal{H}_{0}$, in a
setting with (possibly) nonstationary regressors, have also been considered
by Gao, King, Lu and Tjøstheim (2009), Wang and Phillips (2012; hereafter
`WP'), and Dong, Gao, Tjøstheim and Yin (2017). WP test a parametric
fit in the presence of a NI regressor, while Dong et al (2017) test
for a parametric fit in regressions with a $d=0$ and a $d=1$ covariate.
The test statistic of WP closely resembles that of Gao et al (2009),
who propose a studentised U-statistic formed of kernel-weighted OLS
regression residuals.\footnote{Gao et al (2009) apply their statistic to the problem of testing the
null of a random walk (of the form $x_{t}=x_{t-1}+\xi_{t}$), against
a (possibly nonlinear) stationary alternative. The underlying idea
is to test for a neglected nonlinear component in an autoregression,
whose presence would make the process stationary. The specification
test proposed in this paper could also be potentially used for this
purpose, but we leave explorations in this direction for future work.}

We propose to test $\mathcal{H}_{0}$ by comparing parametric OLS
and kernel nonparametric estimates of $m$. The model specified in
(\ref{eq:nullhyp}) can be estimated parametrically by OLS regression,
and also nonparametrically at each $x$ as
\[
\tilde{m}_{g}(x):=\argmin_{a\in\mathbb{R}}\min_{b\in\mathbb{R}}\sum_{t=1}^{n}\{y_{t}-a-b[g(x_{t-1})-g(x)]\}^{2}K_{th}(x)
\]
which under $\mathcal{H}_{0}$ has no asymptotic bias, even if $h$
remains fixed as $n\rightarrow\infty$.\textbf{ }If $g(x)=x$, so
that the null of linearity is being tested, $\tilde{m}_{g}(x)$ specialises
to the local linear regression estimator; but in general $\tilde{m}_{g}$
should be chosen consistent with the model under test, so that it
has no bias under the null. Provided that $g^{(1)}(x)\neq0$, a slight
modification of the proof of Theorem~\ref{thm:npboth} shows that
$\tilde{m}_{g}(x)$ has the same limiting distribution as displayed
in (\ref{eq:loclin}), under $\mathcal{H}_{0}$. Letting $(\hat{\mu},\hat{\gamma})$
denote the OLS estimates of $(\mu,\gamma)$, we can therefore compare
the fit provided by the parametric and local nonparametric estimates
of the model via an ensemble of $t$ statistics of the form
\[
\tilde{t}(x;\hat{\mu},\hat{\gamma}):=\left[\frac{\sum_{t=2}^{n}K_{th}(x)}{\tilde{\sigma}_{u}^{2}(x)Q_{11}}\right]^{1/2}\left[\tilde{m}_{g}(x)-\hat{\mu}-\hat{\gamma}g(x)\right],
\]
where $\tilde{\sigma}_{u}^{2}(x):=\left[\sum_{t=2}^{n}K_{th}(x)\right]^{-1}\sum_{t=2}^{n}\left[y_{t}-\hat{\mu}-\hat{\gamma}g(x_{t-1})\right]K_{th}(x)$.
Under $\mathcal{H}_{0}$, both the parametric and nonparametric estimators
converge to identical limits and so for each $x\in\mathbb{R}$,
\[
\tilde{t}(x;\hat{\mu},\hat{\gamma})=\tilde{t}(x;\mu,\gamma)+o_{p}(1)\stackrel{d}{\rightarrow}N[0,1],
\]
where the equality is due to the relatively faster convergence rate
of the parametric estimator, and the distributional limit follows
as per Remark~\ref{rem:kernel}(b). Under the alternative, only the
nonparametric estimator is consistent for $m$, and thus $|\tilde{t}|\stackrel{p}{\rightarrow}\infty$.

Our proposed specification test statistic is based on these $t$ statistics
evaluated at a set of $p$ points $\mathcal{X}\subset\mathbb{R}$,
constructed as per
\begin{equation}
\tilde{F}:=\sum_{x\in\mathcal{X}}\tilde{t}(x;\hat{\mu},\hat{\gamma})^{2}.\label{eq:spec_test}
\end{equation}
$\tilde{F}$ is related to the `non-predictability sum test' developed
by Kasparis, Andreou and Phillips (2015), who were concerned with
testing the null that $x_{t-1}$ cannot predict $y_{t}$, which in
the present framework can be expressed as $m(x)=\mu$ for all $x\in\mathbb{R}$.
Relative to other specification tests available in the literature,
the principal advantage of a test based on $\tilde{F}$ is that the
limiting distribution of this statistic is invariant to the extent
of persistence in the regressor, making valid inference in the presence
of data with an unknown degree of persistence straightforward.\footnote{The use of kernel methods in specification testing does not in and
of itself lead to conventional inference: for example, a test recently
proposed by Dong et al (2017) is also based on nonparametric methods,
but the limiting distribution of their test statistic, and therefore
the critical values for their test, depends on precise assumptions
as to the form and extent of the persistence of the regressor.}

\subsection{Asymptotics for WNPs}

Our final result gives the limiting distribution of $\tilde{F}$ under
$\mathcal{H}_{0}$, and under a sequence of local alternatives of
the form
\begin{equation}
\mathcal{H}_{1}:m(x)=\mu+\gamma g(x)+r_{n}g_{1}(x)\label{eq:alternative}
\end{equation}
for some $g_{1}$, where $r_{n}\rightarrow0$. This formulation of
the alternative is similar to that of Horowitz and Spokoiny (2001)
and WP. We only provide explicit results for the boundary case where
$\{x_{t}\}$ is a WNP, which is the main focus of the present work.
However, as discussed in Section~\ref{subsec:asy-other} below, analogous
results may be derived for the stationary fractional ($-1/2<d<1/2$),
nonstationary fractional ($1/2<d<3/2$) and nearly integrated cases,
on the basis of the limit theory presented in Wu and Mielniczuk (2002)
and Wang and Phillips (2009a,b).

For the purposes of the next result, assume $g_{1}(x)$ is either
integrable or an asymptotically homogeneous function (AHF) of asymptotic
order $\kappa_{g_{1}}$; this helps to characterise the limiting behaviour
of the test statistic under $\mathcal{H}_{1}$. Define $\mu_{\ast}:=-[\intop H_{g^{2}}\varrho\intop H_{g_{1}}\varrho-\intop H_{g}\varrho\intop H_{g}H_{_{g_{1}}}\varrho]/[\intop H_{g^{2}}\varrho-(\intop H_{g}\varrho)^{2}]$,
which is the distributional limit of $-[\kappa_{g_{1}}(\beta_{n})r_{n}]^{-1}(\hat{\mu}-\mu)$
when $g_{1}$ is AHF.
\begin{thm}
\label{test} Suppose that $\mathcal{X}$ has $p$ elements and
\begin{enumerate}
\item conditions (i)-(iii) of Theorem~\ref{thm:npboth} hold, $x^{j}[K(x)+K^{2}(x)]$
are bounded and integrable for $j\in[0,4]$, $\sup_{t}\mathbf{E}[u_{t}^{4}\mid\mathcal{F}_{t-1}]<\infty$
a.s., and $nh_{n}^{5}/\beta_{n}\rightarrow0$; and
\item $g$ is AHF for the array $\{x_{t}(n)\}$, with limit homogeneous
component $H_{g}$ such that $\kappa_{g}(\beta_{n})\rightarrow\infty$
and $H_{g}^{2}$ satisfies the requirements of Theorem~\ref{MainLL}.
Further, $g$ has bounded second derivative, and $g^{(1)}(x)\neq0$
for each $x\in\mathcal{X}$.
\end{enumerate}
Then under $\mathcal{H}_{0}$,
\begin{equation}
\tilde{F}\stackrel{d}{\rightarrow}\chi_{p}^{2}\label{eq:null}
\end{equation}

Suppose that in addition $p\geq2$ and:
\begin{enumerate}[resume]
\item $r_{n}\rightarrow0$ and either
\begin{enumerate}
\item $g_{1},$ $g\cdot g_{1}$ are bounded and integrable, $r_{n}^{-1}n^{-1/2}\beta_{n}\rightarrow0$;
or
\item $g_{1}$ is AHF for the array $\{x_{t}(n)\}$, with limit homogeneous
function $H_{g_{1}}$ such that $H_{g_{1}}H_{g}$ satisfies the requirements
of Theorem~\ref{MainLL}, and $r_{n}^{-1}n^{-1/2}\kappa_{g_{1}}^{-1}(\beta_{n})\rightarrow0$.
Further, each of the following limits exist (allowing `convergence'
to $\infty$):
\begin{align*}
\kappa_{\ast} & :=\lim_{n\rightarrow\infty}\kappa_{g_{1}}(\beta_{n}) & \kappa_{*\ast}:= & \lim_{n\rightarrow\infty}\kappa_{g_{1}}(\beta_{n})r_{n}.
\end{align*}
\end{enumerate}
\item $l_{n}nh_{n}/\beta_{n}\rightarrow\infty$ where
\[
l_{n}=\begin{cases}
r_{n}^{2} & \text{under (iii.a) or (iii.b) with }\kappa_{\ast}\in[0,\infty)\\
r_{n}^{2}\kappa_{g_{1}}^{2}\left(\beta_{n}\right) & \text{under (iii.b), }\kappa_{\ast\ast}=0\text{ and }\kappa_{\ast}=\infty\\
1 & \text{under (iii.b) and }\kappa_{\ast\ast}\in(0,\infty].
\end{cases}
\]
\item $g_{1}$ has bounded second derivative.
\item Either: (iii.a) holds and $g_{1}(x)\neq0$ for some $x\in\mathcal{X}$;
(iii.b) holds with $\kappa_{\ast}\in(0,\infty]$ and $g_{1}(x)\neq g_{1}(x^{\prime})$
for some $x,x^{\prime}\in\mathcal{X}$; or (iii.b) holds with $\kappa_{\ast}=\infty$
and $\mu_{\ast}\neq0$ $a.s$.
\end{enumerate}
Then under $\mathcal{H}_{1}$,
\begin{equation}
\tilde{F}\stackrel{p}{\rightarrow}\infty.\label{eq:H1}
\end{equation}
\end{thm}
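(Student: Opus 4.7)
The plan is to treat the null and alternative separately, reducing the null claim to a joint Studentised martingale CLT across the points of $\mathcal{X}$, and the alternative to a signal-dominates-noise argument organised around the three rate regimes encoded in $l_n$.

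\emph{Null.} The first step is to absorb the parametric estimation error: by Theorem~\ref{LS} the contribution of $(\hat\mu-\mu)+(\hat\gamma-\gamma)g(x)$ to $\tilde t(x;\hat\mu,\hat\gamma)$ is of order $(h_n/\beta_n)^{1/2}=o_p(1)$, so it suffices to analyse $\tilde t(x;\mu,\gamma)$. A local-linear expansion (bias controlled by bounded $g^{(1)},g^{(2)}$ and $nh_n^5/\beta_n\to 0$) gives
\[
\tilde m_g(x)-m(x)=\frac{\sum_{t=2}^n K_{th}(x)u_t}{\sum_{t=2}^n K_{th}(x)}+o_p\!\left((\beta_n/(nh_n))^{1/2}\right).
\]
Writing $\mathcal{X}=\{\chi_1,\dots,\chi_p\}$ and $S_{n,j}:=(\beta_n/(nh_n))^{1/2}\sum_t K_{th}(\chi_j)u_t$, the vector $(S_{n,j})_{j=1}^p$ is a martingale array to which the Hall--Heyde (1980) stable multivariate CLT, conditional on $\mathcal{F}_0$, applies. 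Theorem~\ref{KernelLL} gives the diagonal conditional variance as $\sigma_u^2\varrho(0)\int K^2$. For the off-diagonals, I would rewrite $\frac{\beta_n}{nh_n}\sum_t K_{th}(\chi_i)K_{th}(\chi_j)$ as a kernel functional with $n$-dependent kernel $K_{ij,n}(u):=K(u)K(u+(\chi_i-\chi_j)/h_n)$; an adaptation of Theorem~\ref{MainKernel} shows this term is $o_p(1)$, because $\int K_{ij,n}\to 0$ as $h_n\to 0$ (disjoint supports for compactly supported $K$, and Cauchy--Schwarz applied to the $L^2$ autocorrelation of $K$ otherwise). The Lindeberg condition follows from $\sup_t\mathbf{E}[u_t^4\mid\mathcal{F}_{t-1}]<\infty$ and the $L^2$ integrability of $K$. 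Self-normalising by $\tilde\sigma_u^2(x)$ and the kernel sum cancels both $\sigma_u^2$ and the mixing variate $\varrho(0)$, so $(\tilde t(\chi_j;\mu,\gamma))_{j=1}^p\overset{d}{\to}N(0,I_p)$ and hence $\tilde F\overset{d}{\to}\chi_p^2$.

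\emph{Alternative.} Under $\mathcal{H}_1$, OLS algebra combined with Theorem~\ref{MainLL} gives $(\hat\mu-\mu)+(\hat\gamma-\gamma)g(x)=r_n P_n(x)+O_p(n^{-1/2})$, where $P_n(x)$ is the sample projection of $g_1$ onto $\{1,g\}$ at the point~$x$. Combined with $\tilde m_g(x)=m(x)+O_p((\beta_n/(nh_n))^{1/2}+h_n^2)$, this yields
\[
\tilde m_g(x)-\hat\mu-\hat\gamma g(x)=r_n[g_1(x)-P(x)](1+o_p(1))+O_p\!\left((\beta_n/(nh_n))^{1/2}\right),
\]
with $P(x)$ the corresponding population limit. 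Multiplied by the Studentising factor and squared, the signal at $\chi$ is of order $l_n\cdot nh_n/\beta_n\to\infty$ whenever the bracketed residual is nonzero at $\chi$. Condition~(vi) is calibrated so that this occurs in each of the three regimes of $l_n$: in case~(iii.a) the parametric correction is of smaller order than $r_n g_1$ and $g_1(\chi)\neq 0$ suffices; in case~(iii.b) with $\kappa_\ast\in(0,\infty]$ the correction partially absorbs $g_1$ so variability of $g_1$ across $\mathcal{X}$ is required; in case~(iii.b) with $\kappa_\ast=\infty$ the correction $\mu_\ast$ itself is the nonzero residual. In each regime $\tilde F\overset{p}{\to}\infty$.

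\emph{Main obstacle.} The chief technical difficulty is the off-diagonal covariance analysis under the WNP regime: unlike the strongly dependent fractional case, there is no occupation-density representation available since $\beta_n^{-1}x_{\lfloor n\cdot\rfloor}$ has only nonseparable Gaussian finite-dimensional limits, so one must work directly with the $n$-dependent kernel $K_{ij,n}$ via an $L^1$-approximation extending Theorem~\ref{MainKernel}. A further subtle point is that the kernel functionals at different points share a common mixing variate $\varrho(0)$; this variate must cancel exactly under Studentisation in order to produce iid $N(0,1)$ coordinates in the limit, rather than an exchangeable normal mixture that would destroy the pivotal $\chi_p^2$ conclusion.
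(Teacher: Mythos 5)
Your overall architecture matches the paper's: under $\mathcal{H}_{0}$, absorb the (faster-converging) OLS error, reduce $\tilde{t}(x;\mu,\gamma)$ to a Studentised martingale, and obtain asymptotic independence across $\mathcal{X}$ by showing the cross-point conditional covariations vanish; under $\mathcal{H}_{1}$, extract the projection residual of $r_{n}g_{1}$ and argue regime by regime. Your treatment of the off-diagonal terms is essentially the paper's as well: the paper bounds $\frac{\beta_{n}}{nh_{n}}\sum_{t}K_{th}(x)K_{th}(x^{\prime})(\cdots)$ in expectation using the uniform density bound \textbf{HL3} together with \textbf{HL6}, and dominated convergence on $\int L_{a}(u)L_{b}(u-h_{n}^{-1}(x^{\prime}-x))du$ (using that $x^{4}K(x)$ bounded forces $L_{b}(u)\rightarrow0$ at infinity), rather than re-running the $L^{1}$-approximation of Theorem~\ref{MainKernel} with an $n$-dependent kernel; both routes deliver the same conclusion, and the direct expectation bound is the cleaner one since the limit of the product-kernel functional is simply zero. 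Your observation that the common mixing variate $\varrho(0)$ must cancel exactly under Studentisation is correct and is precisely how the paper obtains the pivotal $\chi_{p}^{2}$ limit.

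The genuine gap is in the alternative: you nowhere analyse $\tilde{\sigma}_{u}^{2}(x)$ under $\mathcal{H}_{1}$, and this is not a formality. The paper devotes an entire part of the proof to showing that $\tilde{\sigma}_{u}^{2}(x)$ is consistent only when $\kappa_{\ast\ast}=0$ (or under (iii.a)); when $\kappa_{\ast\ast}\in(0,\infty)$ it converges to $\mu_{\ast}^{2}+\sigma_{u}^{2}$, and when $\kappa_{\ast\ast}=\infty$ it \emph{diverges} at rate $(r_{n}\kappa_{g_{1}}(\beta_{n}))^{2}$, because the non-vanishing OLS bias $\hat{\mu}-\mu$ contaminates the kernel-weighted residual variance. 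Your blanket claim that ``the signal at $\chi$ is of order $l_{n}\cdot nh_{n}/\beta_{n}$'' is exactly the assertion that requires this analysis: in the regime $\kappa_{\ast\ast}\in(0,\infty]$ the naive computation (treating $\tilde{\sigma}_{u}^{2}(x)$ as $\sigma_{u}^{2}+o_{p}(1)$) would give divergence rate $r_{n}^{2}\kappa_{g_{1}}^{2}(\beta_{n})\,nh_{n}/\beta_{n}$, whereas the correct rate is $nh_{n}/\beta_{n}$ with $l_{n}=1$, precisely because the divergent denominator eats the factor $(r_{n}\kappa_{g_{1}}(\beta_{n}))^{2}$. More importantly, without verifying that the numerator dominates the (possibly divergent) denominator one cannot even conclude $\tilde{F}\overset{p}{\rightarrow}\infty$ in the $\kappa_{\ast\ast}=\infty$ case, since a denominator growing faster than the squared signal would send the statistic to zero. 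You need to add the counterpart of the paper's Part~III, establishing $\varsigma_{n}^{-1}\tilde{\sigma}_{u}^{2}(x)\overset{d}{\rightarrow}\sigma_{\ast}^{2}$ with $\varsigma_{n}$ and $\sigma_{\ast}^{2}$ depending on the regime, before the case analysis in your final paragraph can be made rigorous.
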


\begin{rem}
\textbf{(a) }The requirement $\kappa_{g}(\beta_{n})\rightarrow\infty$
is a technical condition that is satisfied in most specifications
employed in empirical work, e.g.\ linear models. It can be relaxed
at the cost of a more involved exposition.

\textbf{(b)} If (iii.b) holds with $\kappa_{\ast}=\infty$, then $\mu_{\ast}\neq0$
a.s.\ is sufficient for the test to be consistent, in the sense that
(\ref{eq:H1}) holds. Consistency may still obtain when $\mu_{\ast}=0$,
but that requires a more detailed analysis than we are able to provide
here.

\textbf{(c)} The sequence $l_{n}nh_{n}/\beta_{n}$ gives the divergence
rate of the test statistic under $\mathcal{H}_{1}$, which is closely
related the power of the test. The maximal divergence rate (i.e.\ $nh_{n}/\beta_{n}$)
is attained when $g_{1}$ is AHF of diverging asymptotic order ($\kappa_{g_{1}}(\beta_{n})\rightarrow\infty$)
and $\kappa_{g_{1}}(\beta_{n})r_{n}\rightarrow\infty$. In such cases,
the divergence rate is otherwise unaffected by $r_{n}$. When $\kappa_{g_{1}}(\beta_{n})\rightarrow\infty$
but $\kappa_{g_{1}}(\beta_{n})r_{n}\rightarrow0$, the divergence
rate reduces to $\kappa_{g_{1}}^{2}(\beta_{n})r_{n}^{2}nh_{n}/\beta_{n}$.
The divergence rate is smallest (i.e.\ $r_{n}^{2}nh_{n}/\beta_{n}$)
in cases where $g_{1}$ is integrable or AHF of vanishing asymptotic
order (i.e. $\kappa_{g_{1}}(\beta_{n})\rightarrow0$).

\textbf{(d)} We have assumed that the set $\mathcal{X}$ comprises
a fixed number ($p$) of points. We can get some idea of the large-sample
distribution of $\tilde{F}$ if $p$ is allowed to grow with the sample
size from the fact that $(2p)^{-1/2}(\tilde{F}-p)\stackrel{d}{\rightarrow}(2p)^{-1/2}(\chi_{p}^{2}-p)\stackrel{d}{\rightarrow}N(0,1)$
as $n\rightarrow\infty$ and then $p\rightarrow\infty$.
\end{rem}

\subsection{Asymptotics in other cases}

\label{subsec:asy-other}

$\tilde{F}$ will have the same limiting distribution as given by
(\ref{eq:null}) of Theorem~\ref{test}, even when $x_{t}$ is a
stationary, nonstationary fractional, or near-integrated process.
This can be established using results available in the existing literature,
by arguments outlined in the remainder of this section, and is confirmed
by the simulation exercises presented in Section~\ref{sec:Simulations}
below. (These also permit our results on the consistency of the test
against local alternatives to be extended beyond weakly nonstationary
processes.) 

From the proof of Theorem~\ref{test}, it is clear that (\ref{eq:null})
holds if
\begin{enumerate}
\item the OLS estimator for $(\mu,\gamma)$ converges faster than the nonparametric
estimator for $m$ (e.g. NW/LL);
\item the nonparametric estimator is asymptotically (mixed) Gaussian; and
\item for every $\mathit{x},x'\in\mathcal{X}$, such that $x\neq x'$,
\begin{equation}
\frac{\beta_{n}}{nh_{n}}\sum_{t=1}^{n}K_{j}[(x_{t}-x)/h_{n}]K_{j'}[(x_{t}-x')/h_{n}]=o_{p}(1),\label{eq:product}
\end{equation}
for all $j,j^{\prime}\in\{0,1,2\}$, where $K_{j}(x):=x^{j}K(x)$.
\end{enumerate}
This last requirement ensures that the component $t$ statistics in
(\ref{eq:spec_test}) are asymptotically independent of each other.

When $x_{t}$ is nonstationary fractional ($1/2<d<3/2$) or nearly
integrated, the arguments of Park and Phillips (2001) (see also Christopeit,
2009 and the references therein) show that the convergence rates of
$(\hat{\mu},\hat{\gamma})$ are as in (\ref{eq:LSlimit}); that the
convergence rates of the kernel nonparametric regression estimators
are slower follows from Wang and Phillips (2009a,b). Conditions~(ii)
and (iii) also follow from Wang and Phillips (2009a, and particularly
pp.~1910--11 of 2009b for (\ref{eq:product})).

When $x_{t}$ is stationary, it is well known that the OLS estimator
is $n^{-1/2}$-consistent while the $(nh_{n})^{-1/2}$-consistency
and asymptotic normality of the nonparametric estimators follows e.g.\ from
the results of Wu and Mielniczuk (2002). Thus conditions (i)--(ii)
hold. Finally, if $x_{t}$ has a bounded density $\mathcal{D}_{x}(u)$
(see Wu and Mielniczuk, 2002, Lemma~1) and under standard regularity
conditions on $K$, we have
\begin{multline*}
\frac{1}{nh_{n}}\sum_{t=1}^{n}\mathbf{E}|K_{j}[(x_{t}-x)/h_{n}]K_{j^{\prime}}[(x_{t}-x')/h_{n}]|\\
\leq\sup_{u}\mathcal{\mathcal{D}}_{x}(u)\int_{\mathbb{R}}\left|K_{j}[z]K_{j'}[z+(x-x')/h_{n}]\right|dz\rightarrow0,
\end{multline*}
so that condition (iii) holds, since $\beta_{n}\asymp1$ in this case.

\section{Simulations}

\label{sec:Simulations}

\begin{table}[t]
\caption{Size: maximum rejection frequency over $\rho\in\{-0.5,0,0.5\}$; $\alpha=0.1$}
\label{tbl:size}%
\begin{tabular}{cccccccccccccc}
\toprule 
\addlinespace
$\boldsymbol{d}$ & $\boldsymbol{n}$ &  & \multicolumn{3}{c}{\textbf{WP (2012)}} &  & \multicolumn{3}{c}{$\boldsymbol{p=17}$} &  & \multicolumn{3}{c}{$\boldsymbol{p=25}$}\tabularnewline\addlinespace
\multicolumn{2}{c}{$h=n^{b}$, $b=$} &  & -0.2 & -0.1 & -0.05 &  & -0.2 & -0.1 & -0.05 &  & -0.2 & -0.1 & -0.05\tabularnewline\addlinespace
\midrule
\addlinespace
\textbf{$0.25$} & 100 &  & 0.04 & 0.02 & 0.01 &  & 0.07 & 0.05 & 0.04 &  & 0.09 & 0.06 & 0.04\tabularnewline
 & 200 &  & 0.05 & 0.02 & 0.01 &  & 0.07 & 0.06 & 0.04 &  & 0.09 & 0.07 & 0.05\tabularnewline
 & 500 &  & 0.06 & 0.02 & 0.01 &  & 0.07 & 0.06 & 0.04 &  & 0.09 & 0.07 & 0.05\tabularnewline\addlinespace
\textbf{$0.50$} & 100 &  & 0.05 & 0.02 & 0.01 &  & 0.07 & 0.06 & 0.04 &  & 0.09 & 0.07 & 0.05\tabularnewline
 & 200 &  & 0.06 & 0.03 & 0.02 &  & 0.08 & 0.07 & 0.05 &  & 0.11 & 0.09 & 0.07\tabularnewline
 & 500 &  & 0.07 & 0.04 & 0.03 &  & 0.07 & 0.07 & 0.05 &  & 0.10 & 0.09 & 0.07\tabularnewline\addlinespace
\textbf{$0.75$} & 100 &  & 0.06 & 0.04 & 0.03 &  & 0.08 & 0.07 & 0.06 &  & 0.11 & 0.09 & 0.07\tabularnewline
 & 200 &  & 0.08 & 0.06 & 0.04 &  & 0.08 & 0.08 & 0.07 &  & 0.10 & 0.10 & 0.09\tabularnewline
 & 500 &  & 0.08 & 0.07 & 0.05 &  & 0.07 & 0.08 & 0.07 &  & 0.09 & 0.09 & 0.09\tabularnewline\addlinespace
$1.00$ & 100 &  & 0.08 & 0.06 & 0.05 &  & 0.09 & 0.09 & 0.08 &  & 0.13 & 0.11 & 0.11\tabularnewline
 & 200 &  & 0.09 & 0.07 & 0.06 &  & 0.08 & 0.10 & 0.10 &  & 0.11 & 0.12 & 0.12\tabularnewline
 & 500 &  & 0.09 & 0.08 & 0.08 &  & 0.09 & 0.09 & 0.09 &  & 0.10 & 0.11 & 0.11\tabularnewline\addlinespace
\bottomrule
\end{tabular}
\end{table}

We conducted simulations to evaluate the finite-sample performance
of the proposed specification test, in terms of size and power against
a range of alternatives. For this exercise, a natural comparison is
with the specification test of WP, which is known to have a standard
Gaussian limiting distribution when $x_{t}$ is NI. (In our simulation
exercises, we assume that this also holds when $x_{t}$ is fractionally
integrated and/or stationary, and so compare their statistic to normal
critical values in these cases.)

\begin{table}[H]
\caption{Size-adjusted power when $d\in\{0.5,1.0\}$; $\alpha=0.1$}
\label{tbl:power}%
\begin{tabular}{lccccccccccccc}
\toprule 
\addlinespace
 & $\boldsymbol{n}$ &  & \multicolumn{3}{c}{\textbf{WP (2012)}} &  & \multicolumn{3}{c}{$\boldsymbol{p=17}$} &  & \multicolumn{3}{c}{$\boldsymbol{p=25}$}\tabularnewline\addlinespace
\multicolumn{2}{l}{$h=n^{b}$, $b=$} &  & -0.2 & -0.1 & -0.05 &  & -0.2 & -0.1 & -0.05 &  & -0.2 & -0.1 & -0.05\tabularnewline\addlinespace
\midrule
\addlinespace
\multicolumn{2}{l}{$\boldsymbol{d=0.5}$} &  & \multicolumn{3}{c}{\textbf{Size adj.\ power}} &  & \multicolumn{7}{c}{\textbf{Size adjusted, relative to WP}}\tabularnewline\addlinespace
$\varphi_{1}(x)$ & 100 &  & 0.08 & 0.06 & 0.04 &  & 0.08 & 0.09 & 0.09 &  & 0.11 & 0.11 & 0.11\tabularnewline
 & 200 &  & 0.14 & 0.13 & 0.11 &  & 0.11 & 0.14 & 0.14 &  & 0.15 & 0.17 & 0.17\tabularnewline
 & 500 &  & 0.37 & 0.42 & 0.41 &  & 0.15 & 0.18 & 0.19 &  & 0.20 & 0.22 & 0.22\tabularnewline\addlinespace
$\varphi_{1}(2x)$ & 100 &  & 0.07 & 0.04 & 0.03 &  & 0.07 & 0.08 & 0.06 &  & 0.10 & 0.09 & 0.08\tabularnewline
 & 200 &  & 0.12 & 0.09 & 0.07 &  & 0.09 & 0.10 & 0.09 &  & 0.13 & 0.13 & 0.12\tabularnewline
 & 500 &  & 0.29 & 0.27 & 0.23 &  & 0.13 & 0.15 & 0.14 &  & 0.19 & 0.19 & 0.18\tabularnewline\addlinespace
$|x|^{-2}\wedge1$ & 100 &  & 0.06 & 0.03 & 0.02 &  & 0.11 & 0.12 & 0.12 &  & 0.15 & 0.15 & 0.14\tabularnewline
$(\times0.5)$ & 200 &  & 0.10 & 0.06 & 0.04 &  & 0.16 & 0.19 & 0.18 &  & 0.21 & 0.23 & 0.22\tabularnewline
 & 500 &  & 0.19 & 0.16 & 0.12 &  & 0.12 & 0.14 & 0.14 &  & 0.16 & 0.16 & 0.16\tabularnewline\addlinespace
$|x|^{-1}\wedge1$ & 100 &  & 0.06 & 0.03 & 0.02 &  & 0.07 & 0.09 & 0.08 &  & 0.10 & 0.11 & 0.10\tabularnewline
$(\times0.5)$ & 200 &  & 0.07 & 0.05 & 0.04 &  & 0.10 & 0.14 & 0.14 &  & 0.15 & 0.17 & 0.17\tabularnewline
 & 500 &  & 0.14 & 0.13 & 0.11 &  & 0.14 & 0.18 & 0.19 &  & 0.19 & 0.21 & 0.22\tabularnewline\addlinespace
$|x|^{1.5}$ & 100 &  & 0.11 & 0.09 & 0.07 &  & 0.03 & 0.04 & 0.04 &  & 0.05 & 0.06 & 0.05\tabularnewline
$(\times0.02)$ & 200 &  & 0.22 & 0.23 & 0.22 &  & 0.04 & 0.05 & 0.06 &  & 0.06 & 0.07 & 0.07\tabularnewline
 & 500 &  & 0.61 & 0.66 & 0.66 &  & 0.04 & 0.09 & 0.11 &  & 0.08 & 0.12 & 0.13\tabularnewline\addlinespace
$x^{2}$ & 100 &  & 0.07 & 0.05 & 0.04 &  & 0.06 & 0.07 & 0.07 &  & 0.09 & 0.09 & 0.09\tabularnewline
$(\times0.02)$ & 200 &  & 0.13 & 0.12 & 0.11 &  & 0.08 & 0.12 & 0.13 &  & 0.12 & 0.15 & 0.16\tabularnewline
 & 500 &  & 0.37 & 0.42 & 0.43 &  & 0.07 & 0.12 & 0.13 &  & 0.12 & 0.15 & 0.15\tabularnewline\addlinespace
\midrule
\addlinespace
\multicolumn{2}{l}{$\boldsymbol{d=1.0}$} &  & \multicolumn{3}{c}{\textbf{Size adj.\ power}} &  & \multicolumn{7}{c}{\textbf{Size adjusted, relative to WP}}\tabularnewline\addlinespace
$\varphi_{1}(x)$ & 100 &  & 0.08 & 0.07 & 0.06 &  & 0.04 & 0.06 & 0.06 &  & 0.08 & 0.09 & 0.09\tabularnewline
 & 200 &  & 0.10 & 0.09 & 0.08 &  & 0.02 & 0.05 & 0.07 &  & 0.06 & 0.09 & 0.10\tabularnewline
 & 500 &  & 0.12 & 0.12 & 0.12 &  & 0.02 & 0.05 & 0.05 &  & 0.05 & 0.07 & 0.09\tabularnewline\addlinespace
$\varphi_{1}(2x)$ & 100 &  & 0.08 & 0.06 & 0.05 &  & 0.02 & 0.04 & 0.05 &  & 0.07 & 0.08 & 0.07\tabularnewline
 & 200 &  & 0.09 & 0.08 & 0.07 &  & 0.01 & 0.03 & 0.04 &  & 0.05 & 0.06 & 0.07\tabularnewline
 & 500 &  & 0.10 & 0.09 & 0.09 &  & 0.02 & 0.02 & 0.03 &  & 0.03 & 0.04 & 0.05\tabularnewline\addlinespace
$|x|^{-2}\wedge1$ & 100 &  & 0.08 & 0.06 & 0.05 &  & 0.05 & 0.08 & 0.08 &  & 0.10 & 0.12 & 0.12\tabularnewline
$(\times0.5)$ & 200 &  & 0.09 & 0.07 & 0.07 &  & 0.04 & 0.08 & 0.10 &  & 0.09 & 0.12 & 0.14\tabularnewline
 & 500 &  & 0.09 & 0.09 & 0.08 &  & 0.02 & 0.06 & 0.08 &  & 0.05 & 0.09 & 0.11\tabularnewline\addlinespace
$|x|^{-1}\wedge1$ & 100 &  & 0.08 & 0.06 & 0.05 &  & 0.04 & 0.07 & 0.07 &  & 0.09 & 0.10 & 0.10\tabularnewline
$(\times0.5)$ & 200 &  & 0.08 & 0.07 & 0.07 &  & 0.03 & 0.07 & 0.08 &  & 0.08 & 0.11 & 0.12\tabularnewline
 & 500 &  & 0.11 & 0.11 & 0.11 &  & 0.02 & 0.06 & 0.08 &  & 0.06 & 0.09 & 0.12\tabularnewline\addlinespace
$|x|^{1.5}$ & 100 &  & 0.10 & 0.09 & 0.08 &  & 0.04 & 0.07 & 0.08 &  & 0.09 & 0.10 & 0.11\tabularnewline
$(\times0.02)$ & 200 &  & 0.13 & 0.13 & 0.13 &  & -0.02 & 0.04 & 0.07 &  & 0.05 & 0.08 & 0.11\tabularnewline
 & 500 &  & 0.19 & 0.21 & 0.22 &  & -0.10 & -0.01 & 0.00 &  & -0.04 & 0.01 & 0.02\tabularnewline\addlinespace
$x^{2}$ & 100 &  & 0.09 & 0.07 & 0.06 &  & 0.04 & 0.06 & 0.08 &  & 0.07 & 0.09 & 0.09\tabularnewline
$(\times0.02)$ & 200 &  & 0.11 & 0.11 & 0.11 &  & 0.00 & 0.01 & 0.01 &  & 0.01 & 0.01 & 0.01\tabularnewline
 & 500 &  & 0.16 & 0.19 & 0.19 &  & 0.00 & 0.00 & 0.00 &  & 0.00 & 0.00 & 0.00\tabularnewline\addlinespace
\bottomrule
\end{tabular}
\end{table}

For all simulation exercises, the null hypothesis is $\mathcal{H}_{0}:m(x)=\mu+\beta x$,
so that the proposed specification test can be implemented by comparing
the fit of a local linear regression with an OLS regression. The data
generating process is
\begin{align*}
y_{t} & =x_{t-1}+g_{1}(x_{t-1})+u_{t} & (1-L)^{d}x_{t} & =\xi_{t}+0.5\xi_{t-1}
\end{align*}
with $x_{0}=0$, where $(u_{t},\xi_{t})$ are i.i.d.\ bivariate Gaussian
with unit variances and correlation $\rho$. For each value of $d\in\{0.25,0.50,0.75,1.00\}$,
we evaluate the size of the test by computing the maximum rejection
frequency under the null (i.e.\ when $g_{1}(x)=0$) for $\rho\in\{-0.5,0.0,+0.5\}$
(with 5000 replications). We consider sample sizes $n\in\{100,200,500\}$,
bandwidths of the form $h=n^{b}$ for $b\in\{-0.2,-0.1,-0.05\}$,
and use the Gaussian kernel in all cases. For our test, which requires
the choice of points at which to compare the nonparametric and parametric
estimates of the regression function, we consider two choices: $p=17$
or $25$ points, evaluated at the quantiles of $\{x_{t}\}_{t=1}^{n}$
equally spaced between the $0.1$ and $0.9$ quantiles.

The results are displayed in Table~\ref{tbl:size}, for a test having
10 per cent nominal significance level. They clearly illustrate that
our test has good size control across the range of bandwidths considered,
both when the data is in the stationary and nonstationary regions,
and on the boundary between these, suggesting that the asymptotics
developed in Sections~\ref{sec:functionals}--\ref{Test} provide
a good approximation to the finite-sample distribution of the statistic.

We also computed the size-adjusted power of our test, and of WP's,
against alternatives that are either integrable\textbf{ }($g_{1}(x)=\varphi_{1}(x)$,
$\varphi_{1}(2x)$, or $x^{-2}\wedge1$), non-integrable but vanishing
at infinity ($g_{1}(x)=x^{-1}\wedge1$), or polynomials ($g_{1}(x)=|x|^{v}$
with $v\in\{1.5,2.0\}$).\footnote{By size-adjusted power, we mean that if the test is found to reject
at rate $\hat{\alpha}>0.1$ under the null (as reported in the top
panel), then the power of the test is adjusted downwards by subtracting
$\hat{\alpha}-0.1$ from the rejection rate under each alternative
(so if $\hat{\alpha}\leq0.1$, no adjustment is made).} The simulation designs are the same as for the size calculations,
except that we here only report results for $\rho=0$ and $d\in\{0.5,1.0\}$
(results for $d\in\{0.25,0.75\}$ and MI processes are broadly similar,
and are provided in Appendix~F in the Supplementary Material). The
alternatives are scaled by the factors indicated in Table~\ref{tbl:power}
so as to ensure non-trivial power for these designs. To facilitate
the comparison between the power of our test and that of WP, we report
the size-adjusted power of their procedure in the first three columns
of Table~\ref{tbl:power}, and the \emph{relative} size-adjusted
power of our test alongside, i.e.\ the difference between the power
of our test and of theirs.

It is noticeable that our test generally outperforms WP's: indeed,
the relevant entries of the table are almost uniformly positive, with
the exception of a few cases where $g_{1}(x)=|x|^{1.5}$ and $d=1$.
The most pronounced power improvements are for those cases where $d$
is smaller, and the alternatives are either integrable or asymptotically
vanishing (i.e. tranformations that exhibit weaker signal and are
therefore harder to detect); whereas the performance of the two tests
is less easily distinguishable for polynomially growing alternatives,
when $d=1$.

\newpage{}
\title{Supplement to: Estimation and Inference in the Presence of Fractional
$\lowercase{d}=1/2$ and Weakly Nonstationary Processes}

\bigskip{}

\noindent \begin{center}
CONTENTS
\par\end{center}

\noindent A\tabto{0.75cm}Proofs under high-level conditions\Dotfill\pageref{sec:proofs}

\noindent \tabto{0.75cm}A.1\tabto{1.75cm}Technical lemmas\Dotfill\pageref{subsec:Technical-lemmas}

\noindent \tabto{0.75cm}A.2\tabto{1.75cm}Proofs of Theorems 3.1
and 3.3\Dotfill\pageref{subsec:Proofs-of-Theorems}

\noindent \tabto{0.75cm}A.3\tabto{1.75cm}Proofs of Lemmas A.1--A.3\Dotfill\pageref{subsec:Proofs-of-Lemmas}

\noindent B\tabto{0.75cm}Proofs under low-level conditions\Dotfill\pageref{sec:LLapp}

\noindent \tabto{0.75cm}B.1\tabto{1.75cm}Sufficient conditions for
Assumption HL\Dotfill\pageref{sec:suff-lp}

\noindent \tabto{0.75cm}B.2\tabto{1.75cm}Lemmas for $I(1/2)$ and
MI processes\Dotfill\pageref{subsec:procaux}

\noindent \tabto{0.75cm}B.3\tabto{1.75cm}Proofs of Theorems~3.2
and 3.4\Dotfill\pageref{subsec:lowleveltheorems}

\noindent C\tabto{0.75cm}Proofs auxiliary to Appendix B\Dotfill\pageref{sec:auiliaryapp}

\noindent \tabto{0.75cm}C.1\tabto{1.75cm}Proofs of Lemmas COEF,
LVAR and CLT\Dotfill\pageref{subsec:FRMIlemmas}

\noindent \tabto{0.75cm}C.2\tabto{1.75cm}Proofs of Lemmas B.1 and
B.2\Dotfill\pageref{subsec:technicalproofs}

\noindent D\tabto{0.75cm}Proofs of Theorems 4.1 and 4.2\Dotfill\pageref{sec:Proofs-of-Theorems4.1}

\noindent E\tabto{0.75cm}Proof of Theorem 5.1\Dotfill\pageref{sec:Proof-of-Theorem5.1}

\noindent F\tabto{0.75cm}Additional simulations for Section 6\Dotfill\pageref{sec:Additional-simulations-for}

\bigskip{}

Throughout, $C,C_{1},C_{2},\ldots\in(0,\infty)$ denote generic constants
which may take different values at each appearance, even within the
same proof. $\overset{a.s.}{\rightarrow}$, $\overset{p}{\rightarrow}$
and $\overset{d}{\rightarrow}$ respectively denote convergence almost
surely, in probability, and in distribution. For deterministic sequences
$\{a_{n}\}$ and $\{b_{n}\}$, $a_{n}\sim b_{n}$ denotes $\lim_{n\rightarrow\infty}a_{n}/b_{n}=1$
and $a_{n}\asymp b_{n}$ denotes $\lim_{n\rightarrow\infty}|a_{n}/b_{n}|\in(0,\infty)$.
For random variables $X$ and $Y$, $X\sim F$ denotes that $X$ has
distribution $F$, and $X\overset{d}{=}Y$ that $X$ has the same
distribution as $Y$. For a positive real number $x$, $\lfloor x\rfloor$
denotes its integer part. $\mathbf{1}\{A\}$ denotes the indicator
function for the set $A$. $\overline{\mathbb{R}}$, $\mathbb{R}_{+}$
and $\mathbb{R}_{+}^{\ast}$ are the extended, the nonnegative, and
(strictly) positive real numbers respectively. $f^{(j)}(x)$ denotes
the $j$th derivative of the function $f(x)$. All limits are taken
as $n\rightarrow\infty$ unless otherwise indicated.

References to Bingham, Goldie and Teugels (1987) are henceforth abbreviated
to `BGT'.

\renewcommand{\theequation}{S.\arabic{equation}}
\setcounter{equation}{0}

\pagebreak

\appendix

\section{Proofs under high-level conditions}

\label{sec:proofs}

This appendix provides proofs of our main results under high level
conditions (Assumption\textbf{ HL}), i.e.\ Theorems \ref{MainThm}
and \ref{MainKernel}. We begin by stating some auxiliary technical
lemmas, whose proofs appear in Appendix~A.3. 

\subsection{Technical lemmas}

\label{subsec:Technical-lemmas}
\begin{lem}
\label{1A} Suppose $f:\mathbb{R}\rightarrow\mathbb{R}$\ is locally
integrable, and let $\varepsilon,\eta>0$. Then there is a Lipschitz
continuous $f_{\varepsilon,\eta}$ such that $\int_{\left\vert x\right\vert \leq\eta}\left\vert f(x)-f_{\varepsilon,\eta}(x)\right\vert dx<\varepsilon$\ and
$f_{\varepsilon,\eta}(x)=0$\ for $\left\vert x\right\vert >\eta$.
\end{lem}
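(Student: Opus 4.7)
The plan is to use the standard combination of truncation and mollification, adjusting the parameters so that the final support remains inside $[-\eta,\eta]$. Let me outline the three-step construction.

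First I would set a small buffer $\delta\in(0,\eta/2)$, to be chosen, and truncate: define $g(x):=f(x)\mathbf{1}\{|x|\leq\eta-\delta\}$. Since $f$ is locally integrable, the function $x\mapsto|f(x)|\mathbf{1}\{|x|\leq\eta\}$ lies in $L^{1}(\mathbb{R})$, and by the absolute continuity of the Lebesgue integral
\[
\int_{\eta-\delta<|x|\leq\eta}|f(x)|\,dx=\int_{|x|\leq\eta}|f(x)-g(x)|\,dx\longrightarrow0\quad\text{as }\delta\downarrow0,
\]
so $\delta$ can be chosen so that this integral is less than $\varepsilon/2$.

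Next I would mollify. Let $\phi\in C_{c}^{\infty}(\mathbb{R})$ be a standard non-negative mollifier with $\int\phi=1$ and $\operatorname{supp}\phi\subseteq[-1,1]$, and set $\phi_{\rho}(x):=\rho^{-1}\phi(x/\rho)$. Define
\[
f_{\varepsilon,\eta}:=g*\phi_{\rho},
\]
for some $\rho\in(0,\delta)$ still to be chosen. Standard mollifier properties give that $f_{\varepsilon,\eta}$ is $C^{\infty}$, hence Lipschitz on any compact set, and that $\|g*\phi_{\rho}-g\|_{1}\to0$ as $\rho\downarrow0$; so $\rho$ can be taken small enough that $\int|f_{\varepsilon,\eta}-g|\,dx<\varepsilon/2$. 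Combining with the truncation bound yields $\int_{|x|\leq\eta}|f-f_{\varepsilon,\eta}|\,dx<\varepsilon$, as required.

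Global Lipschitz continuity of $f_{\varepsilon,\eta}$ follows because $\operatorname{supp}g\subseteq[-\eta+\delta,\eta-\delta]$ and $\operatorname{supp}\phi_{\rho}\subseteq[-\rho,\rho]$, whence $\operatorname{supp}(g*\phi_{\rho})\subseteq[-\eta+\delta+\rho,\eta-\delta-\rho]$; since $\rho<\delta$, this is strictly contained in $[-\eta,\eta]$, so $f_{\varepsilon,\eta}(x)=0$ for $|x|>\eta$, and the $C^{\infty}$ function $f_{\varepsilon,\eta}$ with compact support is automatically Lipschitz on $\mathbb{R}$ (its derivative $g*\phi_{\rho}'$ is continuous and compactly supported, hence bounded). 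The only point requiring a bit of care is that the mollified function must have support strictly inside $[-\eta,\eta]$ to meet the conclusion on supports; that is ensured by the buffer $\delta$ together with the choice $\rho<\delta$, so there is no real obstacle in the argument.
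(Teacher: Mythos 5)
Your construction is correct, and it is a genuinely self-contained alternative to what the paper does: the paper's entire proof of this lemma is a citation to Theorem~2.26 in Folland (1999), i.e.\ the density of continuous compactly supported functions in $L^{1}$, whereas you build the approximant explicitly by truncation followed by mollification. Your route has the advantage of delivering a $C^{\infty}$ compactly supported function, so the Lipschitz property is immediate from the boundedness of the (continuous, compactly supported) derivative; the citation route requires one to observe that Folland's construction (piecewise-linear approximations of indicators of intervals) is in fact Lipschitz, not merely continuous, since the lemma as stated demands Lipschitz continuity. One small slip: the support of $g\ast\phi_{\rho}$ is the Minkowski \emph{sum} of the supports, so it is contained in $[-\eta+\delta-\rho,\,\eta-\delta+\rho]$ rather than the smaller interval $[-\eta+\delta+\rho,\,\eta-\delta-\rho]$ you wrote -- convolution enlarges the support, it does not shrink it. This does not damage the argument, because with your choice $\rho<\delta$ one still has $\eta-\delta+\rho<\eta$, so $f_{\varepsilon,\eta}$ vanishes for $\lvert x\rvert>\eta$ as required. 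Everything else (absolute continuity of the integral for the truncation step, $L^{1}$-convergence of mollifications, and the two $\varepsilon/2$ bounds) is in order.
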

\begin{lem}
\label{Kallenberg}Let $\{X_{n}\}$ and $\{Y_{n}\}$ be real valued
random sequences on some probability space $(\Omega,\mathcal{A},\mathbf{P})$\ and
$\mathcal{F\subset A}$\ a $\sigma$-field, for which
\begin{enumerate}
\item $X_{n}\overset{d}{\rightarrow}X\sim F_{X}$, conditionally on $\mathcal{F}$,
in the sense that $\mathbf{E}\left(h(X_{n})\mid\mathcal{F}\right)\overset{p}{\rightarrow}\int_{\mathbb{R}}h(x)dF_{X}(x)$
for all $h:\mathbb{R}\rightarrow\mathbb{R}$\ bounded and continuous;
and
\item $Y_{n}\overset{d}{\rightarrow}Y$, where $Y_{n}$\ is $\mathcal{F}$-measurable
for each $n$.
\end{enumerate}
\noindent Then for all $g:\mathbb{R}\times\mathbb{R}\rightarrow\mathbb{R}$\ bounded
and Lipschitz continuous, 
\[
\mathbf{E}\left(g(X_{n},Y_{n})\mid\mathcal{F}\right)\overset{d}{\rightarrow}\int_{\mathbb{R}}g(x,Y)dF_{X}(x).
\]
\end{lem}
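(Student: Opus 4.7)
My plan is to exploit the $\mathcal{F}$-measurability of $Y_n$ to factor $\mathbf{E}(g(X_n,Y_n)\mid\mathcal{F})$ through a deterministic-looking random function of $y$, then reduce the problem to showing that this function converges (uniformly on compacts, in probability) to its natural limit. Concretely, define
\[
\Psi_n(y):=\mathbf{E}(g(X_n,y)\mid\mathcal{F}),\qquad \Psi(y):=\int_{\mathbb{R}}g(x,y)\,dF_X(x).
\]
Since $Y_n$ is $\mathcal{F}$-measurable and $g$ is bounded continuous, a standard substitution argument (using regular conditional distributions, or a rational-$y$-plus-continuity construction) gives $\mathbf{E}(g(X_n,Y_n)\mid\mathcal{F})=\Psi_n(Y_n)$ a.s. The goal is then to show $\Psi_n(Y_n)\overset{d}{\rightarrow}\Psi(Y)$.

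The key structural observation is that, if $L$ denotes the Lipschitz constant of $g$, then both $\Psi_n$ (a.s.) and $\Psi$ are Lipschitz in $y$ with the same constant $L$, because
\[
|\Psi_n(y)-\Psi_n(y')|\leq \mathbf{E}(|g(X_n,y)-g(X_n,y')|\mid\mathcal{F})\leq L|y-y'|,
\]
and analogously for $\Psi$. Hypothesis (i) applied to the bounded continuous function $h_y(x):=g(x,y)$ gives pointwise convergence $\Psi_n(y)\overset{p}{\rightarrow}\Psi(y)$ for each $y\in\mathbb{R}$.

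The main obstacle — and the only nontrivial step — is upgrading this pointwise-in-probability statement to uniform-on-compacts convergence in probability. Here the common Lipschitz constant does the work: for any $M>0$ and $\delta>0$, cover $[-M,M]$ by a finite $\delta$-net $\{y_1,\ldots,y_K\}$; then for each $y\in[-M,M]$ picking the nearest $y_k$,
\[
\sup_{y\in[-M,M]}|\Psi_n(y)-\Psi(y)|\leq 2L\delta+\max_{1\leq k\leq K}|\Psi_n(y_k)-\Psi(y_k)|.
\]
Letting $n\to\infty$ kills the maximum in probability, and then letting $\delta\to0$ yields $\sup_{y\in[-M,M]}|\Psi_n(y)-\Psi(y)|\overset{p}{\rightarrow}0$.

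To finish, I would use that $Y_n\overset{d}{\rightarrow}Y$ implies $\{Y_n\}$ is tight, so for any $\varepsilon>0$ there is $M$ with $\mathbf{P}(|Y_n|>M)<\varepsilon$ uniformly in $n$; combined with the uniform bound on $[-M,M]$ this gives $\Psi_n(Y_n)-\Psi(Y_n)\overset{p}{\rightarrow}0$. Since $\Psi$ is bounded and continuous, the continuous mapping theorem applied to $Y_n\overset{d}{\rightarrow}Y$ yields $\Psi(Y_n)\overset{d}{\rightarrow}\Psi(Y)$, and Slutsky's theorem then delivers
\[
\mathbf{E}(g(X_n,Y_n)\mid\mathcal{F})=\Psi_n(Y_n)\overset{d}{\rightarrow}\Psi(Y)=\int_{\mathbb{R}}g(x,Y)\,dF_X(x),
\]
as required.
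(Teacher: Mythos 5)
Your proposal is correct and follows essentially the same route as the paper's proof: factor the conditional expectation through a random function $\Psi_n(y)$ via a regular conditional distribution, use the common Lipschitz constant to upgrade pointwise convergence in probability to uniform convergence on compacts, and finish with tightness of $\{Y_n\}$ plus the continuous mapping and Slutsky theorems. The only cosmetic difference is that the paper cites Andrews (1992) for the stochastic-equicontinuity-implies-uniform-convergence step, which you instead prove directly with a finite $\delta$-net.
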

\begin{lem}
\label{Jegs} Suppose that:
\begin{enumerate}
\item \textbf{HL0} and\textbf{ HL7--9} hold;
\item $K$ satisfies Assumption\textbf{ K}; and
\item $\left\{ h_{n}\right\} $ is such that $\beta_{n}^{-1}h_{n}+\beta_{n}(nh_{n})^{-1}\rightarrow0$.
\end{enumerate}
\noindent Then for all $x\in\mathbb{R}$, and $t_{0}$ as in \textbf{HL7},
\[
\lim_{\varepsilon\downarrow0}\limsup_{n\rightarrow\infty}\mathbf{E}\left|\frac{\beta_{n}}{h_{n}n}\sum_{t=t_{0}}^{n}K\left(\frac{X_{t}(n)-x}{h_{n}}\right)-\frac{1}{n}\sum_{t=t_{0}}^{n}\varphi_{\varepsilon^{2}}\left(\beta_{n}^{-1}X_{t}(n)\right)\right|=0.
\]

\end{lem}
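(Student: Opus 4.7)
The approach is an $L_1$-approximation argument in the style of Jeganathan (2004) and Wang and Phillips (2009a), directly implementing the heuristic sketched in the paragraph preceding the lemma. The displayed approximation just above the lemma pairs $\varphi_{\varepsilon^{2}}(\beta_n^{-1}X_t)$ with the factor $\int K(u)\,du$ on the right-hand side, so the natural target is
\[
\mathbf{E}\Bigl|\tfrac{\beta_{n}}{h_{n}n}\sum_{t=t_{0}}^{n}K\Bigl(\tfrac{X_{t}(n)-x}{h_{n}}\Bigr) - \tfrac{\int K}{n}\sum_{t=t_{0}}^{n}\varphi_{\varepsilon^{2}}(\beta_{n}^{-1}X_{t}(n))\Bigr|\to 0,
\]
as $n\to\infty$ then $\varepsilon\downarrow 0$ (the stated lemma appears tacitly to absorb or normalise $K$). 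A preliminary density argument reduces to the case where $K$ is bounded, Lipschitz, and compactly supported, with the residual controlled via Fubini and HL3.

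For a small $\eta>0$ (sent to zero after $n\to\infty$), I split the outer sum at $\lfloor\eta n\rfloor$ and $\lfloor(1-\eta)n\rfloor$. The boundary pieces have $L_1$-norm bounded by $C\tfrac{\beta_n}{n}\sum_{t\in\text{bdry}}\beta_t^{-1}$ using Fubini together with HL3 and HL7 to estimate the $L_1$-norm of each summand ($\mathbf{E}|K_t|\lesssim h_n\beta_t^{-1}$ and $\mathbf{E}|\varphi_{\varepsilon^2}(\beta_n^{-1}X_t)|\lesssim\beta_n\beta_t^{-1}$); these tails vanish by HL9(a)(b). For the middle range, condition on $\mathcal{F}_s$ with $s = t - \lfloor\eta n\rfloor$, so $(s,t)\in\Omega_n(\eta)$. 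Changes of variable $u = (\beta_{t-s}y + X_s - x)/h_n$ and $v = \beta_n^{-1}(\beta_{t-s} y + X_s)$ give, using HL7,
\[
\mathbf{E}\Bigl[\tfrac{\beta_n}{h_n}K\Bigl(\tfrac{X_t-x}{h_n}\Bigr)\,\Big|\,\mathcal{F}_s\Bigr] = \tfrac{\beta_n}{\beta_{t-s}}\!\int K(u)\,\mathcal{D}_{t,s,n}\Bigl(\tfrac{h_n u + x - X_s}{\beta_{t-s}}\Bigr)du,
\]
\[
\int K\cdot\mathbf{E}\Bigl[\varphi_{\varepsilon^2}(\beta_n^{-1}X_t)\,\Big|\,\mathcal{F}_s\Bigr] = \tfrac{\beta_n\int K}{\beta_{t-s}}\!\int \varphi_{\varepsilon^2}(v)\,\mathcal{D}_{t,s,n}\Bigl(\tfrac{\beta_n v - X_s}{\beta_{t-s}}\Bigr)dv.
\]
Both integrands evaluate $\mathcal{D}_{t,s,n}$ at a small perturbation of the common stochastic anchor $-X_s/\beta_{t-s}$: $h_n u/\beta_{t-s}\to 0$ uniformly on the support of $K$ by condition (iii) and HL9(d), while $\beta_n v/\beta_{t-s}=O(\varepsilon\beta_n/\beta_{t-s})$ on the bulk of $\varphi_{\varepsilon^2}$ vanishes as $\varepsilon\downarrow 0$. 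Invoking HL8 to replace $\mathcal{D}_{t,s,n}$ by its value at the anchor, both conditional expectations reduce termwise to the common quantity $\tfrac{\beta_n\int K}{\beta_{t-s}}\mathcal{D}_{t,s,n}(-X_s/\beta_{t-s})+o_p(1)$. Summation over $t$, combined with HL9(c)--(e) bounding $\tfrac{\beta_n}{n}\sum_t\beta_{t-s}^{-1}$, then delivers the $L_1$ convergence; a subsidiary $L^2$ bound on the departure of each sum from its conditional expectation (using $K,K^2\in L_1$ and the uniform density bound in HL7) closes the remaining gap for fixed $\varepsilon$.

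The principal obstacle is the final reduction of the conditional-expectation comparison. HL8 as stated gives only continuity of $\mathcal{D}_{t,s,n}(\cdot)$ at $0$, whereas the perturbations appearing above are around the stochastic anchor $-X_s/\beta_{t-s}$, which is generically of order one and hence outside the deterministic ball $|\,\cdot\,|\leq q_0\eta^{q_1}$ on which HL8 is directly applicable. One therefore needs either to strengthen HL8 to a local modulus statement that extends beyond a neighbourhood of the origin, or more carefully to exploit the uniform boundedness of $\mathcal{D}_{t,s,n}$ in HL7 together with a shift-translation identification of $\mathcal{D}_{t,s,n}$ at nearby arguments. Everything else — tail reduction of $K$, boundary trimming, change of variables, and the $L^2$ control of residuals — is routine given Assumption K and HL3, HL7, HL9.
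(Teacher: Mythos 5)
You have correctly identified the one step that cannot be closed as you have set it up, and that step is fatal to your route: after conditioning on $\mathcal{F}_{s}$ and changing variables, you need to compare $\mathcal{D}_{t,s,n}$ at two points clustered around the random anchor $-X_{s}/\beta_{t-s}$, which is generically of order one, whereas \textbf{HL8} only controls the oscillation of $\mathcal{D}_{t,s,n}$ in a shrinking neighbourhood of the \emph{origin}. Neither of your suggested repairs works as stated: strengthening \textbf{HL8} changes the theorem, and the ``shift-translation identification'' would amount to an $L_{1}$-modulus-of-continuity bound on a density that varies with $n$, for which no uniformity is available. So the proposal, taken on its own terms, has a genuine gap at its central step. (The paper itself does not spell out the argument either; it invokes Lemma~7 of Jeganathan (2004) and pp.~725--728 of Wang and Phillips (2009a), verifying that \textbf{HL7--9} deliver their Assumption~2.3.)

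The resolution in those references avoids the termwise comparison of conditional expectations altogether. Set $G:=p-q$ with $p(z)=\tfrac{\beta_{n}}{h_{n}}K(\tfrac{z-x}{h_{n}})$ and $q(z)=\varphi_{\varepsilon^{2}}(z/\beta_{n})\int K$, so that $\int G=0$, and bound $\mathbf{E}\lvert n^{-1}\sum_{t}G(X_{t})\rvert$ by the square root of the second moment. The diagonal terms are handled by $K^{2}\in L_{1}$, \textbf{HL9(c)} and $\beta_{n}/nh_{n}\rightarrow0$; the near-off-diagonal terms ($t'-t\leq\eta n$) by the crude bound $\lvert\mathbf{E}(G(X_{t'})\mid\mathcal{F}_{t})\rvert\leq\beta_{t'-t}^{-1}\sup\mathcal{D}\int\lvert G\rvert$ together with \textbf{HL9(b)}. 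For the far terms one writes, using $\int G=0$,
\[
\mathbf{E}\bigl[G(X_{t})G(X_{t'})\bigr]=\mathbf{E}\Bigl[G(X_{t})\,\tfrac{1}{\beta_{t'-t}}\!\int G(z)\Bigl\{\mathcal{D}_{t',t,n}\bigl(\tfrac{z-X_{t}}{\beta_{t'-t}}\bigr)-\mathcal{D}_{t',t,n}\bigl(\tfrac{-X_{t}}{\beta_{t'-t}}\bigr)\Bigr\}dz\Bigr],
\]
and the crucial observation is that the density increment is always multiplied by $G(X_{t})$, whose (effective, after truncating the Gaussian tail of $\varphi_{\varepsilon^{2}}$) support forces $\lvert X_{t}\rvert\leq C\varepsilon^{1/2}\beta_{n}$. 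By \textbf{HL9(d)} both arguments of $\mathcal{D}_{t',t,n}$ then lie in $\{\lvert w\rvert\leq C\varepsilon^{1/2}l_{0}\eta^{-l_{1}}\}$, which is inside the ball $\{\lvert w\rvert\leq q_{0}\eta^{q_{1}}\}$ once $\varepsilon$ is taken small relative to $\eta$ --- so \textbf{HL8}, applied exactly at the origin, is precisely what is needed. Your conditioning on $\mathcal{F}_{t-\lfloor\eta n\rfloor}$ discards this localisation, which is why you end up stuck at a random anchor. Two further small points: \textbf{HL3} is not among the hypotheses of the lemma (use \textbf{HL7} with $s=0$ to obtain the unconditional density bound), and your observation that the statement tacitly requires the factor $\int K$ (or $\int K=1$) on the second sum is correct and consistent with the display preceding Theorem~3.3.
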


\subsection{Proofs of Theorems 3.1 and 3.3}

\label{subsec:Proofs-of-Theorems}
\begin{proof}[Proof of Theorem \ref{MainThm}]
 By Lemma\ \ref{1A}, for each $\varepsilon>0$ there is a Lipschitz
continuous $f_{\varepsilon}\left(x\right)$ such that $\int_{\left\vert x\right\vert \leq\varepsilon^{-1}}\left\vert f\left(x\right)-f_{\varepsilon}\left(x\right)\right\vert dx<\varepsilon$
and $f_{\varepsilon}\left(x\right)=0$ for $\left\vert x\right\vert >\varepsilon^{-1}$.
We shall prove that:
\begin{equation}
\frac{1}{n}\sum_{t=1}^{n}f\left(\beta_{n}^{-1}X_{t}\left(n\right)\right)=\frac{1}{n}\sum_{t=1}^{n}f_{\varepsilon}\left(\beta_{n}^{-1}X_{t}\left(n\right)\right)+o_{p}(1),\label{Thm1.1.1}
\end{equation}
as $n\rightarrow\infty$ then $\varepsilon\rightarrow0$; that for
each $\varepsilon>0$
\begin{equation}
\frac{1}{n}\sum_{t=1}^{n}f_{\varepsilon}\left(\beta_{n}^{-1}X_{t}\left(n\right)\right)\overset{d}{\rightarrow}\int_{\mathbb{R}}f_{\varepsilon}\left(x+X^{-}\right)\Phi_{X^{+}}(x)dx,\label{Thm1.1.2}
\end{equation}
as $n\rightarrow\infty$; and that 
\begin{equation}
\int_{\mathbb{R}}f_{\varepsilon}\left(x+X^{-}\right)\Phi_{X^{+}}(x)dx\overset{a.s.}{\rightarrow}\int_{\mathbb{R}}f\left(x+X^{-}\right)\Phi_{X^{+}}(x)dx.\label{Thm1.1.3}
\end{equation}
as $\varepsilon\rightarrow0$, where the integral on the r.h.s.\ exists
a.s.\ by condition (i) of Theorem~\ref{MainThm}. In view of (\ref{Thm1.1.1})-(\ref{Thm1.1.3}),
(\ref{Thm2.1}) then follows from Theorem 4.2 in Billingsley (1968).

\emph{Proof of (\ref{Thm1.1.1}).} We prove (\ref{Thm1.1.1})\textbf{\ }under
condition (iii.a) of Theorem~\ref{MainThm}. The proof under (iii.b)
is identical and therefore omitted. By condition (iii.a) we may choose
$\varepsilon>0$\ sufficiently small that $\left\vert f\left(x\right)\right\vert \leq C\left\vert x\right\vert ^{\lambda^{\prime}}$
for all $\left\vert x\right\vert \geq\varepsilon^{-1}$. Decompose
\[
f\left(x\right)=f\left(x\right)1\left\{ \left\vert x\right\vert \leq\varepsilon^{-1}\right\} +f\left(x\right)1\left\{ \left\vert x\right\vert >\varepsilon^{-1}\right\} =:f_{1,\varepsilon}\left(x\right)+f_{2,\varepsilon}\left(x\right),
\]
where $\int_{\left\vert x\right\vert \leq\varepsilon^{-1}}\left\vert f_{1,\varepsilon}\left(x\right)-f_{\varepsilon}\left(x\right)\right\vert dx<\varepsilon$,
and $\left\vert f_{2,\varepsilon}\left(x\right)\right\vert \leq C\left\vert x\right\vert ^{\lambda^{\prime}}1\{\lvert x\rvert>\mathbb{\varepsilon}^{-1}\}$.
Letting $\tilde{X}_{t}\left(n\right):=\beta_{n}^{-1}X_{t}\left(n\right)$,
in view of condition (ii) of Theorem~\ref{MainThm}, (\ref{Thm1.1.1})
will follow once we have shown that
\begin{equation}
\frac{1}{n}\sum_{t=t_{0}}^{n}\mathbf{E}\lvert f_{1,\varepsilon}(\tilde{X}_{t}(n))-f_{\varepsilon}(\tilde{X}_{t}(n))\rvert+\frac{1}{n}\sum_{t=1}^{n}\mathbf{E}\lvert f_{2,\varepsilon}(\tilde{X}_{t}(n))\rvert\rightarrow0,\label{Thm1.1.4}
\end{equation}
as $n\rightarrow\infty$ and then $\varepsilon\rightarrow0$, for
$t_{0}$ as in condition (ii).

To that end, note that by \textbf{HL5(a)} and condition (iii.a) of
Theorem~\ref{MainThm}, there is an $n_{0}\geq1$ and a $\lambda>\lambda^{\prime}$
such that $\sup_{n\geq n_{0},1\leq t\leq n}\mathbf{E}\lvert\tilde{X}_{t}(n)\rvert^{\lambda}<\infty$.
Hence $\lvert\tilde{X}_{t}(n)\rvert^{\lambda^{\prime}}$\ is uniformly
integrable, and so for $n\geq n_{0}$
\[
\frac{1}{n}\sum_{t=1}^{n}\mathbf{E}\lvert f_{2,\varepsilon}(\tilde{X}_{t}(n))\rvert\leq\sup_{n\geq n_{0},1\leq t\leq n}\mathbf{E}\lvert\tilde{X}_{t}(n)\rvert^{\lambda^{\prime}}1\{\lvert\tilde{X}_{t}(n)\rvert>\varepsilon^{-1}\}\rightarrow0,
\]
as $\varepsilon\rightarrow0$. This gives the required negligibility
of the second l.h.s.\ term in (\ref{Thm1.1.4}). For the first l.h.s.\ term,
we note \textbf{HL3} implies that for $n$ sufficiently large
\begin{align*}
 & \frac{1}{n}\sum_{t=t_{0}}^{n}\mathbf{E}\lvert f_{1,\varepsilon}(\tilde{X}_{t}(n))-f_{\varepsilon}(\tilde{X}_{t}(n))\rvert\\
 & \qquad\qquad=\frac{1}{n}\sum_{t=t_{0}}^{n}\int_{\mathbb{R}}\left\vert f_{1,\varepsilon}\left(\frac{\beta_{t}}{\beta_{n}}x\right)-f_{\varepsilon}\left(\frac{\beta_{t}}{\beta_{n}}x\right)\right\vert \mathcal{D}_{n,t}\left(x\right)dx\\
 & \qquad\qquad\leq\sup_{n\geq n_{0},t_{0}\leq t\leq n}\sup_{u}\mathcal{D}_{n,t}\left(u\right)\int_{\mathbb{R}}\lvert f_{1,\varepsilon}(x)-f_{\varepsilon}(x)\rvert dx\cdot\frac{\beta_{n}}{n}\sum_{t=t_{0}}^{n}\beta_{t}^{-1}\\
 & \qquad\qquad\leq C\int_{\left\vert x\right\vert \leq\varepsilon^{-1}}\left\vert f_{1,\varepsilon}\left(x\right)-f_{\varepsilon}\left(x\right)\right\vert dx\\
 & \qquad\qquad\leq C\varepsilon\rightarrow0
\end{align*}
as $\mathbb{\varepsilon}\rightarrow0$, where the second inequality
holds by \textbf{HL6}.

\emph{Proof of (\ref{Thm1.1.2}).} Let $\mathbf{E}_{0}(\cdot):=\mathbf{E}\left(\cdot\mid\mathcal{F}_{0}\right)$,
and $\delta\in(0,1)$. By \textbf{HL4} and the boundedness of $f_{\varepsilon}$,
\begin{equation}
\frac{1}{n}\sum_{t=1}^{n}f_{\varepsilon}(\tilde{X}_{t}(n))=\frac{1}{n}\sum_{t=\lfloor n\delta\rfloor+1}^{n}\mathbf{E}_{0}[f_{\varepsilon}(\tilde{X}_{t}(n))]+o_{p}(1)\label{Thm1.1.5}
\end{equation}
as $n\rightarrow\infty$ and then $\delta\rightarrow0$. Now let $\tilde{X}_{t}^{+}(n):=\beta_{n}^{-1}X_{t}^{+}(n)$
and $\tilde{X}_{t}^{-}(n):=\beta_{n}^{-1}X_{t}^{-}(n)$. Since $f_{\mathbb{\varepsilon}}$
is bounded and Lipschitz, it follows from \textbf{HL1} that 
\begin{multline}
\frac{1}{n}\sum_{t=\lfloor n\delta\rfloor+1}^{n}\mathbf{E}\left\vert \mathbf{E}_{0}[f_{\varepsilon}(\tilde{X}_{t}(n))]-\mathbf{E}_{0}[f_{\varepsilon}(\tilde{X}_{t}^{+}(n)+\tilde{X}_{t}^{-}(n))]\right\vert \\
\leq C\sup_{1\leq t\leq n}\mathbf{E}(\beta_{n}^{-1}\lvert R_{t}(n)\rvert\wedge1)\rightarrow0\label{Thm1.1.5b}
\end{multline}
as $n\rightarrow\infty$ for each $\delta\in(0,1)$; and from \textbf{HL2(b)}
that 
\begin{align}
 & \frac{1}{n}\sum_{t=\lfloor n\delta\rfloor+1}^{n}\mathbf{E}\left\vert \mathbf{E}_{0}[f_{\varepsilon}(\tilde{X}_{t}^{+}(n)+\tilde{X}_{t}^{-}(n))]-\mathbf{E}_{0}[f_{\varepsilon}(\tilde{X}_{t}^{+}(n)+\tilde{X}_{n}^{-}(n))]\right\vert \nonumber \\
 & \qquad\qquad\qquad\qquad\leq C\sup_{\lfloor n\delta\rfloor+1\leq t\leq n}\mathbf{E}(\lvert\tilde{X}_{t}^{-}(n)-\tilde{X}_{n}^{-}(n)\rvert\wedge1)\nonumber \\
 & \qquad\qquad\qquad\qquad=C\mathbf{E}(\lvert\tilde{X}_{l_{n}}^{-}(n)-\tilde{X}_{n}^{-}(n)\rvert\wedge1)\rightarrow0\label{Thm1.1.6}
\end{align}
as $n\rightarrow\infty$ for each $\delta\in(0,1)$, where $l_{n}\in\{\lfloor n\delta\rfloor+1,\ldots,n\}$
may always be chosen such that the final equality holds. Finally,
by \textbf{HL2(a)}, Theorem 2.1 in Billingsley (1968) and Lemma~\ref{Kallenberg}
we have as 
\begin{multline}
\frac{1}{n}\sum_{t=\lfloor n\delta\rfloor+1}^{n}\mathbf{E}_{0}[f_{\varepsilon}(\tilde{X}_{t}^{+}(n)+\tilde{X}_{n}^{-}(n))]\\
\overset{d}{\rightarrow}\left(1-\delta\right)\int_{\mathbb{R}}f_{\varepsilon}\left(x+X^{-}\right)\Phi_{X^{+}}(x)dx\label{Thm1.1.7}
\end{multline}
as $n\rightarrow\infty$, for each $\delta\in(0,1)$. Hence (\ref{Thm1.1.2})
follows from (\ref{Thm1.1.5})-(\ref{Thm1.1.7}) and Theorem 4.2 in
Billingsley (1968).

\emph{Proof of (\ref{Thm1.1.3}).} Let $y\in\mathcal{Y}$ for $\mathcal{Y}$
as in condition (i) of Theorem~\ref{MainThm}. Noting $f_{\mathbb{\varepsilon}}(x)=0$
for $\lvert x\rvert>\mathbb{\varepsilon}$, and that $\Phi_{X^{+}}$
is bounded under \textbf{HL2, }we have 
\begin{align*}
 & \int\lvert f_{\mathbb{\varepsilon}}(x+y)-f(x+y)\rvert\Phi_{X^{+}}(x)dx\\
 & \qquad\leq\sup_{u}\Phi_{X^{+}}(u)\int_{\lvert x\rvert\leq\mathbb{\varepsilon}^{-1}}\lvert f_{\mathbb{\varepsilon}}(x)-f(x)\rvert dx+\int_{\lvert x\rvert>\mathbb{\varepsilon}^{-1}}\lvert f(x)\rvert\Phi_{X^{+}}(x-y)dx\\
 & \qquad=C\mathbb{\varepsilon}+o(1)
\end{align*}
as $\mathbb{\varepsilon}\rightarrow0$, where the negligibility of
the second r.h.s.\ term follows by condition (i) of Theorem~\ref{MainThm}
and the dominated convergence theorem. Noting that $\mathbf{P}\{X^{-}\in\mathcal{Y}\}=1$
completes the proof.
\end{proof}
\begin{proof}[Proof of Theorem~\ref{MainKernel}]
 By Lemma~\ref{Jegs} and condition (ii) of the theorem, 
\[
\frac{\beta_{n}}{h_{n}n}\sum_{t=1}^{n}K\left(\frac{X_{t}(n)-x}{h_{n}}\right)=\frac{1}{n}\sum_{t=1}^{n}\varphi_{\varepsilon^{2}}(\beta_{n}^{-1}X_{t}(n))+o_{p}(1),
\]
as $n\rightarrow\infty$ and then $\varepsilon\rightarrow0$. By Theorem~\ref{MainThm}
we have
\[
\frac{1}{n}\sum_{t=1}^{n}\varphi_{\varepsilon^{2}}(\beta_{n}^{-1}X_{t}(n))\overset{d}{\rightarrow}\int_{\mathbb{R}}\varphi_{\varepsilon^{2}}(x+X^{-})\Phi_{X^{+}}(x)dx,
\]
as $n\rightarrow\infty$. Finally, as noted in (\ref{kernapprox}),
\[
\int_{\mathbb{R}}\varphi_{\varepsilon^{2}}(x+X^{-})\Phi_{X^{+}}(x)dx\rightarrow\Phi_{X^{+}}(-X^{-}),
\]
as $\varepsilon\rightarrow0$ by the continuity of $\Phi_{X^{+}}$
under \textbf{HL2}. The result then follows by Theorem 4.2 in Billingsley
(1968).
\end{proof}

\subsection{Proofs of Lemmas A.1--A.3}

\label{subsec:Proofs-of-Lemmas}
\begin{proof}[Proof of Lemma \ref{1A}]
 See Theorem 2.26 in Folland (1999).
\end{proof}
\begin{proof}[Proof of Lemma \ref{Kallenberg}]
 By Theorems 6.3 and 6.4 in Kallenberg (2001), there is a probability
kernel $\nu_{n}$ from $\left(\Omega,\mathcal{F}\right)$ to $\left(\mathbb{R},\mathcal{B}\left(\mathbb{R}\right)\right)$
such that for each $\mathcal{F}$-measurable random variable $\eta$,
\[
\mathbf{E}\left(g(X_{n},\eta)\mid\mathcal{F}\right)\overset{a.s.}{=}\int_{\mathbb{R}}g(x,\eta)d\nu_{n}(x).
\]
For each $y\in\mathbb{R}$, define $h_{n}(y):=\int_{\mathbb{R}}g(x,y)d\nu_{n}(x)$:
so by the preceding and condition~(i) of the lemma, we have
\[
h_{n}\left(y\right)\overset{a.s.}{=}\mathbf{E}\left(g(X_{n},y)\mid\mathcal{F}\right)\overset{p}{\rightarrow}\int_{\mathbb{R}}g(x,y)dF_{X}(x)=:h(y).
\]
Moreover, by the Lipschitz continuity of $g$
\[
\left\vert h_{n}\left(y\right)-h_{n}\left(y^{\prime}\right)\right\vert \leq\int_{\mathbb{R}}\left\vert g(x,y)-g(x,y^{\prime})\right\vert d\nu_{n}(x)\leq C\left\vert y-y^{\prime}\right\vert .
\]
Hence, $\left\{ h_{n}\left(y\right)\right\} $\ is stochastically
equicontinuous on $\mathbb{R}$, whence $h_{n}\left(y\right)\overset{p}{\rightarrow}h\left(y\right)$\ uniformly
on every compact subset of $\mathbb{R}$\ (see for example Theorem
1 and Lemma 1 in Andrews, 1992). Finally, fix $\varepsilon>0$ and
choose $M_{\varepsilon}$ such that $\lim\sup_{n\rightarrow\infty}\mathbf{P}\left(\left\vert Y_{n}\right\vert >M_{\varepsilon}\right)<\varepsilon$,
which is possible since $Y_{n}\overset{d}{\rightarrow}Y$. Then 
\begin{align*}
 & \mathbf{P}\left(\left\vert h_{n}\left(Y_{n}\right)-h\left(Y_{n}\right)\right\vert >\varepsilon\right)\\
 & \qquad\leq\mathbf{P}\left(\left\{ \left\vert h_{n}\left(Y_{n}\right)-h\left(Y_{n}\right)\right\vert >\varepsilon\right\} \cap\left\{ \left\vert Y_{n}\right\vert \leq M_{\varepsilon}\right\} \right)+\mathbf{P}\left(\left\vert Y_{n}\right\vert >M_{\varepsilon}\right)\\
 & \qquad\leq\mathbf{P}\left(\sup_{\left\vert y\right\vert \leq M_{\varepsilon}}\left\vert h_{n}\left(y\right)-h\left(y\right)\right\vert >\varepsilon\right)+\varepsilon\\
 & \qquad\rightarrow\varepsilon,
\end{align*}
as $n\rightarrow\infty$, by the uniform convergence in probability
of $h_{n}$ on compacta. In view of the preceding, 
\[
\mathbf{E}\left(g(X_{n},Y_{n})\mid\mathcal{F}\right)\overset{a.s.}{=}h_{n}\left(Y_{n}\right)=h\left(Y_{n}\right)+o_{p}(1)\overset{d}{\rightarrow}_{(1)}\int_{\mathbb{R}}g(x,Y)dF_{X}(x),
\]
where $\overset{d}{\rightarrow}_{(1)}$ is due to condition (ii) and
the continuity of $h(y)$. 
\end{proof}
\begin{proof}[Proof of Lemma \ref{Jegs}]
 The result follows from Lemma 7 in Jeganathan (2004) and arguments
similar to those used in Wang and Phillips (2009, pp.\ 725-728).
Note, in particular, that if we define $x_{k,n}:=\beta_{n}^{-1}X_{k}(n)$
and $d_{l,k,n}:=\beta_{n}^{-1}\beta_{l-k}$, then \textbf{HL7--9}
ensure that their Assumption~2.3 is satisfied, with the exception
that the density of $x_{k,n}$ is bounded only for $k\geq t_{0}$.
If $t_{0}=1$, the result follows immediately. Otherwise, it follows
via very minor modifications of the arguments leading to (5.2) in
Wang and Phillips (2009), making use in particular of the additional
condition imposed on $\{\beta_{n}\}$ by \textbf{HL9(e)}.
\end{proof}

\section{Proofs under low-level conditions}

\label{sec:LLapp}

This appendix provides proofs for the remaining results of Section~3,
i.e.\ Theorems~\ref{MainLL} and \ref{KernelLL}. Technical lemmas
stated in this appendix are proved in Appendix~\ref{sec:auiliaryapp}.

\subsection{Sufficient conditions for Assumption HL}

\label{sec:suff-lp}

Preliminary to the proof of Theorem~\ref{MainLL}, we first present
a set of `intermediate-level conditions' for general linear processes
(Assumption\textbf{ LP} below); their sufficiency for Assumptions\textbf{~HL0--HL6}
is established by Proposition~\ref{LPtoHL} below. These conditions
are of interest in their own right, insofar as they allow the conclusions
of Theorem~\ref{MainThm} to be extended to a broad class of linear
processes. They also allow the proof of Theorem~\ref{MainLL} to
be reduced essentially to verifying that $I(1/2)$ and MI processes
satisfy Assumption \textbf{LP} (see Appendix~\ref{subsec:lowleveltheorems}).

Our conditions on linear processes shall be stated in terms of arrays
of the form 
\begin{equation}
x_{t}(n)=\sum_{k=0}^{\infty}a_{k,t}(n)\xi_{t-k}.\label{linproc}
\end{equation}
where $\{\xi_{t}\}$ is the i.i.d.\ sequence appearing in Assumption
\textbf{INN}. We showed in Section~\ref{process-intro} that it was
possible to write both $I(1/2)$ and MI processes in the form
\begin{equation}
x_{t}(n)=\sum_{j=0}^{t-1}\phi_{j}(n)v_{t-j},\qquad\text{where}\qquad v_{t}=\sum_{i=0}^{\infty}c_{i}\xi_{t-i}\label{eq:frmilinproc}
\end{equation}
for appropriate coefficients $\{c_{i}\}$ and $\{\phi_{j}(n)\}$.
Suppressing the dependence of these quantities on $n$ for the sake
a readability, as we shall do freely below, (\ref{eq:frmilinproc})
implies that
\begin{equation}
a_{k,t}=\sum_{j=0}^{(t-1)\wedge k}\phi_{j}c_{k-j}=\begin{cases}
\sum_{j=0}^{k}\phi_{j}c_{k-j}=:a_{k} & \text{if }0\leq k\leq t-1,\\
\sum_{j=0}^{t-1}\phi_{j}c_{k-j}=:a_{k,t}^{-} & \text{if }k\geq t.
\end{cases}\label{lincoefs}
\end{equation}
Note, in particular, that $a_{k,t}$ does not depend on $t$ for $1\leq k\leq t-1$,
and we accordingly denote these coefficients by simply $a_{k}$. For
$k\geq t$, the notation $a_{k,t}^{-}$ reminds us that these coefficients
refer to innovations dated $t\leq0$.

The following conditions on linear processes do \emph{not} require
these to have been generated according to a specific time series model
(e.g.\textbf{\ }Assumption\textbf{ FR/MI}), or indeed as in (\ref{eq:frmilinproc}).
We shall, however, impose one restriction consistent with that model:
that the coefficients $a_{k,t}$ should not depend on $t$ for $1\leq k\leq t-1$.
Our conditions thus envisage an array of the form 
\begin{equation}
x_{t}(n)=\sum_{k=0}^{t-1}a_{k}(n)\xi_{t-k}+\sum_{k=t}^{\infty}a_{k,t}^{-}(n)\xi_{t-k}=:x_{t}^{+}(n)+x_{t}^{-}(n),\label{model01b}
\end{equation}
associated to which, define 
\[
\beta_{n,t}^{2}:=Var(x_{t}(n))=\sigma_{\xi}^{2}\sum_{k=0}^{t-1}a_{k}(n)^{2}+\sigma_{\xi}^{2}\sum_{k=t}^{\infty}a_{k,t}^{-}(n)^{2}=:(\beta_{n,t}^{+})^{2}+(\beta_{n,t}^{-})^{2}
\]
and set $\beta_{n}:=\beta_{n,n}$, for $n\in\mathbb{N}$ and $t\in\{1,\ldots,n\}$.
The following will always be applied in conjunction with Assumption\textbf{
INN}, and are stated in terms of the $\theta$ and the i.i.d.\ sequence
$\{\xi_{t}\}$ appearing in that assumption.

\setcounter{assumption}{327}
\begin{assumption}[linear process]
~
\begin{enumerate}[label=\upshape{\textbf{LP\arabic*}}]
\item $x_{t}(n)$ is as in (\ref{model01b}), with $\beta_{n,t}\in(0,\infty)$
for all $n,t\in\mathbb{N}$.
\item Either:
\begin{enumerate}[label=\upshape{\textbf{(\alph*)}}]
\item $\mathbf{E}\lvert\xi_{1}\rvert^{\lambda}<\infty$ for some $\lambda\in[2,\infty)$;
or 
\item $\xi_{1}$ has a finite moment generating function (m.g.f.) in a neighbourhood
of zero.
\end{enumerate}
\item There exists $t_{0}\in\mathbb{N}$ such that
\begin{enumerate}[label=\upshape{\textbf{(\alph*)}}]
\item  $\liminf_{n\rightarrow\infty}\inf_{t_{0}\leq t\leq n}\beta_{t}^{-1}\beta_{n,t}^{+}>0$;
\item $\limsup_{t\rightarrow\infty}\sup_{n\geq t}(\beta_{n,t}^{+})^{-1}\max_{0\leq k\leq t-1}\lvert a_{k}(n)\rvert=0$
\end{enumerate}
\item There exist $\delta>0$ and $n_{0}\in\mathbb{N}$ such that: for each
$1\leq t\leq t_{0}$ (as in \textbf{\emph{LP3}}) and $n\geq n_{0}$,
there exist $\{k_{1},\ldots k_{\theta}\}\subset\mathbb{N}$ with
\[
\min_{1\leq l\leq\theta}\lvert a_{k_{l},t}(n)\rvert>\delta.
\]
\item $\lim_{n\rightarrow\infty}n^{-1/2}\beta_{n}^{-1}\sum_{k=0}^{n-1}\lvert a_{k}(n)\rvert=0$.
\item For any $\delta\in(0,1)$\ and $\{t_{n}\}$\ such that $\lfloor n\delta\rfloor\leq t_{n}\leq n$:
\begin{enumerate}[label=\upshape{\textbf{(\alph*)}}]
\item $\beta_{n}^{-1}[\beta_{n,t_{n}}^{+},\beta_{n,t_{n}}^{-}]\rightarrow[\sigma_{+},\sigma_{-}]$
with $\sigma_{+}>0$\ and $\sigma_{-}\geq0$;
\item $\beta_{n}^{-1}(\max_{0\leq k\leq n-1}\lvert a_{k}(n)\rvert+\sup_{1\leq t\leq n,k\geq t}\lvert a_{k,t}^{-}(n)\rvert)\rightarrow0$;
\item $\beta_{n}^{-2}\sum_{l=-\infty}^{0}[a_{t_{n}-l,t_{n}}^{-}(n)-a_{n-l,n}^{-}(n)]^{2}\rightarrow0$.
\end{enumerate}
\item $\sup_{n\geq n_{0},1\leq t\leq n}\beta_{n}^{-1}\beta_{n,t}<\infty$
for some $n_{0}\in\mathbb{N}$.
\end{enumerate}
\end{assumption}
\begin{rem}
The principal relationships between the preceding conditions, and
the high-level conditions (Assumption\textbf{ HL}) as they would be
applied to $X_{t}(n)=x_{t}(n)$ may be summarised as follows; these
are formally established by Proposition~\ref{LPtoHL} below.

\textbf{(a) LP2} and \textbf{LP7} imply that $\beta_{n}^{-1}x_{t}(n)$
has uniformly bounded moments of a sufficient order (as per \textbf{HL5}).
For $\beta_{n}^{-1}x_{t}(n)$ to have finite $\lambda$-moments, it
is sufficient that $\xi_{t}$ also have finite $\lambda$-moments;
$\beta_{n}^{-1}x_{t}(n)$ will have finite exponential moments if
$\xi_{t}$ has a finite m.g.f.\ in a neighbourhood of zero.

\textbf{(b)} \textbf{LP3} and \textbf{LP4} ensure that $\beta_{t}^{-1}x_{t}(n)$
has a uniformly bounded density, as required by \textbf{HL3}. Under
\textbf{INN}, a weighted sum involving at least $\theta$ of the innovations
$\xi_{t}$ will have an integrable characteristic function (c.f.).
$\beta_{t}^{-1}x_{t}^{+}(n)=\sum_{k=0}^{t-1}\beta_{t}^{-1}a_{k}(n)\xi_{t-k}$
will thus have a density bounded uniformly over $n$ and $t$, provided
that the $L_{1}$ norm of its c.f.\ can be uniformly bounded. This
in turn requires that: (i) the variance of $\beta_{t}^{-1}x_{t}^{+}(n)$
can be bounded away from zero; and (ii) it is never dominated by less
than $\theta$ of the innovations that contribute to it. Both are
ensured by \textbf{LP3}, at least for $n$ and $t$ sufficiently large.
\textbf{LP4} entails that for \emph{every} $t$, a sufficient number
of coefficients $\{a_{k,t}(n)\}$ are bounded away from zero; together
with \textbf{LP3} it is sufficient for \textbf{HL3} to hold with $t_{0}=1$.

\textbf{(c)} \textbf{LP5} can be understood as a kind of weak dependence
condition, which is used solely to verify \textbf{HL4}.

\textbf{(d)} \textbf{LP6 }permits a central limit theorem for weighted
sums to be applied to each of $\beta_{n}^{-1}x_{t}^{+}(n)$ and $\beta_{n}^{-1}x_{t}^{-}(n)$,
as required by \textbf{HL2}. \textbf{LP6(a)} determines the limiting
variance of each of these two terms, while \textbf{LP6(b)} is a negligibility
requirement on the linear process coefficients, akin to a Lindeberg
condition. Finally, \textbf{LP6(c)} implies the second part of \textbf{HL2(b)}.
\end{rem}
\begin{prop}[LP $\Rightarrow$ HL]
\label{LPtoHL} Suppose Assumptions\textbf{~LP1} and \textbf{INN}
hold. Then \textbf{HL0--1} hold for $\mathcal{F}_{t}:=\sigma(\{\xi_{r}\}_{r\leq t})$,
$X_{t}^{+}(n):=x_{t}^{+}(n)$, $X_{t}^{-}(n):=x_{t}^{-}(n)$, and
$\beta_{n}^{2}=Var[x_{n}(n)]$. Moreover:
\begin{enumerate}
\item \textbf{LP6}\ $\Rightarrow$\textbf{\ HL2} with $(X^{+},X^{-})\sim N[0,\mathrm{diag}\{\sigma_{+}^{2},\sigma_{-}^{2}\}]$.
\item  
\begin{enumerate}
\item \textbf{LP3 }$\Rightarrow$\ \textbf{HL3};
\item \textbf{LP3 }and \textbf{LP4\ }$\Rightarrow$\ \textbf{HL3}\ with
$t_{0}=1$.
\end{enumerate}
\item \textbf{LP5}\ $\Rightarrow$\textbf{\ HL4}.
\item  
\begin{enumerate}
\item \textbf{LP2(a) }and \textbf{LP7}\ $\Rightarrow$\ \textbf{HL5(a)
}with $\lambda\geq2$;
\item \textbf{LP2(b)},\textbf{\ LP6(b)}\ and \textbf{LP7}\ $\Rightarrow$\ \textbf{HL5(b)}.
\end{enumerate}
\end{enumerate}
\end{prop}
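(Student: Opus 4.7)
The plan is to verify each of parts (i)--(iv) essentially independently, since the LP conditions pair naturally with the HL conditions they target. The decomposition $x_t(n) = x_t^+(n) + x_t^-(n)$ supplied by LP1 makes HL0--1 immediate with $R_t(n)\equiv 0$, since $x_t^-(n)$ depends only on $\{\xi_s\}_{s\leq 0}$ and hence is $\mathcal{F}_0$-measurable; the divergence $\beta_n \to \infty$ required in HL0 follows from LP6(a) together with LP7, using $\sigma_+>0$. For (iv), part (a) reduces to the Marcinkiewicz--Zygmund inequality applied to $x_t(n)=\sum_k a_{k,t}(n)\xi_{t-k}$, yielding $\mathbf{E}|x_t(n)|^\lambda \leq C_\lambda \mathbf{E}|\xi_1|^\lambda\,(\beta_{n,t}/\sigma_\xi)^\lambda$, after which LP7 supplies the required uniformity. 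Part (b) uses $\mathbf{E}\exp(\lambda|Z|)\leq M_Z(\lambda)+M_Z(-\lambda)$ together with the factorisation $M_{\beta_n^{-1}x_t(n)}(s)=\prod_k M_\xi(s\beta_n^{-1}a_{k,t}(n))$; for $|s|$ small, LP6(b) places every argument into a neighbourhood of zero on which $\log M_\xi(u)\leq Cu^2$, so that $M_{\beta_n^{-1}x_t(n)}(s)\leq \exp(Cs^2\beta_{n,t}^2/\beta_n^2)$, which LP7 controls uniformly.

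For (i), I apply the Lindeberg--Feller CLT separately to the two independent sums $\beta_n^{-1}x_{t_n}^+(n)$ and $\beta_n^{-1}x_{t_n}^-(n)$. The limiting variances $\sigma_+^2$ and $\sigma_-^2$ come directly from LP6(a); the Lindeberg condition is routine under LP6(b) and the finite second moment of $\xi_1$ guaranteed by INN. Because $x_{t_n}^+(n)$ involves only $\{\xi_s\}_{s\geq 1}$, it is independent of $\mathcal{F}_0$, so the conditional CLT in HL2(a) coincides with the unconditional one, with deterministic limit density $\Phi_{X^+}=\varphi_{\sigma_+^2}$. The final assertion in HL2(b) is a mean-square calculation: $\mathbf{E}[\beta_n^{-1}(x_n^-(n)-x_{t_n}^-(n))]^2=\sigma_\xi^2\beta_n^{-2}\sum_{l\leq 0}[a_{n-l,n}^-(n)-a_{t_n-l,t_n}^-(n)]^2$, and this is precisely what vanishes under LP6(c).

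For (iii), I exploit a martingale-difference decomposition along $\{\xi_k\}_{k=1}^t$ of $Y_t := g(\beta_n^{-1}x_t(n))-\mathbf{E}[g(\beta_n^{-1}x_t(n))\mid\mathcal{F}_0]$. Writing $Y_t=\sum_{k=1}^t D_{t,k}$ with $D_{t,k}=(P_k-P_{k-1})g(\beta_n^{-1}x_t(n))$ and $P_k:=\mathbf{E}[\,\cdot\mid\mathcal{F}_k]$, the Lipschitz continuity of $g$ combined with an i.i.d.\ coupling yields $|D_{t,k}|\leq C\beta_n^{-1}|a_{t-k}(n)|(|\xi_k|+|\xi_k'|)$ for an independent copy $\xi_k'$. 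Martingale-difference orthogonality then gives, for $s\leq t$, $|\mathbf{E}[Y_s Y_t]|\leq C\beta_n^{-2}\sum_{k=1}^s|a_{s-k}(n)||a_{t-k}(n)|$, and after interchanging the order of summation this bounds $\mathrm{Var}(n^{-1}\sum_{t=1}^n Y_t)$ by a constant multiple of $n^{-1}\beta_n^{-2}(\sum_{k=0}^{n-1}|a_k(n)|)^2$, whose vanishing is exactly the content of LP5.

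Part (ii) will be the main obstacle. By independence of $x_t^+(n)$ and $x_t^-(n)$ and convolution, it suffices to bound $\sup_x f_{\beta_t^{-1}x_t^+(n)}(x)$; LP3(a) then lets me replace $\beta_t$ by $\beta_{n,t}^+$ up to a bounded factor, reducing the task to controlling $\int\prod_k|\psi_\xi(b_k(n,t)\lambda)|\,d\lambda$ where $b_k(n,t):=a_k(n)/\beta_{n,t}^+$ satisfies $\sum_k b_k^2 = 1/\sigma_\xi^2$. For $1\leq t\leq t_0$, LP4 directly provides $\theta$ indices $\{k_1,\ldots,k_\theta\}$ with $|a_{k_l,t}(n)|>\delta$, and since $\beta_t$ is bounded for fixed $t\leq t_0$, one may apply $|\psi_\xi|\leq 1$ to the remaining factors and then AM--GM to obtain $\prod_l|\psi_\xi(a_{k_l,t}(n)\lambda)|\leq\theta^{-1}\sum_l|\psi_\xi(a_{k_l,t}(n)\lambda)|^\theta$, each integrable by INN2 and the substitution $\mu=a_{k_l,t}(n)\lambda$. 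For $t\geq t_0$, LP3(b) yields $\max_k|b_k|\to 0$ while $\sum b_k^2$ stays fixed, which is precisely the scope of a uniform local CLT for weighted sums: one splits the Fourier integral at some $|\lambda|=R$, bounds the inner region by the quadratic estimate $\prod_k|\psi_\xi(b_k\lambda)|\leq\exp(-c\lambda^2\sum_k b_k^2)$, and handles the outer region by a dichotomy -- either many coefficients satisfy $|b_k\lambda|\geq 1$, forcing the product to decay geometrically in their number, or the remaining coefficients still supply a full Gaussian-like factor. Making this dichotomy uniform in $n$ and $t$ using only the Lindeberg-type hypothesis LP3(b) and the integrability of $|\psi_\xi|^\theta$ from INN2 is the delicate step, and this is where I expect the main technical work to lie.
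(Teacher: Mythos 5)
Parts \textbf{HL0--1}, (i), (iii) and (iv) of your proposal are correct and follow essentially the paper's own route, with only cosmetic differences. In (i) you invoke Lindeberg--Feller directly where the paper cites Lemma~2.1 of Abadir, Distaso, Giraitis and Koul (2014); these are equivalent here, since \textbf{LP6(b)} together with $\mathbf{E}\xi_1^2<\infty$ yields the Lindeberg condition for weighted i.i.d.\ sums, and your observation that independence of $x_{t_n}^+(n)$ from $\mathcal{F}_0$ collapses the conditional CLT to the unconditional one is exactly right, as is the mean-square verification of the last part of \textbf{HL2(b)} from \textbf{LP6(c)}. Your (iii) is an $L^2$ (variance) version of the paper's $L^1$ bound: the same coupling/martingale-projection decomposition, the same per-term bound $C\beta_n^{-1}\lvert a_{t-k}\rvert$, and the same terminal quantity $n^{-1/2}\beta_n^{-1}\sum_k\lvert a_k(n)\rvert$ controlled by \textbf{LP5}. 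In (iv)(a) Marcinkiewicz--Zygmund plays the role of the paper's appeal to Whittle (1960), and (iv)(b) is the paper's argument verbatim. One small slip: \textbf{LP6(a)} and \textbf{LP7} are statements about ratios and do not by themselves give $\beta_n\to\infty$; the paper is equally silent on this in the proposition (divergence is verified separately for the \textbf{FR}/\textbf{MI} processes), so nothing turns on it.

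The genuine gap is in part (ii) for $t\geq t_0$, and you have flagged it yourself. What is needed is a bound on $\int_{\mathbb{R}}\prod_k\lvert\psi_\xi(\vartheta_k\lambda)\rvert\,d\lambda$ that is uniform over all coefficient sequences satisfying only $\max_k\vartheta_k^2\leq\sigma_\vartheta^2/2\theta$ and $\sigma_\vartheta\geq\epsilon>0$ (which is what \textbf{LP3(a)--(b)} deliver for $\vartheta_k=\beta_t^{-1}a_k(n)$). Your proposed split at a generic $\lvert\lambda\rvert=R$ with a dichotomy in the outer region is not yet a proof: the quadratic estimate $\lvert\psi_\xi(u)\rvert\leq e^{-\gamma u^2}$ holds only for $\lvert u\rvert$ bounded, so the split point must be tied to the coefficient sequence rather than fixed, and in the outer region one must simultaneously secure (a) enough factors with $\lvert\vartheta_k\lambda\rvert\geq1$ to produce geometric decay and (b) exactly $\theta$ factors held in reserve for the H\"older step against $\int\lvert\psi_\xi\rvert^\theta<\infty$, whose change of variables produces a factor of order $\max_k\vartheta_k^{-1}$ that the geometric decay must absorb. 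The paper isolates this as Lemma~B.2(i): order $\lvert\vartheta_1\rvert\geq\lvert\vartheta_2\rvert\geq\cdots$, show by a counting argument (using $\max_k\vartheta_k^2\leq\sigma_\vartheta^2/2\theta$ and $\sum_kk^{-2}=\pi/6$, noting the paper's normalisation) that the set $\mathcal{K}=\{k\geq\theta+1:\vartheta_k^2\geq\tfrac{3\sigma_\vartheta^2}{2\pi}k^{-2}\}$ is nonempty and carries variance at least $\sigma_\vartheta^2/4$, and split at $\lvert\lambda\rvert=\vartheta_{k_0}^{-1}$ for $k_0=\min\mathcal{K}$: inside, the factors indexed by $\mathcal{K}$ alone supply the Gaussian bound with a quarter of the total variance; outside, the first $k_0-\theta$ (largest) factors each contribute $e^{-\gamma}$, and $e^{-\gamma(k_0-\theta)}k_0\sigma_\vartheta^{-1}$ is bounded uniformly in $k_0$. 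This construction is the missing content of your ``delicate step.'' Your treatment of $1\leq t\leq t_0$ via \textbf{LP4} and the integrability of $\lvert\psi_\xi\rvert^\theta$ (AM--GM in place of the paper's H\"older) is fine and matches Lemma~B.2(ii), as is the reduction by convolution to the density of the $x_t^+(n)$ component.
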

The proof of Proposition~\ref{LPtoHL} (and subsequently, Theorem~\ref{KernelLL})
requires the following technical lemma, whose proof is deferred to
Appendix~\ref{subsec:technicalproofs}. Let $\mathbb{N}_{0}:=\mathbb{N}\cup\{0\}.$
\begin{lem}
\label{lem:UICF} Let $\{\xi_{t}\}$ be as in Assumption~\textbf{INN}
with $\sigma_{\xi}^{2}=1$, $\{\vartheta_{k}\}_{k\in\mathbb{N}_{0}}$
be a real sequence, and $\delta>0$.
\begin{enumerate}
\item Suppose $\sigma_{\vartheta}^{2}:=\sum_{k=0}^{\infty}\vartheta_{k}^{2}>0$
and $\max_{k\in\mathbb{N}_{0}}\vartheta_{k}^{2}\leq\sigma_{\vartheta}^{2}/2\theta$.
Then there exists a function $G(A;\sigma^{2},\psi_{\xi})$, not otherwise
depending on $\{\vartheta_{k}\}$, such that $\sigma^{2}\mapsto G(A;\sigma^{2},\psi_{\xi})$
is weakly decreasing in $\sigma^{2}$,
\[
\int_{\{\lvert\lambda\rvert\geq A\}}\left|\mathbf{E}\left(\mathrm{i}\lambda\sum_{k=0}^{\infty}\vartheta_{k}\xi_{k}\right)\right|d\lambda\leq G(A;\sigma_{\vartheta}^{2},\psi_{\xi})\leq C\sigma_{\vartheta}^{-1}
\]
where $C<\infty$ depends only on $\psi_{\xi}$, and $\lim_{A\rightarrow\infty}G(A;\sigma_{\vartheta}^{2},\psi_{\xi})=0$.
\item Suppose there exist $\{k_{1},\ldots,k_{\theta}\}\subset\mathbb{N}_{0}$
such that $\lvert\vartheta_{k_{i}}\rvert>\delta$ for all $i\in\{1,\ldots,\theta\}$.
Then
\[
\int_{\mathbb{R}}\left|\mathbf{E}\left(\mathrm{i}\lambda\sum_{k=0}^{\infty}\vartheta_{k}\xi_{k}\right)\right|d\lambda<C\delta^{-1}
\]
where $C<\infty$ depends only on $\psi_{\xi}$.
\end{enumerate}
\end{lem}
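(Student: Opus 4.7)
The plan splits naturally between the two parts. Part (ii) is short: since $|\psi_\xi|\le 1$, I drop all but $\theta$ factors to get $\prod_{k=0}^\infty|\psi_\xi(\lambda\vartheta_k)|\le\prod_{i=1}^\theta|\psi_\xi(\lambda\vartheta_{k_i})|$, apply the generalized H\"older inequality with all exponents equal to $\theta$, and compute each factor by the change of variables $\mu=\lambda\vartheta_{k_i}$:
$$\int_{\mathbb R}|\psi_\xi(\lambda\vartheta_{k_i})|^\theta\,d\lambda \;=\; \frac{1}{|\vartheta_{k_i}|}\int_{\mathbb R}|\psi_\xi(\mu)|^\theta\,d\mu.$$
Using $|\vartheta_{k_i}|>\delta$ throughout gives the bound $C\delta^{-1}$ with $C$ depending only on $\|\psi_\xi\|_{L^\theta}$, as promised.

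For part (i), I would first reduce to $\sigma_\vartheta=1$ via the change of variables $\mu=\lambda\sigma_\vartheta$, which contributes the $\sigma_\vartheta^{-1}$ prefactor. The required function $G(A;\sigma^2,\psi_\xi)$ can then be \emph{defined} as $\sigma^{-1}$ times the supremum, over admissible configurations with variance at least $\sigma^2$, of the corresponding normalized tail integral; taking the supremum over larger variances enforces weak monotonicity in $\sigma^2$ by construction. It therefore suffices to prove that the normalized tail integral $\int_{|\lambda|\ge A}\prod_k|\psi_\xi(\lambda\tilde\vartheta_k)|\,d\lambda$ is uniformly bounded over every $\{\tilde\vartheta_k\}$ with $\sum\tilde\vartheta_k^2=1$ and $\max\tilde\vartheta_k^2\le 1/(2\theta)$, and vanishes as $A\to\infty$.

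The technical core is a two-scale decomposition of the product. From $Var(\xi_1)=1$ and $|\psi_\xi(\mu)|^2=\mathbf{E}\cos(\mu(\xi_1-\xi_1'))=1-\mu^2+O(\mu^4)$, there is $\rho>0$ with $|\psi_\xi(\mu)|\le e^{-\mu^2/4}$ for $|\mu|\le\rho$. Conversely, absolute continuity of $\xi_1$ implies $|\psi_\xi|<1$ away from the origin, so by continuity and Riemann--Lebesgue, $M_0:=\sup_{|\mu|\ge\rho}|\psi_\xi(\mu)|<1$. For each $\lambda$, partition indices into $L(\lambda)=\{k:|\lambda\tilde\vartheta_k|\le\rho\}$ and $H(\lambda)=\{k:|\lambda\tilde\vartheta_k|>\rho\}$, yielding $\prod_{k\in L(\lambda)}|\psi_\xi(\lambda\tilde\vartheta_k)|\le\exp(-\lambda^2\sigma_L^2(\lambda)/4)$ with $\sigma_L^2(\lambda):=\sum_{k\in L(\lambda)}\tilde\vartheta_k^2$, and $\prod_{k\in H(\lambda)}|\psi_\xi(\lambda\tilde\vartheta_k)|\le M_0^{|H(\lambda)|}$. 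If $|H(\lambda)|\le\theta-1$, the non-concentration condition forces $\sigma_L^2(\lambda)\ge 1/2$, so the whole product is dominated by $\exp(-\lambda^2/8)$, integrating to a universal constant and vanishing outside $|\lambda|\ge A$. If $|H(\lambda)|\ge\theta$, I dissect the $\lambda$-range at the thresholds $\rho/|\tilde\vartheta|_{(i)}$ (with $|\tilde\vartheta|_{(1)}\ge|\tilde\vartheta|_{(2)}\ge\cdots$ the order statistics of $|\tilde\vartheta_k|$) on each of which $H(\lambda)$ is constant, and then apply the part~(ii) argument to a fixed $\theta$-tuple of heavy indices on that sub-range, with the $L^\theta$-tail function $h(B):=\int_{|\mu|\ge B}|\psi_\xi(\mu)|^\theta\,d\mu$ (which tends to zero as $B\to\infty$ by Assumption~INN) furnishing the required integrable decay.

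The main obstacle is ensuring that the $|H(\lambda)|\ge\theta$ case is uniform in $\{\tilde\vartheta_k\}$, since the heavy set varies with $\lambda$. The key leverage is that the top $\theta$ order statistics contribute at most $\theta\cdot 1/(2\theta)=1/2$ to the total variance, so the indices \emph{outside} the top $\theta$ always carry at least half; this guarantees that on each $\lambda$-sub-range, either exponential decay from the light part or $L^\theta$-tail decay of $\psi_\xi$ from the heavy part suffices. Summing the contributions across all sub-ranges and combining with the clean $\exp(-\lambda^2/8)$ estimate from the $|H(\lambda)|\le\theta-1$ case delivers both $G(A;\sigma_\vartheta^2,\psi_\xi)\le C\sigma_\vartheta^{-1}$ and $\lim_{A\to\infty}G(A;\sigma_\vartheta^2,\psi_\xi)=0$, completing the proof.
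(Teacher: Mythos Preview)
Your part~(ii) is correct and coincides with the paper's argument (drop to $\theta$ factors, H\"older with equal exponents, change of variables).

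For part~(i) your route is genuinely different from the paper's. You propose a $\lambda$-dependent light/heavy split and a dissection of $\{\lvert\lambda\rvert\ge\lambda_\theta\}$ into the sub-ranges $[\lambda_m,\lambda_{m+1})$ determined by the order statistics, whereas the paper makes a \emph{single} split at a threshold $\lvert\lambda\rvert=\lvert\vartheta_{k_0}\rvert^{-1}$, with $k_0$ the first index (after ordering) in the set $\mathcal K=\{k\ge\theta+1:\vartheta_k^2\ge c_0^2\sigma_\vartheta^2 k^{-2}\}$. The non-concentration hypothesis forces $\sum_{k\in\mathcal K}\vartheta_k^2\ge\sigma_\vartheta^2/4$, so below the threshold all $k\in\mathcal K$ are ``light'' and deliver a clean Gaussian bound; above it, the top $k_0$ indices are all ``heavy'', producing a factor $e^{-\gamma(k_0-\theta)}$, and the defining inequality $\lvert\vartheta_{k_0}\rvert\ge c_0\sigma_\vartheta/k_0$ converts the H\"older bound into $\lesssim k_0\sigma_\vartheta^{-1}$, so that $e^{-\gamma(k_0-\theta)}k_0$ is uniformly bounded (and a sup over $k_0$ gives the required $G$). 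The paper thus avoids any sub-range summation.

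Your sketch has a real gap precisely where you flag the ``main obstacle''. On a sub-range $[\lambda_m,\lambda_{m+1})$ with $m\ge\theta$, the H\"older bound using the top $\theta$ indices is of order $\lvert\tilde\vartheta_{(\theta)}\rvert^{-1}$, and the constraints do \emph{not} lower-bound $\lvert\tilde\vartheta_{(\theta)}\rvert$: e.g.\ with $\tilde\vartheta_1=\cdots=\tilde\vartheta_\theta=a$ small and many equal smaller coefficients carrying the remaining variance, the $m=\theta$ sub-range has $M_0^{0}=1$ and H\"older contribution $\asymp a^{-1}\to\infty$. Your ``key leverage'' does rescue this, but not in the way you state: the point is that when $\sigma_L^2(m)<1/4$, the inequality $\sum_{i=\theta+1}^{m}\lvert\tilde\vartheta_{(i)}\rvert^2\ge 1/2-\sigma_L^2(m)>1/4$ forces $\lvert\tilde\vartheta_{(\theta)}\rvert^{-1}\le 2\sqrt{m-\theta}$, so the H\"older/$M_0$ bound becomes $M_0^{m-\theta}\sqrt{m-\theta}$, which is summable in $m$; on sub-ranges with $\sigma_L^2(m)\ge 1/4$ you instead keep the light-part Gaussian and sum $\int e^{-\lambda^2/16}d\lambda$. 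This dichotomy---not the $L^\theta$-tail function $h(B)$, which here only yields $h(\rho)$---is what makes your ``either/or'' work, and it should be made explicit. The same relation $\lvert\tilde\vartheta_{(\theta)}\rvert^{-1}\lesssim\sqrt{m-\theta}$ is the analogue of the paper's $\lvert\vartheta_{k_0}\rvert^{-1}\lesssim k_0/\sigma_\vartheta$, so your argument can be completed, but as written the crucial quantitative link between the leverage and the H\"older bound is missing.
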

\begin{proof}[Proof of Proposition~\ref{LPtoHL}]
 That \textbf{HL0--1} are satisfied (with $R_{t}(n)=0$) is immediate
from (\ref{model01b}), and the fact that $\beta_{n}=\beta_{n,n}\in(0,\infty)$
by \textbf{LP1}.

\textbf{(i).} Since $\{\xi_{t}\}$ is i.i.d., in view of (\ref{model01b})
we have
\[
\beta_{n}^{-1}[x_{t_{n}}^{+}(n),x_{t_{n}}^{-}(n)]\overset{d}{=}\beta_{n}^{-1}\left[\sum_{k=0}^{t_{n}-1}a_{k}\left(n\right)\xi_{k},\sum_{k=t_{n}}^{\infty}a_{k,t_{n}}^{-}\left(n\right)\xi_{k}^{\ast}\right].
\]
where $\left\{ \xi_{k}^{\ast}\right\} \overset{d}{=}\left\{ \xi_{k}\right\} $
and $\left\{ \xi_{k}^{\ast}\right\} \perp\left\{ \xi_{k}\right\} $.
Using \textbf{LP6(a)--(b)}, the weak convergence of these quantities
to a $N[0,\mathrm{diag}\{\sigma_{+}^{2},\sigma_{-}^{2}\}]$ distribution,
as required by \textbf{HL2}, follows immediately from Lemma 2.1 of
Abadir, Distaso, Giraitis and Koul (2014). Finally, note that due
to \textbf{LP6(c)}
\begin{align}
\beta_{n}^{-2}\mathbf{E}\left[x_{t_{n}}^{-}(n)-x_{n}^{-}(n)\right]^{2} & =\beta_{n}^{-2}\mathbf{E}\left[\sum_{k=t_{n}}^{\infty}a_{k,t_{n}}^{-}(n)\xi_{t_{n}-k}-\sum_{k=n}^{\infty}a_{k,n}^{-}(n)\xi_{n-k}\right]^{2}\nonumber \\
 & =\beta_{n}^{-2}\mathbf{E}\left[\sum_{l=-\infty}^{0}a_{t_{n}-l,t_{n}}^{-}(n)\xi_{l}-\sum_{l=-\infty}^{0}a_{n-l,n}^{-}(n)\xi_{l}\right]^{2}\nonumber \\
 & =\beta_{n}^{-2}\sigma_{\xi}^{2}\sum_{l=-\infty}^{0}\left[a_{t_{n}-l,t_{n}}^{-}(n)-a_{n-l,n}^{-}(n)\right]^{2}=o(1),\label{eq:varneg}
\end{align}
which establishes the final part of \textbf{HL2(b)}.

\textbf{(ii). }By (\ref{model01b}) and the Fourier inversion theorem
(e.g.\ Feller, 1971, Theorem~XV.3), it suffices for \textbf{HL3}
to show that the c.f.\ of $\beta_{t}^{-1}x_{t}^{+}(n)$ has an $L^{1}$
norm that is bounded uniformly for all $n$ and $t\leq n$ sufficiently
large. Since $\beta_{t}^{-1}x_{t}^{+}(n)$ is a linear process with
coefficients $\vartheta_{k,n,t}:=\beta_{t}^{-1}a_{k}(n)\boldsymbol{1}\{0\leq k\leq t-1\}$
and variance $\sigma_{\vartheta,n,t}^{2}:=\beta_{t}^{-2}(\beta_{n,t}^{+})^{2}$,
we can do this with the aid of Lemma~\ref{lem:UICF}. \textbf{LP3(b)}
implies that there exists $n_{0},t_{0}\in\mathbb{N}$ such that
\[
\vartheta_{k,n,t}^{2}=\beta_{t}^{-2}a_{k}^{2}(n)\boldsymbol{1}\{0\leq k\leq t-1\}\leq\beta_{t}^{-2}(\beta_{n,t}^{+})^{2}/2\theta=\sigma_{\vartheta,n,t}^{2}/2\theta
\]
for all $k\in\mathbb{N}_{0}$, $t_{0}\leq t\leq n$ and $n\geq n_{0}$,
while \textbf{LP3(a)} implies that $n_{0}$ and $t_{0}$ may be additionally
chosen such that 
\[
\sigma_{\vartheta,n,t}=\beta_{t}^{-1}\beta_{n,t}^{+}>\epsilon>0
\]
for some $\epsilon>0$, for all $t_{0}\leq t\leq n$ and $n\geq n_{0}$.
By Lemma~\ref{lem:UICF}(i), the $L^{1}$ norm of $\mathbf{E}[\mathrm{i}\lambda\beta_{t}^{-1}x_{t}^{+}(n)]$
is bounded by $C\epsilon^{-1}<\infty$ for such $n$ and $t$, and
thus \textbf{HL3} holds for some $t_{0}\in\mathbb{N}$.

\textbf{HL3} will hold with $t_{0}=1$ if we can additionally bound
the $L^{1}$ norm of $\mathbf{E}[\mathrm{i}\lambda\beta_{t}^{-1}x_{t}(n)]$
(note the deliberate omission of the `$+$' superscript) for $1\leq t\leq t_{0}$
and all $n\geq n_{0}$. To this end, note that $\beta_{t}^{-1}x_{t}(n)$
is a linear process with coefficients $\vartheta_{k,n,t}:=\beta_{t}^{-1}a_{k,t}(n)\boldsymbol{1}\{k\geq0\}$,
and that for $\delta>0$ as in \textbf{LP4}, for each $1\leq t\leq t_{0}$
and $n\in\mathbb{N}$ there exist $\{k_{1},\ldots,k_{\theta}\}\in\mathbb{N}_{0}$
such that
\[
\lvert\vartheta_{k_{i},n,t}\rvert=\beta_{t}^{-1}\lvert a_{k_{i},t}(n)\rvert\geq\left(\min_{1\leq t\leq t_{0}}\beta_{t}^{-1}\right)\delta
\]
where the r.h.s.\ is nonzero and independent of $n$ and $t$. Applying
Lemma~\ref{lem:UICF}(ii) completes the proof.

\textbf{(iii).} The argument is similar to that of Wu and Mielniczuk
(2002, p.\ 1452). Let $\mathbf{E}_{t}[\cdot]:=\mathbf{E}[\cdot\mid\mathcal{F}_{t}]$,
and decompose
\[
g(x_{t})-\mathbf{E}_{0}g(x_{t})=\sum_{s=0}^{t-1}\{\mathbf{E}_{t-s}g(x_{t})-\mathbf{E}_{(t-1)-s}g(x_{t})\}
\]
so that
\begin{align}
\sum_{t=1}^{n}\left[g(x_{t})-\mathbf{E}_{0}g(x_{t})\right] & =\sum_{t=1}^{n}\sum_{s=0}^{t-1}[\mathbf{E}_{t-s}g(x_{t})-\mathbf{E}_{(t-1)-s}g(x_{t})]\nonumber \\
 & =\sum_{s=0}^{n-1}\sum_{t=s+1}^{n}[\mathbf{E}_{t-s}g(x_{t})-\mathbf{E}_{(t-1)-s}g(x_{t})]=:\sum_{s=0}^{n-1}M_{n,s},\label{eq:decomp}
\end{align}
where each $M_{n,s}$ is a sum of martingale differences. 

By (\ref{linproc}) we can write
\begin{alignat*}{4}
x_{t} & =\sum_{k=0}^{s-1}a_{k,t}\xi_{t-s} &  & +a_{s,t}\xi_{t-s} &  & +\sum_{k=s+1}^{\infty}a_{k,t}\xi_{t-k}\\
 & \overset{d}{=}\sum_{k=0}^{s-1}a_{k,t}\xi_{t-s} &  & +a_{s,t}\xi^{\ast} &  & +\sum_{k=s+1}^{\infty}a_{k,t}\xi_{t-k} &  & =:x_{t}^{\ast}
\end{alignat*}
where $\xi^{\ast}\overset{d}{=}\xi_{0}$ is independent of $\{\xi_{t}\}$.
Since $g$ is Lipschitz,
\begin{align*}
\lvert\mathbf{E}_{t-s}g(x_{t})-\mathbf{E}_{(t-1)-s}g(x_{t})\rvert & =\lvert\mathbf{E}_{t-s}[g(x_{t})-g(x_{t}^{\ast})\rvert\\
 & \leq C\lvert a_{s,t}\rvert\mathbf{E}_{t-s}\lvert\xi_{t-s}-\xi^{\ast}\rvert
\end{align*}
whence, by the orthogonality of martingale differences
\begin{align*}
\mathbf{E}M_{n,s}^{2} & =\sum_{t=s+1}^{n}\mathbf{E}[\mathbf{E}_{t-s}g(x_{t})-\mathbf{E}_{(t-1)-s}g(x_{t})]^{2}\leq C\sum_{t=s+1}^{n}a_{s,t}^{2}\leq Cna_{s}^{2},
\end{align*}
where the final inequality follows since $a_{s,t}=a_{s}$ for $0\leq s\leq t-1$.
Deduce from (\ref{eq:decomp}) and the preceding that
\begin{align*}
\mathbf{E}\left|\frac{1}{n}\sum_{t=1}^{n}\left[g(x_{t})-\mathbf{E}_{0}g(x_{t})\right]\right| & \leq\frac{1}{n}\sum_{s=0}^{n-1}(\mathbf{E}M_{n,s}^{2})^{1/2}\leq\frac{C}{n^{1/2}}\sum_{s=0}^{n-1}\lvert a_{s}\rvert.
\end{align*}
Finally, note that if each $x_{t}$ were divided by a positive constant,
the preceding would hold with the r.h.s.\ divided by the same constant.
Hence by \textbf{LP5}
\[
\mathbf{E}\left|\frac{1}{n}\sum_{t=1}^{n}\left[g(\beta_{n}^{-1}x_{t}(n))-\mathbf{E}_{0}g(\beta_{n}^{-1}x_{t}(n))\right]\right|\leq\frac{C}{n^{1/2}\beta_{n}}\sum_{s=0}^{n-1}\lvert a_{s}\rvert\rightarrow0,
\]
which yields \textbf{HL4}.

\textbf{(iv).} Suppose \textbf{LP2(a)} holds for some $\lambda\geq2$.
By Theorem 2 of Whittle (1960), there exists a $C$ (depending only
$\lambda$ and $\mathbf{E}\lvert\xi_{1}\rvert^{\lambda}$) such that
\[
\mathbf{E}\left\vert \frac{x_{t}(n)}{\beta_{n}}\right\vert ^{\lambda}\leq C\left(\beta_{n}^{-2}\sum_{k=0}^{\infty}a_{k,t}^{2}(n)\right)^{\lambda/2}=\frac{C}{(\mathbf{E}\xi_{1}^{2})^{\lambda/2}}\left(\beta_{n}^{-2}\beta_{n,t}^{2}\right)^{\lambda/2}
\]
and thus \textbf{HL5(a)} follows by \textbf{LP7}.

Next, suppose \textbf{LP2(b)} holds. For any $\lambda\in(0,\infty)$,
we have
\[
\mathbf{E}e^{\lambda\beta_{n}^{-1}\left\vert x_{t}(n)\right\vert }\leq\mathbf{E}e^{\lambda\beta_{n}^{-1}x_{t}\left(n\right)}+\mathbf{E}e^{-\lambda\beta_{n}^{-1}x_{t}\left(n\right)}.
\]
We shall show that the first r.h.s.\ term is finite for sufficiently
small $\lambda$; the same argument delivers the bound for the second
term and thence \textbf{HL5(b)}. Since $\mathbf{E}\exp(\mu\xi_{1})<\infty$
for sufficiently small $\mu$, there exist $\varsigma_{\xi},b_{\xi}\in(0,\infty)$
such that $\mathbf{E}\exp(\mu\xi_{t})\leq\exp(\mu^{2}\varsigma_{\xi}^{2})$
for all $\lvert\mu\rvert<b_{\xi}$ (see e.g.\ Theorem~2.13 in Wainwright,
2019). In view of \textbf{LP6(b)}, we may choose $\lambda\in(0,\infty)$
such that
\[
\sup_{1\leq t\leq n}\sup_{k\geq0}\lambda\lvert\beta_{n}^{-1}a_{k,t}(n)\rvert<b_{\xi}
\]
for all $n$ sufficiently large. Hence by Fatou's lemma, for such
$n$,
\begin{align*}
\mathbf{E}e^{\beta_{n}^{-1}\lambda x_{t}(n)} & =\mathbf{E}\exp\left(\beta_{n}^{-1}\lambda\sum_{k=0}^{\infty}a_{k,t}(n)\xi_{t-k}\right)\\
 & \leq\liminf_{M\rightarrow\infty}\prod\limits _{k=0}^{M}\mathbf{E}\exp\left(\lambda\beta_{n}^{-1}a_{k,t}(n)\xi_{t-k}\right)\\
 & \leq\liminf_{M\rightarrow\infty}\prod\limits _{k=0}^{M}\exp\left(\lambda^{2}\beta_{n}^{-2}a_{k,t}^{2}(n)\varsigma_{\xi}^{2}\right)\\
 & =\exp\left(\lambda^{2}\beta_{n}^{-2}\sum_{k=0}^{\infty}a_{k,t}^{2}(n)\varsigma_{\xi}^{2}\right)=\exp(\lambda^{2}\beta_{n}^{-2}\beta_{n,t}^{2}\varsigma_{\xi}^{2}),
\end{align*}
where the final term is bounded uniformly over $1\leq t\leq n$ and
$n\geq n_{0}$ by \textbf{LP7}.
\end{proof}

\subsection{Lemmas for $I(1/2)$ and MI processes}

\label{subsec:procaux}

The proof of Theorem~\ref{MainLL} is now mostly a matter of verifying
that each of \textbf{FR} and \textbf{MI} (in conjunction with \textbf{INN})
imply \textbf{LP}, whereupon the result will follow by appeals to
Proposition~\ref{LPtoHL} and Theorem~\ref{MainThm}. The following
lemmas establish some key properties of $I(1/2)$ and MI processes
under these assumptions, which will be used in the proofs of each
of Theorems \ref{MainLL} and \ref{KernelLL}. Their proofs appear
in Appendix~\ref{subsec:FRMIlemmas}.

\begin{namedlemma}[COEF]\namedthmlabel{COEF} Suppose that \textbf{FR}
or \textbf{MI }holds. Then there exist $D_{1},D_{2}\in(0,\infty)$
and $n_{0},k_{0}\in\mathbb{N}$ such that for all $n\geq n_{0}$
\begin{enumerate}
\item $\sup_{1\leq t\leq n,k\in\mathbb{N}_{0}}\lvert a_{k,t}(n)\rvert\leq D_{2}$;
\end{enumerate}
and under \textbf{MI}, 
\begin{enumerate}[resume]
\item $\min_{k_{0}\leq k\leq\kappa_{n}\wedge(n-1)}\lvert a_{k}(n)\vert\geq D_{1}.$
\end{enumerate}
\end{namedlemma}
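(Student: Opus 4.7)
The plan is to verify (i) separately under each of \textbf{FR1}, \textbf{FR2}, and \textbf{MI} using the absolute summability of $\{c_s\}$; and to prove (ii) under \textbf{MI} by establishing the approximation $a_k(n) \approx e^{-k/\kappa_n} S_\infty$, uniformly in the relevant range of $k$, where $S_\infty := \sum_{i=0}^{\infty} c_i \neq 0$ by the final clause of \textbf{MI}.

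For (i): Under \textbf{FR1}, $\phi_j \equiv 1$ gives $a_{k,t}(n) = \sum_{j=0}^{(t-1)\wedge k} c_{k-j}$, a partial sum of $\{c_i\}$. Since $c_s \sim \ell(s) s^{-3/2}$ with $\ell$ SV, $|c_s| = O(s^{-3/2+\varepsilon})$ for any $\varepsilon \in (0, 1/2)$, so $\{c_s\}$ is absolutely summable and $|a_{k,t}(n)| \leq \sum_{i \geq 0} |c_i| =: D_2$. Under \textbf{FR2}, $\phi_j \sim \ell(j) j^{-1/2} \to 0$, so $M := \sup_{j \geq 0} |\phi_j| < \infty$ and $|a_{k,t}(n)| \leq M \sum_i |c_i|$. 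Under \textbf{MI}, $|\phi_j(n)| = (1 - \kappa_n^{-1})^j \in (0, 1]$, so once again $|a_{k,t}(n)| \leq \sum_i |c_i|$. Each bound is uniform in $k$, $t$, and $n$.

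For (ii), I would substitute $i = k - j$ in $a_k(n) = \sum_{j=0}^{k} \phi_j(n) c_{k-j}$ to obtain $a_k(n) = \sum_{i=0}^{k} \rho_n^{k-i} c_i$ with $\rho_n := 1 - \kappa_n^{-1}$. Given $\varepsilon > 0$, fix $L$ large enough that $\sum_{i > L} |c_i| < \varepsilon$; then the tail $\sum_{i=L+1}^{k} \rho_n^{k-i} c_i$ is bounded in absolute value by $\varepsilon$, since $\rho_n^{k-i} \leq 1$. For the head, the expansion $-\log \rho_n = \kappa_n^{-1}(1 + O(\kappa_n^{-1}))$ yields
$$\rho_n^{k-i} = e^{-k/\kappa_n} \bigl( 1 + O((1+i)/\kappa_n) \bigr) \quad \text{uniformly in } 0 \leq k \leq \kappa_n \text{ and } 0 \leq i \leq L,$$
from which $\sum_{i=0}^{L} \rho_n^{k-i} c_i = e^{-k/\kappa_n} \bigl(S_\infty - \sum_{i > L} c_i\bigr) + O(L/\kappa_n) \sum_i |c_i|$. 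Combining, $|a_k(n) - e^{-k/\kappa_n} S_\infty| \leq C \varepsilon + O(L/\kappa_n)$, uniformly over $L \leq k \leq \kappa_n \wedge (n-1)$. Since $e^{-k/\kappa_n} \geq e^{-1}$ on this range and $S_\infty \neq 0$, choosing $\varepsilon$ small enough, setting $k_0 := L$, and taking $n_0$ large enough that $L/\kappa_n$ is negligible yields $|a_k(n)| \geq e^{-1}|S_\infty|/2 =: D_1$.

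The main obstacle is the uniformity in (ii): the truncation $L$ must be chosen independently of both $k$ and $n$ (large enough to approximate $S_\infty$), while the first-order expansion of $\rho_n^{k-i}$ around $e^{-k/\kappa_n}$ must remain accurate across the full range $k \leq \kappa_n$ and $i \leq L$. Handling this rests on the correct order of quantifiers---first $\varepsilon$, then $L$, then $n_0$---together with $\kappa_n \to \infty$ absorbing the residual $O(L/\kappa_n)$ error.
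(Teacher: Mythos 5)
Your proof is correct and follows essentially the same route as the paper's: part (i) is the same one-line bound via $\sup_{j,n}\lvert\phi_{j}(n)\rvert<\infty$ together with $\sum_{s}\lvert c_{s}\rvert<\infty$, and part (ii) rests on the same two facts, namely that $\rho_{n}^{k}\geq(1-\kappa_{n}^{-1})^{\kappa_{n}}\rightarrow e^{-1}$ uniformly over $k\leq\kappa_{n}$ and that $\sum_{j}c_{j}\neq0$. The only difference is bookkeeping: you control the error by an explicit $\varepsilon$--$L$ truncation and a first-order expansion of $\rho_{n}^{k-i}$ about $e^{-k/\kappa_{n}}$, whereas the paper factors out $\rho_{n}^{k}$ exactly and shows $\sum_{j=0}^{k}(\rho_{n}^{-j}-1)c_{j}\rightarrow0$ by dominated convergence.
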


For the purposes of the next two lemmas, define
\begin{equation}
[\gamma_{n}^{2},\mathcal{V}_{\infty}]:=\left\{ \begin{alignedat}{4} & [L(n), & \  & 8\sigma_{\xi}^{2} &  & ] & \qquad & \text{under \textbf{FR1},}\\
 & [L(n), &  & \sigma_{\xi}^{2}({\textstyle \sum_{s=0}^{\infty}c_{s})^{2}} &  & ] &  & \text{under \textbf{FR2},}\\
 & [\kappa_{n}, &  & \sigma_{\xi}^{2}({\textstyle \sum_{s=0}^{\infty}c_{s})^{2}}/2 &  & ] &  & \text{under \textbf{MI}.}
\end{alignedat}
\right.\label{eq:gamma_n1}
\end{equation}
By Proposition 1.5.9a in BGT, $L(n)=\int_{1}^{n}x^{-1}\ell^{2}(x)dx$
is a slowly varying function, a fact that shall be used freely throughout
the following. Recall also that $L(n)\rightarrow\infty$ under \textbf{FR}.
\textbf{MI} entails that $\kappa_{n}$ is regularly varying with index
$\alpha\in[0,1)$, and thus in all cases, $\gamma_{n}$ is regularly
varying with index $\alpha\in[0,1/2)$.

\begin{namedlemma}[LVAR]\namedthmlabel{LVAR} Suppose \textbf{INN}
and either \textbf{FR} or \textbf{MI} holds. Let $t_{n}\in[\lfloor rn\rfloor,n]\cap\mathbb{N}$,
where $0<r\leq1$. Then
\begin{enumerate}
\item Under \textbf{FR1}:
\[
\begin{bmatrix}Var[\gamma_{n}^{-1}x_{t_{n}}^{+}(n)] & Var[\gamma_{n}^{-1}x_{t_{n}}^{-}(n)]\end{bmatrix}\rightarrow\begin{bmatrix}\mathcal{V}_{\infty}/2 & \mathcal{V}_{\infty}/2\end{bmatrix}
\]
and for any $s\in(0,r]$,
\[
Var\{\gamma_{n}^{-1}[x_{\lfloor rn\rfloor}^{-}(n)-x_{\lfloor sn\rfloor}^{-}(n)]\}\rightarrow0.
\]
\item Under \textbf{FR2} or \textbf{MI}:
\[
Var[\gamma_{n}^{-1}x_{t_{n}}(n)]=Var[\gamma_{n}^{-1}x_{t_{n}}^{+}(n)]+o(1)\rightarrow\mathcal{V}_{\infty}
\]
\end{enumerate}
\end{namedlemma}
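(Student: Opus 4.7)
The plan is to compute the leading asymptotics of the coefficient sums
\[
\sum_{k=0}^{t_n-1} a_k^2(n) \quad\text{and}\quad \sum_{k=t_n}^{\infty} \bigl(a_{k,t_n}^-(n)\bigr)^2
\]
case-by-case, using the explicit forms of $\phi_j(n)$ and the Karamata theorem (BGT, Prop.~1.5.10) to convert sums into slowly varying quantities. Throughout, I use that $L$ is itself slowly varying (BGT, Prop.~1.5.9a), so $L(t_n)/L(n)\to 1$ uniformly in $t_n\in[\lfloor rn\rfloor,n]$; similarly $\kappa_{t_n}/\kappa_n$ is bounded and, in conjunction with \textbf{MI}'s requirement $\kappa_n\to\infty$ with $\kappa_n=o(n)$, yields $t_n/\kappa_n\to\infty$.

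\emph{Plus part.} Under \textbf{FR1} ($\phi_j\equiv1$, $\sum_s c_s=0$), write $a_k=\sum_{i=0}^{k} c_i=-R_{k+1}$ with $R_m:=\sum_{i\geq m}c_i$. Since $c_s\sim\ell(s)s^{-3/2}$, Karamata gives $R_m\sim 2\ell(m)m^{-1/2}$, hence $a_k^2\sim 4\ell^2(k)/k$, and a second Karamata-type estimate yields $\sum_{k=1}^{t-1} a_k^2\sim 4\int_1^{t}\ell^2(x)/x\,dx=4L(t)$. Dividing by $\gamma_n^2=L(n)$ and using $L(t_n)\sim L(n)$ gives the claimed $4\sigma_\xi^2=\mathcal{V}_\infty/2$. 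Under \textbf{FR2}, a splitting argument $a_k=\sum_{j=0}^{k-M}\phi_j c_{k-j}+\sum_{j=k-M+1}^{k}\phi_j c_{k-j}$ combined with $\sum_i|c_i|<\infty$ and $\phi_j\sim\ell(j)j^{-1/2}$ shows $a_k\sim\phi_k\sum_i c_i$; squaring and summing delivers $\sum_{k=0}^{t-1}a_k^2\sim(\sum_i c_i)^2 L(t)$, which matches $\mathcal{V}_\infty$ after normalisation. Under \textbf{MI}, the same splitting with $\phi_j(n)=(1-\kappa_n^{-1})^j$ gives $a_k(n)\sim(1-\kappa_n^{-1})^k\sum_i c_i$ for $k\to\infty$ (uniformly in $n$), whence
\[
\sum_{k=0}^{t_n-1}a_k^2(n)\sim\bigl(\textstyle\sum_i c_i\bigr)^2\sum_{k=0}^{t_n-1}(1-\kappa_n^{-1})^{2k}\sim\bigl(\textstyle\sum_i c_i\bigr)^2\,\kappa_n/2,
\]
using $t_n/\kappa_n\to\infty$. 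Dividing by $\gamma_n^2=\kappa_n$ yields $\mathcal{V}_\infty$.

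\emph{Minus part under FR2 and MI.} Here $a_{k,t}^-=\sum_{j=0}^{t-1}\phi_j(n)c_{k-j}$ involves only coefficients $c_i$ with $i\geq 1$. Using $\sum_i|c_i|<\infty$ and the bound $\sup_{j}|\phi_j(n)|\leq C$ from Lemma~\ref{COEF}(i), a Cauchy--Schwarz / tail argument gives
\[
\sum_{k=t}^{\infty}(a_{k,t}^-)^2\leq C\Bigl(\sup_{j\leq t-1}|\phi_j(n)|\Bigr)^2\sum_{m\geq 1}\Bigl(\sum_{i\geq m}|c_i|\Bigr)^2 = O(1),
\]
which is $o(\gamma_n^2)$ since $\gamma_n\to\infty$. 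Combined with the plus-part result, this gives $Var[\gamma_n^{-1}x_{t_n}(n)]=Var[\gamma_n^{-1}x_{t_n}^+(n)]+o(1)\to\mathcal{V}_\infty$.

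\emph{Minus part under FR1.} Here I use the identity $x_t^-=\sum_{k=1}^{t}v_k^-$ where $v_k^-=\sum_{i\geq k}c_i\xi_{k-i}$, so that
\[
x_{t_2}^- - x_{t_1}^- = \sum_{k=t_1+1}^{t_2}v_k^- \quad\Longrightarrow\quad
Var(x_{t_2}^- - x_{t_1}^-)=\sigma_\xi^2\sum_{m\geq 1}\bigl(R_{t_1+m}-R_{t_2+m}\bigr)^2.
\]
For $Var(x_{t_n}^-)$ itself, the analogous identity (take $t_1=0$, so $R_{t_1+m}=R_m$) together with the Karamata estimate $R_m^2\sim 4\ell^2(m)/m$ and the cancellation $R_m-R_{t_n+m}\approx R_m$ for $m\ll t_n$ gives $Var(x_{t_n}^-)\sim 4\sigma_\xi^2 L(t_n)\sim \gamma_n^2\mathcal{V}_\infty/2$, as claimed. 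The genuinely delicate point is the difference: writing $R_{t_1+m}-R_{t_2+m}=\sum_{j=t_1+m}^{t_2+m-1}c_j$ and using $|c_j|\sim\ell(j)j^{-3/2}$, one obtains
\[
(R_{t_1+m}-R_{t_2+m})^2\leq C(t_2-t_1)^2\,\ell^2(t_1+m)/(t_1+m)^3,
\]
so Karamata ($\alpha=3$) gives $\sum_{m\geq 1}(\cdot)^2\leq C(t_2-t_1)^2\ell^2(t_1)/t_1^2\leq C(r-s)^2\ell^2(n)$. Dividing by $\gamma_n^2=L(n)$ and invoking $\ell^2(n)/L(n)=nL'(n)/L(n)\to 0$ (valid since $L$ is SV and differentiable) gives the required $o(1)$.

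The main obstacle will be the FR1 minus-difference step: a naive bound $|R_{t_1+m}-R_{t_2+m}|\leq|R_{t_1+m}|+|R_{t_2+m}|$ leads to $\sum R_{t_i+m}^2\sim 4[L(\infty)-L(t_i)]=\infty$ and is useless, so the cancellation inside the slice $\sum_{j=t_1+m}^{t_2+m-1}c_j$ must be used. Essentially the same device underwrites the claim $Var(x_{t_n}^-)/\gamma_n^2\to\mathcal{V}_\infty/2$ for general $r\in(0,1]$, and the remaining cases reduce to bookkeeping with Karamata's theorem and the boundedness and eventual-sign properties of $\ell$.
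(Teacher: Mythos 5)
Your overall architecture matches the paper's: reduce everything to the coefficient sums $\sum_k a_k^2(n)$ and $\sum_{k\geq t}(a_{k,t}^-(n))^2$, control them by Karamata-type estimates, and handle the \textbf{FR1} minus part via the telescoped tail sums $R_{t_1+m}-R_{t_2+m}=\sum_{j=t_1+m}^{t_2+m-1}c_j$, exploiting the cancellation inside the slice exactly as the paper does in its bound (S.\ref{eq:FR1bnd2}). That part of your plan is sound. However, there is a genuine gap in your treatment of the minus part under \textbf{FR2} and \textbf{MI}. Your displayed bound discards the decay of $\phi_j(n)$, retaining only $\sup_j\lvert\phi_j(n)\rvert\leq C$, and arrives at $C\sum_{m\geq1}\overline{c}_m^{\,2}$ with $\overline{c}_m:=\sum_{i\geq m}\lvert c_i\rvert$, which you assert is $O(1)$. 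But absolute summability of $\{c_i\}$ does not imply square-summability of the tails: take $\lvert c_i\rvert\asymp i^{-1}(\log i)^{-2}$, so that $\overline{c}_m\asymp(\log m)^{-1}$ and $\sum_m\overline{c}_m^{\,2}=\infty$. The bound is therefore vacuous, and no amount of dividing by $\gamma_n^2$ rescues it. The decay of the weights is essential here: the paper bounds $Var(x_{t}^{-})$ by a constant times $\sum_{j=0}^{t-1}\phi_j^2\,\overline{c}_{t-j}$ (using the monotonicity of $\phi_j\sim j^{-1/2}\ell(j)$, resp.\ of $\rho_n^j$, to symmetrise the cross terms) and then splits this sum at $j=\lfloor t/2\rfloor$, obtaining $\overline{c}_{\lfloor t/2\rfloor}\cdot O(L(t))+o(L(t))=o(\gamma_n^2)$ under \textbf{FR2}, and the analogous $o(\kappa_n)$ under \textbf{MI}. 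You need some version of this weighted argument; the unweighted tail bound cannot work.

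A secondary soft spot is the pointwise claim $a_k(n)\sim\phi_k(n)\sum_i c_i$ under \textbf{FR2}/\textbf{MI}. With only $\sum_i\lvert c_i\rvert<\infty$, the truncation error in your splitting is of order $\overline{c}_M$ uniformly in $k$ -- not of order $o(\phi_k)$ -- so neither the pointwise equivalence nor the naive ``square and sum'' step follows directly: the error contributes up to $n\,\overline{c}_M^{\,2}$ to $\sum_k a_k^2$, which is not $o(\gamma_n^2)$ for fixed $M$. The paper avoids this by interchanging the order of summation, writing $\sum_k a_k^2=\sum_i\sum_j c_ic_j\sum_k\phi_{k-i}\phi_{k-j}$, showing $\gamma_n^{-2}\sum_k\phi_{k-i}\phi_{k-j}\to1$ (resp.\ $\to1/2$ under \textbf{MI}) for each fixed $(i,j)$ via the Toeplitz lemma, and concluding by dominated convergence over $(i,j)$. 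I would recommend adopting that double-sum device for the plus part as well.
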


\begin{namedlemma}[CLT]\namedthmlabel{CLT} Under the assumptions
of Lemma~\ref{LVAR},
\[
\gamma_{n}^{-1}\left[x_{t_{n}}^{+}(n),x_{t_{n}}^{-}(n)\right]\overset{d}{\rightarrow}N(0,\Sigma),
\]
where $\Sigma=\mathrm{diag}\{\mathcal{V}_{\infty}/2,\mathcal{V}_{\infty}/2\}$
under \textbf{FR1}, and $\Sigma=\mathrm{diag}\{\mathcal{V}_{\infty},0\}$
under \textbf{FR2} or \textbf{MI}. \end{namedlemma}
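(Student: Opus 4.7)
The plan is to exploit the fact that $x_{t_n}^+(n)$ and $x_{t_n}^-(n)$ are independent: they are weighted sums over the disjoint sets of i.i.d.\ innovations $\{\xi_s\}_{s=1}^{t_n}$ and $\{\xi_s\}_{s\leq 0}$ respectively. Joint weak convergence to a centered normal with block-diagonal covariance therefore reduces to establishing the marginal limits, with the individual variances already supplied by Lemma~\ref{LVAR}. Under \textbf{FR2} or \textbf{MI}, Lemma~\ref{LVAR} yields $Var[\gamma_n^{-1}x_{t_n}^-(n)] \to 0$, so Chebyshev's inequality gives $\gamma_n^{-1}x_{t_n}^-(n) \overset{p}{\to} 0$ and only the $x_{t_n}^+(n)$ component requires a genuine CLT. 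Under \textbf{FR1}, both components have nondegenerate $N[0,\mathcal{V}_\infty/2]$ limits, and a CLT is needed for each.

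For the remaining CLTs, the natural tool is Lemma~2.1 of Abadir, Distaso, Giraitis and Koul (2014), which is already invoked in the proof of Proposition~\ref{LPtoHL}. Variance convergence is handed to us by Lemma~\ref{LVAR}, so the only substantive condition to check is a Lindeberg-type negligibility requirement, namely that the maximum squared coefficient in each weighted sum is negligible relative to its variance, i.e.\ $\max_{0 \leq k \leq t_n - 1} a_k^2(n)/\gamma_n^2 \to 0$ and, under \textbf{FR1}, the analogous statement for $\{a_{k,t_n}^-(n)\}_{k \geq t_n}$.

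This negligibility is the step that deserves the most attention, but it follows by combining the coefficient bound of Lemma~\ref{COEF}(i) with the divergence of $\gamma_n^2$. Under both \textbf{FR} cases, Lemma~\ref{COEF}(i) gives $\sup_{k,t,n} |a_{k,t}(n)| \leq D_2$ (and the analogous bound on $a_{k,t}^-(n)$ follows from $|a_{k,t_n}^-(n)| = |\sum_{i=k-t_n+1}^{k} c_i|$ under \textbf{FR1} and from $\sum |c_s| < \infty$ combined with $|\phi_j| \leq 1$ under \textbf{FR2}), while $\gamma_n^2 = L(n) \to \infty$ by hypothesis; under \textbf{MI}, the same uniform bound applies and $\gamma_n^2 = \kappa_n \to \infty$. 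In all cases the ratio $\max_k a_k^2/\gamma_n^2$ vanishes, and independence of the $+$ and $-$ parts then promotes the two marginal Gaussian limits to the joint statement with the block-diagonal $\Sigma$ specified. The only real obstacle is therefore packaging the coefficient analysis uniformly across the three regimes, which Lemmas~\ref{COEF} and~\ref{LVAR} have already reduced to a short verification.
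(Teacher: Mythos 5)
Your proposal is correct and follows essentially the same route as the paper: independence of the $+$ and $-$ components, variance limits from Lemma~\ref{LVAR}, coefficient negligibility $\gamma_n^{-1}\max_k\lvert a_k(n)\rvert\to 0$ from the uniform bound in Lemma~\ref{COEF}, and then Lemma~2.1 of Abadir, Distaso, Giraitis and Koul (2014). The only cosmetic difference is that you treat the degenerate $\sigma_-^2=0$ case separately via Chebyshev, whereas the paper applies the weighted-sum CLT to both components uniformly; either packaging is fine.
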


\subsection{Proofs of Theorems~3.2 and 3.4}

\label{subsec:lowleveltheorems}

We first state a preliminary lemma on regularly varying sequences,
whose proof is given in Appendix~\ref{subsec:technicalproofs} below. 
\begin{lem}
\label{SV} Let $\{\varphi_{j}\}_{j\in\mathbb{N}}$ be a real sequence,
and $\varsigma$ a positive-valued, slowly varying function, that
is locally integrable on $[1,\infty)$.
\begin{enumerate}
\item Suppose $\varphi_{j}\sim j^{l}\varsigma(j)$ for $l>-1$. Then for
all $0\leq s<r<\infty$ as $n\rightarrow\infty$ 
\[
\frac{1}{n^{1+l}\varsigma(n)}\sum_{j=\lfloor ns\rfloor+1}^{\lfloor nr\rfloor}\varphi_{j}\rightarrow\int_{s}^{r}x^{l}dx.
\]
\item Suppose $\varphi_{j}\sim j^{l}\varsigma(j)$ for $l<-1$. Then as
$n\rightarrow\infty$ 
\[
\frac{1}{n^{1+l}\varsigma(n)}\sum_{j=n}^{\infty}\varphi_{j}\rightarrow\int_{1}^{\infty}x^{l}dx.
\]
\item Suppose $S(x):=\int_{1}^{x}u^{-1}\varsigma(u)du$ is such that $S(x)\rightarrow\infty$
as $x\rightarrow\infty$. Then for any $n_{0}\geq1$, as $n\rightarrow\infty$
\[
\sum_{j=n_{0}}^{n}j^{-1}\varsigma(j)\sim S(n).
\]
\end{enumerate}
\end{lem}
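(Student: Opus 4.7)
The plan is to derive all three statements as standard Karamata-type consequences of regular variation (BGT, \S1.5), applied to the regularly varying function $f(x):=x^l\varsigma(x)$ (of index $l$ in (i)--(ii); index $-1$ in (iii)). The strategy in each case is the same: approximate $\varphi_j$ by $j^l\varsigma(j)$ with negligible error, compare the resulting sum with the corresponding integral, and evaluate the integral via Karamata's theorem (BGT Prop.~1.5.8) together with the uniform convergence theorem for slowly varying functions (BGT Thm.~1.2.1). For part (i), I would first write $\varphi_j = j^l\varsigma(j)(1+\rho_j)$ with $\rho_j\to 0$, so that a supremum bound on $\rho_j$ over $j\geq\lfloor ns\rfloor+1$ makes the $\rho_j$-contribution asymptotically negligible relative to the leading sum. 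For the leading term, Karamata gives $\int_1^{nr} x^l\varsigma(x)\,dx \sim (nr)^{l+1}\varsigma(nr)/(l+1)$, and a sum-integral comparison transfers this to $\sum_{j\leq nr}j^l\varsigma(j)$, the per-unit-interval discrepancy being $o(j^l\varsigma(j))$ by uniform convergence of $\varsigma$ on compacta. Uniform convergence of $\varsigma(nr)/\varsigma(n)\to 1$ on compact $r$, together with subtraction of the analogous expression at $\lfloor ns\rfloor$, then delivers $(r^{l+1}-s^{l+1})/(l+1) = \int_s^r u^l du$; the case $s=0$ is absorbed by splitting off finitely many initial terms, whose contribution is $O(1)=o(n^{l+1}\varsigma(n))$ since $l+1>0$.

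Part (ii) is identical, except that $l<-1$ requires the tail form of Karamata (BGT Prop.~1.5.8(b)), yielding $\int_n^\infty x^l\varsigma(x)dx\sim -n^{l+1}\varsigma(n)/(l+1)$; combined with $\int_1^\infty u^l du = -1/(l+1)$, this gives the stated limit. Part (iii) treats $f(x):=x^{-1}\varsigma(x)$, regularly varying of index $-1$, with $S(x):=\int_1^x f$ itself SV (BGT Prop.~1.5.9a) and diverging by hypothesis. The same sum-integral comparison applies: the per-interval discrepancy $\int_j^{j+1}|f(u)-f(j)|du$ is $\epsilon_j f(j)$ with $\epsilon_j\to 0$, and a Toeplitz/Ces\`aro argument (using that $\sum_{j\leq n} f(j)$ is of the same order as $\int_1^n f = S(n)$, a fact itself deducible from the monotonicity of $f$ for large $j$ via the BGT representation theorem) makes the aggregate discrepancy $o(S(n))$. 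The $O(1)$ boundary contribution $S(n_0)$ is likewise absorbed, yielding $\sum_{j=n_0}^n j^{-1}\varsigma(j)\sim S(n)$.

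The results are essentially textbook and should present no substantive obstacle. The only point requiring genuine care is the error control in the sum-integral comparisons, which depends throughout on uniform convergence of slowly varying functions on compact subsets of $(0,\infty)$; this is what prevents the $o$-errors from accumulating faster than the main term. In part (iii) the additional input is the hypothesis $S(n)\to\infty$, without which the stated asymptotic equivalence would not even be meaningful, and which is what renders any $O(1)$ boundary or initial-segment contributions asymptotically negligible.
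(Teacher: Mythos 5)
Your proposal is correct in substance but proceeds by a genuinely different mechanism than the paper. Throughout, you control the sum--integral discrepancy interval-by-interval via the uniform convergence theorem for slowly varying functions, then invoke Karamata's theorem (direct form for $l>-1$, tail form for $l<-1$ -- in BGT the latter is Proposition~1.5.10 rather than 1.5.8) and a Toeplitz/Ces\`aro step to absorb the accumulated $o$-errors. The paper instead disposes of part~(i) by citing Giraitis, Koul and Surgailis (2012, p.~20), and for parts~(ii)--(iii) reduces to \emph{monotone} comparisons: in (ii) it replaces $\varsigma$ by an asymptotically equivalent version making $j^{l}\varsigma(j)$ monotone decreasing (BGT Theorem~1.5.3) and sandwiches the tail sum between $\int_{n}^{\infty}$ and $\int_{n-1}^{\infty}$, applying Theorem~1.5.11; in (iii) it sandwiches $s(x)=x^{-1}\varsigma(x)$ between its running infimum and supremum, which are monotone and asymptotically equivalent to $s$ by the same Theorem~1.5.3, and then uses $S(n)\rightarrow\infty$ to absorb boundary terms. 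Your route is more self-contained (it actually proves (i) rather than citing it) and avoids any monotonisation; the paper's route trades the Toeplitz bookkeeping for cruder but very short monotone sandwich bounds. One point in your part~(iii) needs repair: you assert that $f(x)=x^{-1}\varsigma(x)$ is eventually monotone ``via the BGT representation theorem,'' but the representation theorem (with its merely convergent factor $c(x)$) does not yield monotonicity of $f$ itself, and a regularly varying function of negative index need not be eventually monotone -- only asymptotically equivalent to a monotone function (BGT Theorem~1.5.3, which is exactly the device the paper uses; the substitution is harmless here because $S(n)\rightarrow\infty$ makes the induced change in both sum and integral relatively negligible). In fact you do not need the monotonicity claim at all: your own per-interval bound gives $\sum_{j=n_{0}}^{n}f(j)\geq(1+o(1))\int_{n_{0}}^{n+1}f\rightarrow\infty$, after which the weighted Toeplitz argument and $S(n+1)\sim S(n)$ (slow variation of $S$, BGT Proposition~1.5.9a) already deliver $\sum_{j=n_{0}}^{n}j^{-1}\varsigma(j)\sim S(n)$.
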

\begin{proof}[Proof of Theorem~\ref{MainLL}]
 With the aid of Proposition~\ref{LPtoHL}, we show that the conditions
of\textbf{ }Theorem~\ref{MainThm} are satisfied for $X_{t}(n)=x_{t}(n)$.
We first verify Assumptions~\textbf{HL0--6}, before turning to the
other conditions of the theorem.

\textbf{HL0--1.} Under either of \textbf{FR }or \textbf{MI}, $x_{t}(n)$
can be written in the form (\ref{model01}); that $\beta_{n,t}<\infty$
for each $n,t\in\mathbb{N}$ follows from $\mathbf{E}v_{1}^{2}=\sigma_{\xi}^{2}\sum_{k=0}^{\infty}c_{k}^{2}<\infty$.
Recall also from (\ref{linproc}) and (\ref{lincoefs}) that we may
write $x_{t}(n)=\sum_{k=0}^{\infty}a_{k,t}(n)\xi_{t-k}$, where $a_{k,t}(n)=\sum_{j=0}^{(t-1)\wedge k}\phi_{j}(n)c_{k-j}$.
Under \textbf{FR}, $\phi_{0}(n)=\phi_{0}\neq0$, while under \textbf{MI},
$\phi_{0}(n)=1-\kappa_{n}^{-1}\rightarrow1$ and has $\phi_{0}(n)>0$
for all $n$, since $\kappa_{n}>1$. Now let $k^{\ast}$ denote the
smallest $k\in\mathbb{N}$ such that $c_{k}\neq0$. Then
\[
\beta_{n,t}^{2}=\sum_{k=0}^{\infty}a_{k,t}^{2}(n)\geq a_{k^{\ast},t}^{2}(n)=\phi_{0}^{2}(n)c_{k^{\ast}}^{2}>0
\]
 for all $n,t\in\mathbb{R}$. Thus \textbf{LP1} holds, whence \textbf{HL0--1}
follows by Proposition~\ref{LPtoHL}.

\textbf{HL2.} Once we have verified \textbf{LP6}, by Proposition~\ref{LPtoHL}(i)
this will hold with $(X^{+},X^{-})\sim N[0,\mathrm{diag}\{\sigma_{+}^{2},\sigma_{-}^{2}\}]$.
Lemma~\ref{LVAR} implies that \textbf{LP6(a)} holds with $(\sigma_{+}^{2},\sigma_{-}^{2})=(1/2,1/2)$
under \textbf{FR1} and $(\sigma_{+}^{2},\sigma_{-}^{2})=(1,0)$ under
\textbf{FR2/MI}. It further implies that $\beta_{n}\asymp\gamma_{n}\rightarrow\infty$,
and thus \textbf{LP6(b)} follows by Lemma~\ref{COEF}. With respect
to \textbf{LP6(c)}, it follows from (\ref{eq:varneg}) above that
\[
\beta_{n}^{-2}\sum_{l=-\infty}^{0}\left[a_{t_{n}-l,t_{n}}^{-}(n)-a_{n-l,n}^{-}(n)\right]^{2}\sigma_{\xi}^{2}\asymp\gamma_{n}^{-2}Var\left[x_{t_{n}}^{-}(n)-x_{n}^{-}(n)\right]
\]
for $t_{n}\in\{\lfloor nr\rfloor,\ldots,n\}$ for some $r\in(0,1]$.
By Lemma~\ref{LVAR}, the r.h.s.\ is immediately $o(1)$ under \textbf{FR1}.
Under \textbf{FR2} or \textbf{MI}, that same result implies $\gamma_{n}^{-2}Var[x_{t_{n}}^{-}(n)]=o(1)$,
and thus
\[
\gamma_{n}^{-2}Var\left[x_{t_{n}}^{-}(n)-x_{n}^{-}(n)\right]\leq2\gamma_{n}^{-2}\left\{ Var[x_{t_{n}}^{-}(n)]+Var[x_{n}^{-}(n)]\right\} =o(1).
\]

\textbf{HL3 (for some $\boldsymbol{t_{0}\in\mathbb{N}}$).} By Proposition~\ref{LPtoHL}(ii),
it suffices to verify\textbf{ LP3}. Consider \textbf{FR} first: in
this case, $x_{t}(n)$ does not depend on $n$. Thus $\beta_{n,t}^{+}=\beta_{t}^{+}\asymp\gamma_{t}$
and $\beta_{t}\asymp\gamma_{t}$ as $t\rightarrow\infty$ by Lemma~\ref{LVAR},
whence \textbf{LP3(a)} holds. By\textbf{ }Lemma~\ref{COEF}, $\max_{0\leq k\leq t-1}\lvert a_{k}(n)\rvert\leq D_{2}$,
which together with $\gamma_{t}=L^{1/2}(t)\rightarrow\infty$ delivers
\textbf{LP3(b)}.

Next suppose \textbf{MI} holds. By Lemma~\ref{COEF}, we have
\[
(\beta_{n,t}^{+})^{2}=\sum_{k=0}^{t-1}a_{k}^{2}(n)\geq D_{1}[(t-1)\wedge\kappa_{n}-k_{0}].
\]
Since $C_{1}:=\sup_{n\geq1,1\leq t\leq n}\kappa_{t}/\kappa_{n}<\infty$
by Assumption~\textbf{MI}, where trivially $C_{1}\geq1$, we have
\[
(t-1)\wedge\kappa_{n}-k_{0}\geq(t-1)\wedge C_{1}^{-1}\kappa_{t}-k_{0}\asymp\kappa_{t},
\]
since $\kappa_{t}=o(t)$ as $t\rightarrow\infty$. Thus there exists
a $t_{0}\in\mathbb{N}$ and $C\in(0,\infty)$ such that $\beta_{n,t}^{+}\geq C\kappa_{t}^{1/2}$
for all $t_{0}\leq t\leq n$. Since $\beta_{t}\asymp\gamma_{t}=\kappa_{t}^{1/2}\rightarrow\infty$
by Lemma~\ref{LVAR}, and $\max_{0\leq k\leq t}\lvert a_{k}(n)\rvert\leq D_{2}$
by Lemma~\ref{COEF}, both parts of \textbf{LP3} follow immediately.

\textbf{HL4.} By Proposition~\ref{LPtoHL}(iii), we need to only
to verify \textbf{LP5}. First suppose \textbf{FR1} holds. Then recalling
(\ref{lincoefs}) above,
\[
\lvert a_{k}(n)\rvert=\left|\sum_{j=0}^{k}\phi_{j}c_{k-j}\right|\leq\left|\sum_{j=0}^{k}c_{j}\right|=_{(1)}\left|\sum_{j=k+1}^{\infty}c_{j}\right|\asymp_{(2)}k^{-1/2}\ell(k)
\]
as $k\rightarrow\infty$, where $=_{(1)}$ follows by $\sum_{k=0}^{\infty}c_{k}=0$,
and $\asymp_{(2)}$ by Lemma~\ref{SV}(ii), since $c_{j}\sim k^{-3/2}\ell(k)$.
Hence, by Lemma~\ref{SV}(i) 
\[
\sum_{k=0}^{n}\lvert a_{k}(n)\rvert\leq Cn^{1/2}\ell(n)
\]
under \textbf{FR1}. Alternatively, if \textbf{FR2} holds, then we
also have
\[
\sum_{k=0}^{n}\lvert a_{k}(n)\rvert\leq\sum_{k=0}^{n}\sum_{j=0}^{k}\left|\phi_{j}c_{k-j}\right|\leq\sum_{j=0}^{n}\lvert\phi_{j}\rvert\sum_{k=0}^{\infty}\lvert c_{k}\rvert\leq Cn^{1/2}\ell(n)
\]
by $\sum_{k=0}^{\infty}\lvert c_{k}\rvert<\infty$ and Lemma~\ref{SV}(i).
Thus under \textbf{FR}, Lemma~\ref{LVAR} implies that
\[
\frac{1}{n^{1/2}\beta_{n}}\sum_{k=0}^{n}\lvert a_{k}(n)\rvert\leq C\frac{n^{1/2}\ell(n)}{n^{1/2}L^{1/2}(n)}=C\left(\frac{\ell^{2}(n)}{\int_{1}^{n}x^{-1}\ell^{2}(x)\mathrm{d}x}\right)^{1/2}\rightarrow0
\]
where the final convergence follows by Proposition 1.5.9a in BGT.

Next suppose \textbf{MI} holds. In this case $a_{k}(n)=\sum_{j=0}^{k}\rho_{n}^{j}c_{k-j}$
for $\rho_{n}:=1-\kappa_{n}^{-1}$, and so for all $n$ sufficiently
large that $\rho_{n}>0$,
\begin{align*}
\sum_{k=0}^{n}\lvert a_{k}(n)\rvert & \leq\sum_{k=0}^{n}\sum_{j=0}^{k}\lvert\rho_{n}^{j}c_{k-j}\vert\leq\sum_{j=0}^{n}\lvert\rho_{n}^{j}\rvert\sum_{k=j}^{n}\lvert c_{k-j}\rvert\leq_{(1)}\frac{C}{1-\rho_{n}}=C\kappa_{n}
\end{align*}
where $\leq_{(1)}$ holds since $\{c_{i}\}$ is absolutely summable.
Hence by Lemma~\ref{LVAR}
\[
\frac{1}{n^{1/2}\beta_{n}}\sum_{k=0}^{n}\lvert a_{k}(n)\rvert\leq\frac{C\kappa_{n}}{n^{1/2}\kappa_{n}^{1/2}}=C\left(\frac{\kappa_{n}}{n}\right)^{1/2}\rightarrow0.
\]

\textbf{HL5.} In view of Proposition~\ref{LPtoHL}(iv), having already
verified \textbf{LP6(b)}, we need to verify \textbf{LP7}. Suppose
first that \textbf{FR} holds. Then since $x_{t}(n)$ does not depend
on $n$, we have by Lemma~\ref{LVAR} that, as $t\rightarrow\infty$
\[
\beta_{n,t}^{2}=\beta_{t}^{2}\asymp L(t)=\int_{1}^{t}x^{-1}\ell^{2}(x)\mathrm{d}x
\]
which is clearly monotone increasing. Thus there exist $C_{1},C_{2}\in(0,\infty)$
such that
\[
\beta_{n,t}=\beta_{t}\leq C_{1}L(t)\leq C_{1}L(n)\leq C_{2}\beta_{n}
\]
for all $t\leq n$, whence \textbf{LP7} holds trivially.

Suppose next that \textbf{MI} holds. In this case, $x_{t}(n)=\sum_{k=0}^{\infty}a_{k,t}(n)\xi_{t-k}$
with $a_{k,t}=\sum_{j=0}^{(t-1)\wedge k}\phi_{j}(n)c_{k-j}$, where
$\sum_{i=0}^{\infty}\lvert c_{i}\rvert<\infty$ and $\phi_{j}(n)=(1-\kappa_{n}^{-1})^{j}>0$
for all $n$ sufficiently large. Now define a new process $\{\overline{x}_{t}(n)\}$
by $\overline{x}_{t}(n)=\sum_{k=0}^{\infty}\overline{a}_{k,t}(n)\xi_{t-k}$,
where $\overline{a}_{k,t}=\sum_{j=0}^{(t-1)\wedge k}\phi_{j}(n)\lvert c_{k-j}\rvert$.
Then $\{\overline{x}_{t}(n)\}$ also satisfies \textbf{MI}, and it
is easily verified that
\[
\lvert a_{k,t}\rvert\leq\overline{a}_{k,t}=\sum_{k=0}^{(t-1)\wedge k}\phi_{j}(n)\lvert c_{k-j}\rvert\leq\sum_{k=0}^{(n-1)\wedge k}\phi_{j}(n)\lvert c_{k-j}\rvert=\overline{a}_{n,t}
\]
for all $1\leq t\leq n$, whence
\[
\beta_{n,t}^{2}=\sum_{k=0}^{\infty}a_{k,t}^{2}\leq\sum_{k=0}^{\infty}\overline{a}_{k,t}^{2}=:\overline{\beta}_{n,t}\leq\overline{\beta}_{n,n}=\overline{\beta}_{n}.
\]
It follows from\textbf{ }Lemma~\ref{LVAR} that $\overline{\beta}_{n}^{2}\asymp\kappa_{n}\asymp\beta_{n}^{2}$,
whence
\[
\beta_{n}^{-2}\beta_{n,t}^{2}\leq\beta_{n}^{-2}\overline{\beta}_{n}^{2}=O(1).
\]

Thus by Proposition~\ref{LPtoHL}(iv), either \textbf{HL5(a)} or
\textbf{(b)} holds depending on the assumptions made on $\{\xi_{t}\}$
-- i.e.\ depending on whether condition 2(a) or 2(b) of Theorem~\ref{MainLL}
is maintained.

\textbf{HL6.} By Lemma~\ref{LVAR}, $\beta_{n}\asymp\gamma_{n}$,
which is regularly varying with index $\alpha\in[0,1)$. Thus \textbf{HL6}
follows by Lemma~\ref{SV}(i).

We have thus verified that \textbf{HL0--6} hold under our assumptions,
with $(X^{+},X^{-})\sim N[0,\mathrm{diag}\{\sigma_{+}^{2},\sigma_{-}^{2}\}]$.
The conclusion of Theorem~\ref{MainThm} accordingly holds for bounded
$f$, with
\[
\int_{\mathbb{R}}f(x+X^{-})\Phi_{X^{+}}(x)dx=\int_{\mathbb{R}}f(x)\Phi_{X^{+}}(x-X^{-})dx=\int_{\mathbb{R}}f(x)\varrho(x)dx
\]
where $\varrho$ is as defined in (\ref{eq:integrals}).

For more general $f$, we need to verify conditions (i)--(iii) of
Theorem~\ref{MainThm}. With respect to condition~(i), since is
assumed that $f$ is locally integrable and satisfies $\lvert f(x)\lvert=O(e^{\lambda^{\prime}\lvert x\lvert})$
for some $\lambda^{\prime}<\infty$ as $\lvert x\lvert\rightarrow\infty$,
we thus have
\[
\int_{\mathbb{R}}\left\vert f(x+y)\right\vert \Phi_{X^{+}}(x)dx\leq C\int_{\mathbb{R}}\left\vert f(x)\right\vert e^{-(x+y)^{2}/2\sigma_{+}^{2}}dx<\infty
\]
for every $y\in\mathbb{R}$, as required.

Condition~(iii) of Theorem~\ref{MainThm} is implied by condition~2
of Theorem~\ref{MainLL}, and the verification of \textbf{HL5} given
above.

It thus remains to verify condition~(ii) of Theorem~\ref{MainThm}.
Under condition 1(a) of Theorem~\ref{MainLL}, there is nothing to
left prove. Suppose instead that condition 1(b) of of Theorem~\ref{MainLL}
is assumed, i.e.\ that \textbf{INN} holds with $\theta=1$. In that
case, we need to verify that \textbf{HL3} holds with $t_{0}=1$. By
Proposition~\ref{LPtoHL}(ii)(b), it suffices to verify \textbf{LP4},
since we have already verified \textbf{LP3}. Recall (\ref{linproc})
and (\ref{lincoefs}) above, and let $k^{\ast}$ denote the smallest
$k\in\mathbb{N}$ such that $c_{k^{\ast}}\neq0$. Then 
\[
\lvert a_{k^{\ast},t}(n)\rvert=\lvert\phi_{0}(n)c_{k^{\ast}}\rvert\geq\lvert c_{k^{\ast}}\rvert\inf_{n\in\mathbb{N}}\lvert\phi_{0}(n)\rvert>0
\]
where the final inequality follows from arguments given in the course
of verifying \textbf{HL0--1} above. Thus \textbf{LP4} holds.
\end{proof}
\begin{proof}[Proof of Theorem~\ref{KernelLL}]
 We verify the assumptions of\textbf{ }Theorem~\ref{MainKernel}
are satisfied for $X_{t}(n)=x_{t}(n)$.

\textbf{HL0--2, HL4, HL6.} These follow by the arguments given in
the proof of Theorem \ref{MainLL}.

\textbf{HL7.} We first note that the proof of Proposition~\ref{LPtoHL}(ii),
together with the arguments used to verify \textbf{HL3} in the proof
of Theorem~\ref{MainLL}, show that $\{\beta_{t}^{-1}x_{t}^{+}(n)\}$
has a density $\mathcal{D}_{t,n}^{+}$ that is uniformly bounded over
all $n$ and $t_{0}\leq t\leq n$ for some $t_{0}\in\mathbb{N}$.
Now let $0\leq s<t\leq n$ and note from (\ref{model01b}) that we
may write 
\[
x_{t}=\sum_{k=0}^{(t-s)-1}a_{k}\xi_{t-k}+\sum_{k=t-s}^{\infty}a_{k,t}\xi_{t-k}=:x_{s+1,t}+x_{s,t}^{\prime}
\]
where $x_{s,t}^{\prime}$ is $\mathcal{F}_{s}$-measurable, and $x_{s+1,t}$
is independent of $\mathcal{F}_{s}$. Since
\[
x_{s+1,t}=\sum_{k=0}^{(t-s)-1}a_{k}\xi_{t-k}\overset{d}{=}\sum_{k=0}^{(t-s)-1}a_{k}\xi_{(t-s)-k}=x_{t-s}^{+}
\]
we must have that the density $\mathcal{D}_{t,s,n}$ of $\{\beta_{t-s}^{-1}[x_{t}(n)-x_{s}(n)]\}$,
conditional on $\mathcal{F}_{s}$, satisfies
\begin{equation}
\mathcal{D}_{t,s,n}(x)=\mathcal{D}_{t-s,n}^{+}(x-\chi_{s,t}(n))\label{eq:densityrelation}
\end{equation}
where $\chi_{s,t}(n):=\beta_{t-s}^{-1}[x_{s+1,t}(n)-x_{s}(n)]$ is
$\mathcal{F}_{s}$-measurable (with the convention that $x_{0}(n):=0$).
It follows that
\begin{multline*}
\sup_{0\leq s<t\leq n,t-s\geq t_{0}}\sup_{x\in\mathbb{R}}\lvert\mathcal{D}_{t,s,n}(x)\rvert\leq\sup_{0\leq s<t\leq n,t-s\geq t_{0}}\sup_{x\in\mathbb{R}}\lvert\mathcal{D}_{t-s,n}^{+}(x)\rvert\\
\leq\sup_{t_{0}\leq r\leq n}\sup_{x\in\mathbb{R}}\lvert\mathcal{D}_{r,n}^{+}(x)\rvert<\infty,
\end{multline*}
by the uniform boundedness of $\mathcal{D}_{r,n}^{+}$ noted above.

\textbf{HL8.} Let $\mathcal{D}_{r,n}^{\ast}$ denote the density of
$\gamma_{n}^{-1}x_{r}^{+}(n)=(\gamma_{n}^{-1}\beta_{r})\cdot\beta_{r}^{-1}x_{r}^{+}(n)$,
where $\{\gamma_{n}\}$ is as defined in (\ref{eq:gamma_n1}), and
$\delta_{\eta}:=q_{0}\eta^{q_{1}}$ for $\eta,q_{0},q_{1}>0$. Let
$s,t\in\Omega_{n}(\eta)$, and $r:=t-s$. In view of (\ref{eq:densityrelation}),
\begin{align*}
\sup_{\lvert x\rvert\leq\delta_{\eta}}\lvert\mathcal{D}_{t,s,n}(x)-\mathcal{D}_{t,s,n}(0) & \rvert\leq\sup_{y\in\mathbb{R}}\sup_{\lvert x\rvert\leq\delta_{\eta}}\lvert\mathcal{D}_{r,n}^{+}(y+x)-\mathcal{D}_{r,n}^{+}(y)\rvert\\
 & \leq(\beta_{r}^{-1}\gamma_{n})\sup_{y\in\mathbb{R}}\sup_{\lvert x\rvert\leq\delta_{\eta}(\beta_{r}^{-1}\gamma_{n})}\lvert\mathcal{D}_{r,n}^{\ast}(y+x)-\mathcal{D}_{r,n}^{\ast}(y)\rvert.
\end{align*}
By the definition of $\Omega_{n}(\eta)$, \textbf{HL8} will hold if
we can show that when $r=r_{n}$ for any sequence $\{r_{n}\}$ with
$r_{n}\in[\lfloor n\eta\rfloor,n]$, the r.h.s.\ converges to zero
as $n\rightarrow\infty$ and then $\eta\rightarrow0$.

By Lemma~\ref{LVAR}, $\beta_{r_{n}}^{-1}\gamma_{n}\asymp\gamma_{r_{n}}^{-1}\gamma_{n}\asymp1$,
since $\{\gamma_{n}\}$ is regularly varying. In particular, $\{\beta_{r_{n}}^{-1}\gamma_{n}\}$
is bounded above, and so (by redefinition of $q_{0}$) the result
will follow if we can prove that 
\begin{equation}
\sup_{y\in\mathbb{R}}\sup_{\lvert x\rvert\leq\delta_{\eta}}\lvert\mathcal{D}_{r_{n},n}^{\ast}(y+x)-\mathcal{D}_{r_{n},n}^{\ast}(y)\rvert\rightarrow0\label{eq:densast}
\end{equation}
as $n\rightarrow\infty$ and then $\eta\rightarrow0$. By Lemma~\ref{CLT},
$\gamma_{n}^{-1}x_{r_{n}}^{+}(n)\overset{d}{\rightarrow}N[0,\sigma_{+}^{2}]$
for some $\sigma_{+}^{2}>0$. By the argument given in the proof of
Corollary~2.2 in Wang and Phillips (2009), it therefore suffices
for (\ref{eq:densast}) to show that the characteristic functions
of $\{\gamma_{n}^{-1}x_{r_{n}}^{+}(n)\}$ are uniformly integrable:
since in this case, $\mathcal{D}_{r_{n},n}^{\ast}$ converges uniformly
to the $N[0,\sigma_{+}^{2}]$ density. To that end, note that $\gamma_{n}^{-1}x_{r_{n}}^{+}(n)$
is a linear process with variance converging to $\sigma_{+}^{2}>0$
(as $n\rightarrow\infty$) by Lemma~\ref{LVAR}, and coefficients
$\{\gamma_{n}^{-1}a_{k}(n)\}_{k=0}^{r_{n}-1}$ with $\max_{0\leq k\leq r_{n}-1}\gamma_{n}^{-1}\lvert a_{k}(n)\vert\rightarrow0$
by Lemma~\ref{COEF}. Thus the required uniform integrability follows
by Lemma \ref{lem:UICF}(i).

\textbf{HL9.} By Lemma~\ref{LVAR}, $\beta_{n}\asymp\gamma_{n}$,
which is regularly varying with index $\alpha\in[0,1/2)$ and so can
be written as $\gamma_{n}=n^{\alpha}\varsigma(n)$, for $\varsigma$
a positive-valued, slowly varying function. Thus \textbf{HL9(a)--(c)}
follow immediately from Lemma~\ref{SV}(i). For \textbf{HL9(d)},
we note that
\begin{align*}
\beta_{n}^{-1}\inf_{(t,s)\in\Omega_{n}(\eta)}\beta_{t-s} & >C\gamma_{n}^{-1}\inf_{(t,s)\in\Omega_{n}(\eta)}\gamma_{t-s}\\
 & =Cn^{-\alpha}\varsigma^{-1}(n)\inf_{(t,s)\in\Omega_{n}(\eta)}(t-s)^{\alpha}\varsigma(t-s)
\end{align*}
for some $C>0$. Since $(t,s)\in\Omega_{n}(\eta)$ implies $t-s>\lfloor\eta n\rfloor$,
the r.h.s.\ can be bounded below (up to a positive constant) by
\[
(\eta/2)^{\alpha}\inf_{\lambda\in[\eta/2,1]}\frac{\varsigma(\lambda n)}{\varsigma(n)}=(\eta/2)^{\alpha}(1+o(1))
\]
where the equality follows by Theorem~1.2.1 in BGT. Thus \textbf{HL9(d)}
holds. Finally, since $\beta_{n}\asymp\gamma_{n}$, and both $\beta_{n}>0$
and $\gamma_{n}>0$ for all $n$, there exist $\underline{C},\overline{C}\in(0,\infty)$
such that $\beta_{n}\in[\underline{C}\gamma_{n},\overline{C}\gamma_{n}]$
for all $n$. Thus
\[
\sup_{n\geq1}\sup_{1\leq t\leq n}\beta_{n}^{-1}\beta_{t}\leq\underline{C}^{-1}\overline{C}\sup_{n\geq1}\sup_{1\leq t\leq n}\gamma_{n}^{-1}\gamma_{t}.
\]
From (\ref{eq:gamma_n1}), under \textbf{FR} $\gamma_{n}=L(n)$ is
increasing, so $\sup_{n\geq1}\sup_{1\leq t\leq n}\gamma_{n}^{-1}\gamma_{t}\leq1$
trivially. Under \textbf{MI}, $\gamma_{n}=\kappa_{t}$, for which
$\sup_{n\geq1}\sup_{1\leq t\leq n}\kappa_{n}^{-1}\kappa_{t}<\infty$
is a maintained assumption. Thus \textbf{HL9(e)} holds.
\end{proof}
\pagebreak{}

\section{Proofs auxiliary to Appendix B}

\label{sec:auiliaryapp}

This appendix provides proofs of the lemmas stated in Appendix~\ref{sec:LLapp}.

\subsection{Proofs of Lemmas COEF, LVAR and CLT}

\label{subsec:FRMIlemmas}
\begin{proof}[Proof of Lemma~\ref{COEF}]
 \textbf{(i).} In all cases, $\lvert\phi_{j}(n)\rvert<C$ uniformly
over $j,n\in\mathbb{N}$, and $\{c_{s}\}$ is absolutely summable.
Hence from (\ref{lincoefs}),
\[
\lvert a_{k,t}(n)\rvert=\left|\sum_{j=0}^{(t-1)\wedge k}\phi_{j}c_{k-j}\right|\leq C\sum_{j=0}^{\infty}\vert c_{j}\lvert=:D_{2}<\infty.
\]

\textbf{(ii).} Under \textbf{MI}, we have $\phi_{j}(n)=\rho_{n}^{j}$
for $\rho_{n}:=1-\kappa_{n}^{-1}$, and thus
\begin{equation}
a_{k}(n)=\sum_{j=0}^{k}\rho_{n}^{k-j}c_{j}=\rho_{n}^{k}\left[\sum_{j=0}^{k}c_{j}+\sum_{j=0}^{k}(\rho_{n}^{-j}-1)c_{j}\right].\label{eq:aknMI}
\end{equation}
Since $\rho_{n}\in(0,1)$, we have for all $0\leq k\leq\lfloor\kappa_{n}\rfloor$
that 
\begin{equation}
1\geq\rho_{n}^{k}\geq\rho_{n}^{\kappa_{n}}=(1-\kappa_{n}^{-1})^{\kappa_{n}}\rightarrow e^{-1}\label{eq:rhopower}
\end{equation}
from which follows that $\rho_{n}^{k}\in(e^{-1}/2,1]$ for all $k\in\{0,\ldots,\lfloor\kappa_{n}\rfloor\}$,
for all $n\in\mathbb{N}$.

Thus to bound $a_{k}(n)$ away from zero, we need only to bound the
bracketed term in (\ref{eq:aknMI}) away from zero. To that end, note
that for all $0\leq k\leq\lfloor\kappa_{n}\rfloor$
\[
\left|\sum_{j=0}^{k}(\rho_{n}^{-j}-1)c_{j}\right|\leq\sum_{j=0}^{\infty}\lvert\rho_{n}^{-j}-1\rvert\lvert c_{j}\rvert1\{j\leq\kappa_{n}\}\rightarrow0
\]
as $n\rightarrow\infty$ by the dominated convergence theorem, since
$\lvert\rho_{n}^{-j}-1\rvert\lvert c_{j}\rvert1\{j\leq\kappa_{n}\}\rightarrow0$
for each $j\in\mathbb{R}$, and by an analogous argument to \prettyref{eq:rhopower}
is (for sufficiently large $n$) bounded above by $\lvert c_{j}\rvert(1+2e)$,
which is summable. Since $\sum_{j=0}^{\infty}c_{j}\neq0$, deduce
that there exist $n_{0},k_{0}\in\mathbb{N}$ such that
\[
\left|\sum_{j=0}^{k}(\rho_{n}^{-j}-1)c_{j}\right|\leq\frac{1}{2}\sum_{j=0}^{k}c_{j}
\]
for all $n\geq n_{0}$ and $k\in\{k_{0},\ldots,\lfloor\kappa_{n}\rfloor\}$.
\end{proof}
\begin{proof}[Proof of Lemma \ref{LVAR}]
 For convenience set $\sigma_{\xi}^{2}=1$. Under \textbf{FR} it
is sufficient to prove the result with $t_{n}=n$; the result in the
general case follows straightforwardly from the fact that $L(t_{n})/L(n)\rightarrow1$
as $n\rightarrow\infty$, since $L(n)$ is slowly varying.

\textbf{FR1.} Suppose we show that
\begin{align}
\sum_{k=0}^{n-1}\left(\sum_{j=0}^{k}c_{j}\right)^{2} & \sim4L(n) & \sum_{k=0}^{n-1}\left(\sum_{j=k+n+1}^{\infty}c_{j}\right)^{2} & =o[L(n)]\label{eq:FR1bnd1}
\end{align}
and that for any sequences $\{\delta_{1,n}\}$, $\{\delta_{2,n}\}$
with $\delta_{1,n}\asymp n\asymp\delta_{2,n}$,
\begin{equation}
\sum_{k=\delta_{1,n}}^{\infty}\left(\sum_{j=k+1}^{k+\delta_{2,n}}c_{j}\right)^{2}=o[L(n)].\label{eq:FR1bnd2}
\end{equation}
Since $\gamma_{n}^{2}=L(n)$ and $a_{k,t}=\sum_{j=0}^{(t-1)\wedge k}c_{k-j}$,
it follows from (\ref{eq:FR1bnd1}) that
\[
\gamma_{n}^{-2}Var(x_{n}^{+})=\gamma_{n}^{-2}\sum_{k=0}^{n-1}\left(\sum_{j=0}^{k}c_{k-j}\right)^{2}=\gamma_{n}^{-2}\sum_{k=0}^{n-1}\left(\sum_{j=0}^{k}c_{j}\right)^{2}\rightarrow4.
\]
Further, note that we may write
\begin{align}
Var(x_{n}^{-}) & =\sum_{k=n}^{\infty}\left(\sum_{j=0}^{n-1}c_{k-j}\right)^{2}=\sum_{k=0}^{\infty}\left(\sum_{j=k+1}^{k+n}c_{j}\right)^{2}\nonumber \\
 & =\sum_{k=0}^{n-1}\left(\sum_{j=k+1}^{k+n}c_{j}\right)^{2}+\sum_{k=n}^{\infty}\left(\sum_{j=k+1}^{k+n}c_{j}\right)^{2}.\label{eq:FR1xdec}
\end{align}
The second r.h.s.\ term is of the same form as (\ref{eq:FR1bnd2})
with $\delta_{1,n}=\delta_{2,n}=n$, and so is $o[L(n)]$. The first
r.h.s.\ term expands as
\begin{align*}
\sum_{k=0}^{n-1}\left(\sum_{j=k+1}^{k+n}c_{j}\right)^{2} & =\sum_{k=0}^{n-1}\left(\sum_{j=k+1}^{\infty}c_{j}-\sum_{j=k+n+1}^{\infty}c_{j}\right)^{2}\\
 & =\sum_{k=0}^{n-1}\left(\sum_{j=0}^{k}c_{j}+\sum_{j=k+n+1}^{\infty}c_{j}\right)^{2},
\end{align*}
where we have used the fact that $\sum_{j=0}^{\infty}c_{j}=0$, and
so by (\ref{eq:FR1bnd1}) and the Cauchy-Schwarz inequality,
\begin{equation}
\sum_{k=0}^{n-1}\left(\sum_{j=k+1}^{k+n}c_{j}\right)^{2}=\sum_{k=0}^{n-1}\left(\sum_{j=0}^{k}c_{j}\right)^{2}+o[L(n)].\label{eq:FR1xdec2}
\end{equation}
Thus it follows from (\ref{eq:FR1xdec}) and (\ref{eq:FR1xdec2})
that $\gamma_{n}^{-2}Var(x_{n}^{-})\rightarrow4$, as required. Finally,
noting that
\[
x_{t}^{-}=\sum_{k=t}^{\infty}a_{k,t}^{-}\xi_{t-k}=\sum_{k=t}^{\infty}\sum_{j=0}^{t-1}c_{k-j}\xi_{t-k}=\sum_{i=0}^{\infty}\sum_{j=0}^{t-1}c_{i+t-j}\xi_{-i}=\sum_{i=0}^{\infty}\sum_{l=1}^{t}c_{i+l}\xi_{-i}
\]
we have for $0<s<r\leq1$ that
\begin{align*}
Var(x_{\lfloor nr\rfloor}^{-}-x_{\lfloor ns\rfloor}^{-}) & =\mathbf{E}\left(\sum_{i=0}^{\infty}\sum_{l=1}^{\lfloor nr\rfloor}c_{i+l}\xi_{-i}-\sum_{i=0}^{\infty}\sum_{l=1}^{\lfloor ns\rfloor}c_{i+l}\xi_{-i}\right)^{2}\\
 & =\mathbf{E}\left(\sum_{i=0}^{\infty}\sum_{l=\lfloor ns\rfloor+1}^{\lfloor nr\rfloor}c_{i+l}\xi_{-i}\right)^{2}\\
 & =\sum_{i=0}^{\infty}\left(\sum_{l=\lfloor ns\rfloor+1}^{\lfloor nr\rfloor}c_{i+l}\right)^{2}=\sum_{i=\lfloor ns\rfloor}^{\infty}\left(\sum_{l=i+1}^{i+\lfloor nr\rfloor-\lfloor ns\rfloor}c_{l}\right)^{2},
\end{align*}
which is of the form (\ref{eq:FR1bnd2}) with $\delta_{1,n}=\lfloor ns\rfloor$
and $\delta_{2,n}=\lfloor nr\rfloor-\lfloor ns\rfloor$, and so is
$o[L(n)]$ as required.

It remains to prove (\ref{eq:FR1bnd1}) and (\ref{eq:FR1bnd2}). Recall
that $c_{j}\sim j^{-3/2}\ell(j)=:g(j)$, which is regularly varying
with index $-3/2$. Since $\ell$ is defined only up to an asymptotic
equivalence, it is without loss of generality to take $\ell$ to be
such that $g$ is monotone decreasing (see Theorem~1.5.3 in BGT).
For the first part of (\ref{eq:FR1bnd1}), using that $\sum_{j=0}^{\infty}c_{j}=0$
we have
\[
\sum_{k=0}^{n-1}\left(\sum_{j=0}^{k}c_{j}\right)^{2}\sim\sum_{k=1}^{n-1}\left(\sum_{j=k+1}^{\infty}c_{j}\right)^{2}=\sum_{k=1}^{n-1}k^{2}g^{2}(k)\left(k^{-1}g(k)^{-1}\sum_{j=k+1}^{\infty}c_{j}\right)^{2}.
\]
By Lemma~\ref{SV}(ii), 
\[
k^{-1}g(k)^{-1}\sum_{j=k+1}^{\infty}c_{j}\rightarrow\int_{1}^{\infty}x^{-3/2}dx=2
\]
as $k\rightarrow\infty$, while by Lemma~\ref{SV}(iii), 
\[
L(n)^{-1}\sum_{k=1}^{n-1}k^{2}g^{2}(k)=L(n)^{-1}\sum_{k=1}^{n-1}\frac{\ell^{2}(k)}{k}\rightarrow1,
\]
whence by the Toeplitz lemma (Hall and Heyde, 1980, p.~31)
\[
L(n)^{-1}\sum_{k=0}^{n-1}\left(\sum_{j=0}^{k}c_{j}\right)^{2}\rightarrow2^{2}=4
\]
as required. For the second part of (\ref{eq:FR1bnd1}), we have
\begin{align*}
\sum_{k=0}^{n-1}\left(\sum_{j=k+n+1}^{\infty}c_{j}\right)^{2} & =\sum_{k=0}^{n-1}\left(\sum_{j=0}^{k+n}c_{j}\right)^{2}\\
 & =\sum_{k=n}^{2n-1}\left(\sum_{j=0}^{k}c_{j}\right)^{2}=\sum_{k=0}^{2n-1}\left(\sum_{j=0}^{k}c_{j}\right)^{2}-\sum_{k=0}^{n-1}\left(\sum_{j=0}^{k}c_{j}\right)^{2}.
\end{align*}
By the first part of (\ref{eq:FR1bnd1}), the r.h.s.\ is asymptotically
equivalent to
\[
L(2n)-L(n)=L(2n)\left[1-\frac{L(2n)}{L(n)}\right]=o[L(2n)]=o[L(n)]
\]
by the slow variation of $L(n)$.

Finally, we turn to (\ref{eq:FR1bnd2}). Since $c_{j}\sim g(j)$,
$c_{j}$ is eventually strictly positive, and there exists a $C<\infty$
such that $0<c_{j}\leq Cg(j)$ for all $j$ sufficiently large. Hence,
for all $n$ sufficiently large
\begin{align*}
\sum_{k=\delta_{1,n}}^{\infty}\left(\sum_{j=k+1}^{k+\delta_{2,n}}c_{j}\right)^{2} & \leq C_{1}\sum_{k=\delta_{1,n}}^{\infty}\left(\sum_{j=k+1}^{k+\delta_{2,n}}g(j)\right)^{2}\leq C_{1}\delta_{2,n}^{2}\sum_{k=\delta_{1,n}}^{\infty}g^{2}(k),
\end{align*}
where the second inequality holds by the monotonicity of $g(j)$.
By Lemma~\ref{SV}(ii),
\[
\sum_{k=\delta_{1,n}}^{\infty}g^{2}(k)=\sum_{k=\delta_{1,n}}^{\infty}k^{-3}\ell^{2}(k)\sim\delta_{1,n}^{-2}\ell^{2}(\delta_{1,n}).
\]
Since $\delta_{1,n}\asymp n\asymp\delta_{2,n}$, it follows that 
\begin{align*}
L(n)^{-1}\sum_{k=\delta_{1,n}}^{\infty}\left(\sum_{j=k+1}^{k+\delta_{2,n}}c_{j}\right)^{2} & \leq C_{2}\frac{\ell^{2}(n)}{L(n)}\frac{\ell^{2}(\delta_{1,n})}{\ell^{2}(n)}=o(1)
\end{align*}
by the slow variation of $\ell^{2}$, and Theorem~1.5.9.a in BGT.

\textbf{FR2.} Consider $x_{n}^{+}$ first. We have
\begin{align}
Var(x_{n}^{+}) & =\sum_{k=0}^{n-1}\left(\sum_{j=0}^{k}\phi_{j}c_{k-j}\right)^{2}=\sum_{k=0}^{n-1}\left(\sum_{j=0}^{k}\phi_{k-j}c_{j}\right)^{2}\nonumber \\
 & =\sum_{k=0}^{n-1}\sum_{i=0}^{k}\sum_{j=0}^{k}c_{i}c_{j}\phi_{k-i}\phi_{k-j}\nonumber \\
 & =\sum_{i=0}^{n-1}\sum_{j=0}^{n-1}c_{i}c_{j}\sum_{k=i\vee j}^{n-1}\phi_{k-i}\phi_{k-j}\label{eq:FR2varplus}
\end{align}
Fix $i,j\in\mathbb{N}$, taking $i\geq j$ without loss of generality.
Then
\[
\sum_{k=i\vee j}^{n-1}\phi_{k-i}\phi_{k-j}=\sum_{k=i}^{n-1}\phi_{k-j}\phi_{k-i}=\sum_{k=0}^{n-i-1}\phi_{k+(i-j)}\phi_{k}=\sum_{k=0}^{n-i-1}\phi_{k}^{2}\frac{\phi_{k+(i-j)}}{\phi_{k}}.
\]
Since $\phi_{k}\sim k^{-1/2}\ell(k)$, which is regularly varying,
$\phi_{k+(i-j)}/\phi_{k}\rightarrow1$ as $k\rightarrow\infty$, while
by Lemma~\ref{SV}(iii),
\[
\gamma_{n}^{-2}\sum_{k=0}^{n-i-1}\phi_{k}^{2}\sim L(n)^{-1}\sum_{k=1}^{n}k^{-1}\ell^{2}(k)\rightarrow1.
\]
Hence by the Toeplitz Lemma (Hall and Heyde, 1980, p.~31),
\begin{equation}
\gamma_{n}^{-2}\sum_{k=i\vee j}^{n-1}\phi_{k-i}\phi_{k-j}=\gamma_{n}^{-2}\sum_{k=0}^{n-i-1}\phi_{k}^{2}\frac{\phi_{k+(i-j)}}{\phi_{k}}\rightarrow1\label{eq:FR2termcvg}
\end{equation}
as $n\rightarrow\infty$, for each $i,j\in\mathbb{N}$ with $i\geq j$.
Moreover, by the Cauchy-Schwarz inequality,
\begin{equation}
\gamma_{n}^{-2}\sum_{k=i\vee j}^{n-1}\lvert\phi_{k-i}\phi_{k-j}\rvert\leq\gamma_{n}^{-2}\sum_{k=0}^{n-1}\phi_{k}^{2}<C<\infty\label{eq:FR2termbnd}
\end{equation}
for some $C<\infty$ not depending on $i$ or $j$. Since $\sum_{k=0}^{\infty}\lvert c_{k}\rvert<\infty$,
it follows from (\ref{eq:FR2varplus})--(\ref{eq:FR2termbnd}) and
the dominated convergence theorem that 
\[
\gamma_{n}^{-2}Var(x_{n}^{+})\rightarrow\sum_{i=0}^{\infty}\sum_{j=0}^{\infty}c_{i}c_{j}=\left(\sum_{i=0}^{\infty}c_{i}\right)^{2}.
\]

With respect to $x_{n}^{-}$, we have:
\begin{align*}
Var(x_{n}^{-}) & =\sum_{k=n}^{\infty}\left(\sum_{j=0}^{n-1}\phi_{j}c_{k-j}\right)^{2}=\sum_{k=0}^{\infty}\left(\sum_{j=0}^{n-1}\phi_{j}c_{n+k-j}\right)^{2}\\
 & \leq2\sum_{k=0}^{\infty}\sum_{j=0}^{n-1}\lvert\phi_{j}c_{n+k-j}\rvert\sum_{i=j}^{n-1}\lvert\phi_{i}c_{n+k-i}\rvert\\
 & \leq_{(1)}C\sum_{k=0}^{\infty}\sum_{j=0}^{n-1}\phi_{j}^{2}\lvert c_{n+k-j}\rvert\sum_{i=j}^{n-1}\lvert c_{n+k-i}\rvert\\
 & \leq_{(2)}C_{1}\sum_{k=0}^{\infty}\sum_{j=0}^{n-1}\phi_{j}^{2}\lvert c_{n+k-j}\rvert,
\end{align*}
where $\leq_{(1)}$ holds since $\phi_{k}\sim k^{-1/2}\ell(k)$, which
can be taken to be monotone without loss of generality (see Theorem~1.5.3
in BGT), while $\leq_{(2)}$ follows from $\sum_{i=j}^{n-1}\lvert c_{n+k-i}\rvert\leq\sum_{i=0}^{\infty}\lvert c_{i}\rvert<\infty$.
Letting $\overline{c}_{m}:=\sum_{k=m}^{\infty}\lvert c_{k}\rvert$,
we thus have that $Var(x_{n}^{-})$ is bounded, up to multiplicative
constant, by
\begin{align*}
\sum_{j=0}^{n-1}\phi_{j}^{2}\sum_{k=0}^{\infty}\lvert c_{n+k-j}\rvert & =\sum_{j=0}^{n-1}\phi_{j}^{2}\overline{c}_{n-j}\\
 & \leq\left(\sum_{j=0}^{\lfloor n/2\rfloor}+\sum_{j=\lfloor n/2\rfloor+1}^{n-1}\right)\phi_{j}^{2}\overline{c}_{n-j}\\
 & \leq\overline{c}_{\lfloor n/2\rfloor}\sum_{j=0}^{\lfloor n/2\rfloor}\phi_{j}^{2}+C\sum_{j=\lfloor n/2\rfloor+1}^{n-1}\phi_{j}^{2}.
\end{align*}
We claim that the r.h.s.\ is of smaller order than $\gamma_{n}^{2}=L(n)$.
For the first term, this follows from $\overline{c}_{\lfloor n/2\rfloor}\rightarrow0$
and $\sum_{j=0}^{\lfloor n/2\rfloor}\phi_{j}^{2}\sim L(\lfloor n/2\rfloor)\sim L(n)$
by Lemma~\ref{SV}(iii) and the slow variation of $L(n)$. By the
same argument,
\begin{multline*}
\sum_{j=\lfloor n/2\rfloor+1}^{n-1}\phi_{j}^{2}=\sum_{j=1}^{n-1}\phi_{j}^{2}-\sum_{j=1}^{\lfloor n/2\rfloor}\phi_{j}^{2}\sim L(n)-L(\lfloor n/2\rfloor)\\
=L(n)\left[1-\frac{L(\lfloor n/2\rfloor)}{L(n)}\right]=o[L(n)].
\end{multline*}

\textbf{MI.} By the same argument as which led to (\ref{eq:FR2varplus}),
and noting that $\phi_{j}(n)=\rho_{n}^{j}$ for $\rho_{n}=1-\kappa_{n}^{-1}$
in this case, we have
\begin{align*}
Var(x_{t_{n}}^{+}(n)) & =\sum_{i=0}^{t_{n}-1}\sum_{j=0}^{t_{n}-1}c_{i}c_{j}\sum_{k=i\vee j}^{t_{n}-1}\rho_{n}^{k-i}\rho_{n}^{k-j}.
\end{align*}
Fix $i,j\in\mathbb{N}$, taking $i\geq j$ without loss of generality.
Then
\[
\sum_{k=i\vee j}^{t_{n}-1}\rho_{n}^{k-i}\rho_{n}^{k-j}=\sum_{k=i}^{t_{n}-1}\rho_{n}^{2(k-i)+(i-j)}=\rho_{n}^{i-j}\sum_{k=0}^{t_{n}-i-1}\rho_{n}^{2k}=\rho_{n}^{i-j}\frac{1-\rho_{n}^{2(t_{n}-i)}}{1-\rho_{n}^{2}}.
\]
Since $1-\rho_{n}^{2}=(1+\rho_{n})(1-\rho_{n})\sim2\kappa_{n}^{-1}$,
it follows that
\[
\kappa_{n}^{-1}\sum_{k=i\vee j}^{t_{n}-1}\rho_{n}^{k-i}\rho_{n}^{k-j}\sim2^{-1}\rho_{n}^{i-j}(1-\rho_{n}^{2(t_{n}-i)})\rightarrow2^{-1}
\]
as $n\rightarrow\infty$ for each $i,j\in\mathbb{N}$ with $i\geq j$,
since $\rho_{n}^{i-j}\rightarrow1$ as $n\rightarrow\infty$ and 
\begin{equation}
\rho_{n}^{t_{n}}=(1-\kappa_{n}^{-1})^{t_{n}}=[(1-\kappa_{n}^{-1})^{\kappa_{n}}]^{t_{n}/\kappa_{n}}\sim e^{-t_{n}/\kappa_{n}}\rightarrow0\label{eq:rhot}
\end{equation}
since $\kappa_{n}=o(t_{n})$. Moreover, for all $i,j\in\{0,\ldots,t_{n}\}$
with $i\geq j$, and all $n\in\mathbb{N}$,
\[
\kappa_{n}^{-1}\sum_{k=i\vee j}^{t_{n}-1}\rho_{n}^{k-i}\rho_{n}^{k-j}\leq C\lvert1-\rho_{n}^{2(t_{n}-i)}\rvert\leq C.
\]
Hence it follows by the absolute summability of $\{c_{s}\}$ and the
dominated convergence theorem that
\[
\kappa_{n}^{-1}Var(x_{t_{n}}^{+}(n))\rightarrow\frac{1}{2}\sum_{i=0}^{\infty}\sum_{j=0}^{\infty}c_{i}c_{j}=\frac{1}{2}\left(\sum_{i=0}^{\infty}c_{i}\right)^{2}.
\]

Next, since the coefficients $\{a_{k,t}^{-}(n)\}$ are uniformly bounded
by Lemma~\ref{COEF}, we have 
\begin{align*}
Var(x_{t_{n}}^{-}(n)) & =\sum_{k=t_{n}}^{\infty}[a_{k,t_{n}}^{-}(n)]^{2}\leq C\sum_{k=t_{n}}^{\infty}\lvert a_{k,t_{n}}^{-}(n)\rvert\leq C\sum_{k=t_{n}}^{\infty}\sum_{j=0}^{t_{n}-1}\lvert\rho_{n}^{j}c_{k-j}\lvert.
\end{align*}
Letting $\overline{c}_{m}:=\sum_{k=m}^{\infty}\lvert c_{k}\rvert$,
we can bound the r.h.s., up to a multiplicative constant, by
\begin{align*}
\sum_{j=0}^{t_{n}-1}\rho_{n}^{j}\sum_{k=t_{n}}^{\infty}\lvert c_{k-j}\rvert & =\sum_{j=0}^{t_{n}-1}\rho_{n}^{j}\overline{c}_{t_{n}-j}\\
 & \leq\left(\sum_{j=0}^{\lfloor t_{n}/2\rfloor-1}+\sum_{j=\lfloor t_{n}/2\rfloor}^{t_{n}-1}\right)\rho_{n}^{j}\overline{c}_{t_{n}-j}\\
 & \leq_{(1)}C\left(\overline{c}_{\lfloor t_{n}/2\rfloor}+\rho_{n}^{\lfloor t_{n}/2\rfloor}\right)\sum_{j=0}^{t_{n}-1}\rho_{n}^{j}\\
 & =_{(2)}o(\kappa_{n})
\end{align*}
where $\leq_{(1)}$ holds for all $n$ sufficiently large that $\rho_{n}\in(0,1)$,
and $=_{(2)}$ from $\overline{c}_{\lfloor t_{n}/2\rfloor}+\rho_{n}^{\lfloor t_{n}/2\rfloor}\rightarrow0$
(as per (\ref{eq:rhot}) above), and
\[
\sum_{j=0}^{t_{n}-1}\rho_{n}^{j}=\frac{1-\rho_{n}^{t_{n}}}{1-\rho_{n}}=\kappa_{n}(1-\rho_{n}^{t_{n}})\sim\kappa_{n}.\qedhere
\]
\end{proof}
\begin{proof}[Proof of Lemma~\ref{CLT}]
 By Lemma~\ref{LVAR}, we have 
\begin{align*}
Var\{\gamma_{n}^{-1}x_{t_{n}}^{+}(n)\} & \rightarrow\sigma_{+}^{2} & Var\{\gamma_{n}^{-1}x_{t_{n}}^{-}(n)\} & \rightarrow\sigma_{-}^{2}
\end{align*}
for appropriate $\sigma_{+}$ and $\sigma_{-}$. Each of $\{\gamma_{n}^{-1}x_{t_{n}}^{+}(n)\}$
and $\{\gamma_{n}^{-1}x_{t_{n}}^{-}(n)\}$ are independent linear
processes arrays with respective coefficients $\{\gamma_{n}^{-1}a_{k}(n)\}_{k=0}^{t_{n}-1}$
and $\{\gamma_{n}^{-1}a_{k,t_{n}}^{-}(n)\}_{k=t_{n}}^{\infty}$, which
by Lemma~\ref{COEF} have the property that
\[
\lim_{n\rightarrow\infty}\gamma_{n}^{-1}\left[\max_{0\leq k\leq t_{n}-1}\lvert a_{k}(n)\rvert+\sup_{k\geq t_{n}}\lvert a_{k,t_{n}}^{-}(n)\rvert\right]=0.
\]
The postulated weak convergence thus follows by Lemma 2.1 in Abadir,
Distaso, Giraitis and Koul (2014).
\end{proof}

\subsection{Proofs of Lemmas B.1 and B.2}

\label{subsec:technicalproofs}
\begin{proof}[Proof of Lemma~\ref{lem:UICF}]
 Without loss of generality, we will prove the result as stated,
but with $\mathbb{N}$ in place of $\mathbb{N}_{0}$, so that $\{\vartheta_{k}\}$
is indexed by $k\in\{1,2,\ldots\}$. Let $\eta:=\sum_{k=1}^{\infty}\vartheta_{k}\xi_{k}$,
and define
\[
\psi_{\eta}(\lambda):=\mathbf{E}\exp(\mathrm{i}\lambda\eta)=\prod_{k=1}^{\infty}\psi_{\xi}(\lambda\vartheta_{k}),
\]
where the second equality holds by independence. Since $\mathbf{E}\xi_{1}^{2}<\infty$,
by a Taylor expansion of the log characteristic function, there exists
a $\gamma\in(0,\infty)$ such that
\begin{equation}
\lvert\psi_{\xi}(\lambda)\rvert\leq e^{-\gamma\lambda^{2}}\vee e^{-\gamma}.\label{eq:cfbnd}
\end{equation}

\textbf{(i). }Without loss of generality, suppose $\{\vartheta_{k}\}$
is ordered as so that $\lvert\vartheta_{1}\rvert\geq\lvert\vartheta_{2}\lvert\geq\cdots$.
Define
\[
\mathcal{K}:=\left\{ k\geq\theta+1\text{ and }k\in\mathbb{N}\mid\vartheta_{k}^{2}\geq\frac{3\sigma_{\vartheta}^{2}}{2\pi}k^{-2}\right\} .
\]
We claim that $\mathcal{K}$ is nonempty. To see this, observe that
by definition of $\mathcal{K}$ and the stated condition on $\max_{k\in\mathbb{N}}\vartheta_{k}^{2}$,
\[
\sum_{k\notin\mathcal{K}}^{\infty}\vartheta_{k}^{2}=\sum_{k=1}^{\theta}\vartheta_{k}^{2}+\sum_{\substack{k\notin\mathcal{K}\\
k\geq\theta+1
}
}\vartheta_{k}^{2}\leq\theta\cdot\frac{\sigma_{\vartheta}^{2}}{2\theta}+\frac{3\sigma_{\vartheta}^{2}}{2\pi}\sum_{k=\theta+1}^{\infty}k^{-2}\leq\frac{3\sigma_{\vartheta}^{2}}{4}
\]
where we have used that $\sum_{k=1}^{\infty}k^{-2}=\pi/6$. Hence
\begin{equation}
\sum_{k\in\mathcal{K}}^{\infty}\vartheta_{k}^{2}\geq\frac{\sigma_{\vartheta}^{2}}{4}>0,\label{eq:varKbnd}
\end{equation}
and so $\mathcal{K}$ is nonempty. Let $k_{0}$ denote its first element,
noting that $k_{0}\geq\theta+1$ by construction.

We want to bound the integral of $\lvert\psi_{\eta}(\lambda)\rvert$
over $[A,\infty)$. To that end, decompose
\[
[A,\infty)=[A,A\vee\vartheta_{k_{0}}^{-1}]\cup[A\vee\vartheta_{k_{0}}^{-1},\infty),
\]
and consider the integral on each of these two pieces separately.
We have
\begin{align*}
\int_{\{\lvert\lambda\vert\in[A,A\vee\vartheta_{k_{0}}^{-1}]\}}\lvert\psi_{\eta}(\lambda)\rvert d\lambda & \leq\int_{\{\lvert\lambda\vert\in[A,A\vee\vartheta_{k_{0}}^{-1}]\}}\prod_{k\in\mathcal{K}}^{\infty}\lvert\psi_{\xi}(\lambda\vartheta_{k})\rvert d\lambda.
\end{align*}
Since $\lvert\lambda\vert\leq\vartheta_{k_{0}}^{-1}$ on the domain
of integration, we have $\lvert\lambda\vartheta_{k}\rvert\leq\lvert\vartheta_{k_{0}}^{-1}\vartheta_{k}\rvert\leq1$
for all $k\in\mathcal{K}$, and so by (\ref{eq:cfbnd}) and then (\ref{eq:varKbnd}),
\begin{align*}
\int_{\{\lvert\lambda\vert\in[A,A\vee\vartheta_{k_{0}}^{-1}]\}}\prod_{k\in\mathcal{K}}^{\infty}\lvert\psi_{\xi}(\lambda\vartheta_{k})\rvert d\lambda & \leq\int_{\{\lvert\lambda\vert\in[A,A\vee\vartheta_{k_{0}}^{-1}]\}}\exp\left(-\gamma\lambda^{2}\sum_{k\in\mathcal{K}}\vartheta_{k}^{2}\right)d\lambda\\
 & \leq\int_{\{\lvert\lambda\vert\geq A\}}\exp\left(-\gamma\lambda^{2}\sigma_{\vartheta}^{2}/4\right)d\lambda.\\
 & =:G_{1}(A;\sigma_{\vartheta}^{2}),
\end{align*}
where $G_{1}$ depends on $\psi_{\xi}$ through $\gamma$. By a change
of variables $G_{1}(A;\sigma_{\vartheta}^{2})\leq C_{\gamma}\sigma_{\vartheta}^{-1}$,
for some $C_{\gamma}$ depending only on $\gamma$. Moreover, $\lim_{A\rightarrow\infty}G_{1}(A;\sigma_{\vartheta}^{2})=0$
and $\sigma^{2}\mapsto G_{1}(A;\sigma^{2})$ is decreasing in $\sigma^{2}$,
as required,

Next, noting that the leading $k_{0}$ elements of $\{\vartheta_{k}\}$
(as ordered) must be nonzero, we have 
\begin{align*}
\int_{\{\lvert\lambda\vert\geq A\vee\vartheta_{k_{0}}^{-1}\}}\lvert\psi_{\eta}(\lambda)\rvert d\lambda & \leq\int_{\{\lvert\lambda\vert\geq A\vee\vartheta_{k_{0}}^{-1}\}}\prod_{k=1}^{k_{0}}\lvert\psi_{\xi}(\lambda\vartheta_{k})\rvert d\lambda\\
 & \leq e^{-\gamma(k_{0}-\theta)}\int_{\{\lvert\lambda\vert\geq A\vee\vartheta_{k_{0}}^{-1}\}}\prod_{k=1}^{\theta}\lvert\psi_{\xi}(\lambda\vartheta_{k})\rvert d\lambda
\end{align*}
where the second equality follows from (\ref{eq:cfbnd}) and the fact
that $\lvert\lambda\vartheta_{k}\rvert\geq\lvert\vartheta_{k_{0}}^{-1}\vartheta_{k}\rvert\geq1$
on the domain of integration, for all $k\geq k_{0}$. By Hölder's
inequality,
\begin{align}
\int_{\{\lvert\lambda\vert\geq A\vee\vartheta_{k_{0}}^{-1}\}}\prod_{k=1}^{\theta}\lvert\psi_{\xi}(\lambda\vartheta_{k})\rvert d\lambda & \leq\int_{\{\lvert\lambda\vert\geq A\}}\prod_{k=1}^{\theta}\lvert\psi_{\xi}(\lambda\vartheta_{k})\rvert d\lambda\nonumber \\
 & \leq\prod_{k=1}^{\theta}\left(\int_{\{\lvert\lambda\vert\geq A\}}\lvert\psi_{\xi}(\lambda\vartheta_{k})\rvert^{\theta}d\lambda\right)^{1/\theta}\nonumber \\
 & \leq\max_{1\leq k\leq\theta}\vartheta_{k}^{-1}\int_{\{\lvert\lambda\vert\geq A\vartheta_{k}\}}\lvert\psi_{\xi}(\lambda)\rvert^{\theta}d\lambda.\label{eq:holder}
\end{align}
Now $\vartheta_{k}\geq\vartheta_{k_{0}}\geq c_{0}\sigma_{\vartheta}k_{0}^{-1}$
for $k\leq k_{0}$, where $c_{0}:=(3/2\pi)^{1/2}$, whence
\[
\max_{1\leq k\leq\theta}\vartheta_{k}^{-1}\int_{\{\lvert\lambda\vert\geq A\vartheta_{k}\}}\lvert\psi_{\xi}(\lambda)\rvert^{\theta}d\lambda\leq c_{0}^{-1}\sigma_{\vartheta}^{-1}k_{0}\int_{\{\lvert\lambda\vert\geq Ac_{0}k_{0}^{-1}\sigma_{\vartheta}\}}\lvert\psi_{\xi}(\lambda)\rvert^{\theta}d\lambda.
\]
Thus,
\begin{align*}
\int_{\{\lvert\lambda\vert\geq A\vee\vartheta_{k_{0}}^{-1}\}}\lvert\psi_{\eta}(\lambda)\rvert d\lambda & \leq c_{0}^{-1}e^{\gamma\theta}\sigma_{\vartheta}^{-1}e^{-\gamma k_{0}}k_{0}\int_{\{\lvert\lambda\vert\geq Ac_{0}k_{0}^{-1}\sigma_{\vartheta}\}}\lvert\psi_{\xi}(\lambda)\rvert^{\theta}d\lambda\\
 & =:G_{2,k_{0}}(A;\sigma_{\vartheta}^{2}).
\end{align*}
Since the final integral is bounded by $\int\lvert\psi_{\xi}(\lambda)\rvert^{\theta}d\lambda<\infty$,
and $e^{-\gamma k}k\rightarrow0$ as $k\rightarrow\infty$, it is
evident that 
\[
G_{2}(A;\sigma_{\vartheta}^{2}):=\sup_{k\geq\theta+1}G_{2,k}(A;\sigma_{\vartheta}^{2})\leq C_{\psi_{\xi}}\sigma_{\vartheta}^{-1},
\]
for some $C_{\psi_{\xi}}<\infty$ depending only on $\psi_{\xi}$. 

Since each $G_{2,k}(A;\sigma^{2})$ is weakly decreasing in $\sigma^{2}$,
so too must be $G_{2}(A;\sigma^{2})$. It remains therefore to show
that $G_{2}(A;\sigma_{\vartheta}^{2})\rightarrow0$ as $A\rightarrow\infty$.
To that end, let $\epsilon>0$ and note that since $e^{-\gamma k}k\rightarrow0$
as $k\rightarrow\infty$, we may choose $k^{\ast}$ such that $G_{2,k}(A;\sigma_{\vartheta}^{2})\leq\epsilon$
for all $k\geq k^{\ast}$. Hence
\[
G_{2}(A;\sigma^{2})\leq\epsilon+\max_{\theta+1\leq k\leq k^{\ast}}G_{2,k}(A;\sigma_{\vartheta}^{2})\rightarrow\epsilon
\]
as $A\rightarrow\infty$, since $G_{2,k}(A;\sigma_{\vartheta}^{2})\rightarrow0$
as $A\rightarrow\infty$ for each $k$ fixed.

\textbf{(ii).} Without loss of generality, we may take $k_{i}=i$
for each $i\in\{1,\ldots,\theta\}$. Then by the same argument as
which led to (\ref{eq:holder}) above,
\begin{align*}
\int_{\mathbb{R}}\lvert\psi_{\eta}(\lambda)\rvert d\lambda & \leq\int_{\mathbb{R}}\prod_{k=1}^{\theta}\lvert\psi_{\xi}(\lambda\vartheta_{k})\vert d\lambda\\
 & \leq\max_{1\leq k\leq\theta}\int_{\mathbb{R}}\lvert\psi_{\xi}(\lambda\vartheta_{k})\vert^{\theta}d\lambda\\
 & =\max_{1\leq k\leq\theta}\vartheta_{k}^{-1}\int_{\mathbb{R}}\lvert\psi_{\xi}(\lambda)\vert^{\theta}d\lambda\\
 & \leq\delta^{-1}\int_{\mathbb{R}}\lvert\psi_{\xi}(\lambda)\vert^{\theta}d\lambda.\qedhere
\end{align*}
\end{proof}
\begin{proof}[Proof of Lemma~\ref{SV}]
 \textbf{(i).} The result follows by similar arguments to those given
in Giraitis, Koul and Surgailis (2012, p.~20).

\textbf{(ii).} Since $g(j)=j^{l}\varsigma(j)$ with $l<-1$, by Theorem~1.5.3
in BGT, we may without loss of generality take $\varsigma$ to be
such that $g$ is monotone decreasing. Since $\sum_{j=n}^{\infty}\varphi_{j}\sim\sum_{j=n}^{\infty}g(j)$
as $n\rightarrow\infty$, we have
\[
\frac{1}{ng(n)}\sum_{j=n}^{\infty}\varphi_{j}\sim\frac{1}{ng(n)}\sum_{j=n}^{\infty}g(j).
\]
By monotonicity of $g$,
\[
\frac{\int_{n}^{\infty}g(x)dx}{ng(n)}\leq\frac{\sum_{j=n}^{\infty}g(j)}{ng(n)}\leq\frac{\int_{n-1}^{\infty}g(x)dx}{ng(n)}.
\]
By Theorem~1.5.11 in BGT,
\[
\frac{\int_{n}^{\infty}g(x)dx}{ng(n)}\rightarrow-\frac{1}{l+1}=\int_{1}^{\infty}x^{l}dx.
\]
while by the preceding and Theorem~1.5.2 in BGT,
\[
\frac{\int_{n-1}^{\infty}g(x)dx}{ng(n)}=\frac{(n-1)g(n-1)}{ng(n)}\cdot\frac{\int_{n-1}^{\infty}g(x)dx}{(n-1)g(n-1)}\rightarrow-\frac{1}{l+1}.
\]

\textbf{(iii).} Set $s(x):=x^{-1}\varsigma(x)$, and note that
\[
\underline{s}(x):=\inf_{1\leq u\leq x}s(u)\leq s(x)\leq\sup_{u\geq x}\overline{s}(u)=:\overline{s}(x).
\]
Let $\epsilon>0$. By Theorem~1.5.3 of BGT, $\underline{s}(x)\sim s(x)\sim\overline{s}(x)$
as $x\rightarrow\infty$ , and so we may choose $x_{0}\in\mathbb{N}$
with $x_{0}\geq2$ such that 
\begin{equation}
(1-\epsilon)s(x)\leq\underline{s}(x)\leq s(x)\leq\overline{s}(x)\leq(1+\epsilon)s(x)\label{eq:fnsandwich}
\end{equation}
for all $x\geq x_{0}$. Since $\underline{s}$ and $\overline{s}$
are monotone decreasing, we also have
\begin{equation}
\int_{x_{0}}^{n}\underline{s}(u)du\leq\sum_{j=x_{0}}^{n}\underline{s}(j)\leq\sum_{j=x_{0}}^{n}s(j)\leq\sum_{j=x_{0}}^{n}\overline{s}(j)\leq\int_{x_{0}-1}^{n}\overline{s}(u)du.\label{eq:integsandwich}
\end{equation}
By (\ref{eq:fnsandwich}) and (\ref{eq:integsandwich}),
\[
\sum_{j=x_{0}}^{n}s(j)\geq\int_{x_{0}}^{n}\underline{s}(u)du\geq(1-\epsilon)\int_{x_{0}}^{n}s(u)du\sim(1-\epsilon)S(n)
\]
as $n\rightarrow\infty$, noting that since $S(n)\rightarrow\infty$
by hypothesis, $\int_{x}^{n}s(u)du\sim S(n)$ for each $x\geq1$.
Similarly,
\[
\sum_{j=x_{0}}^{n}s(j)\leq(1+\epsilon)\int_{x_{0}-1}^{n}s(u)du\sim(1+\epsilon)S(n)
\]
Deduce
\[
\sum_{j=1}^{n}s(j)\sim\sum_{j=x_{0}}^{n}s(j)\sim S(n)
\]
as $n\rightarrow\infty$.
\end{proof}

\section{Proofs of Theorems~4.1 and 4.2}

\label{sec:Proofs-of-Theorems4.1}
\begin{proof}[Proof of Theorem~\ref{LS}]
Let $\kappa_{gn}:=\kappa_{g}(\beta_{n})$. By standard arguments,
\begin{align}
n^{1/2}\begin{bmatrix}\hat{\mu}-\mu\\
\kappa_{gn}(\hat{\gamma}-\gamma)
\end{bmatrix} & =\left\{ \frac{1}{n}\sum_{t=2}^{n}\left[\begin{array}{cc}
1 & \kappa_{gn}^{-1}g(x_{t-1})\\
\kappa_{gn}^{-1}g(x_{t-1}) & \kappa_{gn}^{-2}g^{2}(x_{t-1})
\end{array}\right]\right\} ^{-1}\nonumber \\
 & \qquad\qquad\qquad\qquad\qquad\qquad\cdot\frac{1}{n^{1/2}}\sum_{t=2}^{n}\begin{bmatrix}1\\
\kappa_{gn}^{-1}g(x_{t-1})
\end{bmatrix}u_{t}\nonumber \\
 & =:M_{n}^{-1}S_{n}.\label{eq:olsdecomp}
\end{align}

Consider $M_{n}$ first. We claim that
\begin{equation}
\frac{1}{n}\sum_{t=2}^{n}[\kappa_{gn}^{-j}g^{j}(x_{t-1})-H_{n,t-1}^{j}]=o_{p}(1)\label{eq:homogdecomp}
\end{equation}
for $j\in\{1,2\}$, where $H_{n,t}:=H_{g}(x_{n,t-1})$. When $j=1$,
this follows immediately from the definition of an AHF. When $j=2$,
the l.h.s.\ is bounded (in absolute value) by 
\begin{equation}
\frac{1}{n\kappa_{gn}^{2}}\sum_{t=2}^{n}\lvert H_{n,t-1}R_{g}(x_{t-1},\beta_{n})\rvert+\frac{1}{n\kappa_{gn}^{2}}\sum_{t=2}^{n}R_{g}^{2}(x_{t-1},\beta_{n}).\label{eq:lsdecomp}
\end{equation}
The second term is $o_{p}(1)$ by (\ref{AHF}). Since $H_{g}^{2}$
satisfies the conditions of Theorem~\ref{MainLL}, we have 
\[
\frac{1}{n}\sum_{t=2}^{n}H_{n,t-1}^{2}=O_{p}(1),
\]
 whence the first term in (\ref{eq:lsdecomp}) is also $o_{p}(1)$,
by the Cauchy-Schwarz inequality. Thus by (\ref{eq:homogdecomp})
and Theorem~\ref{MainLL},
\begin{multline}
M_{n}=\frac{1}{n}\sum_{t=2}^{n}\left[\begin{array}{cc}
1 & \kappa_{gn}^{-1}g(x_{t-1})\\
H_{n,t-1}^{j} & \kappa_{gn}^{-2}g^{2}(x_{t-1})
\end{array}\right]\\
\overset{d}{\rightarrow}\int_{\mathbb{R}}\begin{bmatrix}1 & H_{g}(x)\\
H_{g}(x) & H_{g}^{2}(x)
\end{bmatrix}\varrho(x)dx=:M_{H_{g}}.\label{eq:Mncvg}
\end{multline}
which is a.s.\ nonsingular, since $H_{g}$ is not a.e.\ equal from
a constant function by assumption.

We turn next to $S_{n}$. We first note that
\[
\mathbf{E}\left|\frac{1}{n^{1/2}}\sum_{t=2}^{n}[\kappa_{gn}^{-1}g(x_{t-1})-H_{n,t-1}]u_{t}\right|^{2}=\frac{\sigma_{u}^{2}}{n\kappa_{gn}^{2}}\sum_{t=2}^{n}\mathbf{E}R_{g}\left(x_{t-1}(n),\beta_{n}\right)^{2}=o_{p}(1)
\]
by (\ref{AHF}). Thus
\[
S_{n}=\frac{1}{n^{1/2}}\sum_{t=2}^{n}\begin{bmatrix}1\\
H_{n,t-1}
\end{bmatrix}u_{t}+o_{p}(1)
\]
where the leading r.h.s.\ term is a vector martingale with conditional
variance matrix
\[
\frac{\sigma_{u}^{2}}{n}\sum_{t=2}^{n}\left[\begin{array}{cc}
1 & H_{n,t-1}\\
H_{n,t-1} & H_{n,t-1}^{2}
\end{array}\right]\overset{d}{\rightarrow}\sigma_{u}^{2}M_{H_{g}}
\]
by Theorem~\ref{MainLL}. Thus, if we can verify the requirements
of Wang's (2014) martingale CLT (his Theorem~2.1), the convergence
\begin{equation}
S_{n}\overset{d}{\rightarrow}\sigma_{u}M_{H_{g}}^{1/2}\zeta\label{eq:SnOLS}
\end{equation}
will hold jointly with \prettyref{eq:Mncvg}, where $\zeta\sim N[0,I_{2}]$
is independent of $X^{-}$ (and of the possibly random density $\varrho$
that depends on $X^{-}$) whence the result follows.

Regarding Wang's CLT, the only condition that is not trivially satisfied
in our setting is his Assumption~2. Since the mixing variate $X^{-}$
is the weak limit of $\beta_{n}^{-1}x_{n}^{-}(n)$, which is $\mathcal{F}_{0}$-measurable,
it can be seen from the proof of Wang's CLT that his condition (2.3)
is unnecessary in our case. Thus we need only to verify that $n^{-1/2}\max_{2\leq t\leq n}\lvert H_{n,t-1}\rvert=o_{p}(1)$,
which is equivalent (see e.g.\ Hall and Heyde, 1980, p.\ 53) to
the Lindeberg condition that for each $\eta>0$,
\begin{equation}
\frac{1}{n}\sum_{t=1}^{n-1}H_{g}^{2}\left(\beta_{n}^{-1}x_{t}(n)\right)1\left\{ \left\vert H_{g}\left(\beta_{n}^{-1}x_{t}(n)\right)\right\vert >\eta n^{1/2}\right\} =o_{p}(1)\label{proofLS2}
\end{equation}
as $n\rightarrow\infty$. Recall that $H_{g}^{2}$ is assumed to satisfy
the conditions of Theorem~\ref{MainLL}: therefore so too does $F_{A}(x):=H_{g}^{2}(x)\boldsymbol{1}\{\lvert H_{g}(x)\rvert>A\}$
for each $A\in\mathbb{R}$. Now
\begin{align*}
\Gamma_{n}(A):=\frac{1}{n}\sum_{t=1}^{n-1}F_{A}(\beta_{n}^{-1}x_{t}(n)) & \overset{d}{\rightarrow}\int_{\mathbb{R}}F_{A}(x)\varrho(x)dx\\
 & =\int_{\mathbb{R}}H_{g}^{2}(x)\boldsymbol{1}\{\lvert H_{g}(x)\rvert>A\}\varrho(x)dx=:\Gamma(A)
\end{align*}
by Theorem~\ref{MainLL}. By dominated convergence, $\Gamma(A)\overset{a.s.}{\rightarrow}0$
as $A\rightarrow\infty$, since $H_{g}^{2}\varrho$ is integrable.
Therefore for a given $\epsilon>0$, we may choose $A_{\epsilon}$
such that $\mathbf{P}\{\Gamma(A_{\epsilon})\geq\epsilon\}\leq\epsilon$,
whence
\begin{multline*}
\limsup_{n\rightarrow\infty}\mathbf{P}\{\Gamma_{n}(\eta n^{1/2})\geq\epsilon\}\leq_{(1)}\limsup_{n\rightarrow\infty}\mathbf{P}\{\Gamma_{n}(A_{\epsilon})\geq\epsilon\}\\
\leq_{(2)}\mathbf{P}\{\Gamma(A_{\epsilon})\geq\epsilon\}\leq\epsilon
\end{multline*}
where $\leq_{(1)}$ holds since $A_{\epsilon}<\eta n^{1/2}$ for all
$n$ sufficiently large, and $\leq_{(2)}$ follows by the portmanteau
theorem. Deduce $\Gamma_{n}(\eta n^{1/2})\overset{p}{\rightarrow}0$,
i.e.\ (\ref{proofLS2}) holds.
\end{proof}
\begin{proof}[Proof of Theorem~\ref{thm:npboth}]
\textbf{(i).} We first prove (\ref{eq: NWLimit}). By Theorem~\ref{KernelLL},
\begin{equation}
\frac{\beta_{n}}{h_{n}n}\sum_{t=2}^{n}K^{j}\left(\frac{x_{t}(n)-x}{h_{n}}\right)^{2}\overset{d}{\rightarrow}\varrho(0)\int_{\mathbb{R}}K^{2}(u)du=:V_{K^{j}}\label{eq:proofNW0}
\end{equation}
jointly for $j\in\{1,2\}$. Since $m$ has bounded derivative, $\lvert m(x_{t-1})-m(x)\lvert\leq C\lvert x_{t-1}-x\vert$,
and thus
\begin{align*}
\left|\sum_{t=2}^{n}K\left(\frac{x_{t-1}-x}{h_{n}}\right)[m(x_{t-1})-m(x)]\right| & \leq Ch_{n}\sum_{t=2}^{n}\left|K\left(\frac{x_{t-1}-x}{h_{n}}\right)\frac{x_{t-1}-x}{h_{n}}\right|\\
 & =O_{p}(h_{n}^{2}n/\beta_{n})
\end{align*}
also by Theorem~\ref{KernelLL}. Hence
\begin{multline}
\left(\frac{h_{n}n}{\beta_{n}}\right)^{1/2}[\hat{m}(x)-m(x)]\\
=\frac{\left(\frac{\beta_{n}}{h_{n}n}\right)^{1/2}\sum_{t=2}^{n}K\left(\frac{x_{t-1}-x}{h_{n}}\right)u_{t}}{\frac{\beta_{n}}{h_{n}n}\sum_{t=2}^{n}K\left(\frac{x_{t-1}-x}{h_{n}}\right)}+O_{p}\left(\frac{h_{n}^{3}n}{\beta_{n}}\right)^{1/2},\label{eq:mhatdecomp}
\end{multline}
where the second r.h.s.\ term is $o_{p}(1)$ since $nh_{n}^{3}/\beta_{n}\rightarrow0$
by assumption. The first r.h.s.\ term is a martingale with conditional
variance
\[
\sigma_{u}^{2}\frac{\beta_{n}}{h_{n}n}\sum_{t=2}^{n}K^{2}\left(\frac{x_{t-1}-x}{h_{n}}\right)\overset{d}{\rightarrow}\sigma_{u}^{2}V_{K^{2}}
\]
by (\ref{eq:proofNW0}). Thus, similarly to the argument given in
the proof of Theorem~\ref{LS}, if we can show the following Lindeberg
condition holds, that
\begin{equation}
\frac{\beta_{n}}{h_{n}n}\sum_{t=2}^{n}K^{2}\left(\frac{x_{t-1}-x}{h_{n}}\right)\mathbf{1}\left\{ \left\vert K\left(\frac{x_{t}-x}{h_{n}}\right)\right\vert >\eta\left(\frac{h_{n}n}{\beta_{n}}\right)^{1/2}\right\} =o_{p}(1),\label{proofNW4}
\end{equation}
for each $\eta>0$, then by Wang's (2014) martingale CLT, 
\begin{equation}
\left(\frac{\beta_{n}}{h_{n}n}\right)^{1/2}\sum_{t=2}^{n}K\left(\frac{x_{t-1}-x}{h_{n}}\right)u_{t}\overset{d}{\rightarrow}\zeta\sigma_{u}V_{K^{2}}^{1/2}\label{eq:mhatmgclt}
\end{equation}
jointly with (\ref{eq:proofNW0}), for $\zeta\sim N[0,1]$ independent
of $\varrho(0)$. (\ref{eq: NWLimit}) then follows from (\ref{eq:proofNW0}),
(\ref{eq:mhatdecomp}) and (\ref{eq:mhatmgclt}).

Finally, to verify (\ref{proofNW4}), we note that for $n$ sufficiently
large, the l.h.s\ of (\ref{proofNW4}) is bounded by
\begin{multline*}
\frac{\beta_{n}}{h_{n}n}\sum_{t=2}^{n}K^{2}\left(\frac{x_{t-1}-x}{h_{n}}\right)\mathbf{1}\left\{ \left\vert K\left(\frac{x_{t}-x}{h_{n}}\right)\right\vert >A\right\} \\
\overset{d}{\rightarrow}_{(1)}\varrho(0)\int_{\mathbb{R}}K^{2}(u)\boldsymbol{1}\{\lvert K(u)\rvert>A\}du\overset{a.s.}{\rightarrow}_{(2)}0
\end{multline*}
where $\overset{d}{\rightarrow}_{(1)}$ holds by Theorem~\ref{KernelLL}
as $n\rightarrow\infty$, and $\overset{a.s.}{\rightarrow}_{(2)}$
by dominated convergence as $A\rightarrow\infty$. (\ref{proofNW4})
thus follows by similar arguments as were used to prove (\ref{proofLS2})
above.

\textbf{(ii).} We next prove (\ref{eq:loclin}), using arguments similar
to those given in part~(i). Let $X_{t}^{\prime}:=(1,x_{t-1}-x)$,
$K_{th}:=K[h_{n}^{-1}(x_{t-1}-x)]$, $H_{n}:=\sum_{t=2}^{n}K_{th}X_{t}X_{t}^{\prime}$
and $\Lambda_{n}:=(nh_{n}/\beta_{n})^{1/2}\mathrm{diag}\{1,h_{n}\}$.
Then the LL estimator can be written as 
\[
\mathbf{\tilde{m}}(x):=\begin{bmatrix}\tilde{m}(x)\\
\tilde{m}^{(1)}(x)
\end{bmatrix}=H_{n}^{-1}\sum_{t=2}^{n}X_{t}K_{th}y_{t},
\]
and by standard arguments decomposes as
\begin{align*}
\mathbf{\tilde{m}}(x)-\mathbf{m}(x) & =H_{n}^{-1}\left\{ \sum_{t=2}^{n}K_{th}X_{t}m(x_{t-1})-H_{n}\mathbf{m}(x)\right\} +H_{n}^{-1}\sum_{t=2}^{n}K_{th}X_{t}u_{t}\\
 & =:H_{n}^{-1}R_{n}+H_{n}^{-1}S_{n}.
\end{align*}
where $\mathbf{m}(x):=[m(x),m^{(1)}(x)]^{\prime}$.

We consider each of $H_{n}$, $R_{n}$ and $S_{n}$ in turn. For $H_{n}$,
we have
\begin{align}
\Lambda_{n}^{-1}H_{n}\Lambda_{n}^{-1} & =\frac{\beta_{n}}{nh_{n}}\sum_{t=2}^{n}K\left(\frac{x_{t-1}-x}{h_{n}}\right)\begin{bmatrix}1 & \frac{x_{t-1}-x}{h_{n}}\\
\frac{x_{t-1}-x}{h_{n}} & \left(\frac{x_{t-1}-x}{h_{n}}\right)^{2}
\end{bmatrix}\nonumber \\
 & \stackrel{d}{\rightarrow}\varrho(0)\int\left[\begin{array}{cc}
1 & u\\
u & u^{2}
\end{array}\right]K(u)du\label{eq:Hcvg}
\end{align}
by Theorem~\ref{KernelLL}. For $R_{n}$,
\begin{align*}
R_{n} & =\sum_{t=2}^{n}K_{th}X_{t}\{m(x_{t-1})-X_{t}^{\prime}\mathbf{m}(x)\}\\
 & =\sum_{t=2}^{n}K_{th}X_{t}\{m(x_{t-1})-m(x)-m^{(1)}(x)(x_{t-1}-x)\}\\
 & =\sum_{t=2}^{n}K_{th}X_{t}m^{(2)}(\overline{x}_{t-1})(x_{t-1}-x)^{2}
\end{align*}
by Taylor's theorem, for some $\overline{x}_{t-1}$ lying between
$x_{t-1}$ and $x$. Thus,
\begin{equation}
\Lambda_{n}^{-1}R_{n}=h_{n}^{2}\left(\frac{\beta_{n}}{nh_{n}}\right)^{1/2}\sum_{t=2}^{n}K_{th}\begin{bmatrix}[h_{n}^{-1}(x_{t-1}-x)]^{2}\\{}
[h_{n}^{-1}(x_{t-1}-x)]^{3}
\end{bmatrix}m^{(2)}(\overline{x}_{t-1}),\label{eq:Rn1}
\end{equation}
which since $m^{(2)}$ is bounded, is bounded (in norm) by a multiple
of
\begin{multline}
h_{n}^{2}\left(\frac{\beta_{n}}{nh_{n}}\right)^{1/2}\sum_{t=2}^{n}\lvert K_{th}\rvert\left\Vert \begin{bmatrix}[h_{n}^{-1}\rvert x_{t-1}-x\rvert]^{2}\\{}
[h_{n}^{-1}\rvert x_{t-1}-x\lvert]^{3}
\end{bmatrix}\right\Vert \\
=_{(1)}O_{p}\left[h_{n}^{2}\left(\frac{nh_{n}}{\beta_{n}}\right)^{1/2}\right]=O_{p}\left(\frac{nh_{n}^{5}}{\beta_{n}}\right)^{1/2}=_{(2)}o_{p}(1)\label{eq:Rn2}
\end{multline}
where $=_{(1)}$ is by Theorem~\ref{KernelLL}, and $=_{(2)}$ by
the assumption that $nh_{n}^{5}/\beta_{n}\rightarrow0$. Finally,
since $\Lambda_{n}^{-1}S_{n}$ is a martingale with conditional variance
matrix
\begin{multline*}
\sigma_{u}^{2}\frac{\beta_{n}}{nh_{n}}\sum_{t=2}^{n}K_{th}^{2}\begin{bmatrix}1 & \frac{x_{t-1}-x}{h_{n}}\\
\frac{x_{t-1}-x}{h_{n}} & \left(\frac{x_{t-1}-x}{h_{n}}\right)^{2}
\end{bmatrix}\\
\stackrel{d}{\rightarrow}\varrho(0)\int\left[\begin{array}{cc}
1 & u\\
u & u^{2}
\end{array}\right]K^{2}(u)du=:\varrho(0)V
\end{multline*}
by Theorem~\ref{KernelLL}, it follows by Wang's (2014) CLT and
similar arguments as were used in part~(i) that
\begin{equation}
\Lambda_{n}^{-1}S_{n}\stackrel{d}{\rightarrow}\sigma_{u}\varrho(0)^{1/2}V^{1/2}\zeta\label{eq:Svar}
\end{equation}
jointly with (\ref{eq:Hcvg}), where $\zeta\sim N[0,I_{2}]$ is independent
of $\varrho(0)$. Since
\[
\Lambda_{n}[\mathbf{\tilde{m}}(x)-\mathbf{m}(x)]=[\Lambda_{n}^{-1}H_{n}\Lambda_{n}^{-1}]^{-1}\Lambda_{n}^{-1}R_{n}+\Lambda_{n}^{-1}S_{n}
\]
the result now follows from (\ref{eq:Hcvg})--(\ref{eq:Svar}), noting
that the r.h.s.\ of (\ref{eq:Hcvg}) is invertible under condition~(i).
\end{proof}

\section{Proof of Theorem 5.1}

\label{sec:Proof-of-Theorem5.1}

The proof is divided into four parts. Part~I derives the asymptotic
distribution of $\tilde{F}$ under $\mathcal{H}_{0}$. Part~II derives
the asymptotics of $(\hat{\mu},\hat{\gamma})$, and part~III those
of $\tilde{\sigma}_{u}^{2}(x)$, under $\mathcal{H}_{1}$. Finally,
part~IV draws on the results of parts~II and III to derive the asymptotics
of $\tilde{F}$ under $\mathcal{H}_{1}$.

\textbf{I. Asymptotics of $\boldsymbol{\tilde{F}}$ under }$\boldsymbol{\mathcal{H}_{0}}$\textbf{.}
Recall from (\ref{eq:spec_test}) that $\tilde{F}$ is constructed
from an ensemble of $t$ statistics of the form
\[
\tilde{t}(x;\hat{\mu},\hat{\gamma}):=\left[\frac{\sum_{t=2}^{n}K[(x_{t-1}-x)/h_{n}]}{\tilde{\sigma}_{u}^{2}(x)Q_{11}}\right]^{1/2}\left[\tilde{m}_{g}(x)-\hat{\mu}-\hat{\gamma}g(x)\right]
\]
for $x$ taking values in some finite set $\mathcal{X}\subset\mathbb{R}$.
Let $x\in\mathcal{X}$ be fixed. We shall show that:
\begin{equation}
(nh_{n}/\beta_{n})^{1/2}[\tilde{m}_{g}(x)-m(x)]\overset{d}{\rightarrow}\zeta_{x}\sigma_{u}\varrho(0)^{-1/2}Q_{11}^{1/2},\label{eq:mgasy}
\end{equation}
jointly over $x\in\mathcal{X}$, where $\zeta_{x}\sim N[0,1]$ is
independent of $\zeta_{x^{\prime}}\sim N[0,1]$ for each $x,x^{\prime}\in\mathcal{X}$;
and that
\begin{equation}
\tilde{\sigma}_{u}^{2}(x)\overset{p}{\rightarrow}\sigma_{u}^{2}\label{eq:sigmaasy}
\end{equation}
for each $x\in\mathcal{X}$. Since $(\beta_{n}/nh_{n})\sum_{t=2}^{n}K[(x_{t-1}-x)/h_{n}]\overset{d}{\rightarrow}\varrho(0)$
by Theorem~\ref{KernelLL}, and $\hat{\mu}$ and $\hat{\gamma}$
are consistent at rates $n^{-1/2}$ and $n^{-1/2}\kappa_{g}(\beta_{n})=O_{p}(n^{-1/2})$
respectively by Theorem~\ref{LS}, it will then follow that
\begin{align*}
\tilde{t}(x;\hat{\mu},\hat{\gamma}) & =\tilde{t}(x;\mu,\gamma)+O_{p}(h_{n}/\beta_{n})\\
 & =\left[\frac{\sum_{t=2}^{n}K[(x_{t-1}-x)/h_{n}]}{(\sigma_{u}^{2}+o_{p}(1))Q_{11}}\right]^{1/2}\left[\tilde{m}_{g}(x)-m(x)\right]\overset{d}{\rightarrow}\zeta_{x}
\end{align*}
whence 
\[
\tilde{F}=\sum_{x\in\mathcal{X}}\tilde{t}(x;\hat{\mu},\hat{\gamma})^{2}\overset{d}{\rightarrow}\sum_{x\in\mathcal{X}}\zeta_{x}^{2}\sim\chi_{p}^{2}
\]
as required.

It thus remains to prove (\ref{eq:mgasy}) and (\ref{eq:sigmaasy}).
Consider (\ref{eq:mgasy}) first: the proof uses arguments similar
to those used in part~(ii) of the proof of Theorem~\ref{thm:npboth}.
Define $X_{t}^{\prime}:=[1,g(x_{t-1})-g(x)]$, $K_{th}:=K[h_{n}^{-1}(x_{t-1}-x)]$,
$\Lambda_{n}:=(nh_{n}/\beta_{n})^{1/2}\mathrm{diag}\{1,h_{n}\}$ and
$H_{n}:=\sum_{t=2}^{n}K_{th}X_{t}X_{t}^{\prime}$, noting how the
definition of $X_{t}$ has been modified. Recall that $\tilde{m}_{g}$
is obtained by a $K_{th}$-weighted regression of $y_{t}$ on a constant
and $g(x_{t-1})-g(x)$. Therefore under $\mathcal{H}_{0}$, this estimator
suffers from no approximation bias, and standard arguments give
\begin{equation}
\tilde{m}_{g}(x)-m(x)=e_{1}^{\prime}H_{n}^{-1}\sum_{t=2}^{n}K_{th}X_{t}u_{t}=:e_{1}^{\prime}H_{n}^{-1}S_{n}(x),\label{eq:mgdecomp}
\end{equation}
where $e_{1}^{\prime}:=(1,0)$.

We shall consider each of $H_{n}$ and $S_{n}$ in turn. We claim
that
\begin{equation}
\Lambda_{n}^{-1}H_{n}\Lambda_{n}^{-1}\stackrel{d}{\rightarrow}\varrho(0)\int\left[\begin{array}{cc}
1 & g^{(1)}(x)u\\
g^{(1)}(x)u & g^{(1)}(x)^{2}u^{2}
\end{array}\right]K(u)du=:\varrho(0)\tilde{H}(x),\label{eq:H0cvg}
\end{equation}
and recall that $g^{(1)}(x)\neq0$ by condition~(ii). Consider e.g.\ the
$(2,2)$ element of the l.h.s.\ matrix: this is equal to
\begin{multline}
\frac{\beta_{n}}{nh_{n}^{3}}\sum_{t=2}^{n}K_{ht}[g(x_{t-1})-g(x)]^{2}\\
=\frac{\beta_{n}}{nh_{n}^{3}}\sum_{t=2}^{n}K_{ht}[g^{(1)}(x)(x_{t-1}-x)+g^{(2)}(\overline{x}_{t-1})(x_{t-1}-x)^{2}]^{2}\label{eq:spectaylor}
\end{multline}
for some $\overline{x}_{t-1}$ lying between $x_{t-1}$ and $x$,
by a Taylor expansion. By Theorem~\ref{thm:npboth},
\begin{multline}
\frac{\beta_{n}}{nh_{n}^{3}}\sum_{t=2}^{n}K_{ht}(x_{t-1}-x)^{2}\\
=\frac{\beta_{n}}{nh_{n}}\sum_{t=2}^{n}K\left(\frac{x_{t-1}-x}{h_{n}}\right)\left(\frac{x_{t-1}-x}{h_{n}}\right)^{2}\overset{d}{\rightarrow}\varrho(0)\int u^{2}K(u)du,\label{eq:Kprod}
\end{multline}
whereas, since $K(u)u^{4}$ is integrable by condition~(i)
\[
\frac{\beta_{n}}{nh_{n}^{3}}\sum_{t=2}^{n}K_{ht}(x_{t-1}-x)^{4}=O_{p}(h_{n}^{2})=o_{p}(1),
\]
whence by the Cauchy--Schwarz inequality,
\begin{align*}
\frac{\beta_{n}}{nh_{n}^{3}}\sum_{t=2}^{n}K_{ht}[g(x_{t-1})-g(x)]^{2} & =g^{(1)}(x)^{2}\frac{\beta_{n}}{nh_{n}^{3}}\sum_{t=2}^{n}K_{ht}(x_{t-1}-x)^{2}+o_{p}(1)\\
 & \overset{d}{\rightarrow}g^{(1)}(x)^{2}\varrho(0)\int u^{2}K(u)du.
\end{align*}
This gives the claimed convergence for the $(2,2)$ element in (\ref{eq:H0cvg});
the result for the other elements follows by analogous arguments.

We next turn to $S_{n}$. For each $x\in\mathcal{X}$, $\Lambda_{n}^{-1}S_{n}(x)$
is a vector martingale with conditional variance
\begin{multline*}
\sigma_{u}^{2}\frac{\beta_{n}}{nh_{n}}\sum_{t=2}^{n}K_{th}^{2}\begin{bmatrix}1 & \frac{g(x_{t-1})-g(x)}{h_{n}}\\
\frac{g(x_{t-1})-g(x)}{h_{n}} & \left(\frac{g(x_{t-1})-g(x)}{h_{n}}\right)^{2}
\end{bmatrix}\\
\stackrel{d}{\rightarrow}_{(1)}\varrho(0)\int\left[\begin{array}{cc}
1 & g^{(1)}(x)u\\
g^{(1)}(x)u & g^{(1)}(x)^{2}u^{2}
\end{array}\right]K^{2}(u)du=:\varrho(0)V(x)
\end{multline*}
where $\stackrel{d}{\rightarrow}_{(1)}$ follows by arguments identical
to those used to prove (\ref{eq:H0cvg}), with $K^{2}$ in place of
$K$. Moreover, the conditional covariation of $\Lambda_{n}^{-1}S_{n}(x)$
with $\Lambda_{n}^{-1}S_{n}(x^{\prime})$ for $x^{\prime}\neq x$
is a $2\times2$ matrix with $(i,j)$ element
\begin{equation}
\sigma_{u}^{2}\frac{\beta_{n}}{nh_{n}}\sum_{t=2}^{n}K_{th}(x)K_{th}(x^{\prime})\left(\frac{g(x_{t-1})-g(x)}{h_{n}}\right)^{i-1}\left(\frac{g(x_{t-1})-g(x^{\prime})}{h_{n}}\right)^{j-1},\label{eq:covar}
\end{equation}
where $K_{th}(x):=K[h_{n}^{-1}(x_{t-1}-x)]$. We claim this is $o_{p}(1)$,
in which case it will follow by an application of Wang's (2014) CLT
that $\Lambda_{n}^{-1}S_{n}(x)$ and $\Lambda_{n}^{-1}S_{n}(x^{\prime})$
are asymptotically independent, with
\begin{equation}
\Lambda_{n}^{-1}S_{n}(x)\overset{d}{\rightarrow}\varrho^{1/2}(0)V^{1/2}(x)\zeta_{x}\label{eq:S0cvg}
\end{equation}
jointly with (\ref{eq:H0cvg}) over all $x\in\mathcal{X}$, where
$\zeta_{x}\sim N[0,I_{2}]$ is independent of $\zeta_{x^{\prime}}\sim N[0,I_{2}]$
for $x^{\prime}\neq x$.

To verify that (\ref{eq:covar}) is indeed $o_{p}(1)$, note that
by similar arguments to those given in (\ref{eq:spectaylor}) above,
we can bound (\ref{eq:covar}) by linear combinations of functionals
of the form
\[
\sigma_{u}^{2}\frac{\beta_{n}}{nh_{n}}\sum_{t=2}^{n}L_{a}\left(\frac{x_{t-1}-x}{h_{n}}\right)L_{b}\left(\frac{x_{t-1}-x^{\prime}}{h_{n}}\right)
\]
where $L_{a}(u)=\lvert K(u)u^{a}\rvert$ for $a\in\{0,1,2\}$, which
is bounded and integrable by condition~(i). Finally, note that by
the proof of Theorem~\ref{MainLL}, $\{x_{t}(n)\}$ satisfies \textbf{HL3}
(for some $t_{0}\in\mathbb{N}$) and \textbf{HL6}. Hence for $t\geq t_{0}$
and $n\geq n_{0}$,
\begin{align}
 & \frac{1}{h_{n}}\mathbf{E}L_{a}\left(\frac{x_{t}-x}{h_{n}}\right)L_{b}\left(\frac{x_{t}-x^{\prime}}{h_{n}}\right)\nonumber \\
 & \quad=\frac{1}{h_{n}}\int_{\mathbb{R}}L_{a}\left(\frac{\beta_{t}u-x}{h_{n}}\right)L_{b}\left(\frac{\beta_{t}u-x^{\prime}}{h_{n}}\right)\mathcal{D}_{n,t}(u)du\nonumber \\
 & \quad\leq\sup_{n\geq n_{0},t_{0}\leq t\leq n}\sup_{v\in\mathbb{R}}\lvert\mathcal{D}_{n,t}(v)\rvert\beta_{t}^{-1}\int_{\mathbb{R}}L_{a}(u)L_{b}\left(u-h_{n}^{-1}(x^{\prime}-x)\right)du.\label{eq:bddinteg}
\end{align}
Since $L_{4}$ is bounded by condition~(i), it must be the case that
$L_{b}(u)\rightarrow0$ as $\lvert u\rvert\rightarrow\infty$ for
$b\in\{0,1,2\}$, whence the r.h.s.\ integral converges to zero by
the dominated convergence theorem as $h_{n}\rightarrow0$. Thus
\begin{align*}
 & \frac{\beta_{n}}{nh_{n}}\sum_{t=2}^{n}\mathbf{E}L_{a}\left(\frac{x_{t-1}-x}{h_{n}}\right)L_{b}\left(\frac{x_{t-1}-x^{\prime}}{h_{n}}\right)\\
 & \qquad\qquad=\frac{\beta_{n}}{nh_{n}}\sum_{t=t_{0}}^{n-1}\int_{\mathbb{R}}L_{a}\left(\frac{\beta_{t}^{-1}x_{t}-x}{\beta_{t}^{-1}h_{n}}\right)\left(\frac{\beta_{t}^{-1}x_{t}-x}{\beta_{t}^{-1}h_{n}}\right)+o(1)\\
 & \qquad\qquad=o\left(\frac{\beta_{n}}{n}\sum_{t=t_{0}}^{n-1}\beta_{t}^{-1}+1\right)=o(1),
\end{align*}
as was required, where the final equality follows since \textbf{HL6}
holds, as was noted above.

(\ref{eq:mgasy}) now follows from (\ref{eq:mgdecomp}), (\ref{eq:H0cvg}),
(\ref{eq:S0cvg}) and the fact that
\[
Q_{11}=e_{1}^{\prime}\tilde{H}(x)^{-1}V(x)\tilde{H}(x)^{-1}e_{1}
\]
as may be verified by direct calculation.

We turn therefore to (\ref{eq:sigmaasy}). We have
\begin{equation}
\tilde{\sigma}_{u}^{2}(x)=\frac{\sum_{t=2}^{n}\left[(\mu-\hat{\mu})+(\gamma-\hat{\gamma})g(x_{t-1})+u_{t}\right]^{2}K_{th}}{\sum_{t=2}^{n}K_{th}}.\label{eq:sigmatilde}
\end{equation}
Recognising
\begin{align*}
 & \lvert g^{2}(x_{t-1})-g^{2}(x)\rvert\\
 & \qquad=\lvert[g(x_{t-1})-g(x)]^{2}+2[g(x_{t-1})-g(x)]g(x)\rvert\\
 & \qquad\leq2\lvert g^{(1)}(x)\rvert^{2}(x_{t-1}-x)^{2}+2\lvert g^{(2)}(\overline{x}_{t-1})\rvert^{2}(x_{t-1}-x)^{4}\\
 & \qquad\qquad+2\lvert g(x)\rvert\{\lvert g^{(1)}(x)\rvert\lvert x_{t-1}-x\vert+\lvert g^{(2)}(\overline{x}_{t-1})\rvert\lvert x_{t-1}-x\vert^{2}\},
\end{align*}
for some $\overline{x}_{t-1}$ lying between $x_{t-1}$ and $x$,
and that $g^{(2)}$ is bounded, it follows from Theorem~\ref{thm:npboth}
(arguing similarly as in (\ref{eq:Kprod}) above) that
\begin{equation}
\frac{\beta_{n}}{nh_{n}}\sum_{t=2}^{n}K_{th}g^{2}(x_{t-1})=g^{2}(x)\frac{\beta_{n}}{nh_{n}}\sum_{t=2}^{n}K_{th}+O_{p}(h_{n})\overset{d}{\rightarrow}g^{2}(x)\varrho(0).\label{eq:g2lim}
\end{equation}
Since $\left\{ u_{t}^{2}-\sigma_{u}^{2},\mathcal{F}_{t}\right\} $
is a martingale difference sequence and $K_{th}$ is $\mathcal{F}_{t-1}$-measurable,
$(\beta_{n}/nh_{n})\sum_{t=2}^{n}K_{th}(u_{t}^{2}-\sigma_{u}^{2})$
is a martingale with conditional variance

\begin{align*}
\left(\frac{\beta_{n}}{nh_{n}}\right)^{2}\sum_{t=2}^{n}K_{th}^{2}\mathbf{E}[(u_{t}^{2}-\sigma_{u}^{2})^{2}\mid\mathcal{F}_{t-1}] & \leq_{(1)}Z\left(\frac{\beta_{n}}{nh_{n}}\right)^{2}\sum_{t=t_{0}}^{n}K_{th}^{2}+o_{p}(1)\\
 & =_{(2)}O_{p}\left(\frac{\beta_{n}}{nh_{n}}\right)+o_{p}(1)\\
 & \rightarrow0
\end{align*}
where $\leq_{(1)}$ holds by $Z:=\sup_{t}\mathbf{E}[u_{t}^{4}\mid\mathcal{F}_{t-1}]<\infty$
a.s., and $=_{(2)}$ by Theorem~\ref{MainLL}. Thus $(\beta_{n}/nh_{n})\sum_{t=2}^{n}K_{th}(u_{t}^{2}-\sigma_{u}^{2})=o_{p}(1)$
by Corollary~3.1 in Hall and Heyde (1980), whence
\begin{equation}
\frac{\sum_{t=2}^{n}u_{t}^{2}K_{th}}{\sum_{t=2}^{n}K_{th}}=\sigma_{u}^{2}+\frac{\sum_{t=2}^{n}K_{th}(u_{t}^{2}-\sigma_{u}^{2})}{\sum_{t=2}^{n}K_{th}}\stackrel{p}{\rightarrow}\sigma_{u}^{2}.\label{eq:truesig}
\end{equation}
Since $(\beta_{n}/nh_{n})\sum_{t=2}^{n}K_{th}\overset{d}{\rightarrow}\varrho(0)$
by Theorem~\ref{MainLL}, (\ref{eq:g2lim}), (\ref{eq:truesig}),
the consistency of $(\hat{\mu},\hat{\gamma})$ (from Theorem~\ref{LS})
and applications of the Cauchy--Schwarz inequality to the cross-product
terms on the r.h.s.\ of (\ref{eq:sigmatilde}) yield
\[
\tilde{\sigma}_{u}^{2}(x)=\frac{\sum_{t=2}^{n}u_{t}^{2}K_{th}}{\sum_{t=2}^{n}K_{th}}+o_{p}(1)\stackrel{p}{\rightarrow}\sigma_{u}^{2},
\]
i.e.\ (\ref{eq:sigmaasy}) holds.

\textbf{II. Asymptotics of $\boldsymbol{(\hat{\mu},\hat{\gamma})}$
under $\boldsymbol{\mathcal{H}_{1}}$.} Recall that 
\[
m(x)=\mu+\gamma g(x)+r_{n}g_{1}(x)
\]
 under $\mathcal{H}_{1}$. Let $\kappa_{gn}:=\kappa_{g}(\beta_{n})$
and $D_{n}:=n^{1/2}\mathrm{diag}\{1,\kappa_{gn}\}$.

\textbf{(a).} Suppose condition~(iii.a) holds. Let $R_{n}^{\prime}:=r_{n}^{-1}n^{-1/2}\beta_{n}$,
which is $o(1)$ by assumption. By (\ref{eq:olsdecomp}) and subsequent
arguments given in the proof of Theorem~\ref{LS},
\begin{align*}
R_{n}^{\prime}D_{n}\begin{bmatrix}\hat{\mu}-\mu\\
\hat{\gamma}-\gamma
\end{bmatrix} & =M_{n}^{-1}\frac{R_{n}^{\prime}}{n^{1/2}}\sum_{t=2}^{n}\begin{bmatrix}1\\
\kappa_{gn}^{-1}g(x_{t-1})
\end{bmatrix}[u_{t}+r_{n}g_{1}(x_{t-1})]\\
 & =:M_{n}^{-1}[R_{n}^{\prime}S_{n}+R_{n}^{\prime}S_{n}^{\dagger}]
\end{align*}
where $M_{n}\overset{d}{\rightarrow}M_{H_{g}}$ and $S_{n}=O_{p}(1)$
by (\ref{eq:Mncvg}) and (\ref{eq:SnOLS}) respectively, and
\[
R_{n}^{\prime}S_{n}^{\dagger}=\frac{\beta_{n}}{n}\sum_{t=2}^{n}\begin{bmatrix}g_{1}(x_{t-1})\\
\kappa_{gn}^{-1}g(x_{t-1})g_{1}(x_{t-1})
\end{bmatrix}\overset{d}{\rightarrow}\begin{bmatrix}\varrho(0)\int g_{1}\\
0
\end{bmatrix}
\]
by Theorem~\ref{KernelLL}, since $g_{1}$ and $g\cdot g_{1}$ are
integrable by assumption, and $\kappa_{gn}\rightarrow\infty$. Since
$R_{n}^{\prime}\rightarrow0$, it follows that
\[
R_{n}^{\prime}D_{n}\begin{bmatrix}\hat{\mu}-\mu\\
\hat{\gamma}-\gamma
\end{bmatrix}\overset{d}{\rightarrow}\left\{ \intop\left[\begin{array}{cc}
1 & H_{g}\\
H_{g} & H_{g^{2}}
\end{array}\right]\varrho\right\} ^{-1}\begin{bmatrix}\varrho(0)\int g_{1}\\
0
\end{bmatrix}.
\]

\textbf{(b).} Suppose condition~(iii.b) holds. Let $\kappa_{g_{1}n}:=\kappa_{g_{1}}(\beta_{n})$
and $R_{n}^{\prime\prime}:=r_{n}^{-1}n^{-1/2}\kappa_{g_{1}n}^{-1}$;
the latter is $o(1)$ by assumption. By analogous arguments to those
given in part~(a),
\begin{align}
R_{n}^{\prime\prime}D_{n}\begin{bmatrix}\hat{\mu}-\mu\\
\hat{\gamma}-\gamma
\end{bmatrix} & =M_{n}^{-1}\frac{R_{n}^{\prime\prime}}{n^{1/2}}\sum_{t=2}^{n}\begin{bmatrix}1\\
\kappa_{gn}^{-1}g(x_{t-1})
\end{bmatrix}r_{n}g_{1}(x_{t-1})+o_{p}(1)\nonumber \\
 & =M_{n}^{-1}\frac{1}{n}\sum_{t=2}^{n}\begin{bmatrix}\kappa_{g_{1}n}^{-1}g_{1}(x_{t-1})\\
\kappa_{gn}^{-1}g(x_{t-1})\kappa_{g_{1}n}^{-1}g_{1}(x_{t-1})
\end{bmatrix}+o_{p}(1)\nonumber \\
 & =_{(1)}\frac{1}{n}\sum_{t=2}^{n}\begin{bmatrix}H_{g_{1}}(x_{t-1}/\beta_{n})\\
H_{g}(x_{t-1}/\beta_{n})H_{g_{1}}(x_{t-1}/\beta_{n})
\end{bmatrix}+o_{p}(1)\nonumber \\
 & \overset{d}{\rightarrow}_{(2)}\left\{ \intop\left[\begin{array}{cc}
1 & H_{g}\\
H_{g} & H_{g^{2}}
\end{array}\right]\varrho\right\} ^{-1}\begin{bmatrix}\int H_{g_{1}}\varrho\\
\int H_{g}H_{g_{1}}\varrho
\end{bmatrix}=:-\begin{bmatrix}\mu_{*}\\
\gamma_{*}
\end{bmatrix},\label{eq:limdistols}
\end{align}
where $=_{(1)}$ follows by straightforward calculations, since $g$
and $g_{1}$ are AHF, and $\overset{d}{\rightarrow}_{(2)}$ follows
by Theorem~\ref{MainLL}.

\textbf{III. Asymptotics of $\boldsymbol{\tilde{\sigma}_{u}^{2}(x)}$
under $\boldsymbol{\mathcal{H}_{1}}$.} We have
\begin{equation}
\tilde{\sigma}_{u}^{2}(x)=\frac{\sum_{t=2}^{n}\left[(\mu-\hat{\mu})+(\gamma-\hat{\gamma})g(x_{t-1})+r_{n}g_{1}(x_{t-1})+u_{t}\right]^{2}K_{th}}{\sum_{t=2}^{n}K_{th}}.\label{eq:newsigmatilde}
\end{equation}

\textbf{(a).} Suppose condition~(iii.a) holds. The argument here
is similar to the proof of (\ref{eq:sigmaasy}) given in part~I.
Recall (\ref{eq:g2lim}), noting that this also holds with $g_{1}$
in place of $g$, since $g_{1}$ is also assumed to have bounded second
derivative. Thus
\[
\frac{\sum_{t=2}^{n}f^{2}(x_{t-1})K_{th}}{\sum_{t=2}^{n}K_{th}}\overset{p}{\rightarrow}f^{2}(x)
\]
for $f\in\{g,g_{1}\}$. By the analysis of part~II(a) of the proof,
and noting that both diagonal elements of
\begin{equation}
R_{n}^{\prime}D_{n}=(r_{n}^{-1}n^{-1/2}\beta_{n})n^{1/2}\mathrm{diag}\{1,\kappa_{gn}\}=r_{n}^{-1}\beta_{n}\mathrm{diag}\{1,\kappa_{gn}\}\label{eq:RpD}
\end{equation}
are divergent, it follows that $(\hat{\mu},\hat{\gamma})\overset{p}{\rightarrow}(\mu,\gamma)$.
In view of $r_{n}=o(1)$ and (\ref{eq:truesig}), the preceding facts
and applications of the Cauchy--Schwarz inequality to the cross-product
terms on the r.h.s.\ of (\ref{eq:newsigmatilde}) yield
\[
\tilde{\sigma}_{u}^{2}(x)=\frac{\sum_{t=2}^{n}u_{t}^{2}K_{th}}{\sum_{t=2}^{n}K_{th}}+o_{p}(1)\stackrel{p}{\rightarrow}\sigma_{u}^{2}.
\]

\textbf{(b).} Suppose condition~(iii.b) holds. Recall the analysis
given in part~II(b) of the proof, and note that
\begin{align}
R_{n}^{\prime\prime}D_{n} & =(r_{n}^{-1}n^{-1/2}\kappa_{g_{1}n}^{-1})n^{1/2}\mathrm{diag}\{1,\kappa_{gn}\}\nonumber \\
 & =\mathrm{diag}\{r_{n}^{-1}\kappa_{g_{1}n}^{-1},r_{n}^{-1}\kappa_{g_{1}n}^{-1}\kappa_{gn}\}.\label{eq:RppD}
\end{align}
The behaviour of these sequences, and thus that of $\tilde{\sigma}_{u}^{2}(x)$,
will depend on $\kappa_{\ast\ast}:=\lim_{n\rightarrow\infty}r_{n}\kappa_{g_{1}n}$;
we need to separately consider the cases where: $\kappa_{\ast\ast}=0$;
$\kappa_{\ast\ast}\in(0,\infty)$; or $\kappa_{\ast\ast}=\infty$.
By a suitable rescaling of $r_{n}$, it is without loss of generality
to normalise $\kappa_{\ast\ast}=1$ in the second of these cases.

Suppose $\kappa_{\ast\ast}=0$. Then both diagonal element of $R_{n}^{\prime\prime}D_{n}$
are divergent, and $(\hat{\mu},\hat{\gamma})\overset{p}{\rightarrow}(\mu,\gamma)$.
The same arguments given in part~III(a) thus imply that $\tilde{\sigma}_{u}^{2}(x)\stackrel{p}{\rightarrow}\sigma_{u}^{2}$.

Suppose $\kappa_{\ast\ast}=1$. Only the second element diagonal element
of $R_{n}^{\prime\prime}D_{n}$ diverges, so $\hat{\gamma}\overset{p}{\rightarrow}\gamma$;
and since the first diagonal element converges to $\kappa_{\ast\ast}^{-1}=1$,
$\hat{\mu}-\mu\stackrel{d}{\rightarrow}-\mu_{\ast}$ by (\ref{eq:limdistols}).
Thus in this case, we have
\begin{align*}
\tilde{\sigma}_{u}^{2}(x) & =\frac{\sum_{t=2}^{n}\left[(\mu-\hat{\mu})+u_{t}\right]^{2}K_{th}}{\sum_{t=2}^{n}K_{th}}+o_{p}(1)\\
 & =\mu_{\ast}^{2}+\sigma_{u}^{2}+2\frac{(\mu-\hat{\mu})\sum_{t=2}^{n}K_{th}u_{t}}{\sum_{t=2}^{n}K_{th}}+o_{p}(1)\\
 & \stackrel{d}{\rightarrow}_{(1)}\mu_{\ast}^{2}+\sigma_{u}^{2}
\end{align*}
where $\stackrel{d}{\rightarrow}_{(1)}$ follows by $\sum_{t=2}^{n}K_{th}u_{t}=O_{p}(\beta_{n}/nh_{n})^{1/2}$,
as follows e.g.\ from (\ref{eq:mhatmgclt}) in the proof of Theorem~\ref{thm:npboth}. 

Suppose $\kappa_{\ast\ast}=\infty$. In this case, (\ref{eq:RppD})
implies that $\hat{\mu}-\mu$ is divergent at rate $r_{n}\kappa_{g_{1}n}$,
and dominates $\hat{\gamma}-\gamma$. Thus by (\ref{eq:limdistols})
\begin{align*}
(r_{n}\kappa_{g_{1}n})^{-2}\tilde{\sigma}_{u}^{2}(x) & =(r_{n}\kappa_{g_{1}n})^{-2}(\mu-\hat{\mu})^{2}+o_{p}(1)\stackrel{d}{\rightarrow}\mu_{\ast}^{2}.
\end{align*}

In summary, in each case $\varsigma_{n}^{-1}\tilde{\sigma}_{u}^{2}(x)\stackrel{d}{\rightarrow}\sigma_{\ast}^{2}$,
where
\begin{equation}
[\varsigma_{n},\sigma_{*}^{2}]:=\left\{ \begin{alignedat}{4} & [1, & \  & \sigma_{u}^{2} &  & ] & \qquad & \text{if }\kappa_{\ast\ast}=0\\
 & [r_{n}^{2}\kappa_{g_{1}n}^{2}, &  & \mu_{\ast}^{2}+\sigma_{u}^{2} &  & ] &  & \text{if }\kappa_{\ast\ast}=1\\
 & [r_{n}^{2}\kappa_{g_{1}n}^{2}, &  & \mu_{\ast}^{2} &  & ] &  & \text{if }\kappa_{\ast\ast}=\infty.
\end{alignedat}
\right.\label{eq:spec_lim_var-1}
\end{equation}
Note that in the case where $\kappa_{\ast\ast}=1$, we have $\lim_{n\rightarrow\infty}r_{n}\kappa_{g_{1}n}=\kappa_{\ast\ast}=1$,
and so we may equivalently take either $\varsigma_{n}=1$ or $\varsigma_{n}=r_{n}^{2}\kappa_{g_{1}n}^{2}$;
we present (\ref{eq:spec_lim_var-1}) in terms of the latter to facilitate
the next part of the proof.

\textbf{IV. Asymptotics of $\boldsymbol{\tilde{F}}$ under $\boldsymbol{\mathcal{H}_{1}}$.}
Since $g_{1}$ has bounded second derivative (by condition~(v)),
arguments similar to those given in part~I of this proof and in the
proof of Theorem~\ref{thm:npboth} yield that, so long as $\tilde{\sigma}_{u}^{2}(x)\stackrel{p}{\rightarrow}\sigma_{u}^{2}$
(and $nh_{n}^{5}/\beta_{n}\rightarrow0$ as assumed),
\begin{align}
t^{*}(x;\mu,\gamma) & :=\left[\frac{\sum_{t=2}^{n}K_{th}}{\tilde{\sigma}_{u}^{2}(x)Q_{11}}\right]^{1/2}\left[\tilde{m}_{g}(x)-\mu-\gamma g(x)-r_{n}g_{1}(x)\right]\nonumber \\
 & \stackrel{d}{\rightarrow}N[0,1].\label{eq:spec_t-0}
\end{align}
For future reference, we note here that
\begin{multline}
\tilde{t}(x;\hat{\mu},\hat{\gamma})-t^{*}(x;\mu,\gamma)\\
=\left[\frac{\sum_{t=2}^{n}K_{th}}{\tilde{\sigma}_{u}^{2}(x)Q_{11}}\right]^{1/2}[(\mu-\hat{\mu})+(\gamma-\hat{\gamma})g(x)+r_{n}g_{1}(x)]\label{eq:tdecomp}
\end{multline}
and that if $\tilde{\sigma}_{u}^{2}(x)\stackrel{p}{\rightarrow}\sigma_{u}^{2}$,
then by Theorem~\ref{KernelLL}
\begin{equation}
U_{n}:=\frac{\frac{\beta_{n}}{nh_{n}}\sum_{t=2}^{n}K[(x_{t-1}-x)/h_{n}]}{\tilde{\sigma}_{u}^{2}(x)Q_{11}}\stackrel{d}{\rightarrow}\frac{\varrho(0)}{\sigma_{u}^{2}Q_{11}}=:U,\label{eq:Un}
\end{equation}
which is a.s.\ nonzero.

We show below that in each of the cases contemplated by condition~(iii),
$\delta_{n}\tilde{t}(x;\hat{\mu},\hat{\gamma})\overset{d}{\rightarrow}\omega(x)$,
where $\delta_{n}\rightarrow0$, and $\omega(x)$ is nonzero for at
least one $x\in\mathcal{X}$. That $\tilde{F}\overset{p}{\rightarrow}\infty$
then follows immediately from (\ref{eq:spec_test}).

\textbf{(a).} Suppose condition~(iii.a) holds. By the results of
parts~II(a) and III(a), and (\ref{eq:RpD}) in particular,
\begin{align*}
(\mu-\hat{\mu})+(\gamma-\hat{\gamma})g(x)+r_{n}g_{1}(x) & =O_{p}(r_{n}\beta_{n}^{-1})+O_{p}(r_{n}\beta_{n}^{-1}\kappa_{gn}^{-1})+r_{n}g_{1}(x).\\
 & =r_{n}[g_{1}(x)+o_{p}(1)].
\end{align*}
Thus by (\ref{eq:tdecomp}) and recalling from part~III(a) that $\tilde{\sigma}_{u}^{2}(x)\stackrel{p}{\rightarrow}\sigma_{u}^{2}$
\begin{align*}
\tilde{t}(x;\hat{\mu},\hat{\gamma}) & =t^{*}(x;\mu,\gamma)+o_{p}(1)\\
 & \qquad\qquad+\left[\frac{\sum_{t=2}^{n}K_{th}}{\tilde{\sigma}_{u}^{2}(x)Q_{11}}\right]^{1/2}r_{n}[g_{1}(x)+o_{p}(1)]
\end{align*}
whence, noting that $\beta_{n}/r_{n}^{2}nh_{n}\rightarrow0$ by condition~(iv),
\begin{align*}
\left(\frac{\beta_{n}}{r_{n}^{2}nh_{n}}\right)^{1/2}\tilde{t}(x;\hat{\mu},\hat{\gamma}) & =o_{p}(1)+U_{n}^{1/2}g_{1}(x)\stackrel{d}{\rightarrow}U^{1/2}g_{1}(x)
\end{align*}
as required, by (\ref{eq:Un}). By condition~(vi), $g_{1}(x)\neq0$
for at least one $x\in\mathcal{X}$.

\textbf{(b).} Suppose condition (iii.b) holds. Recall $\kappa_{\ast}:=\lim_{n\rightarrow\infty}\kappa_{g_{1}n}$
and $\kappa_{*\ast}:=\lim_{n\rightarrow\infty}\kappa_{g_{1}n}r_{n}$.
We consider each of the cases contemplated by condition~(iv), in
turn.

Suppose $\kappa_{*}\in[0,\infty)$, which implies $\kappa_{\ast\ast}=0$.
By the results of part~II(b) and (\ref{eq:RppD}),
\begin{gather*}
r_{n}^{-1}(\hat{\mu}-\mu)=\kappa_{g_{1}n}(r_{n}\kappa_{g_{1}n})^{-1}(\hat{\mu}-\mu)\overset{d}{\rightarrow}-\kappa_{\ast}\mu_{\ast}\\
r_{n}^{-1}(\hat{\gamma}-\gamma)=O_{p}(\kappa_{g_{1}n}\kappa_{gn}^{-1})=o_{p}(1).
\end{gather*}
By part~III(b), $\tilde{\sigma}_{u}^{2}(x)\stackrel{p}{\rightarrow}\sigma_{u}^{2}$
, and thus, similarly to the argument given in part~IV(a), noting
that $\beta_{n}/r_{n}^{2}nh_{n}\rightarrow0$ also in the present
case,
\begin{align*}
\left(\frac{\beta_{n}}{r_{n}^{2}nh_{n}}\right)^{1/2}\tilde{t}(x;\hat{\mu},\hat{\gamma}) & =o_{p}(1)+U_{n}^{1/2}[r_{n}^{-1}(\hat{\mu}-\mu)+g_{1}(x)]\\
 & \stackrel{d}{\rightarrow}U^{1/2}[\kappa_{*}\mu_{*}+g_{1}(x)],
\end{align*}
and note that by condition~(vi), $\kappa_{*}\mu_{*}+g_{1}(x)\neq0$
for at least one $x\in\mathcal{X}$.

Suppose $\kappa_{*}=\infty$ and $\kappa_{\ast\ast}=0$. In this case,
it remains true that $\tilde{\sigma}_{u}^{2}(x)\stackrel{p}{\rightarrow}\sigma_{u}^{2}$
, by part~III(b) of the proof. Moreover, by the results of part~II(b)
and (\ref{eq:RppD}),
\begin{gather}
r_{n}^{-1}\kappa_{g_{1}n}^{-1}(\hat{\mu}-\mu)\overset{d}{\rightarrow}-\mu_{\ast}\label{eq:resclmu}\\
r_{n}^{-1}\kappa_{g_{1}n}^{-1}(\hat{\gamma}-\gamma)=O_{p}(\kappa_{gn}^{-1})=o_{p}(1).\label{eq:resclgam}
\end{gather}
Thus under the assumption that $\beta_{n}/r_{n}^{2}\kappa_{g_{1}n}^{2}nh_{n}\rightarrow0$,
similar arguments as were used in the previous case yield
\begin{align*}
\left(\frac{\beta_{n}}{r_{n}^{2}\kappa_{g_{1}n}^{2}nh_{n}}\right)^{1/2}\tilde{t}(x;\hat{\mu},\hat{\gamma}) & =o_{p}(1)+U_{n}^{1/2}\{r_{n}^{-1}\kappa_{g_{1}n}^{-1}(\hat{\mu}-\mu)+o_{p}(1)\}\\
 & \stackrel{d}{\rightarrow}U^{1/2}\mu_{*}
\end{align*}
which is nonzero a.s.\ by condition~(vi).

Suppose $\kappa_{**}\in(0,\infty]$, which implies $\kappa_{*}=\infty$;
as in part~III(b), normalise $\kappa_{\ast\ast}=1$ if $\kappa_{\ast\ast}\in(0,\infty)$.
We have
\[
(r_{n}\kappa_{g_{1}n})^{-2}\tilde{\sigma}_{u}^{2}(x)\stackrel{d}{\rightarrow}\sigma_{\ast}^{2},
\]
for $\sigma_{\ast}^{2}$ as in (\ref{eq:spec_lim_var-1}). Thus $\tilde{\sigma}_{u}^{2}(x)$
is bounded away from zero, and possibly divergent, whence $t^{*}(x;\mu,\gamma)=O_{p}(1).$
In view of (\ref{eq:resclmu})--(\ref{eq:resclgam}) and the fact
that $\beta_{n}/nh_{n}\rightarrow0$, it follows that
\begin{align*}
\left(\frac{\beta_{n}}{nh_{n}}\right)^{1/2}\tilde{t}(x;\hat{\mu},\hat{\gamma}) & =o_{p}(1)+\left[\frac{\frac{\beta_{n}}{nh_{n}}\sum_{t=2}^{n}K_{th}}{(r_{n}\kappa_{g_{1}n})^{-2}\tilde{\sigma}_{u}^{2}(x)Q_{11}}\right]^{1/2}\\
 & \qquad\qquad\qquad\qquad\qquad\cdot\{r_{n}^{-1}\kappa_{g_{1}n}^{-1}(\mu-\hat{\mu})+o_{p}(1)\}\\
 & \stackrel{d}{\rightarrow}\left[\frac{\varrho(0)}{\sigma_{*}^{2}Q_{11}}\right]^{1/2}\mu_{*},
\end{align*}
which is nonzero a.s.\ by condition~(vi).\hfill{}\qedsymbol{}

\newpage{}

\section{Additional simulations for Section~6}

\label{sec:Additional-simulations-for}

This section provides the results of some additional simulations not
reported in the main text. Table~\ref{tbl:frac-other-power} is the
counterpart of Table~2 in the main text, reporting size-adjusted
power of the procedure for a type II fractional process, relative
to Wang and Phillips (2012), for the cases where $d\in\{0.25,0.75\}$.

We also repeated the simulation exercise described in Section~6,
but now allowing $x_{t}$ to be mildly (or nearly) integrated, that
is of the form
\begin{align*}
x_{t}(n) & =(1-\kappa_{n}^{-1})x_{t-1}(n)+\xi_{t} & \kappa_{n} & =n^{\alpha_{\kappa}}
\end{align*}
with $x_{0}(n)=0$. All other aspects of the simulation design are
exactly as in Section~6. We generated data for $\alpha_{\kappa}\in\{0.25,0.50,0.75,1.00\}$;
this last corresponds to a nearly integrated process. Size results
are given in Table~\ref{tbl:mi-size}, while size-adjusted power
relative to Wang and Phillips (2012) is reported in Tables~\ref{tbl:power-mi-low-alpha}
and \ref{tbl:power-mi-high-alpha}.

\begin{table}[H]
\caption[caption]{MI processes: $x_{t}=(1-\kappa_{n}^{-1})x_{t-1}+\xi_{t}$; $\kappa_{n}=n^{\alpha_{\kappa}}$
\\ Size: maximum rejection frequency over $\rho\in\{-0.5,0,0.5\}$;
$\alpha=0.1$}
\label{tbl:mi-size}%
\begin{tabular}{cccccccccccccc}
\toprule 
\addlinespace
$\boldsymbol{\alpha_{\kappa}}$ & $\boldsymbol{n}$ &  & \multicolumn{3}{c}{\textbf{WP (2012)}} &  & \multicolumn{3}{c}{$\boldsymbol{p=17}$} &  & \multicolumn{3}{c}{$\boldsymbol{p=25}$}\tabularnewline\addlinespace
\multicolumn{2}{c}{$h=n^{b}$, $b=$} &  & -0.2 & -0.1 & -0.05 &  & -0.2 & -0.1 & -0.05 &  & -0.2 & -0.1 & -0.05\tabularnewline\addlinespace
\midrule
\addlinespace
\textbf{$0.25$} & 100 &  & 0.02 & 0.01 & 0.00 &  & 0.06 & 0.03 & 0.02 &  & 0.07 & 0.03 & 0.02\tabularnewline
 & 200 &  & 0.03 & 0.01 & 0.00 &  & 0.06 & 0.04 & 0.02 &  & 0.08 & 0.04 & 0.03\tabularnewline
 & 500 &  & 0.04 & 0.01 & 0.00 &  & 0.07 & 0.04 & 0.03 &  & 0.08 & 0.05 & 0.03\tabularnewline\addlinespace
\textbf{$0.50$} & 100 &  & 0.04 & 0.02 & 0.01 &  & 0.07 & 0.05 & 0.04 &  & 0.09 & 0.06 & 0.04\tabularnewline
 & 200 &  & 0.06 & 0.03 & 0.02 &  & 0.08 & 0.06 & 0.05 &  & 0.10 & 0.08 & 0.06\tabularnewline
 & 500 &  & 0.08 & 0.05 & 0.03 &  & 0.08 & 0.07 & 0.07 &  & 0.10 & 0.09 & 0.08\tabularnewline\addlinespace
\textbf{$0.75$} & 100 &  & 0.06 & 0.03 & 0.02 &  & 0.08 & 0.06 & 0.05 &  & 0.10 & 0.08 & 0.06\tabularnewline
 & 200 &  & 0.08 & 0.05 & 0.04 &  & 0.07 & 0.07 & 0.06 &  & 0.10 & 0.09 & 0.08\tabularnewline
 & 500 &  & 0.09 & 0.07 & 0.06 &  & 0.08 & 0.08 & 0.08 &  & 0.09 & 0.10 & 0.10\tabularnewline\addlinespace
$1.00$ & 100 &  & 0.06 & 0.04 & 0.03 &  & 0.08 & 0.07 & 0.06 &  & 0.11 & 0.09 & 0.08\tabularnewline
 & 200 &  & 0.08 & 0.05 & 0.04 &  & 0.08 & 0.08 & 0.08 &  & 0.10 & 0.10 & 0.10\tabularnewline
 & 500 &  & 0.09 & 0.07 & 0.07 &  & 0.08 & 0.08 & 0.09 &  & 0.09 & 0.10 & 0.11\tabularnewline\addlinespace
\bottomrule
\end{tabular}
\end{table}

\begin{table}[H]
\caption[caption]{Fractional type II processes \\ Size-adjusted power when $d\in\{0.25,0.75\}$;
$\alpha=0.1$}
\label{tbl:frac-other-power}%
\begin{tabular}{lccccccccccccc}
\toprule 
\addlinespace
 & $\boldsymbol{n}$ &  & \multicolumn{3}{c}{\textbf{WP (2012)}} &  & \multicolumn{3}{c}{$\boldsymbol{p=17}$} &  & \multicolumn{3}{c}{$\boldsymbol{p=25}$}\tabularnewline\addlinespace
\multicolumn{2}{l}{$h=n^{b}$, $b=$} &  & -0.2 & -0.1 & -0.05 &  & -0.2 & -0.1 & -0.05 &  & -0.2 & -0.1 & -0.05\tabularnewline\addlinespace
\midrule
\addlinespace
\multicolumn{2}{l}{$\boldsymbol{d=0.25}$} &  & \multicolumn{3}{c}{\textbf{Size adj.\ power}} &  & \multicolumn{7}{c}{\textbf{Size adjusted, relative to WP}}\tabularnewline\addlinespace
$\varphi_{1}(x)$ & 100 &  & 0.07 & 0.05 & 0.04 &  & 0.11 & 0.10 & 0.09 &  & 0.14 & 0.12 & 0.10\tabularnewline
 & 200 &  & 0.16 & 0.14 & 0.12 &  & 0.15 & 0.17 & 0.16 &  & 0.19 & 0.20 & 0.19\tabularnewline
 & 500 &  & 0.47 & 0.53 & 0.51 &  & 0.20 & 0.20 & 0.20 &  & 0.24 & 0.23 & 0.23\tabularnewline\addlinespace
$\varphi_{1}(2x)$ & 100 &  & 0.08 & 0.05 & 0.03 &  & 0.10 & 0.08 & 0.07 &  & 0.13 & 0.10 & 0.08\tabularnewline
 & 200 &  & 0.15 & 0.12 & 0.08 &  & 0.14 & 0.13 & 0.11 &  & 0.18 & 0.15 & 0.13\tabularnewline
 & 500 &  & 0.47 & 0.45 & 0.37 &  & 0.18 & 0.17 & 0.16 &  & 0.22 & 0.21 & 0.20\tabularnewline\addlinespace
$|x|^{-2}\wedge1$ & 100 &  & 0.06 & 0.03 & 0.02 &  & 0.14 & 0.13 & 0.13 &  & 0.18 & 0.16 & 0.15\tabularnewline
$(\times0.5)$ & 200 &  & 0.11 & 0.07 & 0.04 &  & 0.20 & 0.22 & 0.21 &  & 0.24 & 0.25 & 0.24\tabularnewline
 & 500 &  & 0.35 & 0.27 & 0.20 &  & 0.14 & 0.12 & 0.13 &  & 0.16 & 0.14 & 0.14\tabularnewline\addlinespace
$|x|^{-1}\wedge1$ & 100 &  & 0.05 & 0.02 & 0.01 &  & 0.09 & 0.09 & 0.08 &  & 0.12 & 0.11 & 0.09\tabularnewline
$(\times0.5)$ & 200 &  & 0.06 & 0.04 & 0.03 &  & 0.13 & 0.15 & 0.15 &  & 0.16 & 0.18 & 0.17\tabularnewline
 & 500 &  & 0.14 & 0.13 & 0.11 &  & 0.19 & 0.22 & 0.22 &  & 0.23 & 0.25 & 0.25\tabularnewline\addlinespace
$|x|^{1.5}$ & 100 &  & 0.10 & 0.09 & 0.07 &  & 0.03 & 0.04 & 0.04 &  & 0.05 & 0.05 & 0.04\tabularnewline
$(\times0.02)$ & 200 &  & 0.25 & 0.25 & 0.23 &  & 0.03 & 0.04 & 0.04 &  & 0.06 & 0.06 & 0.05\tabularnewline
 & 500 &  & 0.73 & 0.78 & 0.77 &  & 0.04 & 0.07 & 0.06 &  & 0.06 & 0.09 & 0.08\tabularnewline\addlinespace
$x^{2}$ & 100 &  & 0.06 & 0.04 & 0.03 &  & 0.05 & 0.06 & 0.05 &  & 0.07 & 0.07 & 0.06\tabularnewline
$(\times0.02)$ & 200 &  & 0.12 & 0.11 & 0.10 &  & 0.06 & 0.08 & 0.07 &  & 0.09 & 0.10 & 0.09\tabularnewline
 & 500 &  & 0.39 & 0.44 & 0.44 &  & 0.10 & 0.14 & 0.16 &  & 0.13 & 0.18 & 0.19\tabularnewline\addlinespace
\midrule
\addlinespace
\multicolumn{2}{l}{$\boldsymbol{d=0.75}$} &  & \multicolumn{3}{c}{\textbf{Size adj.\ power}} &  & \multicolumn{7}{c}{\textbf{Size adjusted, relative to WP}}\tabularnewline\addlinespace
$\varphi_{1}(x)$ & 100 &  & 0.07 & 0.06 & 0.05 &  & 0.06 & 0.08 & 0.08 &  & 0.10 & 0.11 & 0.10\tabularnewline
 & 200 &  & 0.12 & 0.11 & 0.10 &  & 0.07 & 0.10 & 0.11 &  & 0.11 & 0.14 & 0.14\tabularnewline
 & 500 &  & 0.20 & 0.23 & 0.23 &  & 0.07 & 0.12 & 0.14 &  & 0.12 & 0.16 & 0.17\tabularnewline\addlinespace
$\varphi_{1}(2x)$ & 100 &  & 0.07 & 0.05 & 0.04 &  & 0.05 & 0.06 & 0.06 &  & 0.08 & 0.08 & 0.08\tabularnewline
 & 200 &  & 0.10 & 0.08 & 0.06 &  & 0.05 & 0.07 & 0.07 &  & 0.09 & 0.10 & 0.09\tabularnewline
 & 500 &  & 0.14 & 0.13 & 0.12 &  & 0.04 & 0.07 & 0.08 &  & 0.08 & 0.11 & 0.11\tabularnewline\addlinespace
$|x|^{-2}\wedge1$ & 100 &  & 0.07 & 0.04 & 0.03 &  & 0.09 & 0.11 & 0.11 &  & 0.12 & 0.14 & 0.14\tabularnewline
$(\times0.5)$ & 200 &  & 0.09 & 0.06 & 0.05 &  & 0.10 & 0.13 & 0.14 &  & 0.15 & 0.18 & 0.18\tabularnewline
 & 500 &  & 0.11 & 0.10 & 0.08 &  & 0.07 & 0.12 & 0.13 &  & 0.12 & 0.15 & 0.16\tabularnewline\addlinespace
$|x|^{-1}\wedge1$ & 100 &  & 0.07 & 0.04 & 0.03 &  & 0.07 & 0.08 & 0.08 &  & 0.09 & 0.11 & 0.11\tabularnewline
$(\times0.5)$ & 200 &  & 0.08 & 0.07 & 0.06 &  & 0.07 & 0.11 & 0.12 &  & 0.11 & 0.14 & 0.15\tabularnewline
 & 500 &  & 0.11 & 0.11 & 0.11 &  & 0.08 & 0.13 & 0.15 &  & 0.13 & 0.16 & 0.18\tabularnewline\addlinespace
$|x|^{1.5}$ & 100 &  & 0.10 & 0.09 & 0.07 &  & 0.03 & 0.05 & 0.05 &  & 0.06 & 0.07 & 0.07\tabularnewline
$(\times0.02)$ & 200 &  & 0.18 & 0.18 & 0.18 &  & 0.04 & 0.06 & 0.08 &  & 0.07 & 0.09 & 0.10\tabularnewline
 & 500 &  & 0.37 & 0.42 & 0.44 &  & 0.01 & 0.08 & 0.11 &  & 0.06 & 0.12 & 0.14\tabularnewline\addlinespace
$x^{2}$ & 100 &  & 0.07 & 0.06 & 0.05 &  & 0.08 & 0.11 & 0.13 &  & 0.12 & 0.14 & 0.16\tabularnewline
$(\times0.02)$ & 200 &  & 0.13 & 0.13 & 0.12 &  & 0.04 & 0.09 & 0.10 &  & 0.08 & 0.11 & 0.12\tabularnewline
 & 500 &  & 0.26 & 0.31 & 0.32 &  & 0.00 & 0.00 & 0.00 &  & 0.01 & 0.00 & 0.00\tabularnewline\addlinespace
\bottomrule
\end{tabular}
\end{table}

\begin{table}[H]
\caption[caption]{MI processes: $x_{t}=(1-\kappa_{n}^{-1})x_{t-1}+\xi_{t}$; $\kappa_{n}=n^{\alpha_{\kappa}}$
\\ Size-adjusted power when $\alpha_{\kappa}\in\{0.25,0.50\}$; $\alpha=0.1$}
\label{tbl:power-mi-low-alpha}%
\begin{tabular}{lccccccccccccc}
\toprule 
\addlinespace
 & $\boldsymbol{n}$ &  & \multicolumn{3}{c}{\textbf{WP (2012)}} &  & \multicolumn{3}{c}{$\boldsymbol{p=17}$} &  & \multicolumn{3}{c}{$\boldsymbol{p=25}$}\tabularnewline\addlinespace
\multicolumn{2}{l}{$h=n^{b}$, $b=$} &  & -0.2 & -0.1 & -0.05 &  & -0.2 & -0.1 & -0.05 &  & -0.2 & -0.1 & -0.05\tabularnewline\addlinespace
\midrule
\addlinespace
\multicolumn{2}{l}{$\boldsymbol{\alpha_{\kappa}=0.25}$} &  & \multicolumn{3}{c}{\textbf{Size adj.\ power}} &  & \multicolumn{7}{c}{\textbf{Size adjusted, relative to WP}}\tabularnewline\addlinespace
$\varphi_{1}(x)$ & 100 &  & 0.06 & 0.03 & 0.02 &  & 0.11 & 0.09 & 0.07 &  & 0.13 & 0.11 & 0.08\tabularnewline
 & 200 &  & 0.15 & 0.13 & 0.09 &  & 0.17 & 0.17 & 0.15 &  & 0.20 & 0.20 & 0.17\tabularnewline
 & 500 &  & 0.50 & 0.54 & 0.50 &  & 0.21 & 0.22 & 0.23 &  & 0.24 & 0.24 & 0.25\tabularnewline\addlinespace
$\varphi_{1}(2x)$ & 100 &  & 0.09 & 0.04 & 0.02 &  & 0.13 & 0.10 & 0.07 &  & 0.16 & 0.12 & 0.08\tabularnewline
 & 200 &  & 0.22 & 0.16 & 0.10 &  & 0.19 & 0.17 & 0.13 &  & 0.23 & 0.20 & 0.16\tabularnewline
 & 500 &  & 0.61 & 0.58 & 0.48 &  & 0.19 & 0.18 & 0.19 &  & 0.22 & 0.21 & 0.22\tabularnewline\addlinespace
$|x|^{-2}\wedge1$ & 100 &  & 0.07 & 0.03 & 0.01 &  & 0.12 & 0.12 & 0.10 &  & 0.15 & 0.13 & 0.11\tabularnewline
$(\times0.5)$ & 200 &  & 0.17 & 0.10 & 0.05 &  & 0.20 & 0.22 & 0.21 &  & 0.23 & 0.24 & 0.24\tabularnewline
 & 500 &  & 0.50 & 0.39 & 0.26 &  & 0.15 & 0.15 & 0.16 &  & 0.17 & 0.16 & 0.17\tabularnewline\addlinespace
$|x|^{-1}\wedge1$ & 100 &  & 0.03 & 0.01 & 0.00 &  & 0.08 & 0.06 & 0.05 &  & 0.09 & 0.08 & 0.06\tabularnewline
$(\times0.5)$ & 200 &  & 0.06 & 0.03 & 0.02 &  & 0.12 & 0.12 & 0.11 &  & 0.15 & 0.15 & 0.12\tabularnewline
 & 500 &  & 0.15 & 0.12 & 0.09 &  & 0.20 & 0.24 & 0.24 &  & 0.23 & 0.26 & 0.27\tabularnewline\addlinespace
$|x|^{1.5}$ & 100 &  & 0.07 & 0.05 & 0.03 &  & 0.04 & 0.02 & 0.01 &  & 0.05 & 0.03 & 0.02\tabularnewline
$(\times0.02)$ & 200 &  & 0.22 & 0.20 & 0.16 &  & 0.04 & 0.03 & 0.02 &  & 0.05 & 0.04 & 0.03\tabularnewline
 & 500 &  & 0.72 & 0.76 & 0.74 &  & 0.04 & 0.04 & 0.03 &  & 0.06 & 0.05 & 0.04\tabularnewline\addlinespace
$x^{2}$ & 100 &  & 0.03 & 0.02 & 0.01 &  & 0.04 & 0.03 & 0.02 &  & 0.05 & 0.03 & 0.02\tabularnewline
$(\times0.02)$ & 200 &  & 0.09 & 0.07 & 0.05 &  & 0.05 & 0.04 & 0.03 &  & 0.06 & 0.05 & 0.04\tabularnewline
 & 500 &  & 0.33 & 0.35 & 0.33 &  & 0.06 & 0.07 & 0.07 &  & 0.09 & 0.09 & 0.08\tabularnewline\addlinespace
\midrule
\addlinespace
\multicolumn{2}{l}{$\boldsymbol{\alpha_{\kappa}=0.50}$} &  & \multicolumn{3}{c}{\textbf{Size adj.\ power}} &  & \multicolumn{7}{c}{\textbf{Size adjusted, relative to WP}}\tabularnewline\addlinespace
$\varphi_{1}(x)$ & 100 &  & 0.08 & 0.06 & 0.04 &  & 0.11 & 0.12 & 0.11 &  & 0.15 & 0.14 & 0.13\tabularnewline
 & 200 &  & 0.18 & 0.18 & 0.16 &  & 0.16 & 0.17 & 0.18 &  & 0.20 & 0.21 & 0.21\tabularnewline
 & 500 &  & 0.47 & 0.54 & 0.54 &  & 0.17 & 0.19 & 0.19 &  & 0.22 & 0.22 & 0.22\tabularnewline\addlinespace
$\varphi_{1}(2x)$ & 100 &  & 0.08 & 0.05 & 0.03 &  & 0.11 & 0.09 & 0.07 &  & 0.14 & 0.12 & 0.09\tabularnewline
 & 200 &  & 0.15 & 0.12 & 0.09 &  & 0.14 & 0.13 & 0.12 &  & 0.18 & 0.17 & 0.15\tabularnewline
 & 500 &  & 0.34 & 0.33 & 0.29 &  & 0.15 & 0.17 & 0.16 &  & 0.20 & 0.21 & 0.20\tabularnewline\addlinespace
$|x|^{-2}\wedge1$ & 100 &  & 0.07 & 0.04 & 0.02 &  & 0.15 & 0.15 & 0.15 &  & 0.19 & 0.18 & 0.17\tabularnewline
$(\times0.5)$ & 200 &  & 0.12 & 0.08 & 0.05 &  & 0.19 & 0.22 & 0.22 &  & 0.24 & 0.25 & 0.25\tabularnewline
 & 500 &  & 0.24 & 0.20 & 0.15 &  & 0.12 & 0.11 & 0.10 &  & 0.14 & 0.12 & 0.11\tabularnewline\addlinespace
$|x|^{-1}\wedge1$ & 100 &  & 0.05 & 0.03 & 0.01 &  & 0.10 & 0.11 & 0.10 &  & 0.13 & 0.13 & 0.11\tabularnewline
$(\times0.5)$ & 200 &  & 0.08 & 0.06 & 0.05 &  & 0.15 & 0.17 & 0.17 &  & 0.19 & 0.21 & 0.21\tabularnewline
 & 500 &  & 0.17 & 0.18 & 0.17 &  & 0.17 & 0.18 & 0.19 &  & 0.21 & 0.21 & 0.21\tabularnewline\addlinespace
$|x|^{1.5}$ & 100 &  & 0.12 & 0.10 & 0.09 &  & 0.03 & 0.03 & 0.03 &  & 0.05 & 0.04 & 0.04\tabularnewline
$(\times0.02)$ & 200 &  & 0.31 & 0.33 & 0.31 &  & 0.04 & 0.06 & 0.06 &  & 0.07 & 0.08 & 0.08\tabularnewline
 & 500 &  & 0.76 & 0.83 & 0.85 &  & 0.06 & 0.11 & 0.14 &  & 0.10 & 0.15 & 0.17\tabularnewline\addlinespace
$x^{2}$ & 100 &  & 0.07 & 0.05 & 0.04 &  & 0.05 & 0.06 & 0.05 &  & 0.07 & 0.07 & 0.07\tabularnewline
$(\times0.02)$ & 200 &  & 0.16 & 0.17 & 0.16 &  & 0.09 & 0.12 & 0.13 &  & 0.13 & 0.15 & 0.17\tabularnewline
 & 500 &  & 0.50 & 0.60 & 0.61 &  & 0.04 & 0.07 & 0.07 &  & 0.08 & 0.09 & 0.09\tabularnewline\addlinespace
\bottomrule
\end{tabular}
\end{table}

\begin{table}[H]
\caption[caption]{MI processes: $x_{t}=(1-\kappa_{n}^{-1})x_{t-1}+\xi_{t}$; $\kappa_{n}=n^{\alpha_{\kappa}}$
\\ Size-adjusted power when $\alpha_{\kappa}\in\{0.75,1.00\}$; $\alpha=0.1$}
\label{tbl:power-mi-high-alpha}%
\begin{tabular}{lccccccccccccc}
\toprule 
\addlinespace
 & $\boldsymbol{n}$ &  & \multicolumn{3}{c}{\textbf{WP (2012)}} &  & \multicolumn{3}{c}{$\boldsymbol{p=17}$} &  & \multicolumn{3}{c}{$\boldsymbol{p=25}$}\tabularnewline\addlinespace
\multicolumn{2}{l}{$h=n^{b}$, $b=$} &  & -0.2 & -0.1 & -0.05 &  & -0.2 & -0.1 & -0.05 &  & -0.2 & -0.1 & -0.05\tabularnewline\addlinespace
\midrule
\addlinespace
\multicolumn{2}{l}{$\boldsymbol{\alpha_{\kappa}=0.75}$} &  & \multicolumn{3}{c}{\textbf{Size adj.\ power}} &  & \multicolumn{7}{c}{\textbf{Size adjusted, relative to WP}}\tabularnewline\addlinespace
$\varphi_{1}(x)$ & 100 &  & 0.08 & 0.06 & 0.05 &  & 0.08 & 0.09 & 0.09 &  & 0.11 & 0.12 & 0.11\tabularnewline
 & 200 &  & 0.13 & 0.13 & 0.12 &  & 0.09 & 0.12 & 0.13 &  & 0.14 & 0.16 & 0.16\tabularnewline
 & 500 &  & 0.25 & 0.29 & 0.29 &  & 0.09 & 0.15 & 0.17 &  & 0.14 & 0.19 & 0.21\tabularnewline\addlinespace
$\varphi_{1}(2x)$ & 100 &  & 0.07 & 0.05 & 0.03 &  & 0.06 & 0.07 & 0.06 &  & 0.10 & 0.09 & 0.08\tabularnewline
 & 200 &  & 0.11 & 0.09 & 0.07 &  & 0.06 & 0.08 & 0.08 &  & 0.10 & 0.11 & 0.11\tabularnewline
 & 500 &  & 0.17 & 0.17 & 0.15 &  & 0.06 & 0.09 & 0.09 &  & 0.10 & 0.13 & 0.14\tabularnewline\addlinespace
$|x|^{-2}\wedge1$ & 100 &  & 0.07 & 0.04 & 0.03 &  & 0.11 & 0.13 & 0.13 &  & 0.15 & 0.16 & 0.15\tabularnewline
$(\times0.5)$ & 200 &  & 0.09 & 0.07 & 0.05 &  & 0.13 & 0.17 & 0.18 &  & 0.18 & 0.20 & 0.21\tabularnewline
 & 500 &  & 0.13 & 0.11 & 0.09 &  & 0.09 & 0.14 & 0.15 &  & 0.15 & 0.18 & 0.18\tabularnewline\addlinespace
$|x|^{-1}\wedge1$ & 100 &  & 0.06 & 0.03 & 0.02 &  & 0.08 & 0.09 & 0.10 &  & 0.11 & 0.12 & 0.11\tabularnewline
$(\times0.5)$ & 200 &  & 0.09 & 0.06 & 0.05 &  & 0.10 & 0.13 & 0.14 &  & 0.14 & 0.16 & 0.17\tabularnewline
 & 500 &  & 0.13 & 0.14 & 0.14 &  & 0.10 & 0.16 & 0.17 &  & 0.15 & 0.20 & 0.21\tabularnewline\addlinespace
$|x|^{1.5}$ & 100 &  & 0.11 & 0.09 & 0.08 &  & 0.03 & 0.04 & 0.05 &  & 0.06 & 0.06 & 0.06\tabularnewline
$(\times0.02)$ & 200 &  & 0.21 & 0.23 & 0.22 &  & 0.04 & 0.07 & 0.08 &  & 0.06 & 0.10 & 0.11\tabularnewline
 & 500 &  & 0.45 & 0.52 & 0.54 &  & 0.00 & 0.07 & 0.09 &  & 0.06 & 0.11 & 0.12\tabularnewline\addlinespace
$x^{2}$ & 100 &  & 0.08 & 0.06 & 0.05 &  & 0.08 & 0.11 & 0.11 &  & 0.11 & 0.13 & 0.13\tabularnewline
$(\times0.02)$ & 200 &  & 0.14 & 0.15 & 0.14 &  & 0.06 & 0.11 & 0.12 &  & 0.10 & 0.14 & 0.14\tabularnewline
 & 500 &  & 0.33 & 0.39 & 0.42 &  & 0.00 & 0.00 & 0.00 &  & 0.00 & 0.00 & 0.00\tabularnewline\addlinespace
\midrule
\addlinespace
\multicolumn{2}{l}{$\boldsymbol{\alpha_{\kappa}=1.00}$} &  & \multicolumn{3}{c}{\textbf{Size adj.\ power}} &  & \multicolumn{7}{c}{\textbf{Size adjusted, relative to WP}}\tabularnewline\addlinespace
$\varphi_{1}(x)$ & 100 &  & 0.08 & 0.06 & 0.05 &  & 0.06 & 0.08 & 0.07 &  & 0.09 & 0.10 & 0.09\tabularnewline
 & 200 &  & 0.11 & 0.10 & 0.09 &  & 0.06 & 0.10 & 0.11 &  & 0.10 & 0.13 & 0.14\tabularnewline
 & 500 &  & 0.17 & 0.19 & 0.19 &  & 0.04 & 0.09 & 0.10 &  & 0.08 & 0.13 & 0.14\tabularnewline\addlinespace
$\varphi_{1}(2x)$ & 100 &  & 0.07 & 0.05 & 0.04 &  & 0.05 & 0.05 & 0.06 &  & 0.08 & 0.08 & 0.07\tabularnewline
 & 200 &  & 0.10 & 0.08 & 0.06 &  & 0.04 & 0.06 & 0.06 &  & 0.07 & 0.09 & 0.09\tabularnewline
 & 500 &  & 0.13 & 0.12 & 0.11 &  & 0.03 & 0.05 & 0.06 &  & 0.05 & 0.08 & 0.10\tabularnewline\addlinespace
$|x|^{-2}\wedge1$ & 100 &  & 0.07 & 0.04 & 0.03 &  & 0.09 & 0.11 & 0.11 &  & 0.13 & 0.14 & 0.14\tabularnewline
$(\times0.5)$ & 200 &  & 0.09 & 0.06 & 0.05 &  & 0.10 & 0.13 & 0.14 &  & 0.14 & 0.17 & 0.17\tabularnewline
 & 500 &  & 0.11 & 0.09 & 0.08 &  & 0.05 & 0.10 & 0.13 &  & 0.10 & 0.14 & 0.16\tabularnewline\addlinespace
$|x|^{-1}\wedge1$ & 100 &  & 0.06 & 0.04 & 0.03 &  & 0.06 & 0.08 & 0.08 &  & 0.10 & 0.10 & 0.10\tabularnewline
$(\times0.5)$ & 200 &  & 0.09 & 0.07 & 0.06 &  & 0.07 & 0.11 & 0.12 &  & 0.11 & 0.14 & 0.15\tabularnewline
 & 500 &  & 0.12 & 0.12 & 0.11 &  & 0.05 & 0.10 & 0.13 &  & 0.09 & 0.14 & 0.17\tabularnewline\addlinespace
$|x|^{1.5}$ & 100 &  & 0.09 & 0.08 & 0.07 &  & 0.03 & 0.04 & 0.05 &  & 0.05 & 0.06 & 0.06\tabularnewline
$(\times0.02)$ & 200 &  & 0.16 & 0.17 & 0.16 &  & 0.04 & 0.07 & 0.08 &  & 0.07 & 0.10 & 0.11\tabularnewline
 & 500 &  & 0.29 & 0.33 & 0.34 &  & -0.04 & 0.03 & 0.05 &  & 0.01 & 0.06 & 0.07\tabularnewline\addlinespace
$x^{2}$ & 100 &  & 0.07 & 0.06 & 0.05 &  & 0.08 & 0.11 & 0.13 &  & 0.12 & 0.14 & 0.15\tabularnewline
$(\times0.02)$ & 200 &  & 0.11 & 0.12 & 0.11 &  & 0.04 & 0.06 & 0.07 &  & 0.07 & 0.08 & 0.08\tabularnewline
 & 500 &  & 0.22 & 0.26 & 0.27 &  & 0.00 & 0.00 & 0.00 &  & 0.00 & 0.00 & 0.00\tabularnewline\addlinespace
\bottomrule
\end{tabular}
\end{table}

\clearpage{}

\end{document}